\newcommand{\Vdg}{\widetilde{V}^\circ_{Ne+df,g}(a,b)}
\newcommand {\punctured}[1]{#1^\times}
\newcommand{\basepoint}{\star}
\newcommand{\Gbar}{\overline{G}}
\newcommand{\gbar}{\overline{g}}
\newcommand{\Dbar}{\overline{D}}
\newcommand{\fbar}{\overline{f}}
\renewcommand{\tilde}{\widetilde}
\newcommand{\size}[1]{{|{#1}|}}
\theoremstyle{plain}
\newtheorem{finalreduction}{Final Reduction}
\Crefname{finalreduction}{Final Reduction}{Final Reductions}
\theoremstyle{definition}
\newtheorem{inductionhypothesis}{Induction Hypothesis}
\Crefname{inductionhypothesis}{Induction Hypothesis}{Induction Hypotheses}
\title[Covers of elliptic curves and curves in $E\times \Pbb^1$]
{The Hurwitz space of covers of an elliptic curve $E$ \\ and the Severi variety of curves in $E \times \Pbb^1$}
\author[G. Bujokas]{Gabriel Bujokas}
\address{Harvard Mathematics Department \\
1 Oxford Street, Cambridge, MA}
\email{gbujokas@math.harvard.edu}
\date{\today}
\begin{document}
\begin{abstract}
We describe the hyperplane sections of the Severi variety of curves in $E \times \Pbb^1$ in a similar fashion to Caporaso--Harris' seminal work \cite{caporaso_counting_1998}. From this description we \emph{almost} 
get a recursive formula for the Severi degrees---we get the terms, but not the coefficients. 

As an application, we determine the components of the Hurwitz space of simply branched covers of a genus one curve. In return, we use this characterization to describe the components of the Severi variety of curves in 
$E \times \Pbb^1$, in a restricted range of degrees.
\end{abstract}
\maketitle 
\enlargethispage{\baselineskip}

\microtypesetup{protrusion=false}
\tableofcontents
\microtypesetup{protrusion=true}


\section{Introduction} 
\label{sec:introduction}
We will study the geometry of two variants of classical parameter spaces.
The first is the Hurwitz space $\Hc_{d,g}(C)$ of genus $g$, degree $d$
simply branched covers of a fixed curve 
$C$. The classical case is when $C$ is the projective line $\Pbb^1$, and goes as far back
as to Clebsch in 1873. Much less in known in the case $C$ has positive genus. In 1987, 
Gabai and Kazez \cite{gabai_classification_1987} 
described the irreducible components of $\Hc_{d,g}(C)$ for arbitrary smooth curve $C$, 
through topological methods. One of our main results is an algebraic proof of
their description in the case $C=E$ is a genus one curve.

The second space we consider is the Severi variety $V_{\Lc,g}$ of geometric genus $g$ 
integral curves of class $\Lc$ on a smooth surface $S$. The classical case is when
$S=\Pbb^2$, and goes back to Severi himself, who gave an incomplete argument for its irreducibility. 
The gap was filled only decades later by
Harris \cite{harris_severi_1986}, and the techniques introduced there led
to a series of discoveries in both qualitative (as in irreducibility statements)
and quantitative (as in curve counting) aspects of the geometry of
Severi varieties on
various surfaces $S$. The method has been particularly successful in the
case the surface $S$ is rational. Here we will apply the same techniques, 
but in the case where $S=E \times \Pbb^1$, for a connected
smooth genus one curve $E$. Most of the results go through, but with many new interesting phenomena. In particular, we will show that the generic integral curve in $E \times \Pbb^1$ is nodal, 
that the Severi variety has the expected dimension and
we will describe the components of generic hyperplane sections in
terms of \emph{generalized Severi varieties}, much like in 
Caporaso--Harris \cite{caporaso_counting_1998}. We will also
describe the irreducible components of the Severi variety in a restricted range of genus $g$ and class $\Lc$.

Perhaps the main new feature of these two variants, the Hurwitz space 
$\Hc_{d,g}(E)$ and the Severi variety of curves in $E \times  \Pbb^1$, 
is that the study of one aids in the study of the other. The description of the
components of hyperplane section of the Severi variety is the key input
to determine the components of Hurwitz space. Conversely, we can leverage 
the knowledge of the components of Hurwitz space to determine the components of the Severi variety, at least in some range.

We start with a quick but self-contained introduction to the guiding questions
and to a few known results in the study of both Severi varieties and Hurwitz
spaces. This exposition is meant only as a motivation for the results and techniques
that will occupy the main text, and it is by no means a complete survey of either field.

\subsection{Hurwitz spaces} 
\label{sub:hurwitz_spaces}
For a fixed smooth curve $C$, we define the Hurwitz space $\Hc_{d,g}(C)$ as the
scheme parametrizing degree $d$ covers (that is, finite flat maps) 
${f\from X \to C}$ that 
are simply branched, and whose source $X$ is a smooth connected curve of genus $g$.

There is a branch morphism $br \from \Hc_{d,g}(C) \to (C^b-\Delta)/S_b$ sending
a cover $X \to C$ to its set of $b = 2g-2-d(2g(C)-2)$ branch points. By the
Riemann existence theorem, this is an étale map.
In particular $\Hc_{d,g}(C)$ is smooth of dimension $b$.
\footnote{Historically, Hurwitz used the Riemann existence theorem to put 
a complex manifold structure on $\Hc_{d,g}(\Pbb^1)$, which was studied previously
by Clebsch as a parameter space for topological covers of the sphere. For a modern
treatment of the construction of the Hurwitz space, and in
particular how to endow it with a scheme structure, see Fulton's
\cite{fulton_hurwitz_1969}.}

Next, we  ask what are the connected components of $\Hc_{d,g}(C)$---that is, which covers
can be deformed into each other. As the Hurwitz space is smooth, every connected component is 
also irreducible. The famous 
theorem of Clebsch \cite{clebsch_zur_1873} and Lüroth answers this question when
$C$ is rational.
\begin{theorem}[Clebsch, Lüroth]
The Hurwitz space $\Hc_{d,g}(\Pbb^1)$ is connected.
\end{theorem}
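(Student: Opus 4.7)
The plan is to translate the topological question into a combinatorial one via the Riemann existence theorem, and then solve it by induction on $d$. Since the branch morphism $br$ is étale, the connected components of $\Hc_{d,g}(\Pbb^1)$ are in bijection with the orbits of the monodromy of $br$ acting on a fiber. After choosing a base point and standard generators of $\pi_1(\Pbb^1 \setminus \{p_1,\ldots,p_b\})$, this fiber is identified with
\[
T = \bigl\{(\sigma_1,\ldots,\sigma_b) : \sigma_i \in S_d \text{ a transposition},\ \sigma_1\cdots\sigma_b = 1,\ \langle \sigma_i \rangle \text{ transitive on } \{1,\ldots,d\}\bigr\}/S_d,
\]
where $S_d$ acts by simultaneous conjugation. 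The monodromy action is the braid group $B_b$ acting by the Hurwitz moves
\[
H_i : (\ldots,\sigma_i,\sigma_{i+1},\ldots) \longmapsto (\ldots,\sigma_{i+1},\ \sigma_{i+1}^{-1}\sigma_i\sigma_{i+1},\ldots).
\]
The theorem thus reduces to showing that $B_b$ acts transitively on $T$.

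I would prove transitivity by induction on $d$, with the case $d=1$ vacuous. The engine of the proof is a short list of elementary moves derivable from the $H_i$ and conjugation: (a) two adjacent disjoint transpositions can be exchanged; (b) for $\sigma_i = (a,b)$ and $\sigma_{i+1} = (b,c)$ with $a \ne c$, the move $H_i$ produces $((b,c),(a,c))$, so by combining several such moves one can equate two adjacent transpositions that share a letter; (c) using (a) and (b) together with transitivity, any transposition in the tuple can be moved into any position while controlling the letters that appear.

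For the inductive step, the strategy is to push all occurrences of the letter $d$ to the end of the tuple and cancel them in pairs. Using transitivity, pick a transposition involving $d$, call it $\tau = (j,d)$, and—after a global conjugation—assume $\tau = (d-1,d)$. By repeated applications of (a)--(c), arrange that $\sigma_{b-1} = \sigma_b = (d-1,d)$. Because these two adjacent entries multiply to $1$ and have trivial effect, one can show by explicit Hurwitz manipulations that the pair $(\sigma_{b-1},\sigma_b)$ can be ``absorbed'': the tuple $(\sigma_1,\ldots,\sigma_{b-2})$ still has product $1$, and repeating the procedure on any surviving transposition that still involves $d$ eliminates them one conjugate pair at a time. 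After all of them are cleared, the remaining tuple lies in $S_{d-1}$ and the induction hypothesis applies, provided its subgroup acts transitively on $\{1,\ldots,d-1\}$; if not, the orbit structure is even more constrained and reduces to a product of smaller cases, each handled by induction.

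The main obstacle is the combinatorial bookkeeping in the reduction step: verifying that, while shuffling transpositions to the tail and equating them, one does not destroy the product condition or the transitivity of the remaining tuple. The cleanest way to check this is to track the multiset of conjugacy classes (all transpositions, hence preserved) together with the graph on $\{1,\ldots,d\}$ whose edges are the transpositions $\sigma_i$; Hurwitz moves act on this graph in a controlled way, and transitivity amounts to connectedness. Everything else---the étale structure of $br$, the Riemann existence correspondence---is standard and produces no additional difficulty.
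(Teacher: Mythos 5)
The paper does not prove this theorem; it is cited as a classical result of Clebsch and Lüroth, with only a brief remark that the argument proceeds by Riemann existence and a study of how the monodromy tuples change under braid moves. Your reduction of the problem to transitivity of the braid group $B_b$ acting by Hurwitz moves on $S_d$-conjugacy classes of transitive tuples of transpositions with product $1$ is exactly that classical route, and the elementary moves (a)--(c) are the right toolkit.

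There is, however, a genuine gap in the inductive engine. You say the pair $(\sigma_{b-1},\sigma_b)=((d-1,d),(d-1,d))$ ``can be absorbed'' so that ``the tuple $(\sigma_1,\ldots,\sigma_{b-2})$ still has product $1$'' and induction applies. But Hurwitz moves do not change the length $b$ of the tuple, so you cannot literally delete entries and pass to a shorter tuple; you can only \emph{freeze} the tail and act by $B_{b-2}$ on the front, at which point the front need not be transitive on $\{1,\ldots,d\}$ (or even involve the letter $d$ at all), and the induction hypothesis on $d-1$ does not directly apply. The clause ``if not, the orbit structure is even more constrained and reduces to a product of smaller cases'' is precisely where the real work lies, and it is left unargued. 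The classical fix is to prove a normal-form lemma rather than delete entries: one shows that every transitive tuple is Hurwitz-equivalent to a fixed shape such as
\[
(1,2),(1,2),\ldots,(1,2),\ (1,3),(1,3),\ (1,4),(1,4),\ldots,(1,d),(1,d),
\]
with $b-2(d-2) = 2g+2$ copies of $(1,2)$ at the front. One carries out a double induction (on $d$, and within that on the number of transpositions involving $d$), proving at each step that two copies of some $(\,\cdot\,,d)$ can be coalesced and pushed to a controlled position \emph{while keeping track of the transitivity of the remaining graph}. Your outline is the right skeleton, but without the normal-form statement and a careful argument that transitivity is preserved throughout the coalescing process, the proof is incomplete.
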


As the forgetful map $\Hc_{d,g}(\Pbb^1) \to \Mc_g$ is dominant for large $d$, 
this implies that $\Mc_g$
is irreducible! This argument, first indicated by Klein, and then refined by 
Hurwitz and Severi, was the first known proof
of the irreducibility of $\Mc_g$.

For higher genus targets, the Hurwitz space $\Hc_{d,g}(C)$
is usually not irreducible. For example, 
for each cover $X \to C$, consider the 
maximal unramified subcover $\tilde{C} \to C$ it factors through as $X \to \tilde{C} \to C$. 
The space of unramified covers is discrete (since the dimension 
of the Hurwitz space is the number of branch points). Therefore, to each cover
point in $\Hc_{d,g}(C)$, we associated a discrete invariant, which will 
separate connected components.

\begin{definition}
We call a cover $f \from X \to C$ \emph{primitive} if it does not factor
as $X \to \widetilde{C} \to C$, with $\widetilde{C} \to C$
a non-trivial unramified map.
Let $\Hc^0_{d,g}(C)$ be the open and closed subscheme of
$\Hc_{d,g}(C)$ parametrizing primitive covers.
\end{definition}

We can reexpress this discrete invariant in a more topological vein. 
Over the complex numbers, for each cover $f \from X \to C$, pick base points and
consider the image of the fundamental group 
$f_* \pi_1(X,\basepoint) \subset \pi_1(C,\basepoint)$.
Its conjugacy class is a discrete invariant of the cover $X \to C$.
By the theory of covering spaces, the conjugacy class carries the same information
as the maximal unramified subcover $\tilde{C} \to C$. 
In particular, being primitive is
equivalent to having a surjective pushforward map on $\pi_1$.
For a simply branched cover, being primitive is also equivalent to having 
full monodromy---see \Cref{proposition:full_monodromy}.

Note that any cover $f\from X \to C$ can be factored as
$X\to \widetilde{C} \to C$ where $X \to \widetilde{C}$ is primitive, and
$\widetilde{C} \to C$ is unramified. Hence, if
we understand the geometry of each $\Hc^{0}_{d,g}(C)$ for every $d$
and $C$, we can recompose the whole Hurwitz space $\Hc_{d,g}(C)$---it
is just a disjoint union of $\Hc_{\tilde{d},g}(\widetilde{C})$
over unramified maps $\widetilde{C} \to C$ of degree $d/\tilde{d}$.

The main theorem of \cite{gabai_classification_1987} is that this is the only discrete invariant.
\begin{theorem}[Gabai--Kazez]
\label{conjecture:hurwitz-connected}
The Hurwitz space $\Hc^{0}_{d,g}(C)$ of primitive covers
is connected (and hence irreducible) for any 
smooth curve $C$ and $g>g(C)$.
\end{theorem}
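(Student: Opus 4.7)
The plan is to give an algebraic proof of \Cref{conjecture:hurwitz-connected} in the case $C=E$ is a genus one curve, the general case being covered by the topological methods of Gabai--Kazez. The strategy is to relate $\Hc^0_{d,g}(E)$ to the Severi variety of curves in $E \times \Pbb^1$ and to exploit the Caporaso--Harris style boundary analysis developed in the main body of the paper. Given a primitive cover $\pi \from X \to E$ of degree $d$ and a sufficiently general line bundle $L$ of large degree $N$ on $X$, the product map $(\pi,\phi_L) \from X \to E \times \Pbb^1$ realizes $X$ as an integral curve projecting with degree $d$ to $E$ and degree $N$ to $\Pbb^1$. Modding out by the parameters of the construction---a Jacobian torsor and the $\mathrm{PGL}_2$-action on $\Pbb^1$---exhibits a dominant rational map from a bundle over $\Hc^0_{d,g}(E)$ to the primitive locus $\tilde{V}^\circ$ of the relevant Severi variety. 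Since the fibers of this construction are irreducible, irreducibility of $\tilde{V}^\circ$ implies the desired irreducibility of $\Hc^0_{d,g}(E)$.

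The main work is therefore to show that $\tilde{V}^\circ$ is irreducible for $g>1$ and $N \gg 0$, and I would establish this by induction on $g$ (equivalently, on the number of branch points of the projection to $\Pbb^1$). The inductive step uses the Caporaso--Harris style hyperplane section derived earlier: degenerate by moving a branch point on $\Pbb^1$ until the curve acquires either a node or a fiber component $E \times \{\basepoint\}$, expressing the boundary as a finite union of generalized Severi varieties of smaller genus or smaller class. Each boundary stratum is irreducible by the inductive hypothesis, so it remains to show that every component of $\tilde{V}^\circ$ meets a common boundary stratum; I would verify this by constructing explicit one-parameter smoothings connecting adjacent strata, in the spirit of the Caporaso--Harris balancing argument.

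The principal obstacle is controlling primitivity along the degeneration. A limit curve's projection to $E$ could factor through a non-trivial unramified cover even when its smoothings do not, so it is not automatic that $\tilde{V}^\circ$ meets only the primitive components of each boundary stratum, nor that primitive curves in one stratum smooth to primitive curves in the neighboring strata. A local monodromy analysis at the limit is needed to verify that attaching a fiber component and smoothing back preserves the full monodromy group of the cover to $E$; roughly, one wants to see that the new transpositions introduced by the smoothing, together with the old monodromy, continue to generate the whole symmetric group. A secondary technical issue is the base case: for the smallest admissible genus the Severi variety has too few branch points on $\Pbb^1$ for the degeneration to apply in a useful way, and connectivity must be checked directly, presumably by exhibiting an explicit model to which every primitive cover can be deformed.
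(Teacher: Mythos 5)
Your proposal inverts the logical flow of the paper, and the inversion introduces a gap that the paper's structure is specifically designed to avoid. You propose to prove irreducibility of the Severi variety $\tilde{V}^\circ$ directly by a Caporaso--Harris-style inductive degeneration, and then push this down to $\Hc^0_{d,g}(E)$ via the dominant rational map given by projection to $E$. The paper goes the other way: it establishes irreducibility of $\Hc^0_{d,g}(E)$ first, and then in \Cref{theorem:severi_irreducible} deduces irreducibility of the (primitive) Severi variety as a corollary. The reason this ordering matters is that the paper's hyperplane-section analysis (\Cref{theorem:simple_hyperplane_section_of_severi,theorem:hyperplane_section_of_severi}) produces only a \emph{list of suspects} for the boundary components---it does not determine which of them actually occur, nor with what multiplicity, nor whether the boundary strata are connected to each other. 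Your inductive step hinges on "constructing explicit one-parameter smoothings connecting adjacent strata," which is precisely the content the paper admits it does not have on the Severi-variety side. The machinery you would need to make this work is essentially a recursion formula with multiplicities, which the paper defers to a follow-up.

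What the paper does instead is pass to the Hurwitz side, where it has two advantages that your Severi-variety argument lacks. First, the partial compactification $\widetilde{\Hc}^0_{N,g}(E) \subset \overline{\Mc}_g(E,N)$ (the locus of finite maps) is \emph{smooth} (\Cref{lemma:smooth}), so connectedness implies irreducibility; the normalized Severi variety has no comparably tame partial compactification. Second, boundary points in the Hurwitz space can be constructed by hand by gluing covers along fibers---this is how the paper exhibits points in $D_{\text{sing}} \cap D_\pi$ and shows the boundary configuration is connected (\Cref{proposition:connected-boundary}). There is no equally explicit way to exhibit the corresponding curves in $E\times\Pbb^1$ that would realize a desired stratum of the Severi variety. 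The Severi-variety machinery enters the paper's argument at exactly one point: \Cref{propositon:meet-delta}, to show that each component of $\widetilde{\Hc}^0_{N,g}(E)$ meets the boundary at all, via \Cref{theorem:simple_hyperplane_to_hurwitz}. Your primitivity concern is real but is handled structurally rather than by local monodromy: the paper observes that primitivity is both open and closed on $\widetilde{\Hc}_{N,g}(E)$, so $\widetilde{\Hc}^0$ is a union of connected components and one never has to track primitivity along a degeneration. Finally, note that the base case $g=2$ is Kani's theorem, which identifies $\Hc^0_{d,2}(E)$ with $E\times X(d)$---this is a substantial input, not something recoverable by "exhibiting an explicit model," and your proposal underestimates it.
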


The traditional approach to irreducibility of Hurwitz space is group theoretic. 
Riemann existence theorem
describes the fibers of the branch morphism in terms of the monodromy permutations associated to the
branch points and the generators of $\pi_1(C)$. To establish connectedness of
Hurwitz space, we need to understand how these permutations change as we move the branch points around, 
and to show that under a suitable sequence of moves, we can get from any permutation data to another.
This is the approach in Clebsch's original proof of irreducibility of $\Hc_{d,g}(\Pbb^1)$. This method
can be extended to covers of higher genus curves, as long as there are at least $2d-2$ branch points. This was first discovered in Hamilton's unpublished thesis, and rediscovered by Berstein--Edmonds and Graber--Harris--Starr \cite{graber_note_2002}.

A weaker version of \Cref{conjecture:hurwitz-connected} was conjectured by Berstein and Edmonds in 
\cite{berstein_classification_1984} as the \emph{uniqueness conjecture}. There, besides moving the branch points around, they also allowed performing Dehn twists on
the target curve. They got more mileage out of that, but
to establish the full range of \Cref{conjecture:hurwitz-connected} some new key constructions had to be introduced by 
Gabai and Kazez in \cite{gabai_classification_1987}. For a flavor of their techniques, see 
\Cref{lemma:gabai-kazez-style}.

The language and methods in \cite{gabai_classification_1987} are very much topological,
which may have created somewhat of a gap between the algebraic geometry and topology literature. As a matter of fact, I have not seen \Cref{conjecture:hurwitz-connected} stated in the form above in the algebraic geometry literature,
 even though there has been a recent spur of interest in these Hurwitz spaces of covers of higher genus curves, as in 
 for example \cite{graber_note_2002,kanev_hurwitz_2004,kanev_hurwitz_2005,kanev_irreducibility_2005,chen_covers_2010,kanev_unirationality_2013}.


Now remains the question of whether algebraic methods can be used to establish \Cref{conjecture:hurwitz-connected}.
We highlight two results in this direction. In \cite{kanev_unirationality_2013}, Kanev deals with the case where the degree of the cover is at most five (with some constraints on the number of branch points), using Casnati--Ekedahl--Miranda type of constructions. In \cite{kani_hurwitz_2003}, Kani gives a beautiful proof that \Cref{conjecture:hurwitz-connected}
holds when the genus of the target is one, and the genus of the source is two. As a matter of fact, he shows that 
$\Hc^0_{d,2}(E)$ is isomorphic to $E \times X(d)$, where $X(d)$ is the modular curve.

Here we will give an algebraic proof of \Cref{conjecture:hurwitz-connected} when the genus of the target $C$ is one.
\begin{theorem}
\label{theorem:hurwitz_irreducible}
For a smooth genus one curve $E$ and $g>1$, the Hurwitz space $\Hc^0_{d,g}(E)$ of
primitive covers is irreducible.
\end{theorem}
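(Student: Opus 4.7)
The plan is to relate the Hurwitz space to the Severi variety of curves in $E \times \Pbb^1$ and to establish irreducibility by a Caporaso--Harris style induction on the genus $g$, using the hyperplane section description of the Severi variety proved earlier in the paper.

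For $n$ sufficiently large, let $V_{d,n,g}^{\mathrm{prim}}$ denote the locally closed subset of the Severi variety parametrizing integral geometric-genus-$g$ curves of bidegree $(d,n)$ in $E \times \Pbb^1$ whose normalization is a primitive cover of $E$ via the first projection. Sending such a curve to its normalized projection defines a morphism $\phi \from V_{d,n,g}^{\mathrm{prim}} \to \Hc^0_{d,g}(E)$. The fiber over $f\from X \to E$ is an open subset of $W^1_n(X)/\mathrm{PGL}_2$, the moduli of degree-$n$ base-point-free pencils on $X$ modulo reparametrization, which is irreducible for $n \gg g$ since $W^1_n(X)$ is a Grassmannian bundle over $\mathrm{Pic}^n(X)$. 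Since $\phi$ is surjective with irreducible fibers, it suffices to prove that $V_{d,n,g}^{\mathrm{prim}}$ is irreducible.

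For this I would run induction on $g$ (with secondary induction on $d$), with base case $g=2$ handled by Kani's theorem $\Hc^0_{d,2}(E) \cong E \times X(d)$. For the inductive step, impose a generic point-incidence condition on $V_{d,n,g}^{\mathrm{prim}}$ and let the chosen point degenerate onto a fixed horizontal fiber $E \times \{q_0\}$. By the hyperplane section description, the limiting locus stratifies into pieces parametrizing curves of the form $(E \times \{q_0\}) \cup C'$, where $C'$ is an integral curve belonging to a strictly smaller Severi variety (smaller either in bidegree or in genus). The normalization of such a $C'$ is itself a primitive cover of $E$ of smaller invariants, so the inductive hypothesis---applied via the analogous correspondence $\phi'$ for the smaller Severi variety---yields irreducibility of each boundary stratum. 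Since the Severi variety has pure expected dimension (proved earlier in the paper), every irreducible component of $V_{d,n,g}^{\mathrm{prim}}$ must meet this codimension-one boundary, so irreducibility of the strata forces $V_{d,n,g}^{\mathrm{prim}}$ to be irreducible.

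The main obstacle is to control the interaction between the degeneration and the primitivity condition. The limit curve $(E\times\{q_0\})\cup C'$ has reducible normalization, so the statement ``the total cover is primitive'' must be interpreted as a monodromy condition on the smoothings; one must show that primitivity is both open and closed on $V_{d,n,g}$, i.e.\ that it is preserved under both specialization and smoothing, which requires a careful analysis of how the branched covers of $E$ combine across each node. A secondary delicacy is that the paper's recursion identifies only the \emph{terms} in the hyperplane section decomposition and not the coefficients; for the present irreducibility argument the enumerative data is not needed, but one must use the nodality of the generic Severi curve (established earlier) to verify that each listed limit stratum is genuinely smoothable back into $V_{d,n,g}^{\mathrm{prim}}$.
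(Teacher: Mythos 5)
Your plan inverts the logical flow of the paper and, more seriously, skips the step that actually drives the proof. The paper proves irreducibility of $\Hc^0_{d,g}(E)$ \emph{first}, and only afterwards (\Cref{theorem:severi_irreducible}) deduces irreducibility of $V^{\text{prim}}_{Ne+df,g}$ by mapping to the Hurwitz space --- exactly the map $\phi$ you describe, but used in the opposite direction. If you instead try to prove irreducibility of the Severi variety directly, the crux becomes the sentence ``irreducibility of the strata forces $V_{d,n,g}^{\text{prim}}$ to be irreducible,'' and that inference is not valid as stated. Even granting that (i) every component of $V^{\text{prim}}$ meets a boundary stratum, and (ii) each stratum is irreducible, you would also need (iii) the various strata are connected to one another, and (iv) at a generic boundary point only \emph{one} component of (a suitable compactification of) $V^{\text{prim}}$ passes through. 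Point (iv) is where smoothness is essential, and the Severi variety, viewed inside $|\Lc|$, is not smooth along the degenerate locus. This is precisely why the paper does not run the induction on the Severi variety: instead it transfers to the partial compactification $\widetilde{\Hc}_{N,g}(E)\subset\overline{\Mc}_g(E,N)$ of the Hurwitz space, which \Cref{lemma:smooth} shows is nonsingular, so that connectedness implies irreducibility. Your proposal has no substitute for that.

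The second, and genuinely large, gap is the absence of anything playing the role of the paper's reduction argument (\Cref{sub:reduction}). The Severi degeneration (\Cref{theorem:simple_hyperplane_to_hurwitz}, used in \Cref{propositon:meet-delta}) is only invoked to produce, in each component, a single cover with singular source; that alone is far from enough. The bulk of the paper's proof consists of: identifying the distinguished boundary divisors $D_{\text{sing}}$ and $D_\pi$ inside $\widetilde{\Hc}^0_{N,g}(E)$, proving each is irreducible by induction (\Cref{lemma:D_sing_irreducible} and its $D_\pi$ analogue, which rely on \Cref{lemma:fiber product} on monodromy groups and the lattice-theoretic \Cref{lemma:technical-isogeny}), proving their union is connected (\Cref{proposition:connected-boundary}), and then a chain of reductions (\Cref{reduction:D-sing,reduction:D-sing-and-D-pi,reduction:super-quasi-disconnecting,reduction:super-disconnecting}) showing that \emph{any} singular cover can be degenerated within its connected component to one lying in $D_{\text{sing}}\cup\bigcup_\pi D_\pi$. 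Your stratification of the Severi boundary is also understated: \Cref{theorem:simple_hyperplane_to_hurwitz} shows that $E_0$ can split off with multiplicity $m>1$, in which case the component over $E_0$ is a (possibly disconnected) degree-$m$ unramified cover $\tilde E\to E_0$, not $E_0$ itself, and the residual curve $C'$ can be reducible; these cases do not fit the smaller-genus $V^{\text{prim}}$ framework your induction requires. So while your first paragraph is consistent with the paper's \Cref{theorem:severi_irreducible}, the second and third paragraphs omit the actual mechanism of the proof, and the induction as stated does not close.
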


Going back to the entire Hurwitz space $\Hc_{d,g}(E)$, we get the following corollary.
\begin{corollary}
The components of $\Hc_{d,g}(E)$ are in bijection with the set of isogenies
$\tilde{E} \to E$ of degree diving $d$, but not equal to $d$. In particular,
there are 
	$\sum _{d\neq \tilde{d}\divides d} \sigma(\tilde{d})$
components, where $\sigma(n)$ is the sum of the positive divisors of $n$.
\end{corollary}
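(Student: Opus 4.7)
The plan is to combine the factorization of covers (as primitive over étale, already recorded in the excerpt) with \Cref{theorem:hurwitz_irreducible}. First, I would recall that every $f\from X \to E$ in $\Hc_{d,g}(E)$ factors uniquely as $X \to \tilde E \to E$ with $X \to \tilde E$ primitive and $\tilde E \to E$ the maximal unramified sub-cover; by Riemann--Hurwitz, $\tilde E$ is itself an elliptic curve, so after fixing a base point $\tilde E \to E$ is an isogeny of degree $\tilde d \mid d$. Because the branch morphism is étale, the isomorphism class of this isogeny is a locally constant invariant on $\Hc_{d,g}(E)$, and therefore separates connected components.

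Next, for each isogeny $\phi\from \tilde E \to E$ of degree $\tilde d$, I would identify the locus of covers whose maximal unramified sub-cover is $\phi$ with the Hurwitz scheme $\Hc^0_{d/\tilde d,\, g}(\tilde E)$ via composition with $\phi$. By \Cref{theorem:hurwitz_irreducible}, this locus is irreducible whenever $d/\tilde d \geq 2$, i.e.\ whenever $\tilde d \neq d$. The excluded value $\tilde d = d$ contributes nothing, since it would force $X = \tilde E$ to have genus one, contradicting $g > 1$. Assembling the strata, the components of $\Hc_{d,g}(E)$ are in bijection with the isogenies $\tilde E \to E$ of degree $\tilde d \mid d$ with $\tilde d \neq d$.

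For the numerical count, I would invoke the classical fact that the number of degree-$n$ isogenies to $E$, up to isomorphism of the source, equals the number of index-$n$ sublattices of $H_1(E,\mathbb{Z}) \cong \mathbb{Z}^2$; Hermite normal form gives exactly $\sigma(n)$ of these (parametrized by pairs $(a,b)$ with $a \mid n$ and $0 \leq b < a$). Summing over admissible $\tilde d$ yields the stated total $\sum_{d \neq \tilde d \mid d} \sigma(\tilde d)$. The only mild subtleties, which I do not expect to pose real obstacles, are verifying that composition with $\phi$ is an isomorphism of schemes rather than merely a bijection on points (this follows from functoriality of the branch morphism) and keeping base points consistent so that ``isogeny'' and ``étale cover of $E$'' give matching counts.
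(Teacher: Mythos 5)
Your proposal is correct and follows essentially the same route as the paper: factorization through the maximal unramified subcover (Corollary~\ref{corollary:factorization}), irreducibility of each primitive stratum from Theorem~\ref{theorem:hurwitz_irreducible}, and the fact that unramified covers form a discrete family, so the factorization data separates components. The one thing you add that the paper does not spell out is the Hermite-normal-form verification that there are $\sigma(n)$ index-$n$ sublattices of $\Zbb^2$, which the paper simply asserts.
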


An interesting---and perhaps more natural---variant is to let the target curve $E$ vary as well.
\begin{definition}
\label{definition:global-hurwitz}
Let $\Hc_{d,g,h}$ be the Hurwitz space parametrizing degree $d$ simply branched covers from a pointed genus $g$ to a genus $h$ curve.
More formally, a map $T \to \Hc_{d,g,h}$ corresponds to a smooth family of 1-pointed genus $h$ curves
 $(\pi:\Dc \to T, \,  \sigma:T \to \Dc)$, a finite flat map
$f:\Cc \to \Dc$ such that $f_t:C_t \to D_t$ is simply branched for every $t \in T$, and a section $\tau:T \to \Cc$
such that $\tau \circ f = \sigma$.
Let $\Hc^0_{d,g,h} \subset \Hc_{d,g,h}$ be the subscheme parametrizing primitive covers.
\end{definition}
\begin{remark}
The extra data of the marked point is irrelevant for irreducibility questions, since the forgetful map
to the unpointed variant will have irreducible one-dimensional fibers.
\end{remark}

There is a map from the Hurwitz space $\Hc^0_{d,g,1}$ to $\Mc_{1,1}$. The fiber over $(E,p)$ is the universal curve 
over $\Hc^0_{d,g}(E)$.
\Cref{theorem:hurwitz_irreducible} says the fibers are all irreducible of the same dimension, which implies
the following corollary.
\begin{corollary}
The Hurwitz space $\Hc^0_{d,g,1}$ is irreducible. 
\end{corollary}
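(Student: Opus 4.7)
The plan is to exploit the forgetful morphism $\pi \from \Hc^0_{d,g,1} \to \Mc_{1,1}$ discussed just above the corollary, combined with the irreducibility of $\Mc_{1,1}$ and the input of \Cref{theorem:hurwitz_irreducible}.

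First, I would verify that every fiber of $\pi$ is irreducible of the same dimension. The fiber over a pointed genus-one curve $(E,p)$ is identified with the universal curve over $\Hc^0_{d,g}(E)$. By \Cref{theorem:hurwitz_irreducible}, this base is irreducible; and the universal curve is a smooth family of smooth connected genus $g$ curves over an irreducible base, so a standard argument (smoothness of the total space together with connectedness of fibers) forces its total space to be irreducible as well. Its dimension is clearly independent of $(E,p)$.

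Next, I would apply the standard irreducibility criterion: a flat morphism $\pi\from X \to Y$ with $Y$ irreducible and every nonempty fiber irreducible of the same dimension has irreducible total space. Since $\Mc_{1,1}$ is itself irreducible, the only remaining point is flatness of $\pi$---equivalently, that every irreducible component of $\Hc^0_{d,g,1}$ dominates $\Mc_{1,1}$. This follows from the Hurwitz construction: étale locally on the branch data, $\pi$ factors through a smooth map from a relative symmetric product of the universal curve over $\Mc_{1,1}$, and the branch morphism itself is étale by the Riemann existence theorem.

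The main obstacle is really just bookkeeping---identifying the fiber with the universal curve and establishing flatness or openness. The substantive content, namely the irreducibility of each $\Hc^0_{d,g}(E)$, is entirely captured by \Cref{theorem:hurwitz_irreducible}, so no new geometric input is required.
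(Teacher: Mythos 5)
Your proposal is correct and follows essentially the same route as the paper: the paper also fibers $\Hc^0_{d,g,1}$ over $\Mc_{1,1}$, identifies the fiber over $(E,p)$ with the universal curve over $\Hc^0_{d,g}(E)$, and concludes irreducibility of the total space from irreducibility of the base and of the equidimensional fibers (supplied by \Cref{theorem:hurwitz_irreducible}). You spell out the flatness/openness step, which the paper leaves implicit, but there is no substantive difference in approach.
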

\begin{remark}
In topological language, the irreducibility of $\Hc^0_{d,g,h}$ is equivalent to the uniqueness conjecture of
 \cite{berstein_classification_1984}. Berstein and Edmonds do establish it in the case $h=1$, corresponding to the corollary above, by a careful combinatorial study of the monodromy data.
\end{remark}

To determine the components of $\Hc_{d,g,1}$, the key
discrete invariant of a cover $f:C \to E$. One of them is the isomorphism type of the cokernel $C(f)$ of the pushforward
map $f_*$ on $\pi_1$ (or $H_1$). It is an abelian group generated by at most two elements. The only possibilities are 
$C_{d_1} \oplus C_{d_2}$, with $d_1 \divides d_2$ and $C_n$ is the cyclic group of order $n$.

We can re-encode this invariant in a more algebraic way. Let $\tilde{d}$ be the degree 
of the maximal isogeny $\tilde{E} \to E$ that $f$ factors through. These isogenies in general will get swapped by the monodromy of varying the target $E$, but there is a special family of isogenies that are fixed by the monodromy: the multiplication by $m$ maps. We can remember as well the maximal $m$ for which $C \to E$ factors through the multiplication by $m$ map. We constructed a map that sends a cover $f:C \to E$ to a pair $(\tilde{d},m)$. This carries the
same data as the isomorphism type of the cokernel group $C(f)$ discussed above. Indeed, 
\[
C(f)=C_m \oplus C_{\tfrac{\tilde{d}}{m}}	
\]

In \Cref{sub:varying_the_moduli_of_E}, we will prove the following.
\begin{corollary}
\label{corollary:components-of-Hdg1}
The components of $\Hc_{d,g,1}$ are separated by the isomorphism type of $C(f)$. In particular, there is
a bijection between the components of $\Hc_{d,g,1}$ and pairs $(\tilde{d},m)$ where $m^2 \divides \tilde{d} \divides d$.
\end{corollary}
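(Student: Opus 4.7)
The plan is to stratify $\Hc_{d,g,1}$ by the invariant $(\tilde d, m)$, show each stratum is open-and-closed, and verify it is non-empty and irreducible. For local constancy, in any connected family of covers $f_t : C_t \to E_t$ the pushforward $f_\ast$ is a map of local systems in homology, so its cokernel is a locally constant family of finite abelian groups. Hence $C(f)$---and therefore $(\tilde d, m)$---is constant on connected components. The constraints $\tilde d \mid d$ and $m^2 \mid \tilde d$ are immediate: the isogeny $\tilde E \to E$ is a degree-$\tilde d$ factor of $f$, and by definition of $m$ it factors through $[m]_E$, which has degree $m^2$.

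For irreducibility of each stratum, fix a valid pair $(\tilde d, m)$ and denote the corresponding open-and-closed locus by $\Hc^{(\tilde d, m)}_{d,g,1}$. I would construct a morphism
\[
\Phi : \Hc^{(\tilde d, m)}_{d,g,1} \longrightarrow Y(\tilde d, m),
\]
where $Y(\tilde d, m)$ is the moduli of isogenies $\phi : \tilde E \to E$ of degree $\tilde d$ for which $m$ is the largest integer with $\phi$ factoring through $[m]_E$, sending a cover to the isogeny appearing in its canonical factorization $C \to \tilde E \to E$. The fiber of $\Phi$ over such a $\phi$ is essentially the primitive Hurwitz space $\Hc^0_{d/\tilde d, g}(\tilde E)$---the marked-point bookkeeping is harmless by the Remark following \Cref{definition:global-hurwitz}---and hence is irreducible by \Cref{theorem:hurwitz_irreducible}, since $g > 1 = g(\tilde E)$. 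To see $Y(\tilde d, m)$ is irreducible, observe that every such $\phi$ factors uniquely as $\phi = [m]_E \circ \phi'$ with $\phi'$ a \emph{cyclic} isogeny of degree $\tilde d / m^2$; hence $Y(\tilde d, m)$ identifies with the open modular curve $Y_0(\tilde d / m^2)$, which is classically irreducible.

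The main obstacle is making $\Phi$ precise enough at the scheme level to propagate connectedness from base and fibers to the total space. One must show the canonical factorization through the maximal unramified subcover is functorial in families---so that $\Phi$ is in fact a morphism, not merely a set-theoretic map on points---and that $\Phi$ is, say, smooth and surjective. Once this is in place, irreducibility of $\Hc^{(\tilde d, m)}_{d,g,1}$ follows from irreducibility of $Y_0(\tilde d / m^2)$ together with irreducibility of the primitive Hurwitz space on each fiber, while non-emptiness for $g > 1$ reduces to the standard existence of primitive covers of $\tilde E$ of degree $d / \tilde d$ and genus $g$.
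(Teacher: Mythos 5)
Your proposal is correct, and its overall shape matches the paper's: both fiber $\Hc_{d,g,1}$ over a moduli of isogenies so that the fibers are primitive Hurwitz spaces (irreducible by \Cref{theorem:hurwitz_irreducible}), and then both reduce to classifying isogenies $\tilde E \to E$ modulo the $\SL_2(\Zbb)$-monodromy of the target. Where you diverge is in how you close out that last step. The paper maps $\Hc_{d,g,1}$ to $\bigsqcup_{\tilde d\mid d} \Hc_{\tilde d,1,1}$ and classifies the components of $\Hc_{\tilde d,1,1}$ all at once, by representing an isogeny as a $2\times 2$ integer matrix $M$ and putting $M$ into Smith normal form $\mathrm{diag}(d_1,d_2)$ under $\GL_2(\Zbb)\times\SL_2(\Zbb)$; the invariant is then $(m,\tilde d)=(d_1,d_1 d_2)$. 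You instead fix the invariant $(\tilde d,m)$ in advance and show the resulting stratum of the base is connected, by factoring $\phi = [m]_E\circ\phi'$ with $\phi'$ cyclic of degree $\tilde d/m^2$ and identifying $Y(\tilde d,m)$ with $Y_0(\tilde d/m^2)$. Both routes ultimately rest on the same lattice arithmetic (your cyclicity of $\phi'$ \emph{is} the assertion $d_1=m$ in Smith form), but your version imports the classical connectedness of $Y_0(n)$ rather than re-deriving the normal form, and it cleanly avoids any fuss about having only $\SL_2$ available on the target side. Your acknowledged gap---that the canonical factorization $C\to\tilde E\to E$ of \Cref{corollary:factorization} must be shown to be a morphism of families, not just a map on points---is a real point, but the paper elides it in exactly the same way (``the construction \ldots works in pointed families''), so you are not missing anything the paper addresses.

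One small caution: as stated, the corollary allows $\tilde d = d$, but since $g>1$ the degree of the primitive part $C\to\tilde E$ is $d/\tilde d \geq 2$, so $\tilde d$ is necessarily a proper divisor of $d$; this is implicit in \Cref{corollary:components-of-HcNgE} (``but not equal to $d$'') and worth flagging in the enumeration of strata.
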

This answers one of the questions raised in \cite{chen_covers_2010}.

Our argument for \Cref{theorem:hurwitz_irreducible}
 will be analogous to Fulton's proof of the irreducibility of $\Mc_g$
given in the appendix of \cite{harris_kodaira_1982}. For more details and motivation
for both our argument and Fulton's proof, please turn to
 \Cref{sub:our-strategy}. But, in a nutshell,
the proof of \Cref{theorem:hurwitz_irreducible} goes like this. First consider the 
compactification of $\Hc_{d,g}(E)$ in the Kontsevich space $\overline{\Mc}_{g}(E,d)$
of stable maps $X \to E$ of degree $d$. Let us restrict ourselves to the locus 
where the map $X \to E$ has no contracted components---call this
$\widetilde{\Hc}_{d,g}(E)$.

Using induction on $d$ and $g$, we will show that 
only one irreducible component of $\widetilde{\Hc}^0_{d,g}(E)$ may contain covers with singular source.
This amounts to identifying the boundary divisors, and use induction to show that each of them is irreducible,
and that their union is connected. Since $\widetilde{\Hc}^0_{d,g}(E)$ is smooth, only one component may contain
a point in these boundary divisors.

To finish it off, we want to show that every component of $\widetilde{\Hc}_{d,g}(E)$ does contain
 a cover with singular source. This is surprisingly hard to prove directly, 
 and here is where our study of Severi varieties on
$E \times \Pbb^1$ enters.  

\subsection{Severi varieties} 
\label{sub:severi_varieties}
Given a line bundle $\Lc$ on a surface $S$, we define the \emph{Severi variety}
$V_{\Lc,g} \subset |\Lc|$
as the locus of integral curves of geometric genus $g$. The guiding
questions to answer are the following.
\begin{itemize}
	\item What is the dimension of the Severi variety?
	\item Do the generic points correspond to nodal curves?
	\item What are its irreducible components?
	\item What is its degree?
\end{itemize}

There has been a lot of work in these problems, and variants of them.
Historically the problem originated by Severi's study of nodal planar curves,
and nowadays we can answer these questions for most rational surfaces $S$
and line bundles $\Lc$. There have been impressive results in the case $S$
is a K3 surface, where the questions are much more delicate, and even some 
results in the much harder case where the surface $S$ is of general type.
For a general introduction, we refer the reader to Joachim Koch notes'
of Ciro Ciliberto's lectures \cite{ciliberto_nodal_1999}.

We will focus on the technique introduced by Harris in \cite{harris_severi_1986},
and perfected by  Caporaso--Harris, Vakil, Tyomkin, and recently Shoval--Shustin 
\cite{caporaso_counting_1998,vakil_counting_2000,tyomkin_severi_2007,shoval_gromov-witten_2013}.
The originating question of this project was how 
much of these techniques extend over to non-rational surfaces, such as 
$E \times \Pbb^1$.

Before we explain the technique in the context of $E \times \Pbb^1$,
let us deal with the fact its Picard group is not discrete,
as opposed to the rational surface case. The Picard group of $E \times \Pbb^1$ is generated by 
the pullback of $\Oc_{\Pbb^1}(1)$ (which we will denote by $e$)
 and pullbacks from $E$. Therefore, it is one dimensional, just as
$\Pic(E)$.

This potentially could cause minor issues, 
since fixing the numerical class of a divisor
does not pin it down.
We avoid this issue entirely by noting that the automorphism group of $E$ acts
transitively on the set of divisors of same (non-zero) degree. Hence, the choice of line bundle
is irrelevant to the questions above.



Based on this observation, we will denote $V_{df+Ne,g} \subset |\Lc|$
as the Severi variety of integral genus $g$ curves of class $\Lc$, for some 
divisor $\Lc$ with homology class $df+Ne$. When needed, we will be more 
explicit about $\Lc$ and write $V_{\Lc,g}$.

To answer the first two questions (the dimension and the singularities of the 
generic point), we will use the now standard deformation theory approach introduced
in \cite{caporaso_counting_1998}. We conclude that the Severi variety $V_{df+Ne,g}$
has the expected dimension $2d+g-1$ (with a single exception), and that the generic point does 
correspond to a nodal curve. We will do this in \Cref{sec:deformation_theory}.
The exception corresponds to the case $d=0, N=g=1$: when the Severi variety parametrizes 
the fibers of the projection  $E\times \Pbb^1 \to \Pbb^1$. Here the dimension is one, while the
expected dimension works out to zero.

Note that, surprisingly, the expected dimension does not depend on $N$. This will have 
many interesting effects.

Our approach to the degree and irreducibility questions 
is directly analogous to Caporaso--Harris as well. We will fix an elliptic fiber 
$E_0 \subset E \times \Pbb^1$ of the projection to $\Pbb^1$. For a generic point $p \in E_0$, consider the hyperplane 
$H_p \subset |\Lc|$ of curves passing through $p$. We want to describe the 
intersection 
\begin{equation}
\label{equation:intersection}
V_{df+Ne,g} \cap H_{p_1} \cap H_{p_2} \cap \ldots \cap H_{p_a}	
\end{equation}
for generic points $p_1,\ldots, p_a \in E_0$. We do this one hyperplane at a time.
Eventually, the elliptic fiber $E_0$ is forced to split off, 
and most of our work goes into describing the residual 
curve.

To understand this intersection, we need a better compactification of the Severi variety $V_{df+Ne,g}$.
 We could do this by simply taking the closure $\closure{V_{df+Ne,g}} \subset |\Lc|$.
Unfortunately this closure is not normal, so it will be more convenient for us to work with its normalization
$\widetilde{V}_{df+Ne,g}$.

Here is a component we expect to see in the intersection \eqref{equation:intersection}. 
\begin{definition}
\label{definition:gen-severi-variety-1}
For $p_1,\ldots, p_a \in E_0$, and a line bundle $L \in \Pic^b E_0$ such that $a+b=d$,
we define a \emph{generalized Severi variety} 
$\widetilde{V}_{\Lc,g}(a,b)[p_1,\ldots,p_a, L]$ as the normalization of the closure of the 
locus of integral curves $C \subset E \times \Pbb^1$ of geometric genus $g$ and class $\Lc$, such that
the intersection of $C$ with $E_0$ is transverse, composed of $a+b=d$ points, $a$ of which are the
points $p_1,\ldots, p_a$, and the sum of the remaining $b$ points has class $L$. 
When no confusion arises,
we will write only $\widetilde{V}_{df+Ne,g}(a,b)$, where the corresponding 
homology class of $\Lc$ is $df+Ne$, and leave the data of $\Lc$, $L$ and the 
points $p_i$ implicit.
\end{definition}


We will show that when $b >0$ the Severi variety $\widetilde{V}_{df+Ne,g}(a,b)$ has expected dimension and that the
generic point corresponds to a nodal curve.
However, these are not the only components we will see in the intersections with $H_{p_i}$. 
Whenever the fiber $E_0$ is split off generically, the residual curve might not be transverse to $E_0$. We will
need some notation to keep track of the tangency profile of the residual curve with 
$E_0$. Here our notation will differ slightly from the standard notation in Caporaso--Harris.

\begin{definition}
\label{definition:tangency-profile}
A \emph{(tangency) profile} $\alpha$ is an ordered sequence of positive integers $(\alpha_1,\alpha_2,\ldots, \alpha_k)$.
A sum of profiles $\alpha^1 + \alpha^2$ is the profile obtained by concatenating the sequences. The
\emph{size} of the profile $\size{\alpha}$ is $k$, the number of entries in the sequence. The 
\emph{multiplicity} of a profile $m(\alpha)$ is the sum $\sum_{i=1}^k \alpha_i$.

A subprofile $\alpha' \subset \alpha$ is a subsequence of $(\alpha_1,\ldots, \alpha_k)$. The
complement $\alpha - \alpha'$ is the profile given by the complementary subsequence.

For consistency, we will say that an integer $k$ corresponds to the profile 
$1^k=(1,1,\ldots, 1)$ with 
$k$ entries.
\end{definition}

We will use tangency profiles to book-keep tangencies in the most naive 
way---having a tangency
profile $\alpha$ means there is a point with tangency of order $\alpha_1$, another with order $\alpha_2$, and so on
\footnote{Caporaso--Harris in \cite{caporaso_counting_1998} re-encoded these tangency conditions in a
different sequence $\overline{\alpha}$, such that $\overline{\alpha}_k$ is the number of $\alpha_i$ that are
equal to $k$. Their convention definitely has computational advantages, but it would also make the
notation more loaded later on. As we will not derive any recursive degree formulas in this
article, we keep the more naive notation for the sake of notational simplicity.}.

\begin{definition}
\label{definition:gen-severi-variety-2}
For points $p_1,\ldots, p_a$, and a tangency profile $\beta$ such that $a+m(\beta)=d$, 
and $L \in \Pic^{m(\beta)} E_0$, define $\widetilde{V}_{df+Ne,g}(a,\beta)[p_1,\ldots,p_a, L]$ 
as the normalization of the closure of the locus
of integral curves $C \subset E \times \Pbb^1$ of geometric genus $g$, class $\Lc$, 
whose intersection with $E_0$ is 
\[
 	E_0 \cap C = \sum_{i=1}^a p_i + \sum_{j=1}^{\size{\beta}} \beta_j q_j 
 \] 
and the class of $\sum_{j=1}^{\size{\beta}} \beta_j q_j$ is $L$.
\end{definition}
\begin{remark}
We have kept this line bundle $L$ in our
notation for the Severi variety $V_{df+Ne,g}(a,\beta)[p_1,\ldots,p_a,L]$,
 but of course we can
recover it from the class $\Lc$ of the curve in $E\times \Pbb^1$ and the points
$p_i$, as we have 
\[
	\Lc|_{E_0} = \sum_{i=1}^a p_i +L	
\]
This choice will be justified in \Cref{definition:gen-severi-variety-3}.
\end{remark}

Note that under the convention that an integer $b$ corresponds to the profile $1^b$,
\Cref{definition:gen-severi-variety-1,definition:gen-severi-variety-2} agree.

Still, these are not all the possible varieties parametrizing
the residual curves. There are two more phenomena that can happen.
First, even though $V_{df+Ne,g}$ parametrizes integral curves,
there is no reason to expect that once a fiber $E_0$ splits off, the
residual curve is still irreducible. 

We could define $\widetilde{V}^\text{reduced}_{df+Ne,g}(a,\beta)$ as in 
\cref{definition:gen-severi-variety-2}, but imposing that the curves
are only \emph{reduced}, instead of integral. Now $g$ corresponds to
the arithmetic genus of the normalization. This should keep track of the
reducible residual curves as well. 

One would hope that the dimension count will still work for  $\widetilde{V}^\text{reduced}_{df+Ne,g}(a,\beta)$.
However, it does not, because of the exception mentioned earlier: the fibers of 
$E \times \Pbb^1 \to  \Pbb^1$ move in a one-dimensional family, while the dimension count 
predicts a zero dimensional space. Let us introduce a
notation for the components of $\widetilde{V}^\text{reduced}_{df+Ne,g}(a,\beta)$ which do
not contain generically any of these fibers.

\begin{definition}
\label{definition:severi-circ}
Let $\widetilde{V}^\circ_{df+Ne,g}(a,\beta) \subset \widetilde{V}^\text{reduced}_{df+Ne,g}(a,\beta)$ 
be the closure of the components whose generic point does not contain a fiber of the projection 
$E \times \Pbb^1 \to \Pbb^1$.
\end{definition}

\begin{remark}
Note that $\widetilde{V}^\circ_{df+Ne,g}(a,\beta)$ parametrizes curves whose (possibly disconnected) normalizations have arithmetic genus $g$. If the normalization is disconnected, there may be even components with genus bigger than 
$g$, as long as there are rational components to balance off. Of course, there could be at most $d$ of those, so no components of arbitrarily high genus ever arise in 
$\widetilde{V}^\circ_{df+Ne,g}(a,\beta)$.

Still, it makes sense, and will be needed, to consider 
$\widetilde{V}^\circ_{df+Ne,g}(a,\beta)$
for \emph{negative} values of
$g$. The curves thus parametrized always contain some number of fibers from $E\times \Pbb^1 \to E$.
\end{remark}

There is a second possible phenomenon, as seen in the 
following example. Say we are looking at the intersection of
$\widetilde{V}_{Ne+df,g}(a,b)[p_1,\ldots,p_a,L]$ with a hyperplane $H_p$. Let $W$
be a component of the intersection. It could be equal to a component of 
${\widetilde{V}_{Ne+df,g}(a+1,b-1)}$, meaning that one point
which was ``free to move'' became fixed at $p$. Or it could
be that the fiber $E_0$ was split off generically with multiplicity
$m$. Let $X$ be the residual curve. We will completely characterize it soon,
but for now let us consider just the case where the limit of all the $b$ points
land in $X$, meaning, the intersection of $X$ with $E_0$ contains the limit
of the $b$ points. Now, the linear class of the sum of these $b$ points
was fixed and equal to $L$---it still has to be so in the limit.
Moreover, the $d-b$ remaining points of intersection of $X$ with $E_0$
might not be all fixed. In this case, we will need to keep \emph{two}
tangency profiles around to keep track of the intersection of $X$
with $E_0$---one for the limit of the $b$ points in the generic fiber, whose class
sums up to $L$, and another for any new tangencies that might occur.
We accommodate for this with the following definition.
\begin{definition}
\label{definition:gen-severi-variety-3}
For points $p_1,\ldots, p_a$, and tangency profiles $\beta^1,\beta^2$ such that 
\[
 a+m(\beta^1)+m(\beta^2)=d	
\]
and line bundles $L_k \in \Pic^{m(\beta^k)} E_0$, define 
$\widetilde{V}^{\circ}_{df+Ne,g}(a,\beta^1,\beta^2)[p_1,\ldots,p_a, L_1,L_2]$ 
as the normalization of the closure of the locus
of reduced curves $C \subset E \times \Pbb^1$ (not containing any fiber of $E \times \Pbb^1 \to \Pbb^1$)
of geometric genus $g$, class $\Lc$, 
whose intersection with $E_0$ is 
\[
 	E_0 \cap C = \sum_{i=1}^a p_i + \sum_{j=1}^{\size{\beta^1}} \beta^1_j q^1_j +
 	\sum_{j=1}^{\size{\beta^2}} \beta^2_j q^2_j
 \] 
and the class of $\sum_{j=1}^{\size{\beta^k}} \beta^k_j q^k_j$ is $L_k$,
for $k=1,2$.
\end{definition}

We are now ready to state a complete characterization of the possible
components of $H_p \cap \widetilde{V}_{df+Ne,g}(a,b)$. The description is easier to
state if we look at the intersection with $\widetilde{V}^\circ_{df+Ne,g}(a,b)$, so
let us do so.
\begin{theorem}
\label{theorem:simple_hyperplane_section_of_severi}
For $g \geq 2$, and $p$ be a generic point in $E_0$, let 
\[
W \subset H_p\cap \widetilde{V}^\circ_{df+Ne,g}(a,b)[p_1,\ldots,p_a, L]
\]
be an irreducible component of the intersection. Then $W$ falls into one of the following three cases.

If the curve $E_0$ does not split off generically in $W$, then $W$ is a component of
\begin{equation}
\label{eq:typeI}
\widetilde{V}^\circ_{df+Ne,g}(a+1,b-1)[p_1,\ldots, p_a,p, L-p]
\end{equation}
This case can only happen if $b \geq 2$.

Otherwise, the curve $E_0$ splits off generically, say with multiplicity $m$.
We can view $W$ as parameter space for the residual curve, whose class is $\Lc-me$. 
Then there is a subsequence $(i_1,\ldots, i_{\bar{a}})$ of
$(1,2,\ldots, a)$ and a tangency profile $\tau \neq (1)$
such that if we let $\bar{g}=g - |\tau|$, then either:
\begin{itemize}
	\item  $W$ is a component of 
\begin{equation}
 \label{eq:typeII}
	\widetilde{V}^\circ_{df+(N-m)e,\bar{g}}(\bar{a},b-1+\tau)[p_{i_1},\ldots, p_{i_{\bar{a}}}, \bar{L}]
\end{equation}
where $\bar{a}=d-(b-1+m(\tau))\geq 0$ and $\bar{L} = \Lc|_{E_0} - \sum_{k=1}^{\bar{a}} p_{i_k}$,
\item or W is a component of
\begin{equation}
 \label{eq:typeIII}
	\widetilde{V}^\circ_{df+(N-m)e,\bar{g}}(\bar{a},b,\tau)[p_{i_1},\ldots, p_{i_{\bar{a}}}, L,\bar{L}]
\end{equation}
where $\bar{a}=d-(b+m(\tau))\geq 0$ and $\bar{L} = \Lc|_{E_0} - L -\sum_{k=1}^{\bar{a}} p_{i_k}$.
\end{itemize}
\end{theorem}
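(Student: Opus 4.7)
The plan is to apply the Caporaso--Harris style degeneration technique, adapted to the $E \times \Pbb^1$ setting. I would pick a generic 1-parameter family $\pi \colon \Cc \to T$ over a smooth pointed curve $(T, 0)$ such that $\Cc_t$ is a general curve in $\widetilde{V}^\circ_{df+Ne,g}(a,b)[p_1,\ldots,p_a,L]$ for $t \neq 0$, $\Cc_0 \in H_p$, and the family approaches a generic point of the component $W$. The central dichotomy is whether $E_0$ appears as a component of the limit $\Cc_0$.

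In the non-degenerate case $E_0 \not\subset \Cc_0$, semicontinuity forces $\Cc_0$ to meet $E_0$ transversely in $d$ points, now including $p$. A generic point of $W$ then lies in $\widetilde{V}^\circ_{df+Ne,g}(a+1,b-1)[p_1,\ldots,p_a,p,L-p]$; since this variety shares the expected dimension $2d+g-2-a$ with $W$, the inclusion makes $W$ a component, giving case \eqref{eq:typeI}. The hypothesis $b \geq 2$ is forced because when $b = 1$ the degree-one class $L$ already pins the sole moving point down, leaving no room to absorb a further fixed intersection at $p$.

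In the degenerate case, write $\Cc_0 = m E_0 + \bar{C}$ with $m \geq 1$ and $\bar{C}$ not containing $E_0$, so $\bar{C}$ has class $df+(N-m)e$. The intersection $\bar{C} \cap E_0$ is a degree $d$ divisor on $E_0$ of linear class $\Lc|_{E_0}$; some of the $p_i$ need not lie on $\bar{C}$ (because linear equivalence on the elliptic curve $E_0$ allows a fixed point to be exchanged for a tangency point of the same linear class), so one keeps only the indices $(p_{i_1},\ldots,p_{i_{\bar{a}}})$ that do, and records the profile $\tau$ of the \emph{new} tangencies of $\bar{C}$ with $E_0$ arising from colliding transverse intersections in the degeneration. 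The genus identity $\bar{g} = g - \size{\tau}$ I would derive by dimension matching: equating $\dim W = 2d+g-2-a$ with the expected dimensions of the target generalized Severi varieties in cases \eqref{eq:typeII} and \eqref{eq:typeIII} forces exactly this relation.

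The split between \eqref{eq:typeII} and \eqref{eq:typeIII} is governed by whether $p \in \bar{C}$. If $p \in \bar{C}$, then $p$ is itself an intersection point with $E_0$; because the degeneration ties $p$, the original moving class $L$, and the new tangency class together, only a single combined line bundle $\bar{L} = \Lc|_{E_0} - \sum p_{i_k}$ survives and the profile is $b-1+\tau$. If $p \notin \bar{C}$, then $p$ is absorbed into the $mE_0$ component, the $b$ original moving points stay free with class $L$, and the $\tau$ new tangencies carry an independent class $\bar{L} = \Lc|_{E_0} - L - \sum p_{i_k}$, giving two profiles. The main obstacle I anticipate is the local analysis of $\Cc \to T$ near each absorbed fixed point: one needs to verify that the single analytic branch of $\Cc_t$ through an absorbed $p_i$ contributes a definite unit of tangency to $\tau$ in the limit, which in turn pins down the numerics $a-\bar{a} = m(\tau)-1$ in case \eqref{eq:typeII} and $a-\bar{a} = m(\tau)$ in case \eqref{eq:typeIII}, and excludes $\tau = (1)$ as a genuine codimension-one degeneration---it would only reproduce case \eqref{eq:typeI} in disguise. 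Semistable reduction of $\Cc \to T$, together with a careful study of how the relative intersection divisor $\Cc \cap (E_0 \times T)$ specializes at $t=0$, should resolve these subtleties.
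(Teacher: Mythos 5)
Your overall shape is right---degenerate a generic one-parameter family and split on whether $E_0$ appears in the limit---but the core of the argument is missing, and one key mechanism is misidentified.

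The genus identity $\bar{g} = g - \size{\tau}$ cannot be derived by ``dimension matching'' as you propose; that reasoning is circular. Knowing $\dim W = \dim \widetilde{V}^\circ_{df+Ne,g}(a,b) - 1$ and wanting it to equal the dimension of a target Severi variety does not tell you that $W$ \emph{is} such a component: you need to rule out that $W$ sits strictly inside a target variety of higher genus, or exhibits some other behavior not on your list. The paper establishes this by playing two independent inequalities against each other. On one side, semistable reduction produces a nodal central fiber $C_0$, decomposed as $\widetilde{E}$ (components dominating $E_0$), $Z$ (components contracted to points of $E_0$), and $X$ (the rest); flatness of the family then gives a purely combinatorial ``genus bound'' $p_a(X) + T \leq g$, where $T$ counts connections between $X$ and $\widetilde{E}$. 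On the other side, the deformation-theoretic dimension bound (\Cref{proposition:deformation_bound}) applied to the family of residual curves, combined with a careful count of the moduli of the intersection points with $E_0$, gives $g \leq p_a(X) + T$. It is only the coincidence of these two bounds that forces equality and thereby pins down the structure of $C_0$ (e.g.\ that $\widetilde{E} \to E_0$ is unramified, that $Z$ consists of rational chains, that no sections specialize into $Z$). You allude to ``the local analysis of $\Cc \to T$ near each absorbed fixed point'' as an anticipated obstacle, which is exactly where the genus bound lives, but it is not fleshed out; and you relegate semistable reduction to a cleanup step when it is the very first move that makes both bounds available.

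Your criterion for the split between \eqref{eq:typeII} and \eqref{eq:typeIII}---``whether $p \in \bar{C}$''---is not correct. Once $E_0 \subset \Cc_0$ the condition $p \in \Cc_0$ is automatic since $p \in E_0$, and for generic $p$ it will not coincide with any of the finitely many points of $\bar{C} \cap E_0$. The actual dichotomy is whether one of the $b$ moving intersection sections specializes to land on $\widetilde{E}$ (the paper's $\epsilon$): if exactly one does, the remaining $b-1$ moving points bundle with the new tangencies $\tau$ into a single linear class, giving \eqref{eq:typeII}; if none do, the $b$ moving points retain their class $L$ and $\tau$ carries an independent class, giving \eqref{eq:typeIII}. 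Your numerics $a - \bar{a} = m(\tau)-1$ versus $m(\tau)$ come out right, but for the wrong mechanism. Finally, the exclusion of $\tau = (1)$ is not because it ``reproduces case \eqref{eq:typeI} in disguise''; it is because $\tau = (1)$ would make $\widetilde{E}$ an elliptic tail attached at a single node, and \Cref{proposition:tail} shows such a configuration cannot be smoothed, so it cannot arise as a limit of integral curves.
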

\begin{remark}
\Cref{theorem:simple_hyperplane_section_of_severi} gives
only a set-theoretic description of the boundary.
To have a complete scheme theoretic characterization, we would need to
compute the multiplicity in which each component appears. This computation
is planned to appear in an upcoming follow-up article.
\end{remark}

The main new feature here, compared with results in 
\cite{caporaso_counting_1998,vakil_counting_2000,shoval_gromov-witten_2013},
is that $E_0$ may be generically split off with 
multiplicity $m>1$---this will correspond in the stable reduction to getting an $m$-fold
unramified cover dominating $E_0$.

The next question is: what happens when we intersect $\widetilde{V}^\circ_{df+(N-m)e,g'}(a',b,\tau)$
with a hyperplane section $H_p$? We will answer this question in \Cref{theorem:hyperplane_section_of_severi},
and furthermore we can describe all the components that ever arise in the process of intersecting
with hyperplane sections $H_p$. To do so, however, we need to extend even further our
\Cref{definition:gen-severi-variety-3} of Severi varieties $\widetilde{V}^\circ_{df+Ne,g}(a,\beta^1,\beta^2)$
to allow for $a=1^a$ to be an arbitrary tangency profile, and to allow arbitrarily many $\beta^k$. 
The statement of the result becomes notationally more loaded, and we leave it to \Cref{ssub:the_general_case}.

The good news is that the proof of \Cref{theorem:simple_hyperplane_section_of_severi} already contains 
all the interesting challenges, and adapting it to the general case of \Cref{theorem:hyperplane_section_of_severi}
is just a matter of additional bookkeeping. We will then first prove the simpler 
\Cref{theorem:simple_hyperplane_section_of_severi} in \Cref{sec:hyperplane_sections}, and then 
translate its proof to the stronger \Cref{theorem:hyperplane_section_of_severi} in 
\Cref{ssub:the_general_case}.

The missing ingredient to get a recursive formula for the degree of $\widetilde{V}_{df+Ne,g}$ 
is to find out if all these components appear, and with what multiplicity. Thinking in terms of a recursive
formula for the Severi degrees, \Cref{theorem:hyperplane_section_of_severi} tells us what terms would appear in the formula, but does not tell us with what coefficients. The multiplicity computation
is planned to appear in an upcoming follow-up article.
As a starting
point for the recursion, we would need to know the degrees when $g=1$, which have
been already calculated by Cooper--Pandharipande in \cite{cooper_fock_2012}.

The technique used to prove \Cref{theorem:simple_hyperplane_section_of_severi} was 
 introduced in \cite{caporaso_counting_1998}. 
We will exploit the tension between two different points of views on Severi varieties: 
embedded curves versus parametrized curves. From the embedded curves perspective, we know
the dimension of the Severi variety $V^\circ_{df+Ne,g}(a,b)$, which 
gives us control on how many conditions we
may impose on the curve corresponding to the general point of the divisor $W$. From the parametric 
point of view, we can study the general point of $W$ by taking a general arc in $V^\circ_{df+Ne,g}(a,b)$
approaching it, and applying stable reduction to the family of normalization maps. The central
fiber is replaced by a nodal
curve, which allows us to concretely understand the combinatorial restrictions of having arithmetic
genus $g$. It turns out that the number of constraints we discover from the parametric
point of view exactly matches the number of conditions the embedded point of view allows.
This precise balance pins down exactly what are the possible limit components listed in
\Cref{theorem:hyperplane_section_of_severi}.

It is worth mentioning that although our exposition uses both points of view,
it is definitely possible to write it all in terms of the parametric one.
This is done, for example, in \cite{vakil_counting_2000}. The proofs 
usually get shorter, but at a slight cost of some additional technical details.
Our compromise of using both points of view is just a matter of taste---we
 want to make the implicit tension between the two points of view visible already at the level
of language.

There is a rational map  $\widetilde{V}_{df+Ne,g}(a,b) \rationalmap \overline{\Mc}_{g}(E,d)$,
sending integral curves $C \subset E \times \Pbb^1$ to the composition of 
the normalization $C^\nu \to C$ with the projection $C \subset E \times \Pbb^1 \to E$.
The rational map can be completed in codimension 2, and hence it is well defined on the generic points
of the components of the hyperplane section $H_p\cap \widetilde{V}_{df+Ne,g}(a,b)$.
Out of the proof of \Cref{theorem:simple_hyperplane_section_of_severi}, we also get a description of
this map, as follows.

\begin{theorem}
\label{theorem:simple_hyperplane_to_hurwitz}
Using the notation of \Cref{theorem:simple_hyperplane_section_of_severi},
 the image of the general point $[C] \in W$ under the rational map to $\overline{\Mc}_g(E,d)$
 is as follows.
\begin{itemize}
\item If $W$ is a component of $V_{df+Ne,g}(a+1,b-1)$, then
the general point corresponds to an integral curve $C \subset E \times \Pbb^1$, and 
its image in $\overline{\Mc}_{g}(E,d)$ is the composition of the normalization map 
$C^\nu \to C$ with the projection $C \subset E \times \Pbb^1 \to E$.

\item Otherwise the curve $E_0$ splits off generically with multiplicity $m$, 
and the residual curve is a generic point $[R]$ of a component of
$\widetilde{V}^\circ_{df+(N-m)e,\bar{g}}(\bar{a},b-1+\tau)$ or
of ${\widetilde{V}^\circ_{df+(N-m)e,\bar{g}}(\bar{a},b,\tau)}$.
Then its image in $\overline{\Mc}_{g}(E,d)$ is a stable map
$X \to E$, where $X$ is a connected nodal curve such that: 
\begin{itemize}
	\item the normalization of $X$ is the disjoint union of $\tilde{E}$ and
the normalization of $R$, where $\tilde{E} \to E_0$ is a degree $m$ unramified cover 
(which may be disconnected),
	\item and $X$ has $\card{\tau}$ nodes, all of them connecting components of $\tilde{E}$ 
to components of the normalization of $R$.
\end{itemize}
\end{itemize}
\end{theorem}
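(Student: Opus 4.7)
\emph{Proof proposal.} The theorem is essentially a byproduct of the stable-reduction analysis underlying the proof of \Cref{theorem:simple_hyperplane_section_of_severi}, and I would extract the description of the source curve of the limit stable map directly from that analysis. In the first case---when $W$ is a component of \eqref{eq:typeI}---the generic point $[C]\in W$ is an integral curve, the rational map is regular at $[C]$, and its image is by definition the composition $C^\nu\to C\hookrightarrow E\times\Pbb^1\to E$, giving the first bullet of the theorem.

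In the remaining cases I would fix a general one-parameter smoothing $\pi\from\mathcal{C}\to\Delta$ of the generic curve $C_0 = mE_0+R$ of $W$ and compute the limit of the family of normalizations $C^\nu_t \to E$ in $\overline{\Mc}_g(E,d)$. Since $E_0$ appears with multiplicity $m$ in $C_0$, the family is not yet stable; a degree-$m$ base change $t = s^m$ is required. Away from the intersection $R\cap E_0$ the total space has local equation $y^m + tu = 0$ with $u$ a unit, which base-changes to $y^m + s^m u = 0$; normalization locally extracts an $m$-th root of $u$ and so separates the total space into $m$ smooth sheets. Globally the monodromy of this $m$-th root glues the sheets into the degree-$m$ étale cover $\widetilde{E}\to E_0$, whose connected components are each of genus one. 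The other part of the central fiber is the strict transform $R^\nu$, mapped to $E$ through $R\hookrightarrow E\times\Pbb^1\to E$; connectedness of $X$ is forced by that of the smooth fibers $C_t^\nu$.

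The heart of the argument is the local analysis around each intersection of $R$ and $E_0$. At a tangency of order $\tau_j$ the total space carries the surface singularity $y^m(y-x^{\tau_j})+s^m u=0$, whose minimal resolution is a chain of rational curves lying entirely over a single point of $E$. Every component of that chain is unstable in the Kontsevich sense, so stabilization collapses the whole chain to a single node joining the relevant component of $\widetilde{E}$ to that of $R^\nu$. Counting one such node per entry of $\tau$ yields $\card{\tau}$ nodes, and no further ones can appear by the arithmetic-genus identity: if $\widetilde{E}\sqcup R^\nu$ has $c$ connected components, the sum of their geometric genera is $c-1+\bar g$ (each component of $\widetilde{E}$ contributes one, and $R^\nu$ has arithmetic genus $\bar g$), so
\[
p_a(X) = 1 - c + (c - 1 + \bar g) + \card{\tau} = \bar g + \card{\tau} = g,
\]
forcing the node count to equal $\card{\tau}$. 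The principal obstacle is the explicit local resolution analysis: one must verify that, for every pair $(m,\tau_j)$, the minimal resolution of $y^m(y-x^{\tau_j})+s^m u = 0$ really is a single chain of rational bridges whose collapse under stabilization leaves exactly one node, rather than a more intricate dual graph that would produce a different count or attach nodes within $\widetilde{E}$ or $R^\nu$.
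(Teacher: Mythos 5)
Your opening sentence states the paper's actual strategy: the nodal reduction family $\Cc \to B$ constructed in Section~3.1 already resolves the rational map $\Vdg \rationalmap \overline{\Mc}_g(E,N)$, and the structure of the central fiber $C_0 = X \cup Z \cup \tilde{E}$ (with $\tilde{E} \to E_0$ unramified of degree $m$, and $Z$ a disjoint union of rational chains joining $X$ to $\tilde{E}$) was \emph{already established} via the equality conditions in the genus bound (\Cref{proposition:genus_bound}) and the dimension bound (\Cref{proposition:dimension_bound}). The paper then simply composes $\phi\from C_0 \to E \times \Pbb^1$ with the projection to $E$ and contracts the rational chains $Z$---each chain collapsing to one node---and reads off the statement. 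That proof is a few lines because all the work was done for \Cref{theorem:simple_hyperplane_section_of_severi}.

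However, the body of your proposal then abandons that route and instead attempts a direct stable-reduction computation using explicit local equations for the total space $\Cc$, e.g.\ $y^m + tu = 0$ away from $R \cap E_0$ and $y^m(y - x^{\tau_j}) + s^m u = 0$ near the tangencies. This is a genuinely different (and more computational) approach, but it has real gaps. First, the local normal forms for the total space are asserted, not established: the total space of the family $\Yc \to \Delta$ over a general arc must be shown to have precisely these singularities, and in particular one must rule out worse singularities along $E_0$ and higher-order contact between the arc and a discriminant. Second, and as you yourself acknowledge, the minimal resolution of $y^m(y - x^{\tau_j}) + s^m u = 0$ is not computed, so you have not verified that the exceptional locus really is a chain of rational bridges collapsing to a single node; this is the essential content, and leaving it as a named obstacle leaves the proof unfinished. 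Third, the arithmetic-genus identity at the end is circular in the role you want it to play: the count $p_a(X) = 1 - c + (c - 1 + \bar g) + \card{\tau}$ presupposes that the normalization of $X$ is exactly $\tilde{E} \sqcup R^\nu$ and that the nodes are exactly one per entry of $\tau$, which is what is being proved; without independently bounding the number of nodes and excluding extra components, the identity cannot ``force'' the node count. The paper avoids all three issues because it does not recompute the central fiber---it inherits it, fully characterized, from the equality analysis of the genus and dimension bounds. If you want to keep the direct computation, you need to supply the local resolution, justify the normal forms by a transversality/genericity argument for the arc $\Delta$, and separately rule out nodes internal to $\tilde{E}$ or $R^\nu$; alternatively, do what your opening sentence promises and cite the already-established description of $C_0$.
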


\Cref{theorem:simple_hyperplane_to_hurwitz} 
is the key ingredient in our proof of \Cref{theorem:hurwitz_irreducible},
the irreducibility of the Hurwitz space of primitive covers of elliptic curves. 
As briefly discussed in \Cref{sub:hurwitz_spaces}, we can reduce the irreducibility
statement to showing that every connected component of 
$\widetilde{\Hc}_{d,g}(E)$ contains a cover with singular source. 
And \Cref{theorem:simple_hyperplane_to_hurwitz} produces this cover for us!

After using the degeneration coming from the Severi variety world to prove 
\Cref{theorem:hurwitz_irreducible}, we come back and use the irreducibility of 
$\Hc^0_{d,g}(E)$ to prove a result about Severi varieties:
we determine the irreducible components of $V_{Ne+df,g}$ when $d\geq 2g-1$. Again,
their description is very much like in \Cref{sub:hurwitz_spaces}---the unique
discrete invariant will be the maximal unramified map through which the map from the
normalization of the embedded curve to $E$ factors.
 For more details, see \Cref{sec:components_of_the_severi_variety}.


\subsection{Guide to readers} 
\label{sub:guide_to_readers}
The sections are organized to be as independent of each other as possible.
The reader interested in a particular section should be able to read it directly, 
as long as one is willing to assume a couple of statements proven in other sections.
Here is a short summary of the sections' content and dependencies.
\begin{itemize}
	\item In \Cref{sec:deformation_theory}, we start our study of Severi varieties
	by determining its dimension, and showing that its generic point corresponds
	to a nodal curve. All results are based on a now standard theorem in deformation
	theory, of which many versions already exist in the literature. For completeness,
	we prove the precise formulation we use (\Cref{theorem:deftheory}) in 
	\Cref{sec:appendix-def-theory}.
	\item In \Cref{sec:hyperplane_sections}, we analyze the components of
	a hyperplane section of the Severi variety, and prove
	\Cref{theorem:simple_hyperplane_section_of_severi,theorem:simple_hyperplane_to_hurwitz}
	and some generalizations.
	The argument depends on the dimension bounds \Cref{proposition:deformation_bound} and \Cref{proposition:tail},
	both proven in \Cref{sec:deformation_theory}. 
	\item In \Cref{sec:monodromy_groups_of_covers}, we study the monodromy group
	of both primitive and non-primitive simply branched covers. This discussion does not depend
	on the previous sections.
	\item In \Cref{sec:irreduciblity_of_hurwitz_spaces}, we prove \Cref{theorem:hurwitz_irreducible},
	that is, the 
	Hurwitz space of primitive covers of an elliptic curve is irreducible. Most
	of the argument is independent of the previous sections, but at two key
	moments we invoke \Cref{lemma:fiber product} (proven in \Cref{sec:monodromy_groups_of_covers})
	and \Cref{theorem:simple_hyperplane_to_hurwitz} (proven in \Cref{sec:hyperplane_sections}).
	\item Finally, in \Cref{sec:components_of_the_severi_variety}, we use \Cref{theorem:hurwitz_irreducible}
	to describe the irreducible components of the Severi variety $V_{df+Ne,g}$ whenever $d \geq 2g-1$.
\end{itemize}

The results are valid over algebraically closed fields of characteristic zero. For simplicity though,
 we will work over the complex numbers throughout this article. The only places this choice shows up are in 
\Cref{sec:monodromy_groups_of_covers} 
and \Cref{ssub:proof_of_lemma_technical_isogeny}. In the former, the use of complex numbers may be bypassed by
 the machinery of étale fundamental groups, and in the latter by identifying isogenies with subgroups of the $E[N]$ instead of sublattices in the complex plane.

With the alterations above, the arguments of \cref{sec:monodromy_groups_of_covers,sec:irreduciblity_of_hurwitz_spaces} should go through in characteristic $p$ large enough compared to the degree of the cover. However, the key \cref{theorem:deftheory}---on which the whole Severi variety discussion is based on---is only known to be valid in characteristic zero.
This is the same challenge that shows up, for example, in 
\cite{harris_severi_1986,vakil_counting_2000,tyomkin_severi_2007}.

For the reader interested in Hurwitz spaces in characteristic $p>0$, 
probably adapting the results of Fulton's \cite{fulton_hurwitz_1969}
is a more likely route. There he defines the Hurwitz space of covers of $\Pbb^1$ as a scheme over $\Zbb$, and deduces 
irreducibility in characteristic $p$ from the result in characteristic zero.

\subsection{Acknowledgments} 
\label{sub:acknowledgements}

I would like to thank Joe Harris, who welcomed me into the breathtaking world of algebraic geometry, and guided me throughout this entire journey. The original questions that initiated this project arose in conversations with Anand Patel, who has been a great collaborator and friend in the past few years. I thank him again for the thoughtful comments on an earlier draft, along side with Dawei Chen, Anand Deopurkar and Ravi Vakil. I am very grateful to Tom Graber, Jason Starr and Vassil Kanev for introducing me to the relevant topology literature. Finally, a special thanks to Carolina Yamate, who kindly drew the 
(several) beautiful pictures that give life to this manuscript.



\section{Deformation Theory} 
\label{sec:deformation_theory}





We start our study of Severi varieties of curves in $E \times \Pbb^1$ by determining
its dimension, and describing the geometry of the generic curve.
The techniques used are the same as in 
\cite{caporaso_counting_1998,vakil_counting_2000,tyomkin_severi_2007}.
The key result is the following.

\begin{theorem}
\label{theorem:deftheory}
Let $\Sigma$ be a smooth projective surface, and fix a homology class 
$\tau \in H^2(\Sigma, \Cbb)$ and an integer $g$.
Let $W \subset {\Mc}_g(\Sigma,\tau)$ be a non-empty subvariety of Kontsevich space
of stable maps.
 Let $f \from C \to \Sigma$ be a general point of $W$. Assume that $f$ is birational
 onto its image.

Let $D \subset \Sigma$ be a curve, and $\Omega \subset D$ a finite subset.
Let $Q=f^{-1}(D - \Omega) \subset C$. Let $\card Q$ denote the cardinality of
the set $Q$ (that is, we are not counting multiplicity). Fix some integer $b$,
such that $b \geq \card{Q}$, and let $\gamma=-(K_\Sigma+D) \tau + b$.

If $\gamma \geq 1$, then
\[
\dim W \leq g-1+ \gamma
\]
Conversely, suppose that equality occurs.  Then, the more positive $\gamma$, the 
better behaved the map $f$ is:
\begin{itemize}
\item If $\gamma \geq 2$, then $\card{Q} =b$ and
$f$ is unramified away from $Q$.
\item If $\gamma \geq 3$, then $f$ is unramified everywhere, and the image $f(C)$ 
has at most ordinary multiple points, and is smooth along $f(Q)$.
\item If $\gamma \geq 4$, then the image of $f$ is nodal, and smooth along $f(Q)$.
\end{itemize}
\end{theorem}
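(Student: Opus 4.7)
I will follow the deformation-theoretic approach pioneered by Caporaso--Harris \cite{caporaso_counting_1998} and refined by Vakil \cite{vakil_counting_2000} and Tyomkin \cite{tyomkin_severi_2007}; many versions of this statement exist in the literature, and my task is to extract the particular formulation at hand. The bound $\dim W \leq g - 1 + \gamma$ is infinitesimal: I identify the Zariski tangent space to $W$ at a general $[f]$ as a subspace of global sections of an ``incidence-twisted'' normal sheaf on the source curve $C$, and estimate its dimension by Riemann--Roch.

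First, since the general $f \in W$ is birational onto its image, I pass to the normalization and assume $C$ is smooth of genus $g$. The normal sheaf $N_f := \mathrm{coker}(df\colon T_C \to f^*T_\Sigma)$ is then defined, and its torsion-free quotient $\bar N_f$ is a line bundle with
\[
\deg \bar N_f = 2g - 2 - K_\Sigma \cdot \tau - \ell,
\]
where $\ell \geq 0$ is the length of the torsion subsheaf, supported on the ramification locus of $f$. Next, deformations of $f$ staying inside $W$ must preserve the incidence of $f(C)$ with $\Omega$ and the tangency profile at $Q$; this translates into vanishing along an effective subdivisor $R' \leq f^*D$ of degree $D \cdot \tau - b$ (the hypothesis $b \geq \size{Q}$ guarantees that such an $R'$ can be chosen, by forgoing some of the ``forced'' vanishing when $\size{Q} < b$). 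I therefore obtain the injection $T_{[f]} W \hookrightarrow H^0(C, \bar N_f(-R'))$.

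The line bundle $\bar N_f(-R')$ has degree $2g - 2 + \gamma - \ell$ and Euler characteristic $g - 1 + \gamma - \ell$. By Serre duality, $h^1(\bar N_f(-R')) = h^0(K_C - \bar N_f(-R'))$ is controlled by a line bundle of degree $\ell - \gamma$; the trivial estimate $h^0 \leq \max(0, \deg + 1)$ yields $h^1 \leq \max(0, \ell - \gamma + 1)$. Combining with $\chi$ gives $h^0(\bar N_f(-R')) \leq g - 1 + \max(\gamma - \ell, 1) \leq g - 1 + \gamma$ once $\gamma \geq 1$, which establishes the main bound $\dim W \leq g - 1 + \gamma$. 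For the refinements, equality forces tightness throughout: when $\gamma \geq 2$, the estimate becomes strict whenever $\ell > 0$ or $\size{Q} < b$, so equality forces $f$ unramified away from $Q$ and $\size{Q} = b$; when $\gamma \geq 3$, a non-ordinary singularity of $f(C)$ or a ramification at $Q$ contributes further torsion or further defect, so equality rules these out; and when $\gamma \geq 4$, a point of multiplicity $m \geq 3$ contributes $\binom{m}{2} - (m - 1) \geq 1$ extra units---the gap between its $\delta$-invariant and the number of local branches---forcing the image to be nodal.

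The trickiest part of the argument is the precise identification of the incidence sheaf in the second paragraph and the verification that the tangent-space injection lands exactly in $H^0$ of a line bundle of degree $2g - 2 + \gamma - \ell$, so that the Riemann--Roch count is not off by a constant. Equally delicate is the $h^1$ control, which is where the hypothesis $\gamma \geq 1$ enters crucially via Serre duality. The local computations underlying the refinements for $\gamma \geq 3, 4$ are standard (see \cite{vakil_counting_2000}), but also require bookkeeping care to align with the specific incidence sheaf chosen.
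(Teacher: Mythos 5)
Your overall strategy—bound $T_{[f]}W$ inside $H^0$ of an incidence‑twisted version of $\Nc_f/\Nc_{f,\mathrm{tor}}$ and apply Riemann--Roch—is the same as the paper's, but there is a substantive gap in the identification of the twist, and it propagates into the refinements.

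The crux is the degree of your $R'$. You claim the incidence/tangency conditions force vanishing along a subdivisor of $f^{*}D$ of degree $D\cdot\tau - b$, i.e.\ at a point $q_j\in Q$ with $f^{*}D = \cdots + \beta_j q_j + \cdots$ you are taking forced vanishing of order $\beta_j - 1$. This is wrong at a ramification point. If $df$ vanishes to order $\ell_j-1$ at $q_j$, the Caporaso--Harris tangency lemma (Lemma~2.6 of \cite{caporaso_counting_1998}, quoted in the paper as \Cref{lemma:tangency-conditions}) only gives forced vanishing of order $\beta_j - \ell_j$ at $q_j$, and furthermore states a crucial dichotomy: the exact order of vanishing of a Horikawa element is either $\beta_j - \ell_j$ or $\geq \beta_j$, never strictly in between. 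Your injection $T_{[f]}W \hookrightarrow H^0(\bar N_f(-R'))$ is thus unjustified as soon as some $\ell_j>1$: the divisor you are twisting by may be larger than what the tangent space is actually forced to vanish on. One symptom that something has gone wrong is that your $\gamma\geq 2$ argument, taken at face value, would force $\ell=0$ in the equality case, i.e.\ $f$ unramified everywhere; but the theorem (correctly) only concludes that $f$ is unramified \emph{away from} $Q$ when $\gamma\geq 2$, reserving full unramifiedness for $\gamma\geq 3$. The paper avoids this by splitting the torsion as $\Nc_{\mathrm{tor}}=\Gamma\oplus\bigoplus_j k_{q_j}^{\ell_j-1}$ (ramification off $Q$ vs.\ on $Q$) and twisting by $D_1 = \sum\alpha_i p_i + \sum(\beta_j-\ell_j)q_j$, then observing that $\Nc/\Nc_{\mathrm{tor}}(-D_1)$ sits inside a line bundle $\Lc(B)$ of degree exactly $2g-2+\gamma$.

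The $\gamma\geq 3$ refinement then genuinely requires the dichotomy from the tangency lemma, which your write-up does not use: once one knows $\Gamma=0$ and $b=\bar b$, one produces a section of $\Lc$ vanishing to order exactly $1$ at $q_j$; the corresponding Horikawa element vanishes to order $\beta_j-\ell_j+1$ at $q_j$, and the dichotomy then forces $\beta_j-\ell_j+1\geq\beta_j$, i.e.\ $\ell_j=1$. The phrase ``a ramification at $Q$ contributes further torsion or further defect'' cannot by itself yield this; in the degree/torsion bookkeeping the $\ell_j-1$ units of torsion at $q_j$ are already cancelled against the $\ell_j$ units of reduced forced vanishing, so there is no net penalty to detect. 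Your $\gamma\geq 4$ step also diverges from the paper: you appeal to the gap $\binom{m}{2}-(m-1)$ between a singularity's $\delta$-invariant and its number of branches, but you have not connected that count to the tangent-space injection you set up, whereas the paper instead finds a section of the normal bundle that separates two of the three branches of a putative triple point (requiring $\deg\geq 2g$ after subtracting three points, whence $\gamma\geq 4$). A $\delta$-invariant argument can be made to work, but as written it is a different mechanism and is not hooked into the rest of your proof.
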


\begin{remark}
Note that as $W \subset \Mc_g(\Sigma,\tau)$ and not its closure, the source
 $C$ of $f\from C \to \Sigma$ is smooth and irreducible. Later we will see
 what one can say about more general sources, in 
 \Cref{proposition:preliminary_deformation_bound}.
\end{remark}




\begin{remark}
We will apply \Cref{theorem:deftheory} only in the following setup. 
Fix a line bundle $\Lc$ on $\Sigma$, and let $W \subset |\Lc|$ be a non-empty
subvariety, and $f\from C \to \Sigma$ the normalization of the general point of $W$. Hence, for our applications, 
we could have bypassed the language of Kontsevich spaces if we wanted to. 
However, the result as stated above is sharper, because it allows for the line bundle $\Lc$ to vary continuously, which is important when
the Picard group of $\Sigma$ is not discrete.
\end{remark}

Setting $\Omega= \emptyset$, and $b=D \cdot \tau$, the 
first part is explicitly stated as Corollary 2.4 in \cite{caporaso_counting_1998}.
The other parts are also in \cite{caporaso_counting_1998}, but implicit in the proof of
Proposition 2.2. There are many similar results in the literature, such as
Theorem 3.1 in \cite{vakil_counting_2000}, and Theorem 1 and Lemma 2 in 
\cite{tyomkin_severi_2007}. Unfortunately, I could not locate a version of 
this result for the set up we are working on. However,
it is easy to adapt the proofs in any of these sources to the
exact statement above. For completeness, we do include a proof of \Cref{theorem:deftheory}, 
but leave it to \Cref{sec:appendix-def-theory}.

Let us apply the general \Cref{theorem:deftheory} to our study of curves in 
$E\times \Pbb^1$, with tangency
conditions along a fiber $E_0 \subset E \times \Pbb^1$. Here is a preliminary result. 

\begin{lemma}
\label{lemma:dimension_lower_bound}
The Severi variety $\widetilde{V}_{df+Ne,g}(\alpha,0)$ has dimension at least $d+g-1$.
\end{lemma}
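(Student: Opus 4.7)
The plan is to exhibit an explicit family of smooth irreducible curves in $\widetilde{V}_{df+Ne,g}(\alpha,0)$ of dimension $d+g-1$. I parametrize by triples $(C_0,\phi,\psi)$: a smooth genus-$g$ curve $C_0$, a degree-$N$ cover $\phi\from C_0\to E$, and a degree-$d$ rational function $\psi\from C_0\to \Pbb^1$ whose zero divisor is $\sum_i \alpha_i \tilde p_i$, where each $\tilde p_i \in \phi^{-1}(p_i)$ is a chosen preimage of the prescribed point $p_i$. For generic such data the combined map $(\phi,\psi)\from C_0\to E\times\Pbb^1$ is a closed immersion onto an integral curve of class $df+Ne$ meeting $E_0$ precisely in the divisor $\sum \alpha_i p_i$, which is a point of $\widetilde V_{df+Ne,g}(\alpha,0)$.

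The dimension count has two pieces. First, the Hurwitz space $\Hc_{N,g}(E)$ of degree-$N$ simply branched covers of $E$ from smooth genus-$g$ sources has dimension $2g-2$ by Riemann--Hurwitz (the number of branch points on $E$). Second, once $(C_0,\phi)$ and preimages $\tilde p_i$ are fixed, set $D=\sum \alpha_i \tilde p_i$, a divisor of degree $d$ on $C_0$. The rational functions $\psi$ with $(\psi)_0=D$ are parametrized by pairs consisting of a pole divisor $D_\infty\in|D|$ and a scaling $c\in\mathbb{C}^*$, giving a family of dimension $h^0(C_0,\Oc_{C_0}(D))$, which by Riemann--Roch equals $d-g+1$ for generic non-special $D$. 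Summing yields $(2g-2)+(d-g+1)=d+g-1$. The map from triples to image curves is generically injective, since the normalization of the image recovers $C_0$ and the two projections recover $\phi$ and $\psi$; hence $\dim \widetilde{V}_{df+Ne,g}(\alpha,0)\geq d+g-1$.

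That the construction hits the specifically prescribed points $p_i$ is immediate: every degree-$N$ cover $\phi$ is surjective, so each $p_i$ has a nonempty preimage, and we simply select one $\tilde p_i\in\phi^{-1}(p_i)$ per $i$. The main technical hurdle is the range $d<g$, where generically $h^0(\Oc_{C_0}(D))=1$ so no nonconstant $\psi$ with $(\psi)_0=D$ exists and the above parametrization collapses; here one must instead restrict to $(C_0,\phi)$ for which $D$ lies in a Brill--Noether locus $W^1_d(C_0)$ to produce $\psi$, with a correspondingly more delicate parameter count. A uniform alternative, covering all ranges of $(d,g,N)$, is to replace the explicit construction by a normal-sheaf computation: for the normalization $\nu\from\tilde C\to E\times \Pbb^1$ of any curve in $\widetilde V_{df+Ne,g}(\alpha,0)$, the relative normal sheaf $\tilde N_\nu\subset N_\nu$ obtained by imposing the tangency conditions at the $\tilde p_i$ satisfies $\chi(\tilde N_\nu)=(2d+2g-2-d)+1-g=d+g-1$, which by semicontinuity bounds below the local dimension of the moduli of such maps, and hence of the image in $\widetilde V_{df+Ne,g}(\alpha,0)$.
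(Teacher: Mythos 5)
Your argument is genuinely different in strategy from the paper's. The paper works entirely in the embedded picture: it starts from the linear system $|\Lc|$ of dimension $d(N+1)-1$ and subtracts the number of conditions imposed, namely $d-1$ to pin down the intersection with $E_0$ and $\delta=p_a(\Lc)-g$ to drop the geometric genus, the latter relying on the Diaz--Harris fact that the equigeneric stratum in the versal deformation of an isolated planar singularity has codimension exactly its $\delta$-invariant. You instead parametrize: fiber over the Hurwitz space $\Hc_{N,g}(E)$ (dimension $2g-2$) and count the choices of a degree-$d$ map $\psi\from C_0\to\Pbb^1$ with prescribed zero divisor $D=\sum\alpha_i\tilde p_i$, contributing $h^0(C_0,\Oc_{C_0}(D))$. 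The parametric route has a real attraction the paper's count lacks, namely it would simultaneously establish non-emptiness, but the paper's argument is shorter and unconditional in $d$.

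The gap you flag is genuine and is not repaired by gesturing at Brill--Noether. For $d<g$ the generic fiber of your parametrization is empty, and the paper invokes Lemma~\ref{lemma:dimension_lower_bound} inside the proof of Lemma~\ref{lemma:dimension-alpha} for \emph{every} $d\geq 1$; so as written the proposal does not prove the statement in the range it is actually used. Even for $d\geq g$ you should justify that $D=\sum\alpha_i\tilde p_i$ is non-special for generic $\phi$ in the Hurwitz space: the support of $D$ is constrained to lie over fixed points of $E$, so this does not follow from ``generic divisor'' Riemann--Roch without an argument that these preimages move freely enough. There is also a small slip: $(\phi,\psi)$ cannot be a closed immersion once $g<p_a(\Lc)=d(N-1)+1$; birationality onto the image is what is true and all that is needed. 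Your normal-sheaf fall-back, $\chi\bigl(N_\nu(-\sum\alpha_i\tilde p_i)\bigr)=d+g-1$, is the cleanest form of the parametric argument and is in the spirit of the Caporaso--Harris machinery the paper uses elsewhere; but note it presupposes $\widetilde{V}_{df+Ne,g}(\alpha,0)\neq\emptyset$ just as the paper's condition count does, and a careful version has to deal with torsion in $N_\nu$ at ramification points of $\nu$, which is precisely the delicacy handled by Lemma~\ref{lemma:tangency-conditions} in the appendix.
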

\begin{proof}
We want to parametrize integral curves $C \subset E \times \Pbb^1$ of divisor class $\Lc$, geometric genus $g$, and whose
intersection with $E_0$ is fixed. We get our lower bound by a naive dimension count. Fixing the intersection with
$E_0$ takes $h^0(E_0,\Lc|_{E_0})=d$ conditions. And having geometric genus $g$ imposes $\delta=p_a(\Lc)-g$ conditions.
Indeed, if we knew our curve was nodal, then this count asks one condition for each node. We do not know the curve is
nodal yet, but in any case we know that an isolated planar singularity of 
delta-invariant $\delta$ 
imposes $\delta$ conditions (in the language of 
\cite{diaz_ideals_1988}, the \emph{equigeneric locus} $EG$ has codimension $\delta$ 
in the versal deformation space of the singularity).

In the worst case, all these conditions would be independent, and we get
\begin{align*}
	\dim \widetilde{V}_{df+Ne,g}(\alpha,0) &\geq h^0(E\times\Pbb^1, \Lc) -d -\delta \\
	&=d(N+1) -d - (Nd-d+1 -g)=d+g-1
\end{align*}
as we wanted to show.
\end{proof}

\begin{lemma}
\label{lemma:dimension-alpha}
For $d \geq 1$, the dimension of $\widetilde{V}_{df+Ne,g}(\alpha,0)[p_1,\ldots,p_k]$ is $d+g-1$, and the generic point corresponds to a nodal curve.
\end{lemma}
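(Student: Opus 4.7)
The lower bound $\dim \widetilde V_{df+Ne,g}(\alpha,0) \geq d+g-1$ is exactly \Cref{lemma:dimension_lower_bound}, so what remains is to establish the matching upper bound on each irreducible component together with the nodality of the generic curve. My plan is to derive both conclusions from a single application of \Cref{theorem:deftheory} to the normalization of the general curve of a component.

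Fix an irreducible component $W$ and let $f\from C \to \Sigma = E \times \Pbb^1$ be the normalization of a general $[C] \in W$, viewed as a point of $\Mc_g(\Sigma, df+Ne)$. Apply \Cref{theorem:deftheory} with $D = E_0$ and $\Omega = \{p_1,\ldots,p_k\}$. By the defining tangency condition of $\widetilde V_{df+Ne,g}(\alpha,0)$, the pullback divisor $f^*E_0$ is supported entirely on $f^{-1}(\Omega)$, so $Q = f^{-1}(E_0 \setminus \Omega) = \emptyset$ and we may take $b = 0$. Using $K_\Sigma = -2e$ and $[E_0] = e$, one computes $(K_\Sigma + D)\cdot \tau = (-e)\cdot(df+Ne) = -d$, whence
\[
    \gamma \;=\; -(K_\Sigma + D)\cdot \tau + b \;=\; d.
\]
The theorem's bound $\dim W \leq g - 1 + \gamma = d + g - 1$ then matches the lower bound, which in turn forces the equality case of \Cref{theorem:deftheory} to hold for $W$.

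Equality activates the structural conclusions on $f$. For $d \geq 4$ the ``$\gamma \geq 4$'' bullet immediately gives that the image $f(C)$ is nodal, finishing the lemma. The remaining values $d \in \{1,2,3\}$ are the main obstacle: the theorem only yields the progressively weaker conclusions ``$f$ unramified'' (when $d \geq 2$) and ``image has at most ordinary multiple points'' (when $d \geq 3$), which a priori leave open cusps, tacnodes, and ordinary triple points. To close the gap I would show that the sublocus of $\widetilde V_{df+Ne,g}(\alpha,0)$ parametrizing curves with any of these non-nodal singularity types has strictly smaller dimension than $d+g-1$: either by reapplying \Cref{theorem:deftheory} to the partial normalization that splits off the extra branches at the bad singularity---an operation that enlarges $\Omega$ by the singularity's image and pushes $\gamma$ above the nodality threshold---or by a direct versal-deformation computation for each of the finitely many relevant analytic types. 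The case $d=1$ is immediate, since the curve is then forced to be a section of $\pi_2$ and hence smooth.
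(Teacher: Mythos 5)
The bulk of your argument---invoking \Cref{lemma:dimension_lower_bound} for the lower bound and \Cref{theorem:deftheory} with $D=E_0$, $\Omega=\{p_1,\ldots,p_k\}$, $Q=\emptyset$, $b=0$, $\gamma=d$ for the upper bound and for nodality when $d\geq 4$---is exactly what the paper does, and the $d=1$ case is handled in the same way.

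The genuine gap is in the cases $d=2,3$, which you correctly identify as the obstacle but do not actually resolve: you sketch two strategies, and neither is carried out, while the first one does not directly apply as stated. Enlarging $\Omega$ to include the image of the bad singularity is not permitted in \Cref{theorem:deftheory}, because $\Omega$ must be a finite subset of the fixed curve $D$, and a cusp, tacnode, or triple point of $f(C)$ will generically lie off $E_0$. (Also note that for $d=2$ the theorem at $\gamma=2$ already gives $f$ unramified since $Q=\emptyset$, so cusps are automatically excluded and triple points are impossible because $C\to\Pbb^1$ is $2$-to-$1$; what remains is precisely tacnodes and higher $A_{2k-1}$ singularities, which neither of your strategies reaches.) The paper's actual resolution is in \Cref{ssub:the_hyperelliptic_locus} and \Cref{ssub:the_trigonal_locus} and works by changing the ambient surface: for $d=2$ it uses the double cover $E\to\Pbb^1$ determined by the fixed degree-$2$ intersection divisor to map $E\times\Pbb^1$ to $\Pbb^1\times\Pbb^1$, translates the singularity type of $C$ into a tangency order of the image $\overline{C}$ with the branch divisor, and reapplies \Cref{theorem:deftheory} on $\Bl_{p_0}(\Pbb^1\times\Pbb^1)$; for $d=3$ it uses the degree-$3$ divisor to embed $E\subset\Pbb^2$, reads off the curve as a map $\Pbb^1\to\dual{\Pbb^2}$, and reapplies \Cref{theorem:deftheory} on $\Bl_{\ell_0}\dual{\Pbb^2}$ against the dual curve $\dual{E}$. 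Something of this nature---a concrete auxiliary geometry, not a general-purpose partial normalization---is needed to close the $d=2,3$ cases, and your write-up leaves them open.
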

\begin{proof}
\Cref{lemma:dimension_lower_bound} tell us that
the dimension of $\widetilde{V}_{df+Ne,g}(\alpha,0)$ is at least $d+g-1$.
We only need to show that it is also at most that. We will do that by applying
\Cref{theorem:deftheory}. Set $\Sigma=E \times \Pbb^1$, $D=E_0$, 
and $\Omega=\set{p_1,\ldots, p_k}$.

Then $Q= f^{-1}(D-\Omega)=\emptyset$, and we may set $b=0$. We get
\[
	\gamma= -(K_{E \times \Pbb^1} +D) \cdot (df+Ne) +b = -(-2e +e)(df+Ne)+0 = d \geq 1
\]

Hence, by \Cref{theorem:deftheory}, we have
\[
	\dim \widetilde{V}_{df+Ne,g}(\alpha,0) \leq \gamma +g -1 =d+g-1
\]
By \Cref{lemma:dimension_lower_bound}, the opposite inequality holds as well, so
 we actually have equality.

Let us prove that the general point of $\widetilde{V}_{df+Ne,g}(\alpha,0)$ is nodal. By \Cref{theorem:deftheory},
as soon as $\gamma=d \geq 4$, we are good. Let us consider the remaining cases:

\begin{itemize}
	\item For $d=1$, the only class with integral curves is  $f$, which only has
	smooth elements.
	\item For $d=2$, \Cref{theorem:deftheory} tell us it is enough
	to rule out tacnodes and triple points. Triple points can't occur because the
	map $C \to \Pbb^1$ is two to one. We will rule out tacnodes by an ad hoc argument, 
	in \Cref{ssub:the_hyperelliptic_locus}.
	\item For $d=3$, we only have to rule out triple points, by \Cref{theorem:deftheory}.
	We will do this in \Cref{ssub:the_trigonal_locus}.
\end{itemize}
\end{proof}

Now we are ready to analyze the Severi variety $V_{Ne+df,g}(a,b)$.
\begin{theorem}
\label{theorem:dimension-of-severi}
The Severi variety $V_{Ne+df,g}(a,b)$ has dimension $d+g-2+b$, 
and the generic point corresponds to a nodal curve.
\end{theorem}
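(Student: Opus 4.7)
The plan is to reduce the theorem to \Cref{lemma:dimension-alpha} by fibering the Severi variety $V(a,b)$ over the linear system $|L|$ of residual intersections with $E_0$.

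For the dimension lower bound, a direct parameter count suffices. The complete linear system $|\Lc|$ has dimension $dN + d - 1$; the conditions that $C$ pass through $p_1,\ldots,p_a$ cut out codimension at most $a$ (once $\Lc$ and the $p_i$ are fixed, the class of the residual intersection $C \cap E_0 - \sum_i p_i$ is automatically $L$, so there is nothing extra to impose), and requiring geometric genus $g$ drops the dimension by at most the $\delta$-invariant $p_a(\Lc) - g = Nd - d + 1 - g$. Putting these together,
\[
\dim V(a,b) \;\geq\; (dN + d - 1) - a - (Nd - d + 1 - g) \;=\; d + g - 2 + b.
\]

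For the upper bound, a direct application of \Cref{theorem:deftheory} with $\Omega = \{p_1,\ldots,p_a\}$ yields $\gamma = d+b$ and only the weaker bound $\dim V(a,b) \leq d+g+b-1$, which is off by one: it does not see the extra codimension coming from fixing the line bundle class $L$. To recover the missing unit I would use the forgetful morphism
\[
\pi \from V_{Ne+df,g}(a,b)[p_1,\ldots,p_a,L] \longrightarrow |L|, \qquad [C] \longmapsto C \cap E_0 - {\textstyle\sum_i} p_i.
\]
For a reduced divisor $D = q_1+\cdots+q_b \in |L|$ avoiding the $p_i$, the fiber $\pi^{-1}(D)$ is contained in $\widetilde{V}_{Ne+df,g}(d,0)[p_1,\ldots,p_a,q_1,\ldots,q_b]$, which by \Cref{lemma:dimension-alpha} has dimension $d+g-1$. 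Since $\dim |L| = b-1$, every irreducible component $W \subset V(a,b)$ satisfies $\dim W \leq (b-1) + (d+g-1) = d+g-2+b$. Combined with the lower bound this forces equality and, moreover, every top-dimensional component of $V(a,b)$ must map dominantly to $|L|$ (otherwise $\dim \pi(W) \leq b-2$ drops the estimate below the lower bound). The generic point of such a component lives in a generic fiber, which is itself a Severi variety $\widetilde{V}_{Ne+df,g}(d,0)$ whose generic point is nodal by \Cref{lemma:dimension-alpha}; hence the generic curve in $V(a,b)$ is nodal.

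The main obstacle is precisely this off-by-one failure of \Cref{theorem:deftheory}. The fibering trick over $|L|$ is the correct way to extract the extra codimension imposed by fixing the residual class $L$; without it, one only sees the larger variety where the line bundle $\Lc$ is allowed to vary in its numerical class, whose dimension is indeed $d + g + b - 1$.
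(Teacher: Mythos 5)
Your proof is correct and follows essentially the same route as the paper: both fiber the Severi variety over the $(b-1)$-dimensional space of residual intersections with $E_0$ (the paper maps to $\Sym^{a+b}E_0$, you to $|L|$, which amounts to the same thing) and invoke \Cref{lemma:dimension-alpha} to control the fiber dimension and nodality. Your preliminary remark that a naive application of \Cref{theorem:deftheory} is off by one because the Kontsevich space does not see the fixed line bundle class is a reasonable observation, but it is not needed once the fibering argument is in place, and the paper dispenses with it entirely.
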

\begin{proof}
Consider the map $V_{Ne+df,g}(a,b) \to \Sym^{a+b}E$ sending a curve $C \subset E \times \Pbb^1$
to its $a+b$ intersection points with $E_0$. The fiber over $(p_1,\ldots,p_{a+b})$ is 
$V_{df+Ne,g}(a+b,0)$, which by \Cref{lemma:dimension-alpha} has dimension $d+g-1$ and 
the generic point is nodal. We only have to show that the
image of $V_{Ne+df,g}(a,b) \to \Sym^{a+b}E$ has dimension $b-1$. This is because $a$ of
the points are fixed, while the other $b$ are constrained to be in a fixed linear series
of degree $b$. Riemann--Roch tells us they vary in a $(b-1)$-dimensional series.
\end{proof}
\begin{remark}
One of the key difficulties in studying curves in $C \times \Pbb^1$, for curves
$C$ of genus at least two, is the lack of an analogue of \Cref{theorem:dimension-of-severi}.
Indeed, the techniques here stem out of \Cref{theorem:deftheory}, which works 
better the more negative the canonical bundle of the surface is.
\end{remark}

In the discussion above, we were studying integral curves in $E \times \Pbb^1$.
In our application, however, we will not know a priori that
the residual curves are integral, but we still want to have a bound on their dimension.
In the parametric approach, integral curves correspond to maps $f \from C \to E\times \Pbb^1$ 
with smooth connected source and which are birational onto their image.
Let us relax these conditions to just having a nodal source $C$, possibly disconnected,
and with an arbitrary map $f\from  C \to E\times \Pbb^1$. We ask for an upper bound
on the deformations of the map $f$. More precisely, assume the following set up.

\begin{hypothesis}
\label{hypothesis:def-theory-set-up}
Fix a line bundle $\Lc$ on $E \times  \Pbb^1$, and a sequence of positive
integers $\alpha_1, \ldots, \alpha_n$.
Let $\Cc \to V$ be a flat family of (possibly disconnected) semistable genus $g$ curves, equipped with a map 
$\phi\from \Cc \to E \times \Pbb^1$, and sections $\sigma_i\from V \to \Cc$, such that
\begin{itemize}
	\item For each $b \in V$, the pullback divisor $\phi^{-1}(E_0)$ in $C_b$ is equal to
	\[
		\phi_b^{-1}(E_0) = \alpha_1 \sigma_1(b)+ \alpha_2 \sigma_2(b)+ \ldots + \alpha_n \sigma_n(b)
	\]
	\item Class of $\phi(C_b) \subset E \times \Pbb^1$ is $\Lc$.
	\item The induced map $V \to |\Lc|$ has finite fibers 
	(equivalently, the image family $\phi(C_b) \subset E \times  \Pbb^1$ is nowhere 
	isotrivial).
\end{itemize}
\end{hypothesis}

We want to bound the dimension of $V$, and to give a description of the equality case. For sake of 
preliminary discussion, let us fix completely the intersection with $E_0$. That is, let us
say that the sections $\sigma_i(V)$ are contracted under $\phi$ down to points $p_i$,
for each $i=1, \ldots, n$.

We would hope that the bound of \Cref{lemma:dimension-alpha} would still hold. This is not the case.
For example, consider letting $C =C' \sqcup \widetilde{E}$ be a disjoint union, where $C' \to E \times \Pbb^1$ is the normalization of 
a point in $V_{(N-n)e+df,g}(\alpha,0)$, and 
$\widetilde{E} \to E\times \set{t}  \subset E \times \Pbb^1$ is a degree $n$ isogeny
landing in any fiber $E \times \set{t}$.
Then the map $C' \to E \times \Pbb^1$ varies in a $d+g-1$ dimensional family,
while $\widetilde{E} \to E \times \Pbb^1$ varies in one dimension (corresponding
to choosing the target fiber $E \times \set{t}$), for a total of a $d+g$
dimensional family. 

Fortunately, this is the only way to go beyond the $d+g-1$ bound.
Let a \emph{floating component} be a connected component of 
$C_b$ whose image under $\phi(C_b)$ has class multiple of $e$. We will show that if the family $\Cc \to V$ has no floating
components, then the dimension of $V$ is at most $d+g-1$.
Let us first describe the equality cases.

As we are allowing disconnected sources $C$, we could take an arbitrary disjoint union
$C =C_1 \sqcup C_2 \sqcup \ldots \sqcup C_k$, and let $C_i \to E \times \Pbb^1$
be a generic point of $V_{d_if+N_ie, g_i}(\alpha^i,0)$. We require that no $C_i$ is a floating component (i.e. we exclude the $d_i=0$ case), and moreover that
\[
	\sum d_i =d, \;\; \sum N_i =N, \;\; \sum \left(g_i-1\right) =g-1, \;\; 
	\sum \alpha^i = \alpha
\]
Then the map $C \to E \times \Pbb^1$ will vary in a 
\[
	\sum d_i +g_i -1 = d+g-1
\]
dimensional family as well.

We already have a notation to parametrize such curves. Recall \cref{definition:severi-circ}: 
the Severi variety $\widetilde{V}^\circ_{df+Ne,g}(\alpha,0)$ is
the normalization of the closure of the locus of reduced curves in $E\times \Pbb^1$, of some specified 
class $\Lc$, some fixed intersection with $E_0$,
and whose normalization has arithmetic genus $g$, which contains no fibers of the projection
$E \times \Pbb^1 \to \Pbb^1$ (that is, no floating components). Compare to
$\widetilde{V}_{df+Ne,g}(\alpha,0)$, which further imposes that the curves are irreducible.
In general, when we want to allow  disconnected sources, but avoid floating components, we will
decorate our Severi varieties with a $\circ$ superscript.

The variety 
$\widetilde{V}^\circ_{df+Ne,g}(\alpha,0)$ has dimension $d+g-1$, which we will soon prove to be maximal.
One could hope that these are all equality cases. Unfortunately,
there is one more circumstance under which equality is achieved.
 Take a curve $C \to E \times \Pbb^1$ in 
$\widetilde{V}^\circ_{df+(N-N')e,g-k}(\alpha,0)$,
 choose $k$ points $q_1,\ldots, q_k \in C$, and attach isogenies
$E^i \to E$ at each of these points, such that the sum
of the degrees of the $k$ isogenies is $N'$. Let 
\[
\overline{C}= C \cup_{q_1}E^1 \cup \ldots \cup_{q_k}E^k \to E\times \Pbb^1	
\] 
be this new curve, with its map to $E \times  \Pbb^1$ (see \cref{figure:elliptic tails}).
Then it also varies in 
\[
	k+ \dim \widetilde{V}^\circ_{df+(N-N')e,g-k}(\alpha,0) = k+d+g-k -1 = d+g-1
\]
dimensions. Call the curves $E^i$ \emph{elliptic tails}, and set the space of all such maps
 $\overline{C}\to E\times \Pbb^1$ as
$\widetilde{V}^{\circ,\text{tails}}_{df+Ne,g}(\alpha,0)$. Generally, we will use the superscript
\emph{tails} to denote that we allow elliptic tails in the source.

\begin{figure}
\centering
\includegraphics{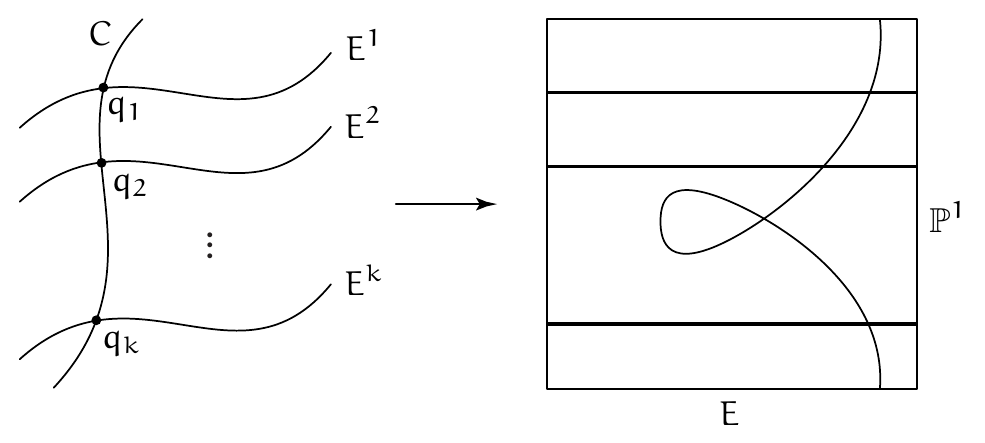}
\caption{A curve with \emph{elliptic tails}. We impose that each $E^i \to E \times \Pbb^1$
is unramified.}
\label{figure:elliptic tails}
\end{figure}

The components of $\widetilde{V}^{\circ,\text{tails}}_{df+Ne,g}(\alpha,0)$ \emph{will}
 be all equality cases. The result is the following.

\begin{proposition}
\label{proposition:preliminary_deformation_bound}
Assume \Cref{hypothesis:def-theory-set-up} and moreover that for
generic $b \in V$, the map $C_b \to E \times \Pbb^1$ has no floating components,
and that the sections $\sigma_i(V)$ get contracted by the map $\phi\from \Cc \to E \times \Pbb^1$
to points $p_i$, for all $i$. Then, $\dim V$ is at most $d+g-1$.

If equality occurs, then the image of $V$ in $|\Lc|$ is dense in a component of
the Severi variety
$\widetilde{V}^{\circ,\text{tails}}_{df+Ne,g}(\alpha,0)[p_1,\ldots,p_n]$.
\end{proposition}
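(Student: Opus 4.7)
The plan is to reduce the dimension bound to \Cref{lemma:dimension-alpha} by analyzing the family $\Cc \to V$ one component at a time at a generic fiber. At a general point $b \in V$, take the normalization $\nu \from \tilde{C}_b = \bigsqcup_j D_j \to C_b$, so that each $D_j$ is a smooth irreducible curve; let $d_j f + N_j e$ be the class of $\phi\circ\nu(D_j)$ and $g_j^D$ its genus. I would classify each $D_j$ as \emph{horizontal} ($d_j \geq 1$, dominating the $\Pbb^1$ factor) or \emph{vertical} ($d_j = 0$, landing in some fiber $E \times \{t_j\}$, with necessarily $t_j \neq 0$ since the pullback of $E_0$ is supported on the sections $\sigma_i$). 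Up to passing to a finite base change, I may further assume $\phi\circ\nu$ is birational onto its image on each $D_j$; otherwise after dividing out by multiplicities the image family would be isotrivial, contradicting the assumption that $V \to |\Lc|$ has finite fibers.

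Next I would bound the deformations contributed by each $D_j$. For a horizontal $D_j$, the sections $\sigma_i$ meeting $D_j$ determine a tangency profile $\alpha^j$, and the deformations of $(D_j, \phi\circ\nu\vert_{D_j})$ with these tangencies pinned at the points $p_i$ are bounded by $\dim \widetilde V_{d_j f + N_j e, g_j^D}(\alpha^j, 0) = d_j + g_j^D - 1$ via \Cref{lemma:dimension-alpha}. For a vertical $D_j$, deformations split into the choice of fiber $t_j$ (one parameter) and deformations of a degree-$N_j$ map $D_j \to E$; Riemann--Hurwitz bounds the Hurwitz-space dimension by $2 g_j^D - 2$, with equality only when $g_j^D \geq 1$ and iff the map is unramified, i.e.\ an isogeny of elliptic curves.

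To put the pieces together, I would regroup the $D_j$ into the connected components $C^{(k)}$ of $C_b$ (none of which is floating). Each node of $C_b$ imposes the condition that the two preimage points map to the same point of $E \times \Pbb^1$. Carefully combining these constraints with the per-component bounds, the key numerical claim is that each $C^{(k)}$ contributes at most $d_k + g_k - 1$, where a horizontal core of arithmetic genus $g_0^{(k)}$ carrying $c_k$ attached elliptic tails gives $d_k + g_0^{(k)} - 1 + c_k = d_k + g_k - 1$ (using the arithmetic genus formula $g_k = g_0^{(k)} + c_k$). Summing and invoking $\sum_k d_k = d$ together with $\sum_k (g_k - 1) = g - 1$ yields
\[
\dim V \;\leq\; \sum_k (d_k + g_k - 1) \;=\; d + g - 1.
\]

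In the equality case every inequality above saturates: each horizontal component is a generic point of its Severi variety (so its image is nodal and integral by \Cref{lemma:dimension-alpha}), and each vertical component must attain $2 g_j^D - 1$ compatibly with its node constraints, which forces $g_j^D = 1$ and the map $D_j \to E$ to be an isogeny, i.e.\ precisely an elliptic tail. The image of $V$ in $|\Lc|$ is therefore dense in a component of $\widetilde V^{\circ,\text{tails}}_{df+Ne,g}(\alpha,0)[p_1,\ldots,p_n]$, as claimed. The main obstacle is the third step: verifying that the matching conditions at nodes connecting a vertical component to the rest eat up exactly $g_j^D - 1$ of the naive $2g_j^D - 1$-parameter family, so that only elliptic isogenies (with their one residual parameter, the fiber choice) survive as the unique equality pattern.
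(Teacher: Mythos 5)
Your proposal has the right overall shape (normalize, decompose into horizontal and vertical pieces, bound each, then do genus accounting), and that is indeed what the paper does. But there are two genuine gaps, one of which you flag at the end but frame incorrectly.

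First, your dismissal of the non-birational case is wrong. If $\phi\circ\nu\vert_{D_j}$ is $m$-to-$1$ onto its image with $m>1$, the map $D_j \to \text{image}$ is a cover with \emph{finitely} many choices once the image is fixed, so this is not in conflict with $V\to|\Lc|$ having finite fibers, and you cannot simply base-change it away. The paper handles it head-on: it bounds the deformations of the normalized \emph{image} $\widetilde{X}$ by \Cref{lemma:dimension-alpha}, and then uses Riemann--Hurwitz on the degree-$m$ cover $X\to\widetilde{X}$ to get $p_a(\widetilde{X})-1 \leq (p_a(X)-1)/m$, which is exactly what makes the sum close up. Second, and more seriously, your bound of $2g_j^D-1$ for a vertical component overcounts, and the obstacle you identify (node constraints eating $g_j^D-1$ of those parameters) cannot be filled: a single node-matching condition cuts at most one dimension, not $g_j^D-1$. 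The $2g_j^D-2$ Hurwitz parameters of $D_j\to E$ never change the image divisor $N_j\cdot(E\times\{t\})$ in $|\Lc|$, so the correct mechanism is that they are killed \emph{by the finiteness of $V\to|\Lc|$}, not by node constraints. The paper bounds the contribution of each floating component by $1$ (the choice of $t$) on those grounds, and then recovers the needed slack from the genus accounting $p_a(Y)\leq g-k$ (each floating component steals at least one from the genus of $Y$, since it meets the rest in at least one node and has genus at least $1$). Your ``third step'' as stated would fail; the finiteness hypothesis is the missing ingredient.
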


The following proof follows Proposition 3.4 in Vakil's \cite{vakil_counting_2000} closely.
\begin{proof}
After passing to a dominant generically finite cover of $V$,
 we may distinguish the components of $C_b$. If generically there are
contracted components, discard them. This only makes the bound tighter.

Let us replace $C_b$ with its normalization $\widetilde{C_b}$. Again, this will only make
the bound tighter. Let $k$ be the number of floating
components in $\widetilde{C_b}$, and let
$Y$ be the union of the other components. 
As the original curve $C_b$ did not have any floating component, there were
at least $k$ nodes connecting them to other components of $\widetilde{C_b}$. Hence, the arithmetic
genus of $p_a(Y)$ is at most $g-k$, with equality only if $C_b$ is equal to the
curve $Y$ with $k$ elliptic tails attached.

Each floating component of $\widetilde{C_b}$ contributes to at most one dimension to $V$,
because the map $V \to |\Lc|$ is generically finite, so the only moduli comes from
picking the target fiber $E \times \set{t}$.

Let us focus on each component $X$ of $Y$ then. Let $\overline{X}$ be the image of 
$\phi\from X \to E \times \Pbb^1$, and $\widetilde{X} \to \overline{X}$ its normalization.
As $X$ is smooth, the map $X \to \overline{X}$ factors through 
$\widetilde{X} \to \overline{X}$. Moreover, since
the fibers of $V \to |\Lc|$ are finite, a bound in the dimension in which 
$\widetilde{X} \to \overline{X} \subset E \times \Pbb^1$ varies
also bounds the dimension of $V$. Let $m$ be the degree of
$X \to \widetilde{X}$. Summing over the components of $\widetilde{C_b}$, and applying \Cref{lemma:dimension-alpha}, we get
\begin{align*}
	\dim V &\leq \left( \sum_{X \text{ floating}} \dim \left\{ X \to E\times \Pbb^1 \right\} \right)+ 
	\left(\sum_{X \text{ not floating}} \dim \left\{\widetilde{X} \to E \times \Pbb^1 \right\} \right)\\
	&\leq \left( \sum_{X \text{ floating}} 1 \right)+ 
	\left( \sum_{X \text{ not floating}} \frac{d_i}{m} + p_a(\widetilde{X})-1 \right)\\
	&\leq k + \left(\sum_{X \text{ not floating}} \frac{d_i}{m} + \frac{p_a(X)-1}{m} \right)\\
	&\leq k + \left(\sum_{X \text{ not floating}} d_i + p_a(X) -1 \right) \\
	&= k + d+ p_a(\widetilde{Y}) -1 \\
	&\leq k+ d+(g-k)-1 = d+g-1
\end{align*}
as we wanted to show.

If equality holds, then $m=1$, $Y$ is a generic point of $V^\circ_{df+N'e,g-k}(\alpha,0)$
and $C_b$ is a generic point in $\widetilde{V}^{\circ,\text{tails}}_{df+Ne,g}(\alpha,0)$.
\end{proof}

In our application, the intersection with $E_0$ will not be fixed. 
However, we can immediately adapt \Cref{proposition:preliminary_deformation_bound} to the
following.
\begin{corollary}
\label{proposition:deformation_bound}
Assume \Cref{hypothesis:def-theory-set-up} and that for
generic $b \in V$, the map $C_b \to E \times \Pbb^1$ has no floating components.
There is a map $V \to E_0 \times \ldots \times E_0$ that sends
\[
	b \mapsto \Bigl(\phi(\sigma_1(b)), \ldots, \phi(\sigma_n(b)) \Bigr)
\]
Let $P$ be the image of this map. Then 
\[
\dim V \leq d+g-1 + \dim P
\]
If equality holds, then each component of the 
fiber of $V \to P$ over $(p_1,\ldots,p_n) \in P$ is dense in a
component of
$\widetilde{V}^{\circ, \text{tails}}_{Ne+df,g}(\alpha, 0) [p_1,\ldots, p_k]$.
\end{corollary}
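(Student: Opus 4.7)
The plan is to reduce this to \Cref{proposition:preliminary_deformation_bound} by slicing $V$ along the map $V \to P$, so that in each fiber the intersection points with $E_0$ become fixed and the earlier result applies directly. First I would replace $V$ with an irreducible component dominating $P$ and, after a dominant generically finite base change (which affects neither the dimension of $V$ nor that of $P$), arrange that $V \to P$ is surjective with equidimensional generic fibers of dimension $\dim V - \dim P$.

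Pick a sufficiently general point $(p_1, \ldots, p_n) \in P$ and let $V_0 \subset V$ be the fiber. Pulling back the family $(\Cc, \phi, \sigma_i)$ to $V_0$, I would verify that \Cref{hypothesis:def-theory-set-up} holds on $V_0$ with the sections $\sigma_i$ contracting to the fixed points $p_i$: the factorization through $(p_1,\ldots,p_n)$ is forced by the definition of $V_0$; no floating components appear generically because that was true on $V$; and the composed map $V_0 \to |\Lc|$ remains finite since it is the restriction of a generically finite morphism. Then \Cref{proposition:preliminary_deformation_bound} gives $\dim V_0 \leq d+g-1$, and combining this with the fiber dimension relation $\dim V = \dim V_0 + \dim P$ yields $\dim V \leq d+g-1+\dim P$.

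For the equality statement, $\dim V = d+g-1+\dim P$ forces $\dim V_0 = d+g-1$ for generic $(p_1,\ldots,p_n) \in P$, and the equality clause of \Cref{proposition:preliminary_deformation_bound} then tells me that the image of $V_0$ in $|\Lc|$ is dense in a component of $\widetilde{V}^{\circ,\text{tails}}_{Ne+df,g}(\alpha,0)[p_1,\ldots,p_n]$; since the components of $V_0$ are in bijection with the components of the generic fiber of $V \to P$, this is exactly the claim. The only mildly delicate step will be ensuring that no pathological jumping of fibers invalidates the slicing, but this is handled by the initial component selection and base change, so no new geometric input beyond the preliminary proposition is required — the argument is a straightforward relative-dimension bookkeeping.
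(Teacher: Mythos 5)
Your argument is exactly the intended one: the paper states this as an immediate corollary of \Cref{proposition:preliminary_deformation_bound} obtained by fibering $V$ over $P$ and applying the preliminary bound to a general fiber, which is precisely your slicing-plus-dimension-count. The proposal matches the paper's (implicit) proof.
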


The elliptic tails arose naturally as an equality case in our dimension bound,
but they do not actually occur as residual curves in the degeneration argument.
To see that, we ask when elliptic tails can be smoothed away,
that is, when is a map $f\from C \to E \times \Pbb^1$ with elliptic tails is 
a limit of a map with smooth source. By dimensional reasons, the general point of
$V^{\circ,\text{tails}}_{df+Ne,g} \setminus V^\circ_{df+Ne,g}$ cannot be smoothed into $V^\circ_{df+Ne,g}$. But more is
true---as long as $f$ is unramified, it cannot be smoothed!

\begin{proposition}
\label{proposition:tail}
Let $f\from X=C \cup_q \tilde{E} \to E \times \Pbb^1$ be a finite flat unramified map 
from a nodal curve such that $\tilde{E}$ lands in a $E$ fiber, and 
$\tilde{E} \to E$ is an isogeny. Then the map $f$ cannot be deformed in a way that the
node $q$ is smoothed.
\end{proposition}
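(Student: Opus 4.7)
The plan is to derive a contradiction by composing $f$ with the second projection $\pi_2\colon E\times\Pbb^1\to\Pbb^1$ and observing that the resulting family of covers of $\Pbb^1$ cannot admit an admissible limit at $q$ when $f$ is unramified there.

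Suppose for contradiction that there is a one-parameter family $\pi\colon \mathcal X\to T$ with smooth total space, central fiber $X$, smooth general fibers $\mathcal X_t$, and a morphism $F\colon\mathcal X\to E\times\Pbb^1$ with $F_0=f$. Since being unramified is an open condition, after shrinking $T$ we may assume each $F_t$ is finite and unramified. Consider $g_t=\pi_2\circ F_t\colon\mathcal X_t\to\Pbb^1$. The line bundle $g_t^*\Oc(1)=F_t^*\pi_2^*\Oc(1)$ has degree $[F_{t*}\mathcal X_t]\cdot e$, which is deformation-invariant and equals $d_C$, the degree of $\pi_2\circ f|_C\colon C\to\Pbb^1$. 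Thus each $g_t$ for $t\neq 0$ is a finite map of degree $d_C$, whereas $g_0$ fails to be finite because it contracts $\tilde E$ to the point $t_0$.

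This family of finite covers $g_t\colon\mathcal X_t\to\Pbb^1$ defines (away from $0$) a morphism to a Hurwitz space, and by properness of the admissible-cover compactification it extends over $t=0$ to an admissible cover $\widetilde{g_0}\colon \widetilde X_0\to \widetilde{\Pbb}^1_0$. At the node $q$, admissibility forces the ramification index of the $C$-branch and of the $\tilde E$-branch (with respect to $g_0$) to agree. On the $C$-side, the unramified hypothesis on $f$ at $q$ means $f(C)$ is transverse to $E_0$ at $f(q)$, so $g_0|_C$ has intersection multiplicity $1$ with $\{t_0\}$ at $q$ and its ramification index equals $1$. On the $\tilde E$-side, $g_0|_{\tilde E}$ is constant, so the only way to interpret $\tilde E$ as a component of $\widetilde X_0$ with ramification index $1$ over $\{t_0\}\in\Pbb^1$ is that $g_0$ is finite and étale of degree $m(\alpha)$ from $\tilde E$ onto a rational tail inserted into $\widetilde{\Pbb}^1_0$ at $t_0$. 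But since the total multiplicity $m(\alpha)$ of the tail over $t_0$ must equal the multiplicity $1$ coming from the $C$-branch, this is impossible unless $m(\alpha)=1$, and even then it corresponds to the identity isogeny with no nontrivial smoothing. Pursuing the count of ``missing'' branch points on the inserted rational tail, we conclude that the only admissible extension is one in which the node $q$ persists, contradicting the assumption that it was smoothed.

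The main obstacle is the careful handling of the admissibility bookkeeping on the $\tilde E$-side, in particular matching up the contracted component with the ramification datum; this is where the isogeny hypothesis on $\tilde E\to E$ enters, because it rules out the alternative scenario in which the ``infinite'' ramification of $g_0$ along $\tilde E$ is created by $\tilde E\to E_0$ itself having ramification that could be absorbed into a rational tail of $\widetilde{\Pbb}^1_0$. A cleaner (but computationally heavier) alternative is to do the deformation theory directly: one shows that the obstruction to smoothing $q$ in the pair $(X,f)$ lies in the one-dimensional space $H^1(\tilde E, N_{f|_{\tilde E}})=H^1(\tilde E,\Oc_{\tilde E})$ and is nonzero precisely because $f$ is unramified at $q$ combined with $\tilde E\to E$ being étale.
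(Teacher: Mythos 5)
Your primary argument, via admissible covers of $\Pbb^1$, has a genuine gap. The central step asserts that $g_0$ restricted to $\tilde E$ becomes ``finite and étale of degree $m(\alpha)$'' onto a rational tail and then declares a multiplicity count ``impossible''---but a nonconstant map $\tilde E\to\Pbb^1$ can never be \'etale (Riemann--Hurwitz forces ramification), the quantity $m(\alpha)$ is undefined in this context, and ``pursuing the count of missing branch points'' is asserted rather than carried out. More fundamentally, the mere existence of an admissible cover limit is guaranteed by properness, so it cannot furnish a contradiction by itself; you would have to argue carefully that the stable reduction of a hypothetical smoothing cannot produce any admissible cover with $\tilde E$ as a component, keeping track of chains of rational curves that may be inserted at $q$ and at smooth points of $\tilde E$. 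That bookkeeping is precisely what is missing, and as written the argument does not go through. (It is also worth noting that a rational tail \emph{can} be smoothed; any correct argument must use $g(\tilde E)=1$ in an essential way, and your sketch never isolates where that hypothesis enters.)

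Your ``cleaner alternative'' at the end is closer in spirit to what the paper actually does, but is still not quite right: the paper does not show that an \emph{obstruction} in $H^1(\tilde E,\Oc_{\tilde E})$ is nonzero. Instead, using Vistoli's description of deformations of an unramified map, it computes the restricted normal sheaf directly, finding $\Nc_f^\vee|_{\tilde E}\cong\Oc_{\tilde E}(-q)$, hence $\Nc_f|_{\tilde E}\cong\Oc_{\tilde E}(q)$. Since the unique (up to scalar) section of $\Oc_{\tilde E}(q)$ on a genus-one curve is the one coming from $\Oc_{\tilde E}\hookrightarrow\Oc_{\tilde E}(q)$, which vanishes at $q$ as a section of that line bundle, the composite $H^0(X,\Nc_f)\to H^0(\tilde E,\Nc_f|_{\tilde E})\to H^0(\tilde E,k_q)\cong T^1_{X,q}$ is zero. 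In other words, the \emph{tangent} directions already fail to smooth the node---no obstruction argument is needed. You should either reproduce this normal-sheaf computation or, if you wish to pursue the admissible-cover line of reasoning, supply the full stable-reduction and ramification-index analysis you have only gestured at.
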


In particular, no component of $V^{\circ,\text{tails}}_{Ne+df,g}(\alpha,0)$ is contained in the 
closure of $V^{\circ}_{df+Ne,g}$.

\begin{proof}
By Theorem~5.1 of \cite{vistoli_deformation_1997}, the deformations of the unramified map
$f$ correspond to $\Hom_{\Oc_X}(\Nc^\vee_f, \Oc_X)$, where $\Nc^\vee_f$ is
the dual normal sheaf defined by: 
\begin{equation}
\label{equation:normal sheaf}
	0 \to \Nc^\vee_f \to f^* \Omega_{E \times \Pbb^1} \to \Omega_X \to 0
\end{equation}

The key computation is the following.
\begin{claim}
\label{claim:computing normal sheaf}
The restriction $\Nc^\vee_f|_{\tilde{E}}$ is equal to $\Oc_{\tilde{E}}(-q)$.
\end{claim}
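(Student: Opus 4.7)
The claim is a local statement near the node $q$, so the plan is to reduce to a model computation in étale-local coordinates on $E \times \Pbb^1$. First, since $f$ is unramified, étale-locally it is a closed immersion, so I may choose coordinates $(u,v)$ on a neighborhood of $f(q)$ with $v$ a coordinate on $\Pbb^1$ vanishing along the fiber containing $\tilde{E}$; this makes $\tilde{E}$ the axis $\{v=0\}$ locally. Since the two branches of the node have distinct tangent directions (the other branch $C$ must be transverse to the $\Pbb^1$-fiber because $f$ is unramified at $q$), after modifying $u$ I can further arrange that $C$ becomes $\{u=0\}$. Thus $X$ locally is the nodal model $\{uv=0\}$ in the $(u,v)$-plane.

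In this model the ideal of $X$ is $I = (uv)$, so $\Nc^\vee_f = I/I^2$ is locally free of rank one, generated by the class of $uv$, and the inclusion $\Nc^\vee_f \to f^*\Omega_{E\times\Pbb^1}$ sends this generator to $d(uv) = v\,du + u\,dv$. Restricting to $\tilde{E}$ by setting $v=0$ (with $u$ now a local parameter on $\tilde{E}$ vanishing to order one at $q$), the image becomes the section $u\,dv$ of $(f^*\Omega_{E\times\Pbb^1})|_{\tilde{E}} \cong \Oc_{\tilde{E}}\,du \oplus \Oc_{\tilde{E}}\,dv$, which vanishes to order one at $q$. Away from $q$, where $X$ agrees locally with $\tilde{E} = \{v=0\}$, the sheaf $\Nc^\vee_f|_{\tilde{E}}$ is trivialized by the class of $v$, whose image in the trivial rank-two bundle is simply $dv$.

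Next I would identify $\Nc^\vee_f|_{\tilde{E}}$ with its image inside $(f^*\Omega_{E\times\Pbb^1})|_{\tilde{E}} \cong \Oc_{\tilde{E}}^2$: injectivity follows because $\Nc^\vee_f|_{\tilde{E}}$ is torsion-free (being a rank-one locally free sheaf on the smooth curve $\tilde{E}$) and generically nonzero. On the overlap, the two local trivializations of the image — $u\,dv$ near $q$ and $dv$ elsewhere — differ by the function $u$, which vanishes to order one at $q$. Hence the image is the subsheaf $\Oc_{\tilde{E}}(-q)\cdot dv$ of $\Oc_{\tilde{E}}^2$, and therefore $\Nc^\vee_f|_{\tilde{E}} \cong \Oc_{\tilde{E}}(-q)$.

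The main delicate step will be setting up the coordinates: the fact that $f$ is étale-locally a closed immersion (standard for unramified maps) combined with the fact that a node in a smooth surface can be brought to coordinate form. As a sanity check, one can compute degrees independently: from the local model $\Omega_X = R\langle du,dv\rangle/(u\,dv+v\,du)$ with $R = k[[u,v]]/(uv)$ one reads off that $\Omega_X|_{\tilde{E}} \cong \Omega_{\tilde{E}} \oplus k(q)$, so the restricted sequence $0 \to \Nc^\vee_f|_{\tilde{E}} \to \Oc_{\tilde{E}}^2 \to \Omega_X|_{\tilde{E}} \to 0$ yields $\chi(\Nc^\vee_f|_{\tilde{E}}) = 0 - (0+1) = -1$, matching the degree of $\Oc_{\tilde{E}}(-q)$.
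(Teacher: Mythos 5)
Your proof is correct, and it takes a genuinely different route from the paper's. You determine $\Nc^\vee_f|_{\tilde{E}}$ by embedding it in $(f^*\Omega_{E\times\Pbb^1})|_{\tilde{E}}$ and explicitly tracking generators of the conormal sheaf through the node: near $q$ the generator is $d(uv)|_{\tilde{E}}=u\,dv$, away from $q$ it is $dv$, and comparing trivializations exhibits the image as $\Oc_{\tilde{E}}(-q)\cdot dv$. The paper instead restricts the defining exact sequence, verifies $\Tor^1_{\Oc_X}(\Omega_X,\Oc_{\tilde{E}})=0$ by an explicit free resolution, compares with the conormal sequence of the isogeny $\pi=f|_{\tilde{E}}$ via the snake lemma to obtain $0\to\Nc^\vee_f|_{\tilde{E}}\to\Nc^\vee_\pi\to k_q\to 0$, and then computes $\Nc^\vee_\pi\cong\Oc_{\tilde{E}}$. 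The content is ultimately the same local computation at the node packaged differently: your version is more concrete and gives an explicit geometric picture (the conormal direction twists by $u$ as you cross $q$), while the paper's diagram-chase version is more formal and keeps the argument at the level of exact sequences of sheaves, which makes the injectivity of $\Nc^\vee_f|_{\tilde{E}}\to(f^*\Omega)|_{\tilde{E}}$ (i.e.\ the Tor vanishing) an explicit verified step rather than an appeal to torsion-freeness as in yours. Your appeal to torsion-freeness is fine since $\Nc^\vee_f$ is a line bundle on $X$ (free of rank one locally at $q$, and the conormal line bundle of an immersion elsewhere), so its restriction to $\tilde{E}$ is a line bundle on a smooth curve; you might spell that out. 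The one step I would tighten is the claim that $C$ is transverse to the fiber $\{v=0\}$: rather than phrasing it as a consequence of unramifiedness directly, it is cleanest to say that $f$ is étale-locally a closed immersion, so the image of $X$ near $f(q)$ is an embedded node in the smooth surface, and an ordinary double point in a smooth surface has transverse branches by definition. Your $\chi$-consistency check at the end in fact reproduces the paper's $\Omega_X|_{\tilde{E}}\cong\Omega_{\tilde{E}}\oplus k_q$ computation, so the two proofs agree on the essential local model.
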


This implies that the sheaf $\Nc^\vee_f$ is locally free at $q$. The map
to the component of the $T^1_X$ sheaf supported on $q$ is just the restriction map
\begin{equation}
\label{equation: map to t1}
\Def(f) = H^0(X, \Nc_f) \to H^0(X, \Nc_f|_q) = H^0(X,T^1_X|_q)
\end{equation}

To establish that the node $q$ cannot be smoothed away, we just need to show that 
the map in \cref{equation: map to t1} is zero. But the restriction map factors as
\[
	H^0(X, \Nc_f) \to H^0(\tilde{E},\Nc_f|_{\tilde{E}})\to H^0(\tilde{E}, k_q)
\]
and the latter map 
	$H^0(\tilde{E},\Nc_f|_{\tilde{E}})
	=H^0(\tilde{E},\Oc_{\tilde{E}}(q))\to H^0(\tilde{E}, k_q)$
is zero, since
\[
	H^0(\tilde{E},\Oc_{\tilde{E}}) \iso
	H^0(\tilde{E}, \Oc_{\tilde{E}}(q))\overset 0 \to H^0(\tilde{E}, k_q) 
	\iso H^1(\tilde{E},\Oc_{\tilde{E}})
\]
This finishes the proof of \Cref{proposition:tail}, given 
\Cref{claim:computing normal sheaf}. \end{proof}

\begin{proof}[Proof of \Cref{claim:computing normal sheaf}]
Restricting the sequence 
(\ref{equation:normal sheaf}), we get
\begin{equation}
\label{equation:restriction}
	\Tor^1_{\Oc_X}(\Omega_X,\Oc_{\tilde{E}}) \to 
	\Nc^\vee_f|_{\tilde{E}} \to \pi^* \Omega_{E \times \Pbb^1} \to 
	\Omega_X|_{\tilde{E}} \to 0
\end{equation}
where $\pi\from \tilde{E} \to E \times \Pbb^1$ is the restriction of $f$ to $\tilde{E}$.
We can compute the extremal terms of the sequence as follows.
\begin{claim}
\label{claim:tor computation}
The sheaf
$\Tor^1_{\Oc_X}(\Omega_X,\Oc_{\tilde{E}})$ vanishes, and
$\Omega_X|_{\tilde{E}} = \Omega_{\tilde{E}} \oplus k_q$.
\end{claim}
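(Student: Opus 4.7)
The plan is that the whole claim is a purely local question at the node $q$: away from $q$, the component $\tilde{E}$ is an open subscheme of $X$ (since $C$ and $\tilde{E}$ meet only at $q$), so $\Oc_{\tilde{E}}$ is $\Oc_X$-flat there, killing $\Tor^1$, and $\Omega_X|_{\tilde{E}}$ restricts to $\Omega_{\tilde{E}}$. So I only need to check both statements in the completed local ring at $q$, which is the standard node
\[
R := \widehat{\Oc}_{X,q} \cong k[[x,y]]/(xy),
\]
with $\tilde{E}$ cut out by $(x)$ (hence $\widehat{\Oc}_{\tilde{E},q} = k[[y]]$) and $C$ by $(y)$.

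For $\Omega_X|_{\tilde{E}}$, the module of Kähler differentials has the presentation
\[
\Omega_{R/k} = (R\,dx \oplus R\,dy)/(x\,dy + y\,dx),
\]
coming from $d(xy)=0$. Tensoring with $R/(x) = k[[y]]$ sets $x=0$ and collapses the single relation to $y\,dx = 0$. The $dy$-summand becomes the free $k[[y]]$-module $k[[y]]\,dy = \Omega_{\tilde{E}}$, while the $dx$-summand becomes $(k[[y]]/(y))\,dx$, which is exactly the skyscraper $k_q$. This gives $\Omega_X|_{\tilde{E}} \cong \Omega_{\tilde{E}} \oplus k_q$, and glues trivially with the identification on the complement of $q$.

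For the Tor, use the $2$-periodic minimal free resolution of $R/(x)$ coming from $xy=0$:
\[
\cdots \xrightarrow{\ y\ } R \xrightarrow{\ x\ } R \xrightarrow{\ y\ } R \xrightarrow{\ x\ } R \longrightarrow R/(x) \longrightarrow 0.
\]
Applying $\Omega_X \otimes_R -$, the stalk of $\Tor^1_{\Oc_X}(\Omega_X, \Oc_{\tilde{E}})$ at $q$ is computed as $\ker(x)/\mathrm{im}(y)$ on $\Omega_{X,q}$. A direct check in the presentation above—using the normal form $R = k[[x]] \oplus y\,k[[y]]$ and the lone relation $x\,dy = -y\,dx$—shows that any element $\alpha\,dx + \beta\,dy$ killed by $x$ forces $\alpha \in (y)$ and $\beta$ to lie in the image of multiplication by $y$, so $\ker(x) = \mathrm{im}(y)$ on $\Omega_{X,q}$ and the Tor vanishes at $q$.

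The main obstacle is the Tor vanishing, which requires a careful bookkeeping in the local module $\Omega_{R/k}$; conceptually it reflects the well-known fact that the only torsion of $\Omega_X$ at a node is transverse to each branch and so contributes to the \emph{cokernel} $\Omega_X|_{\tilde{E}} \twoheadrightarrow \Omega_{\tilde{E}}$ (the $k_q$ summand), not to the kernel of $\Omega_X \otimes I_{\tilde{E}} \to \Omega_X$. The computation of $\Omega_X|_{\tilde{E}}$ itself is routine from the presentation.
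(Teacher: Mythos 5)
Your proof is correct, and for the $\Tor^1$ vanishing it takes a genuinely different route from the paper's. Both arguments localize at the node and use the same presentation $\Omega_R = (R\,dx \oplus R\,dy)/R(y\,dx + x\,dy)$ of the Kähler differentials, and the computation of $\Omega_X|_{\tilde{E}} = \Omega_{\tilde{E}} \oplus k_q$ is the same. But for the Tor the paper instead takes the length-one free resolution of $\Omega_R$ itself, $0 \to R \xrightarrow{(y,x)} R^2 \to \Omega_R \to 0$, tensors with $R/(x) = k[[y]]$, and observes that the resulting map $R/(x) \xrightarrow{\,y\,} (R/(x))^2$ is injective because $y$ is a non-zerodivisor in $k[[y]]$; the Tor vanishing drops out in one line. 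You resolve the \emph{other} argument, $R/(x)$, by the infinite $2$-periodic complex $\cdots \xrightarrow{x} R \xrightarrow{y} R \xrightarrow{x} R$, and compute $\ker(x)/\mathrm{im}(y)$ in $\Omega_{R}$ directly—invoking symmetry of $\Tor$. This works and is a legitimate alternative, but it is appreciably more effort, and one small caution about the way you phrased it: $x\omega = 0$ does not literally force $\beta \in (y)$ in your chosen coordinates; it forces $\alpha \in (y)$ and $\beta \equiv xs \pmod{(y)}$ with $s \in k[[x]]$, and only after trading $xs\,dy \equiv -s_0 y\,dx$ via the relation do you see that the \emph{class} of $\omega$ lies in $y\Omega$. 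Since you explicitly flag the use of the relation, the intent is clearly right, but the sentence as written conflates the representative with the class. The paper's choice of which module to resolve avoids this bookkeeping entirely, which is the main thing its approach buys you.
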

\begin{proof}
We only have to check this around the node $q$. We replace $X$ by $\Spec R$, for 
$R=k[x,y]/(xy)$, and $\tilde{E}$ by the $x=0$ branch. Then $\Omega_R$ has the free
resolution
\[
	0 \to R \langle ydx +x dy \rangle \to R \langle dx, dy \rangle \to \Omega_R \to 0
\]
Tensoring with $R/(x)$, we get the complex
\[
	0 \to R/(x) \langle ydx \rangle \to R/(x) \langle dx,dy \rangle
\]
This is exact at $R/(x) \langle ydx \rangle$, and hence $\Tor^1$ vanishes. The cokernel
is $R/(x) \langle dy \rangle \oplus R/(x,y) \langle dx \rangle$, which implies that $\Omega_X|_{\tilde{E}} = \Omega_{\tilde{E}} \oplus k_q$.
\end{proof}

We use \Cref{claim:tor computation} to simplify the sequence (\ref{equation:restriction}),
and compare it to the normal sheaf sequence of $\pi\from \tilde{E} \to E \times  \Pbb^1$
as follows.
\[
	\xymatrix{
	0 \ar[r] & \Nc^\vee_f|_{\tilde{E}} \ar[r] \ar[d]& 
	\pi^* \Omega_{E\times \Pbb^1} \ar[r] \ar[d] & 
	\Omega_{\tilde{E}} \oplus k_q \ar[d] \ar[r] & 0 \\
	0 \ar[r] & \Nc^\vee_{\pi}  \ar[r] 
	& \pi^* \Omega_{E\times \Pbb^1} \ar[r] &
	\Omega_{\tilde{E}} \ar[r] & 0 
	}
\]
The snake lemma gives us the exact sequence
\[
	0 \to \Nc^\vee_f|_{\tilde{E}} \to \Nc^\vee_{\pi} \to k_q \to 0
\]
Hence, it is enough to show that $\Nc^\vee_{\pi}= \Oc_{\tilde{E}}$. This
in turn follows from the exact sequence
\[
	0 \to \Nc_{\pi}^\vee \to \pi^* \Omega_{E \times \Pbb^1} \to \Omega_{\tilde{E}} \to 0
\]
since
$\pi^* \Omega_{E\times \Pbb^1}= \Omega_{\tilde{E}} \oplus \Oc_{\tilde{E}}$,
and the map  to $\Omega_{\tilde{E}}$ is the projection onto the first factor.
\end{proof}

\subsection{Covers of \texorpdfstring{$E$}{E} of low gonality} 
\label{sub:cover_of_E_of_low_gonality}
In the proof of \Cref{lemma:dimension-alpha}, we left two details to be checked later:
that the general curve parametrized by $V_{df+Ne,g}(\alpha,0)$ does not have tacnodes, for $d=2$,
and that it does not have triple points, for $d=3$. Fortunately, 
we can deal with these cases directly through ad hoc methods,
as follows.

\subsubsection{The hyperelliptic locus \texorpdfstring{($d=2$)}{}} 
\label{ssub:the_hyperelliptic_locus}
The intersection data with $E_0$ is an effective divisor $D=p_1+p_2$
on $E_0$, which is fixed. This defines a degree $2$ map
$\phi\from E \to \Pbb^1$. We will translate our problem of studying
 curves in $E \times  \Pbb^1$ to studying curves in $\Pbb^1\times \Pbb^1$,
 using the map
 \[
 	\Phi = (\phi, id)\from  E\times \Pbb^1 \to \Pbb^1 \times \Pbb^1
 \]

 The divisor $D=p_1+p_2$ is a fiber of $\phi$. Let $p_0=\Phi(p_1)=\Phi(p_2)$.

Let $C \subset E \times  \Pbb^1$ be a curve parametrized by 
$V_{df+Ne,g}(\alpha,0)[p_1,p_2]$.
Then the map $\Phi$ restricted to $C$ is two to one. Indeed,
every intersection with $E \times  \set{t}$ is linearly equivalent to
$D$, and hence a fiber of $\phi$.

Let $\overline{C}= \Phi(C)$ be its image. It has class $(N,1)$ in $\Pbb^1 \times \Pbb^1$,
and is a smooth rational curve. Note that $p_0 \in \overline{C}$. 

The map $\phi\from E \to \Pbb^1$ has four branch points. Set 
\[
	D= \set{\text{branch point of }f} \times \Pbb^1 \subset \Pbb^1 \times \Pbb^1
\]

Away from $D$, the map $\Phi$ is étale. Hence, the curve $C$ can be singular only along
the preimage of $D$. We can relate the singularity of $C$ with the tangency index of 
$\overline{C}$ with $D$. We exhibit the first cases in \Cref{table:hyperelliptic}.
\begin{table}[t]
\begin{tabular}{lccc}
\toprule
\multicolumn{2}{c}{Singularity of $C$} & 
\multicolumn{2}{c}{Tangency of $\overline{C}$ with $D$}\\
\cmidrule(lr){1-2} \cmidrule(lr){3-4}
Type & Local Equation & Multiplicity & Local Equation \\ 
\midrule
Smooth & $x=y^2$ & $1$ & $x=y$ \\
Node & $x^2=y^2$ & $2$ & $x^2=y$ \\
Cusp & $x^3=y^2$ & $3$ & $x^3=y$ \\
Tacnode & $x^4=y^2$ & $4$ & $x^4=y$\\
\bottomrule
\end{tabular}
\caption{A dictionary between tangency conditions of $\overline{C}$ with $D$,
and singularities of $C \subset E \times \Pbb^1$.}
\label{table:hyperelliptic}
\end{table}


We will constrain the tangency profile of $\overline{C}$ with $D$. Let 
\begin{align*}
	R &= \sum_{p\in D} \left( \mult_p(\overline{C} \cdot D) - 1 \right) \\
	&= \overline{C} \cdot D  - \card{\overline{C} \cap D} 
	= 4N -\card{\overline{C} \cap D}
\end{align*}
This is the total ramification of $\overline{C} \to \Pbb^1$ over the four 
branch points of $\phi\from E \to \Pbb^1$.
 Let $\punctured{E}$ (resp. $\punctured{\left(\Pbb^1 \right)}$) be the complement of
 the ramification (resp. branch) points of $\phi$. Let $\punctured{C}$
 (resp. $\punctured{\overline{C}}$) be the corresponding preimage.
 Using Riemann--Hurwitz, we can count the ramification as follows.
 \begin{align*}
 2g-2 &= \Ram(\phi\from C \to E) \\
 &\geq \Ram( \phi\from \punctured{C} \to \punctured{E}) \\
&=2 \Ram( \punctured{\overline{C}} \to \punctured{\left(\Pbb^1 \right)}) \\
&= 2 \left( \Ram(\overline{C} \to \Pbb^1) - R \right) \\
& = 2 ( 2N-2 -R) =2( \card{\overline{C} \cap D} -2N -2)
 \end{align*}

 Equality holds
 only if $C \to E$ is not branched over the four ramification points of $\phi$.
 However, if $\overline{C}$ had more than simple
  tangencies along $D$, then $C \to E$ would be ramified there, as we
  can see from the local equations in \Cref{table:hyperelliptic}.
Hence, if equalities hold, 
all tangencies of $\overline{C}$ with $D$ are simple, 
and the curve $C \subset E \times  \Pbb^1$ is nodal. 

We can rewrite the inequality 
 as
 \begin{equation}
 \label{eq:Q_bound}
 	\card{\overline{C} \cap D} \leq 2N+g+1
 \end{equation}

We now invoke \Cref{theorem:deftheory} to bound the dimension in
which $\overline{C} \subset \Pbb^1 \times \Pbb^1$ varies. Let $\Sigma$ be $\Bl_{p_0} \Pbb^1 \times \Pbb^1$, 
the blow up of $\Pbb^1 \times \Pbb^1$ at $p_0$, and set $\tau = Nf_1 + f_2 -e$,
where $e$ is the exceptional divisor.
We use the same $D$ as above, and set $\Omega= \emptyset$. By \cref{eq:Q_bound},
we may set $b=2N+g+1$. We have
\begin{align*}
	\gamma &= - (K_{\Bl_{p_0} \Pbb^1\times \Pbb^1} +D)\cdot \overline{C} + 2N+g+1\\
	&= -(-2f_1 + 2f_2 +e)(Nf_1 +f_2 -e) + 2N+g+1 \\
	&= -(2N-2+1)+2N+g+1 = g+2
\end{align*}
And hence the dimension bound
\[
	\dim V_{Ne+2f,g}(\alpha, 0) \leq \gamma + 0 -1 = g+1
\]
On the other hand, by \Cref{lemma:dimension_lower_bound}, the opposite inequality
holds as well. Hence, all equalities hold, including in \cref{eq:Q_bound}. As 
$\gamma \geq 2$, \Cref{theorem:deftheory} tell us that equality must hold
in (\ref{eq:Q_bound}), and hence,
$C$ is nodal, as we wanted to show.


\subsubsection{The trigonal locus \texorpdfstring{($d=3$)}{}} 
\label{ssub:the_trigonal_locus}
We now deal with the case of $V_{3f+Ne,g}(\alpha,0)$. We want to show that the general point 
corresponds to a nodal curve, and we just have to eliminate the case of triple points.

The tangency conditions with $E_0$ define a fixed divisor $D=p_1+p_2+p_3$ on $E_0$.
 This divisor defines an embedding of $E \into \Pbb^2$ as a smooth cubic.
  Each intersection $C \cap \left(E \times \set{t}\right)$ maps to three collinear
points in $E \subset \Pbb^2$. We think of this as a map 
$\phi\from \Pbb^1 \to \dual{\Pbb^2}$
sending $t$ to the line joining the three collinear images of 
$C \cap \left(E \times \set{t}\right)$.
Note that $\phi(0)$ goes to the fixed line $\ell_0$ connecting the images of 
$p_1,p_2$ and $p_3$.

Conversely, given the map $\phi$, we can reconstruct $C$ as the intersection of
$E \times \Pbb^1 \subset \Pbb^2 \times \Pbb^1$ with 
the ruled surface in $\Pbb^2 \times \Pbb^1$ traced out by the lines $\phi(t)$
as we vary $t \in \Pbb^1$. See \cref{figure:trigonal curve construction- fig13}.

\begin{figure}
\centering
\includegraphics{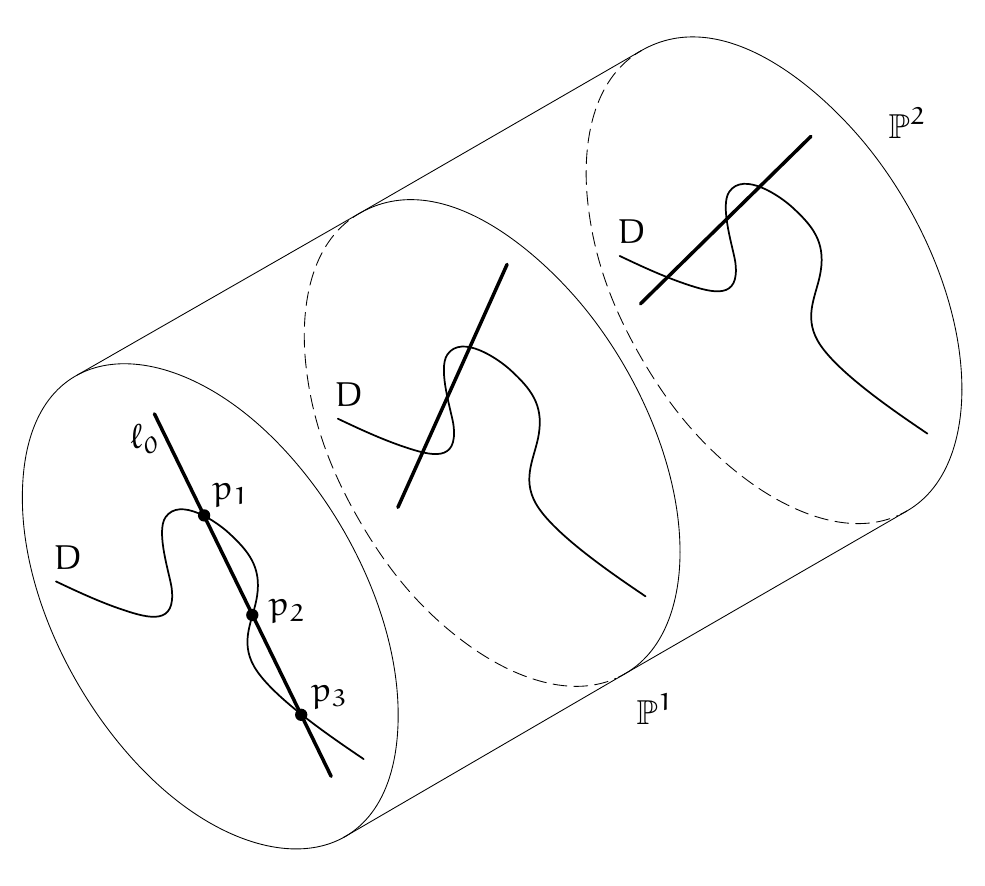}
\caption{Recovering the curve $C$ as the intersection of $D \times \Pbb^1$ 
and the ruled surface traced out by the lines $\phi(t)$.}
\label{figure:trigonal curve construction- fig13}
\end{figure}

We can relate the singularities of $C$ with the tangency of the image
of $\phi$ along $\dual{E} \subset \dual{\Pbb^2}$, the dual to $E \subset \Pbb^2$.
The dual $\dual{E}$ is a degree $6$ curve with $9$ cusps (one for each flex of $E$). 

If $C$ is singular at $(p,t) \in E \times \Pbb^1$, then $\phi(t)$ lies on 
$\dual{E}$---specifically,
at the point corresponding to the tangent to $E$ at $p$. Conversely,
if we denote by $\widetilde{C} \to C$ the normalization map, we have:
\begin{itemize}
	\item If $\phi(\Pbb^1)$ is transverse to $\dual{E}$ at a point $\phi(t)$,
	then $\widetilde{C} \to \Pbb^1$ is simply branched over $t$, and $C$ is smooth at
	the corresponding point of $E \times \Pbb^1$.
	\item If $\phi(\Pbb^1)$ is simply tangent to $\dual{E}$ at a point $\phi(t)$,
	then $C \subset E \times \Pbb^1$ is nodal at the corresponding point, 
	and $\widetilde{C} \to \Pbb^1$ is not branched over $t$.
	\item If $\phi(\Pbb^1)$ does not contain any of the $9$ cusps of $\dual{E}$,
	then $C$ has no triple points.
\end{itemize}

Hence, it is enough to show that the image of $\phi(\Pbb^1)$ has at most
simple tangencies with $\dual{E}$, and does not pass through any of the cusps.
We will establish that by invoking once more  \Cref{theorem:deftheory},
now to bound the dimension in which the image of $\phi$ can
vary.

By Riemann--Hurwitz, $\widetilde{C} \to \Pbb^1$ has $2g+4$ branch points. Let $T$
be the number of transverse intersections of $\phi(\Pbb^1)$ and $\dual{E}$.
Hence $T \leq 2g+4$. Moreover,
\begin{align*}
	6N &= \phi(\Pbb^1) \cdot \dual{E}  \\
	&= \sum_p \mult_p(\phi(\Pbb^1) \cdot \dual{E}) \\
	&\geq T + 2(\card{\phi(\Pbb^1)\cap \dual{E}} - T)  \\
	&= 2 \card{\phi(\Pbb^1)\cap \dual{E}} - T \\
	&\geq 2 \card{\phi(\Pbb^1)\cap \dual{E}} - (2g+4)
\end{align*}
With equality holding only if at most simple tangencies occur.
Rearranging, we get
\begin{equation}
\label{eq:trigonal-bound-Q}
	3N +g +2 \geq \card{\phi(\Pbb^1)\cap \dual{E}} 
\end{equation}

We set $\Sigma= \Bl_{\ell_0} \dual{\Pbb^2}$. The class $\tau$ of the proper
transform of $\phi(\Pbb^1)$ is $Nh-e$, where $e$ is the class of the
exceptional divisor.
 Using the notation of
\Cref{theorem:deftheory}, and setting $b=3N+g+2$, we have
\begin{align*}
	\gamma&= -(K_{\Sigma} + \dual{E})\cdot \tau + 3N+g+2 \\
	&=-(3h+e)(Nh-e)+3N+g+2 \\
	&= -(3N+1)+3N+g+2 = g+1
\end{align*}

So the image of $\phi(\Pbb^1)$ can vary in at most $0-1+g+1=g$ 
dimensions. Adding $\dim (\Aut(\Pbb^1,0))=2$ to remember the map 
$\phi\from \Pbb^1 \to \dual{\Pbb^1}$ (we know that $0$ maps to $\ell_0$),
we get that
the dimension of $V_{3f+Ne, g}(\alpha,0)$ is at most $g+2$.
However, by \Cref{lemma:dimension_lower_bound}, the opposite inequality
holds as well. Hence, equality in (\ref{eq:trigonal-bound-Q}) holds, 
and the original curve $C$ is nodal.

\section{Hyperplane Sections of \texorpdfstring{$\Vdg$}{the Severi Variety}} 
\label{sec:hyperplane_sections}

In this section, we will prove \Cref{theorem:simple_hyperplane_section_of_severi,theorem:simple_hyperplane_to_hurwitz} stated in the introduction,  
and \Cref{theorem:hyperplane_section_of_severi} which is a generalization of them.
 Let us briefly remind
 ourselves of the setup. We fix a fiber $E_0 \subset E \times \Pbb^1$, 
 and let $p \in E_0$ be a general point. We want to characterize the irreducible 
 components of
  of $\Vdg \cap H_p$. Let $W$ be one of these components, and $Y_0 \subset E \times  \Pbb^1$ 
  be the curve corresponding to a general point in $W$.

   If the curve $Y_0$ does not contain $E_0$, it is easy to describe what $W$ must be---we will
do that in \Cref{ssub:proof_of_theorem_ref_theorem_hyperplane_section_of_severi}.
Let us assume, then, that the curve $E_0$ is contained in $Y_0$.
Now it is much harder to analyze the residual curve. 
 Our main tool to get our hands on its geometry is the 
following construction.

\subsection{Nodal reduction} 
\label{sub:nodal_reduction}

Pick a general point $Y_0 \subset E \times  \Pbb^1$ in $W$. 
Choose a general arc $\Delta$ in $\Vdg$ approaching the point $(Y_0) \in W$.
We may assume that $\Delta$ is smooth by taking its normalization.

We get a flat family of curves mapping to $E\times \Pbb^1$,
\[
	\xymatrix{\Yc \ar[r]^-{\phi} \ar[d]^\pi & E \times \Pbb^1 \\
	\Delta}
\]
where the generic fiber is a general point in $\Vdg$, and the central fiber 
is a generic point in $W$. Let $\Yc^\times$ be the restriction of $\Yc$ over
the punctured base $\Delta^\times$.

The preimage of $E_0 \subset E \times \Pbb^1$ under $\phi$ on the punctured family $\Yc^\times$
 will consist of some multisections.
 After a a base change, we may
assume that the multisections are actually sections. Being in $\Vdg$, there will be $a$ of those which
get contracted under the map $\phi$, and $b$ that are allowed to vary---that is, are not necessarily contracted
by $\phi$. To avoid overburden our notation even more, we will refer to these as
$A$-sections and $B$-sections.

At this point there is not much we can say about the central fiber. However,
if we perform nodal reduction on $\Yc \to \Delta$, we can replace it by a more
tractable object. Let us denote by $\Cc \to B$  the family obtained by nodal reduction,
and $f\from B \to \Delta$ the corresponding base change map. Here is what we can assume about
it.

\begin{hypothesis}
The family $\Cc \to B$ satisfies the following properties.
\label{hypothesis:nodal-reduction}
\begin{itemize} 
	\item The total space $\Cc$ is smooth.
	\item The fibers $C_b$ are equal to the corresponding fiber $Y_{f(b)}$
	of the original family $\Yc \to \Delta$, as long as $b\neq 0$.
	\item The (scheme-theoretic) central fiber $C_0$ is nodal (and therefore
	reduced).
	\item The maps $\phi\from C_b \to E\times \Pbb^1$ are semistable.
	\item The preimage of $E_0$ is a union of sections of $\Cc \to B$, plus
	components in the central fiber. These sections are disjoint on the 
	general fiber, but are allowed to meet along the central fiber.
	We will still call them A-sections and B-sections, as above.
	\item The family $\Cc \to B$ is the minimal one respecting the conditions above.
\end{itemize}
\end{hypothesis}

See \cref{figure:nodal reduction family - fig10} for a diagram of the family $\Cc \to B$
away from the central fiber. Our goal is to determine how the central fiber looks like, and
where the A and B-sections meet it.

\begin{figure}
\centering
\includegraphics{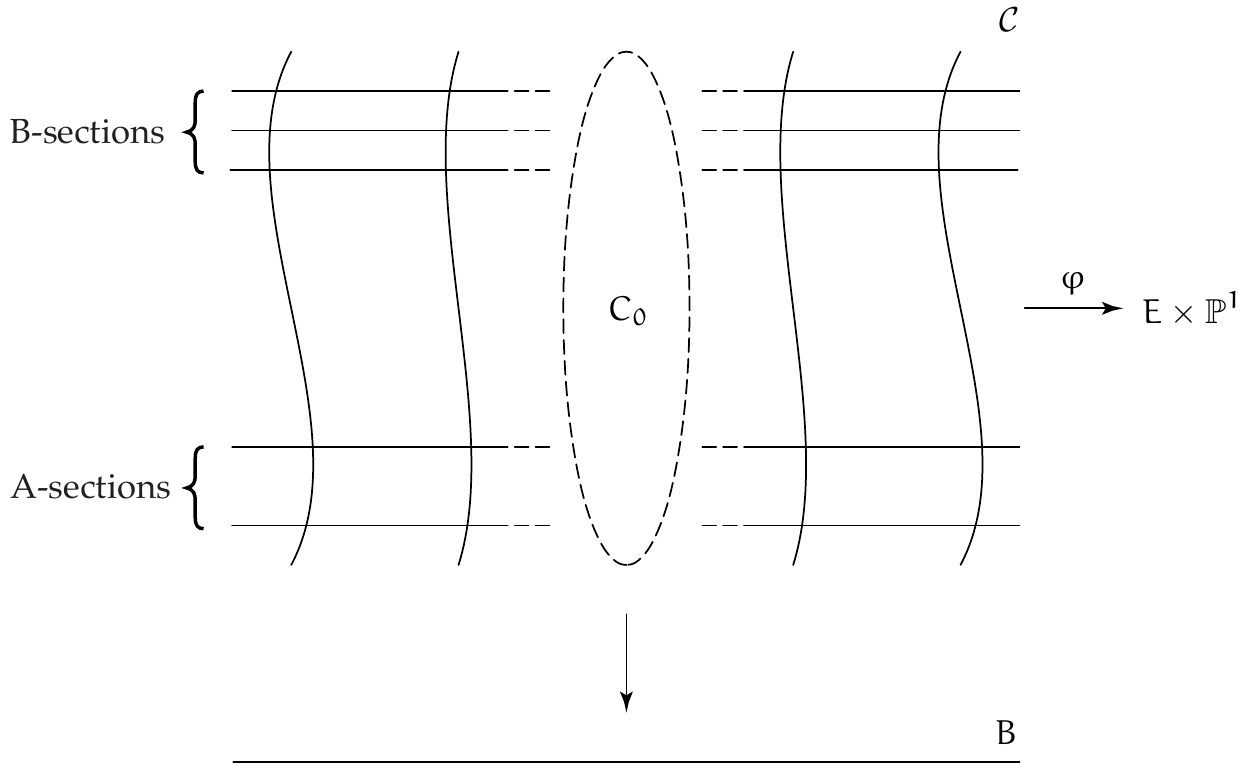}
\caption{A diagram of the family $\Cc \to B$ away from the central fiber.}
\label{figure:nodal reduction family - fig10}
\end{figure}

The idea of the proof of \Cref{theorem:simple_hyperplane_section_of_severi}
is simple, but combinatorially challenging. There are
two opposing forces controlling the central fiber. On one hand, it varies in
a large dimension, and hence its singularities should be mild, and few extra 
conditions should be imposed. That is, the dimension count forces
 the central fiber to be ``simple''. 
 On the other hand, from the parametric point of view, the flatness of
the family $\Cc$ says the central fiber has to have arithmetic genus $g$, 
and hence it cannot be \emph{too} simple. 
We will determine the equilibrium positions,
the conditions under which the two forces precisely cancel out. 
And it will turn out that these conditions 
completely describe the components of the hyperplane section.

To make the structure of the proof transparent, we will break up the argument
 into a chain of small claims. Each one will be an inequality expressing 
the tension between the two points of view. As we compose this long chain of
inequalities, we will conclude that all equalities must hold! We then come back
and use the equality conditions to pin down the possible components of the hyperplane 
section.

Again, we do not prove that all of these components do appear in a 
hyperplane section, or even if they do appear, we do not say with what multiplicity.
Here we only compile a list of \emph{suspects}: any component $W$ must be in this
list.

The difficulty is that our starting point, the central fiber $C_0$,
is an arbitrary nodal curve. Just notionally this is already very cumbersome. We
have to design our notation to express all the arguments in this generality.

We divide the components of $C_0$ in three groups, forming nodal curves each:
\begin{itemize}
 	\item We call $\widetilde{E}$ the curve which is the union of irreducible components 
 	of $C_0$ that dominate $E_0$ under the map $\phi\from  C_0 \to E \times \Pbb^1$.
 	\item Let $Z$ be the union of the components that get contracted to a point in
 	$E_0$ by the map $\phi_0$.
 	\item Let $X$ be the union of the remaining components.
 \end{itemize} 

Let $T$ be the number of connected 
 components of $Z$ meeting both $X$ and $\widetilde{E}$, 
plus the number of nodes between $X$ and $\widetilde{E}$. For an example, see \cref{figure:central fiber - fig11}.

\begin{figure}
\centering
\includegraphics{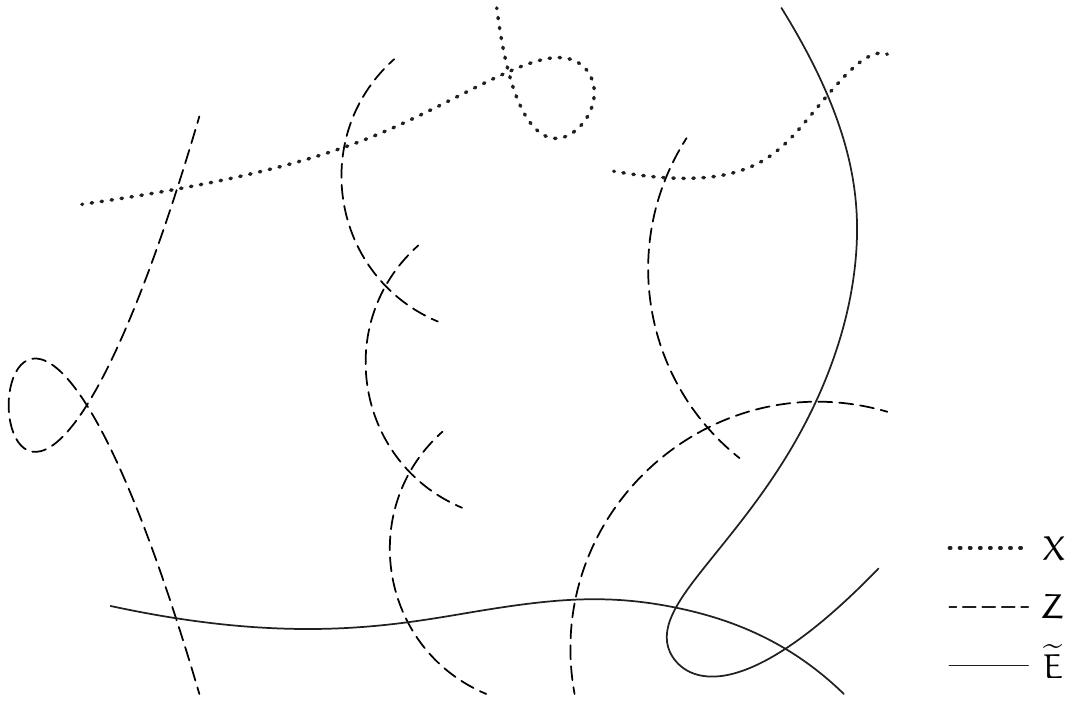}
\caption{An example of an a priori possible central fiber $C_0$, with its components grouped as $\tilde{E}$,
$Z$ and $X$. In this example, $T=4$. The central fibers that do arise will turn out to be much simpler than
this, as we will see.}
\label{figure:central fiber - fig11}
\end{figure}


\subsection{The genus bound} 
\label{sub:the_genus_bound}
In this section we will leverage the flatness of the family $\Cc \to B$ to
bound the arithmetic genus of $X$ in terms of $g$ and $T$.

Start by enumerating the irreducible components of $Z$ by $Z_1,\ldots, Z_m$.
To help us keep  track of the combinatorial data of $C_0$, we introduce an auxiliary 
graph $G$. We obtain $G$ by starting with the \emph{dual graph} of 
the nodal curve $C_0$, and then collapsing the subgraph corresponding to components of
$X$ (resp. $\widetilde{E}$) into a single vertex. Hence, it has $m+2$ 
vertices---one for each $Z_i$, and one for $\widetilde{E}$ and one for
$X$, even though $\widetilde{E}$ and $X$ may well be reducible. The edges of $G$ correspond
to the nodes of $C_0$ which are not \emph{internal} to $X$ or 
$\widetilde{E}$---that is, we 
don't consider nodes which connect two components of $X$, or two components of $\widetilde{E}$.
In particular, there are no loops in $G$ at the vertices corresponding to 
$\widetilde{E}$ and $X$.
We will refer to the nodes corresponding to edges in $G$ as \emph{external} nodes. Note that 
the number of edges in $G$ is equal to the number of external nodes. For an example, see
\cref{figure:nodal reduction family - fig12}.

\begin{figure}
\centering
\includegraphics{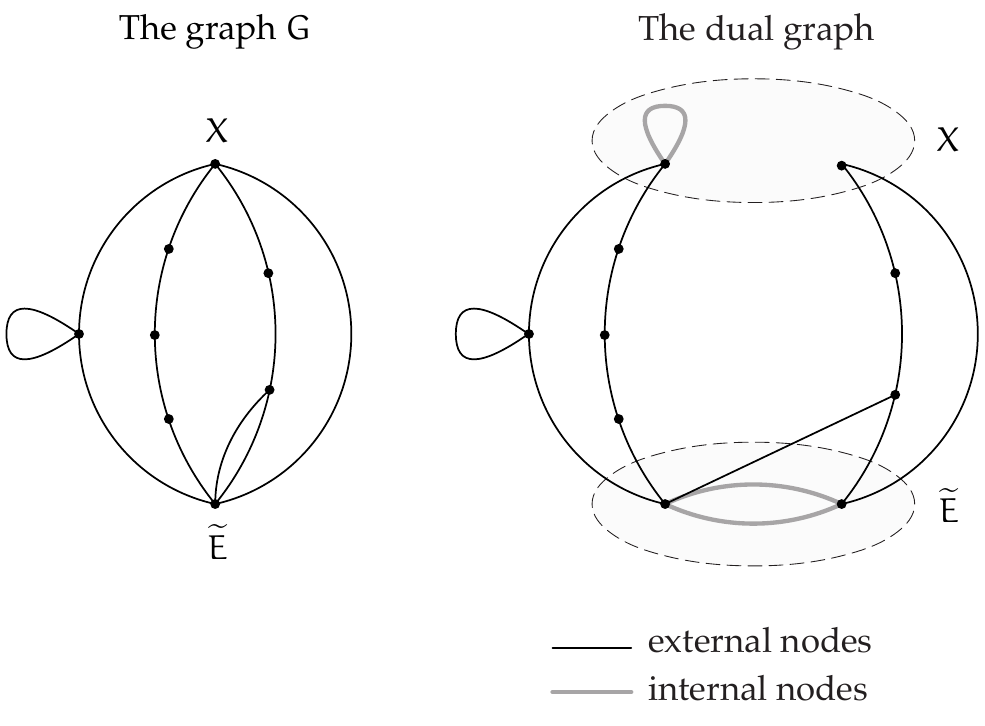}
\caption{The dual graph of the example $C_0$ in \cref{figure:central fiber - fig11}, 
and the respective graph $G$ obtained by contracting the subgraphs corresponding to
$X$ and $\tilde{E}$.}
\label{figure:nodal reduction family - fig12}
\end{figure}

\begin{claim}
\label{claim:T}
Let $d_X$ (resp. $d_{\widetilde{E}}$) be the degree of $X$ (resp. $\widetilde{E}$)
in the graph $G$. 
Then,
$T\leq d_X$  and $T \leq d_{\widetilde{E}}$.
If both equalities hold, then every connected component of $Z$
meets $X$ and $\widetilde{E}$ exactly once each.
\end{claim}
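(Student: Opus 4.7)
The plan is a direct combinatorial counting argument in the graph $G$, paired with a connectedness remark to handle components of $Z$ that do not touch $X$ or $\widetilde{E}$.

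First I would unpack the definition of $G$ to see that $d_X$ literally counts external nodes involving $X$. Writing $n_{X,\widetilde{E}}$ for the number of nodes between $X$ and $\widetilde{E}$, and $n_{X,Z}$ for the number of nodes where $X$ meets some $Z_i$, we have
\[
d_X \;=\; n_{X,\widetilde{E}} + n_{X,Z},
\]
and the analogous decomposition for $d_{\widetilde{E}}$. The bound on $T$ is then almost formal: every connected component of $Z$ that meets $X$ does so at one or more external nodes, and these nodes are pairwise disjoint as the components of $Z$ are disjoint. Hence the number of components of $Z$ that meet $X$ is at most $n_{X,Z}$, and since the components meeting both $X$ and $\widetilde{E}$ form a subset of those meeting $X$,
\[
T \;=\; n_{X,\widetilde{E}} + \#\{Z' \subset Z : Z' \cap X \neq \emptyset \text{ and } Z' \cap \widetilde{E} \neq \emptyset\} \;\leq\; n_{X,\widetilde{E}} + n_{X,Z} \;=\; d_X.
\]
The same argument with the roles of $X$ and $\widetilde{E}$ swapped gives $T \leq d_{\widetilde{E}}$.

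Next I would analyze the equality case. The inequality $T \leq d_X$ is tight exactly when (i) every node between $X$ and $Z$ lies on a component of $Z$ that also meets $\widetilde{E}$ (no component of $Z$ meets $X$ but misses $\widetilde{E}$), and (ii) each component of $Z$ that meets both $X$ and $\widetilde{E}$ touches $X$ at exactly one node. Running the symmetric analysis for $T = d_{\widetilde{E}}$ gives the mirror statements. Combining, every component of $Z$ that meets either $X$ or $\widetilde{E}$ must meet both, with exactly one node on each side.

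The one subtle point, and probably the only real obstacle, is ruling out connected components of $Z$ that meet neither $X$ nor $\widetilde{E}$; these do not affect $T$ or the degrees and so the counting argument alone says nothing about them. Here I would invoke connectedness of the central fiber $C_0$, which follows from flatness of the nodal reduction $\Cc \to B$ together with connectedness of the generic (integral) fiber. A component of $Z$ disjoint from $X\cup \widetilde{E}$ would split off as a connected component of $C_0$, forcing $C_0 \subset Z$; but since the generic fiber dominates $\mathbb{P}^1$ at least one component of $C_0$ must dominate $\mathbb{P}^1$, while every component of $Z$ is contracted to a point of $E_0$, a contradiction. Hence no such floating component of $Z$ exists, and the conclusion of the claim follows.
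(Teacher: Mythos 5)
Your proof is correct and takes essentially the same combinatorial approach as the paper: unpack $d_X$ and $d_{\widetilde{E}}$ as counts of external nodes incident to $X$ and $\widetilde{E}$, compare with $T$, and read off the equality conditions. The paper's own proof is terse (it just lists the two strictness scenarios) but amounts to the same counting.

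The one genuine addition in your write-up is the final step handling connected components of $Z$ meeting \emph{neither} $X$ nor $\widetilde{E}$. Such a component contributes nothing to $T$, $d_X$, or $d_{\widetilde{E}}$, so the counting argument is silent about it, yet the equality conclusion of the claim (``every connected component of $Z$ meets $X$ and $\widetilde{E}$ exactly once each'') quantifies over all of them. You correctly close this gap by invoking connectedness of $C_0$ (via Stein factorization for the proper flat family $\Cc \to B$ with smooth total space and connected general fiber), which forces such a component to be all of $C_0$, an immediate contradiction since $C_0$ cannot be entirely contracted. The paper leaves this point implicit (it surfaces indirectly in the proof of the claim on the $g(Z_i)$ bound, where a $Z_i$ disconnected from the rest of $C_0$ is ruled out), so your version is the more complete one.
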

\begin{proof}
This follows directly from the definition of $T$. The inequality is strict in
two situations: if there are connected components of $Z$ meeting only one of
$X$ and $\widetilde{E}$, or if there are connected components of $Z$ meeting 
$X$ or $\widetilde{E}$ in more than one node.
\end{proof}

\begin{claim}
\label{claim:g_Z}
Let $d_{Z_i}$ be the number of nodes on $Z_i$ in $C_0$, that is, the 
degree of $Z_i$ in the graph $G$. Then
\[
	0 \leq g(Z_i)-1+\frac{d_{Z_i}}{2}
\]
with equality only if $Z_i$ is rational with exactly two nodes on it.
\end{claim}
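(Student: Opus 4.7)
The plan is to split on the geometric genus $g(Z_i)$ of the normalization of $Z_i$ and apply two independent ingredients: the semistability of the map $\phi\from C_0 \to E \times \Pbb^1$ (part of \Cref{hypothesis:nodal-reduction}) and the connectedness of the central fiber $C_0$ (which holds because $C_0$ is obtained by semistable reduction from a family whose general fiber is an integral, hence connected, curve in $E \times \Pbb^1$).

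First I would dispose of the case $g(Z_i) = 0$. Since $Z_i$ is contracted by $\phi$, semistability forces the normalization of $Z_i$ to carry at least two preimages of nodes of $C_0$. Each non-loop edge of $G$ incident to $Z_i$ contributes exactly one such preimage, and each self-loop contributes two, so this semistability requirement is exactly the statement $d_{Z_i} \geq 2$. Substituting gives $g(Z_i) - 1 + d_{Z_i}/2 \geq -1 + 1 = 0$, with equality precisely when $d_{Z_i} = 2$, matching the claim.

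Next I would handle $g(Z_i) \geq 1$; here $g(Z_i) - 1 \geq 0$ is immediate, and it only remains to rule out equality. If $d_{Z_i} = 0$, then $Z_i$ meets no other component of $C_0$, so by connectedness of $C_0$ one would be forced to conclude $Z_i = C_0$. But $Z_i \subset Z$ means $\phi$ contracts all of $C_0$ to a point, whereas $\phi(C_0)$ is the flat specialization of the $\phi(C_b)$, which are curves of fixed non-zero class $df+Ne$, so $\phi(C_0)$ is a genuine $1$-cycle of that class in $E \times \Pbb^1$---a contradiction. Hence $d_{Z_i} \geq 1$, and $g(Z_i) - 1 + d_{Z_i}/2 \geq 1/2 > 0$, giving strict inequality.

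The only delicacy is to pin down the precise semistability convention in use: we need the standard one that every contracted genus-zero component of a semistable map carries at least two special points. Once that is in place the rest is just bookkeeping on $G$, and combining the two cases forces equality to occur only when $g(Z_i) = 0$ and $d_{Z_i} = 2$, i.e., $Z_i$ is rational with exactly two nodes on it, as asserted.
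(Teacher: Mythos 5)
Your argument takes a genuinely different route from the paper's, and the difference matters. The paper does not deduce the inequality from a semistability condition at all; it deduces it from the \emph{minimality} clause of \Cref{hypothesis:nodal-reduction}. Concretely: if $g(Z_i)-1+d_{Z_i}/2<0$ then $Z_i$ is rational and meets the rest of $C_0$ in exactly one node, so $0 = Z_i\cdot C_0 = Z_i^2+1$ gives $Z_i^2=-1$; since $\phi$ already contracts $Z_i$, blowing $Z_i$ down inside the smooth total space $\Cc$ produces a smaller family still satisfying every condition of \Cref{hypothesis:nodal-reduction}, contradicting minimality.

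The delicacy you flag at the end is precisely where a gap appears. The special points on a contracted component of a semistable \emph{map} are the preimages of nodes \emph{together with} the marked points where the $A$- and $B$-sections of \Cref{hypothesis:nodal-reduction} meet $C_0$, and nothing a priori prevents a section from landing on $Z_i$; indeed, ruling that out in the \emph{equality} case is part of the content of \Cref{claim:dim-P}, which comes much later in the argument and for unrelated reasons. So a rational $Z_i$ carrying one node and one section-point has two special points and is perfectly semistable, yet $d_{Z_i}=1$ and the quantity in question equals $-1/2$. Your claim ``$d_{Z_i}\geq 2$'' only follows if marked points are excluded from the count, which is not the standard convention. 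The minimality argument sidesteps this entirely: the $(-1)$-curve contraction is legal whether or not a section passes through $Z_i$, since after contraction the section simply meets the (smooth) attaching point on the residual curve.

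Your handling of $g(Z_i)\geq 1$ is correct and arguably phrased more cleanly than in the paper: you invoke Zariski connectedness of $C_0$ and constancy of the cycle class $\phi_*[C_b]$ to exclude $d_{Z_i}=0$, where the paper only remarks that a disconnected contracted component cannot arise as a limit of birational maps. But for $g(Z_i)=0$ you need minimality (or an explicit argument that no section can hit a one-node $Z_i$), so as written the proof is incomplete.
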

\begin{proof}
This comes from the minimality of the family. If this quantity was negative,
then $Z_i$ would be a rational curve meeting the rest of the 
central fiber at only one point. Then, we compute the intersection pairing on
the surface $\Cc$:
\[
	0=Z_i C_0 =Z_i^2 +1
\]
from which we conclude that $Z_i$ is a ($-1$)-curve. Moreover, being a component
in $Z$, the map $\phi$ contracts $Z_i$. Hence, we could
contract it in $\Cc$ and maintain the conditions of \Cref{hypothesis:nodal-reduction}, contradicting
the minimality of $\Cc$.

If equality holds, then $g(Z_i)=0$ and $d_{Z_i}=2$, as we want, or
$g(Z_i)=1$ and $d_{Z_i}=0$. The latter case does not occur, because $Z_i$ would be a disconnected component of $C_0$
which gets contracted under the map to $E \times  \Pbb^1$, and hence cannot be in the limit of maps 
$C_t \to E\times \Pbb^1$ which are birational onto their image.
\end{proof}

Next, we compute the arithmetic genus of the central fiber. By summing the degrees of all vertices
of $G$, we get twice the number of edges. That is, 
\[
	d_X+d_{\widetilde{E}}+\sum d_{Z_i} = 2 \times \card{\text{edges in }G}
	= 2 \times \card{\text{ external nodes in }C_0}
\]
Hence, we get
\begin{align*} 
 p_a(C_0)-1 &= p_a(X)-1 + p_a(\widetilde{E})-1 + \sum_{i=1}^m \left( g(Z_i)-1 \right) + 
 \card{\text{external nodes in }C_0} \\
	&= p_a(X)-1 + p_a(\widetilde{E})-1 + \frac{d_X+ d_{\widetilde{E}}}{2}
	+\sum_{i=1}^m \left(g(Z_i)-1 + \frac{d_{Z_i}}{2} \right)
\end{align*}

Moreover, by flatness,
\[
 	g-1 = p_a(C_t)-1 =p_a(C_0)-1
 \] 

Summing up, we get the following.
\begin{claim}
\label{claim:g_C_0}
We have,
\[
g-1=p_a(X)-1 + p_a(\widetilde{E})-1 +\frac{d_{\widetilde{E}}+ d_X}{2}
	+\sum_{i=1}^m \left(g(Z_i)-1 + \frac{d_{Z_i}}{2} \right)
\]
\end{claim}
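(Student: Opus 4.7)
The plan is to combine two inputs: the flatness of the family $\Cc \to B$ (which forces $p_a(C_0) = p_a(C_t) = g$ since the generic fiber $C_t$ is smooth of genus $g$), and a direct computation of $p_a(C_0)$ from the dual-graph decomposition into $X$, $Z = \bigsqcup Z_i$, and $\widetilde E$.

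For the arithmetic genus computation, the standard formula for a (connected) nodal curve $D$ with irreducible components $D_1,\ldots, D_r$ reads
\[
p_a(D) - 1 = \sum_{i=1}^r \bigl(p_a(D_i) - 1\bigr) + \#\{\text{nodes of }D\}.
\]
I would apply this to $C_0$, but in two stages. First collapse the internal nodes of $X$ (resp.\ $\widetilde E$) into the arithmetic genera $p_a(X)$ and $p_a(\widetilde E)$, so that the remaining nodes to count are exactly the \emph{external} nodes of $C_0$ in the sense already introduced: nodes connecting two different pieces among $X$, $\widetilde E$, $Z_1,\ldots, Z_m$. This yields
\[
p_a(C_0) - 1 \;=\; \bigl(p_a(X)-1\bigr) + \bigl(p_a(\widetilde E)-1\bigr) + \sum_{i=1}^m \bigl(g(Z_i)-1\bigr) + \#\{\text{external nodes of }C_0\}.
\]

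The second ingredient is a handshake-lemma bookkeeping on the graph $G$: the external nodes of $C_0$ are by construction in bijection with the edges of $G$, and summing the vertex degrees counts each edge twice, so
\[
\#\{\text{external nodes of }C_0\} \;=\; \#\{\text{edges of }G\} \;=\; \tfrac{1}{2}\Bigl(d_X + d_{\widetilde E} + \sum_{i=1}^m d_{Z_i}\Bigr).
\]
Substituting this into the previous display, grouping the $d_{Z_i}/2$ terms with the $g(Z_i)-1$ terms, and using $p_a(C_0) = g$ by flatness, yields exactly the claimed identity. There is no genuine obstacle here; the only subtlety is being careful that "node" means a node of the full curve $C_0$ (so a node internal to $X$ is counted within $p_a(X)$, not again in the external count), which is precisely how the external/internal distinction was set up in the preceding paragraph.
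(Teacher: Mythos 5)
Your proof is correct and follows exactly the paper's argument: decompose $p_a(C_0)-1$ into the contributions $p_a(X)-1$, $p_a(\widetilde E)-1$, $\sum (g(Z_i)-1)$ plus the count of external nodes, apply the handshake lemma on $G$ to rewrite that count as $\tfrac12(d_X + d_{\widetilde E} + \sum d_{Z_i})$, and invoke flatness of $\Cc \to B$ for $p_a(C_0)=g$. One detail worth keeping in mind (which your write-up correctly handles by the Euler-characteristic additivity implicit in the two-stage collapse) is that $X$ and $\widetilde E$ may themselves be disconnected, so the grouping step relies on the nodal-genus formula in its additive form rather than the version stated only for connected curves.
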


 The last ingredient is the following.
\begin{claim}
\label{claim:g_tilde_E}
We have
\[
	0 \leq p_a(\widetilde{E})- 1
\]
with equality only if $\widetilde{E}$ is a disjoint union of smooth genus 1 curves, and
$\widetilde{E}\to E_0$ is unramified.
\end{claim}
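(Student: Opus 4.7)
The plan is to combine Riemann--Hurwitz applied componentwise with the standard formula relating the arithmetic genus of a nodal curve to the geometric genera of its components. Let $\widetilde{E}_1,\ldots,\widetilde{E}_n$ denote the irreducible components of $\widetilde{E}$, let $\widetilde{E}^\nu_j$ be their normalizations, and let $N$ be the number of nodes of $\widetilde{E}$ viewed as a subcurve of $C_0$ (this counts both self-nodes of a single $\widetilde{E}_j$ and nodes joining two distinct components $\widetilde{E}_i,\widetilde{E}_j$; it does not count nodes shared with $X$ or $Z$). From the normalization short exact sequence on $\widetilde{E}$, a direct computation with $\chi(\mathcal{O})$ gives
\[
p_a(\widetilde{E}) - 1 \;=\; \sum_{j=1}^n \bigl(g(\widetilde{E}^\nu_j)-1\bigr) \;+\; N.
\]

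Next I would use the defining property of $\widetilde{E}$: by construction, every $\widetilde{E}_j$ dominates $E_0$ under $\phi$, so the induced map $\pi_j\colon \widetilde{E}^\nu_j \to E_0$ is a finite surjective morphism of smooth curves. Since $g(E_0)=1$, Riemann--Hurwitz reads
\[
2g(\widetilde{E}^\nu_j) - 2 \;=\; \deg(\pi_j)\bigl(2g(E_0)-2\bigr) + \deg R_{\pi_j} \;=\; \deg R_{\pi_j} \;\geq\; 0,
\]
so $g(\widetilde{E}^\nu_j) \geq 1$ for every $j$, with equality if and only if $\pi_j$ is unramified. Substituting this into the previous display makes every summand nonnegative, yielding $p_a(\widetilde{E}) \geq 1$.

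For the equality case, both $\sum_j (g(\widetilde{E}^\nu_j)-1)$ and $N$ must vanish. The vanishing of $N$ forces $\widetilde{E}$ to have no nodes of its own, hence to be the disjoint union of smooth irreducible components $\widetilde{E}_j = \widetilde{E}^\nu_j$; the vanishing of each $g(\widetilde{E}^\nu_j)-1$ together with Riemann--Hurwitz forces each component to have geometric genus one and each map $\pi_j\colon \widetilde{E}_j \to E_0$ to be unramified. This is exactly the desired conclusion. There is no real obstacle here: once one sets up the bookkeeping of components and internal nodes, both inequalities are built into Riemann--Hurwitz and the nodal genus formula, and the characterization of equality is immediate from the equality cases of each ingredient.
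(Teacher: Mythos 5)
Your proof is correct and takes essentially the same approach as the paper, which disposes of this claim in a single sentence by invoking Riemann--Hurwitz for $\widetilde{E}\to E_0$ (remarking only that $\widetilde{E}$ may be disconnected). You have simply made explicit the bookkeeping the paper leaves implicit — the nodal genus formula to reduce to the components' normalizations, then Riemann--Hurwitz on each — and your equality analysis matches.
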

\begin{proof}
This follows from the Riemann--Hurwitz formula applied to $\tilde{E} \to E_0$. Note that $\widetilde{E}$ may
be disconnected.
\end{proof}

We are ready for the following key bound.
\begin{proposition}
\label{proposition:genus_bound}
[The Genus Bound]
We have
\[
	p_a(X)+T \leq g
\]
Equality holds only if $\widetilde{E}$ is smooth, $\widetilde{E}\to E$
is unramified, and $Z$ is a union of chains of rational curves connecting $X$ and
$\widetilde{E}$.
\end{proposition}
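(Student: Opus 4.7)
The plan is to combine the four preceding claims by a direct accounting argument. \Cref{claim:g_C_0} writes
\[
g-1 = (p_a(X)-1) + (p_a(\widetilde{E})-1) + \frac{d_X+d_{\widetilde{E}}}{2} + \sum_{i=1}^m\left(g(Z_i)-1+\frac{d_{Z_i}}{2}\right).
\]
Each of the remaining three claims now contributes a nonnegativity statement for one of these pieces: the pair of bounds $T\leq d_X$ and $T\leq d_{\widetilde{E}}$ from \Cref{claim:T} give $(d_X+d_{\widetilde{E}})/2\geq T$; \Cref{claim:g_Z} gives $g(Z_i)-1+d_{Z_i}/2\geq 0$ for every $i$; and \Cref{claim:g_tilde_E} gives $p_a(\widetilde{E})-1\geq 0$. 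Substituting all three into the identity above yields $g-1\geq p_a(X)-1+T$, which rearranges to the desired inequality.

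For the equality case, every inequality invoked must saturate. Saturation of \Cref{claim:g_tilde_E} forces $\widetilde{E}$ to be a disjoint union of smooth genus one curves with $\widetilde{E}\to E_0$ unramified. Saturation of each instance of \Cref{claim:g_Z} forces each $Z_i$ to be a rational curve carrying exactly two nodes of $C_0$. Finally, $(d_X+d_{\widetilde{E}})/2 = T$ together with $T\leq d_X$ and $T\leq d_{\widetilde{E}}$ forces $T=d_X=d_{\widetilde{E}}$, so by the last sentence of \Cref{claim:T} every connected component of $Z$ meets $X$ and $\widetilde{E}$ in exactly one node each.

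The final step, which I expect to be the main (if mild) obstacle, is to extract the chain structure of $Z$ from these equality conditions. Fix a connected component of $Z$ with $k$ components $Z_{j_1},\ldots,Z_{j_k}$. Each $Z_{j_\ell}$ has total node degree $2$ in $C_0$, contributing $2k$ to the sum of degrees on this component; of these, exactly two are used by the single attachment to $X$ and the single attachment to $\widetilde{E}$, so the remaining $2k-2$ degree contributions come from internal nodes between pairs of $Z_{j_\ell}$s, giving exactly $k-1$ internal edges. A connected graph on $k$ vertices with $k-1$ edges is a tree, and in a tree where each non-leaf vertex has degree $2$ the only possibility is a path. The two leaves are the components meeting $X$ and $\widetilde{E}$ respectively, and the connected component of $Z$ is a chain of rational curves joining $X$ to $\widetilde{E}$, as claimed.
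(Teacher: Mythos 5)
Your proof is correct and follows exactly the same route as the paper: sum the inequalities in the four preceding claims and then analyze what happens when all of them are saturated. The one thing you add beyond the paper's terse statement is the explicit graph-theoretic accounting (one edge each to $X$ and $\widetilde{E}$, hence $k-1$ internal edges, hence a tree with two leaves, hence a path) showing how the chain structure of $Z$ follows from the equality conditions in \Cref{claim:T} and \Cref{claim:g_Z}; this detail is implicit in the paper's version.
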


The equality does not hold in our previous example \cref{figure:central fiber - fig11},
but in \cref{figure:genus bound equality - fig14} we can see an example where it does hold.

\begin{figure}
\centering
\includegraphics{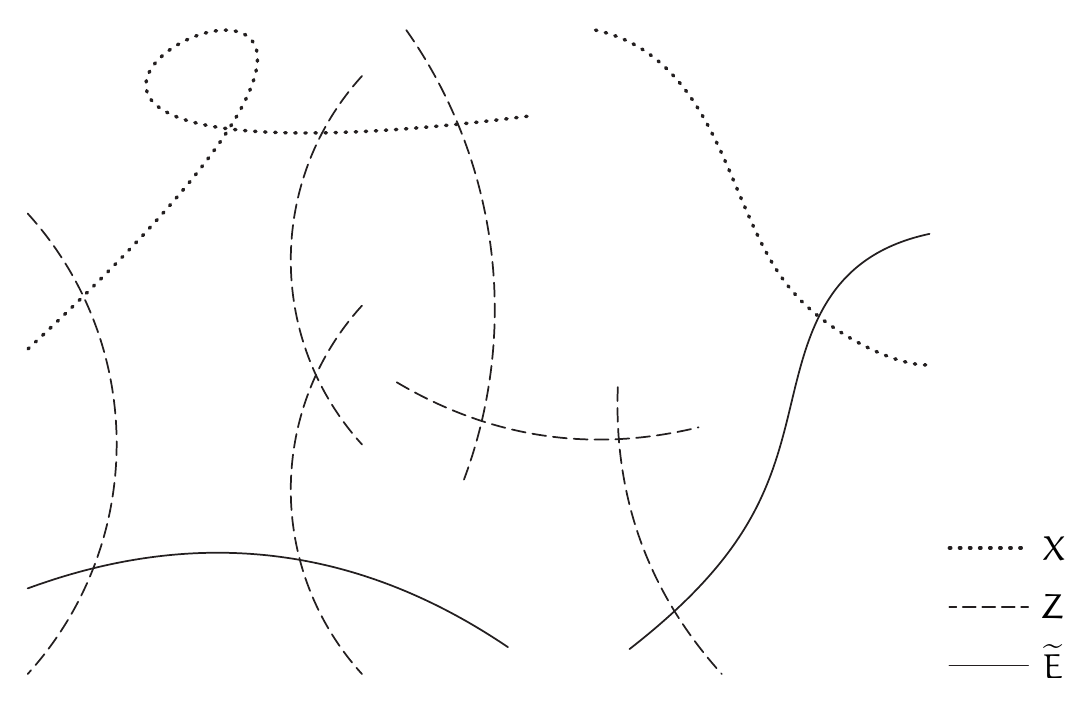}
\caption{A picture of $C_0$ in a case where equality in the genus bound (\Cref{proposition:genus_bound}) holds.}
\label{figure:genus bound equality - fig14}
\end{figure}

\begin{proof}
Just add up the inequalities in 
\Cref{claim:T,claim:g_C_0,claim:g_Z,claim:g_tilde_E}. If equality holds,
then it does as well in every claim, and hence all equalities conditions are true.

From \Cref{claim:g_tilde_E} we get that $\widetilde{E}$ is smooth and 
$\widetilde{E} \to E$ is unramified. 
The equality conditions of \Cref{claim:g_Z} imply that $Z$ is a disjoint union of
chains of rational curves. \Cref{claim:T} implies that each of these chains connects
a component of $\widetilde{E}$ to a component of $X$.
\end{proof}

Note that, so far, even if equality occurs in \Cref{proposition:genus_bound},
we can't say much about the residual curve $X$. Also note that we haven't used the sections
our family came equipped with either. These will play a role in 
the following.

\subsection{The dimension bound} 
\label{sub:the_dimension_bound}
We now want to constraint the geometry of $X$ based on the
fact it varies in a family of dimension $\dim W= \dim H_p \cap \Vdg= d+g+b-3$.

The construction in \Cref{sub:nodal_reduction} replaces a general point 
$\{Y_0 \subset E \times \Pbb^1\}$ of the component $W$ with a map $C_0 \to E \times \Pbb^1$. 
To leverage the dimension of $W$, we need to extend this \emph{pointwise} 
construction to a 
\emph{local} one. The following technical lemma does so.

\begin{lemma}
For each general point $p \in W$, we can find a map $\eta\from U \to U_0 \subset W$ which is
finite onto a open subscheme which
 contains $p$, a flat family $\Cc^U \to U$ and a map $\phi\from \Cc^U \to E \times  \Pbb^1$
 such that for each $u \in U$, the map $C^U_u \to E \times \Pbb^1$ is equal to the result of
the pointwise construction of \Cref{sub:nodal_reduction} on $\eta(u) \in W$.
\end{lemma}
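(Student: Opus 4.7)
The plan is to globalize the pointwise nodal reduction by pulling back the universal family from the Kontsevich moduli space. First I would extend the natural rational map
\[
\widetilde{V}_{Ne+df,g}(a,b) \rationalmap \overline{\Mc}_g(E \times \Pbb^1,\Lc),
\]
sending a curve $Y$ to the stable map given by its normalization into $E \times \Pbb^1$, to a morphism on a Zariski open subset $V \subset \widetilde{V}_{Ne+df,g}(a,b)$ whose intersection $U_0 = V \cap W$ contains $p$. The rational map is defined in codimension two (as already noted in the discussion preceding \Cref{theorem:simple_hyperplane_to_hurwitz}), so in particular it is regular at the generic point of $W$. For a general $u \in U_0$, the image $\Psi(u)$ must coincide with the limit stable map produced in \Cref{sub:nodal_reduction}, since both are valuative limits along the same general arc in a separated moduli space.

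Next I would pull back the universal stable map along $\Psi$ to obtain a flat family $\widetilde{\Cc}^{U_0} \to U_0$ together with a map $\widetilde{\phi}\from \widetilde{\Cc}^{U_0} \to E \times \Pbb^1$ whose fiber over each $u$ agrees with the output of the pointwise construction on $\eta(u)$. The preimage $\widetilde{\phi}^{-1}(E_0) \subset \widetilde{\Cc}^{U_0}$ is a closed subscheme finite over $U_0$ that decomposes, over a dense open, into $a+b$ distinct labeled multisections---the $a$ contracted by $\widetilde{\phi}$ (the $A$-sections) and the $b$ dominating $E_0$ (the $B$-sections). Let $U \to U_0$ be a finite cover obtained from an appropriate component of the iterated fiber product of $\widetilde{\phi}^{-1}(E_0)$ with itself over $U_0$, on which these multisections split into genuine sections; set $\Cc^U := \widetilde{\Cc}^{U_0} \times_{U_0} U$ with the pullback map $\phi\from \Cc^U \to E \times \Pbb^1$.

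Finally, a modification of the total space near the central fiber may be needed to ensure $\Cc^U$ is smooth as in \Cref{hypothesis:nodal-reduction}: blow up any singular points of the total space supported on the central fiber, then contract any resulting $(-1)$-curves that are not forced to remain by the semistability of $\phi$. Since the pointwise construction already produces a minimal smooth model along each arc, these corrections only reproduce the pointwise output fiber by fiber and do not alter the general fibers. The main obstacle will be carrying out this smoothing of the total space in a way that is simultaneously compatible with the minimality clause of \Cref{hypothesis:nodal-reduction}; the remaining clauses (nodality of central fibers, semistability of $\phi$, and the behavior of the $A$- and $B$-sections) then follow from the corresponding properties of the pointwise nodal reduction together with the construction of $U$.
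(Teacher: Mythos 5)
Your proof follows the same strategy as the paper---extend the rational map to the Kontsevich space, then pull back the universal family---but with some differences worth flagging. First, the paper uses the \emph{marked} Kontsevich space $\overline{\Mc}_{g,a+b}(E\times\Pbb^1, df+Ne)$, so that the $a+b$ points of $f^{-1}(E_0)$ are recorded as marked points; the pulled-back universal curve then comes already equipped with the A- and B-sections, and no further splitting of multisections is needed. Your choice of the unmarked space is workable, but the iterated fiber-product step you add afterward is exactly the bookkeeping the marked space handles for free. Second, you extend the rational map directly to the stack $\overline{\Mc}_g(E\times\Pbb^1,\Lc)$, but the normality-plus-properness argument for extending a rational map applies cleanly only with a projective (or at least scheme) target; the paper first extends to the \emph{coarse} space (projective, so the indeterminacy locus of a rational map from the normal $\widetilde{V}$ has codimension $\geq 2$), then lifts to the stack after a base change. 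That intermediate step should not be omitted.

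The larger problem is the third paragraph, which is off-target. \Cref{hypothesis:nodal-reduction}---smoothness of the total space, minimality, etc.---governs the one-parameter families $\Cc\to B$ produced by the \emph{pointwise} nodal reduction of \Cref{sub:nodal_reduction}; it is not a condition on the higher-dimensional family $\Cc^U\to U$, and the lemma makes no such demand. The lemma only asserts that each fiber $C^U_u$, together with its map to $E\times\Pbb^1$, agrees with the output of the pointwise construction at $\eta(u)$. So there is nothing to smooth, no minimality constraint to satisfy for $\Cc^U$, and the ``main obstacle'' you identify is a non-issue. You are implicitly tracking a real subtlety---the universal family over Kontsevich space has \emph{stable} fibers, whereas the nodal reduction produces a \emph{semistable} model, and these can differ by contracted rational bridges---but the cure you propose (blowups and $(-1)$-contractions on a $(d+g+b-2)$-dimensional total space) is not well-founded; there is no canonical minimal semistable model over a positive-dimensional base. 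In fact the paper's proof does not attempt to reconcile these either: the only way $\Cc^U$ is used downstream (\Cref{claim:deformation-bound}) is via the subfamily $\Xc=\closure{\Cc^U-\phi^{-1}(E_0)}$, which is unchanged by collapsing the contracted $Z$-chains, so the stable/semistable distinction never matters. Delete the third paragraph and add the coarse-space-then-lift step, and you have essentially the paper's argument.
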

\begin{proof}
The idea is the following: the
only choice we had in the construction in \Cref{sub:nodal_reduction} was of the
arc $\Delta \to V$. However, as we are performing a codimension one degeneration, the limit
will not depend on the direction of approach. Here is a standard way
to formalize this.

The variety $\Vdg$ admits
a rational map to the Kontsevich space of stable maps 
$\overline{M}_{g,a+b}(E \times \Pbb^1, df+Ne)$. Since the target is
projective and the source is normal,
 we can extend the rational map in codimension one. Hence, for a general point 
in the divisor $H_p \cap \Vdg$, we can find a neighborhood of $p$ mapping to the coarse
space $\overline{M}_{g,a+b}(E \times \Pbb^1, df+Ne)$. Possibly after a base change, we find some
neighborhood $\widehat{U}$ which maps to the stack $\overline{\Mc}_{g,a+b}(E \times \Pbb^1, df+Ne)$. Now pulling back
the universal family, we get a family $\Cc^{\widehat{U}} \to \widehat{U}$. 
Let $U\subset \widehat{U}$ be the preimage of $W$. Our universal family $\Cc^U \to U$ is the restriction
of $\Cc^{\widehat{U}} \to \widehat{U}$.
\end{proof}

The total space $\Cc^U$ is reducible: there is at least a component containing the generic
fiber's $\widetilde{E}$, and a component containing the generic fiber's $X$.
We want to pick out the component $\Xc \subset \Cc^U$ corresponding to the $X$'s.
A way to directly construct it is to set
\[
	\Xc = \closure{\Cc - \phi^{-1}(E_0)}, \text{ where } \phi\from \Cc^U \to E \times \Pbb^1
\]

Let us denote the restriction of $\phi\from \Cc^U \to E \times \Pbb^1$ to $\Xc$ by $\phi$ again.
We will bound arithmetic genus $p_a(X_u)$ in terms of the dimension of $U$ and the tangency
conditions the image of the $X_u$ satisfies along $E_0$, using \Cref{proposition:deformation_bound}.

To get our hands on the tangency conditions, we need to introduce some notation.
Let $k$ be the cardinality of $\phi^{-1}(E_0) \cap X_u$ for $u$ generic. After
a base change, we may assume that $\phi^{-1}(E_0)$ consists of $k$ distinct sections.
 Let $U \to E_0^k$ the map sending $u$ to the images under
$\phi$ of the $k$ sections. 
Let $P \subset E_0^k$ be the image of this map.

Each one of the $k$ sections appears with some multiplicity, say $\alpha_i$. We get
a corresponding map
\begin{align*}
	E_0^k &\to \Div^d E_0 \\
	(p_1,\ldots, p_k) & \mapsto \sum \alpha_i p_i \in \Div^d E_0 
\end{align*}

For example, the composite map 
$U \to E_0^k \to \Div^d(E_0)$ sends $u$ to the divisor $\phi(X_u) \cdot E_0$ on $E_0$.

Note that the linear class of $\phi(X_u) \subset E \times \Pbb^1$ is equal to the class of $C_0$ 
minus some number of $E_0$'s that got
split off. That is, $[\phi(X_u)] = \Lc - m e$, and the intersection $\phi(X_u) \cdot E_0 =\Lc|_{E_0}$ is a fixed linear class.
Hence, the following diagram commutes.

\[
	\xymatrix{
	U \ar[r] \ar@{-->}[drr] & P \subset E_0^k \ar[r] & \Div^d E_0 \ar[r] & \Pic^d E_0  \\
	& &  | \Lc|_{E_0}| \ar[u] \ar[r] & \Spec \Cbb = \set{\Lc|_{E_0}} \ar[u]
	}
\]

Denote by $X$ the generic fiber $X_u$.
\begin{claim}
\label{claim:deformation-bound}
We have
\[
	\dim W \leq d+ p_a(X) -1 + \dim P
\]
If equality holds, then, after a base change, $W$ is fibered over $P$, and the
the fiber over $(p_1,\ldots, p_k) \in P \subset E_0^k$ is dense in a component of
\[
	\widetilde{V}^{\circ, \text{tails}}_{df+N'e,p_a(X)}(\alpha, 0) [p_1,\ldots, p_k]
\]
where $\alpha_i$ is the multiplicity in which the section over the point $p_i$ appears, and
$df+N'e$ is the homology class of $\phi(X)$.
\end{claim}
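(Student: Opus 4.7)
The plan is to derive this claim as a direct application of \Cref{proposition:deformation_bound} to the family $\Xc \to U$ just constructed, equipped with its map $\phi\from \Xc \to E \times \Pbb^1$ and the $k$ sections arising from $\phi^{-1}(E_0) \cap \Xc$. The main task is therefore to verify that $\Xc \to U$ fits \Cref{hypothesis:def-theory-set-up} together with the ``no floating components'' assumption required by \Cref{proposition:deformation_bound}; the dimension bound and the equality description then fall out directly from that proposition.

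Most of the hypotheses are immediate from the construction. Because $\Cc^U \to U$ came from nodal reduction, its fibers are semistable, and the closure $\Xc = \closure{\Cc^U - \phi^{-1}(E_0)}$ inherits this property. Its fibers $X_u$ are therefore (possibly disconnected) semistable curves of some arithmetic genus $p_a(X)$, whose image under $\phi$ has class $df + N'e$ with $N' = N - m$; here $m$ is the multiplicity with which $E_0$ generically splits off in $W$, so only the $e$-degree drops while the $f$-degree (that is, $d$) is preserved. After the base change hinted at in the statement, the $k$ marked intersections of $X_u$ with $E_0$ extend to sections $\sigma_i$ of $\Xc \to U$ appearing with the prescribed multiplicities $\alpha_i$, and the composite $U \to |\Lc|$ has finite fibers because $\eta\from U \to W$ is finite and $W \subset |\Lc|$.

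The genuine subtlety is the ``no floating components'' condition, where I would invoke that $W$ is a component of $\widetilde{V}^\circ_{df+Ne,g}(a,b)$. For generic $u \in U$ no copy of $E_0$ has yet been extracted, so $X_u$ is just the normalization of $Y_u = \phi(C_u)$; by the $\circ$ decoration on $W$, the curve $Y_u$ contains no fiber of $E \times \Pbb^1 \to \Pbb^1$ at all, which rules out floating components of $X_u$ over every fiber, not merely over $E_0$. With this in hand, \Cref{proposition:deformation_bound} applied to $\Xc \to U$ with genus $p_a(X)$, tangency profile $\alpha$, class $df + N'e$, and image $P \subset E_0^k$ of the section-evaluation map gives
\[
\dim U \leq d + p_a(X) - 1 + \dim P,
\]
and since $\eta\from U \to W$ is finite we have $\dim U = \dim W$, which is the desired inequality. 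In the equality case the same proposition identifies each component of the fiber of $U \to P$---and hence, by finiteness of $\eta$, each component of the fiber of $W \to P$---with the closure of a component of $\widetilde{V}^{\circ,\text{tails}}_{df+N'e,p_a(X)}(\alpha,0)[p_1,\ldots,p_k]$, as claimed.
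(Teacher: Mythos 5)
Your overall plan coincides with the paper's: the claim is indeed a direct application of Proposition~\ref{proposition:deformation_bound} to the family $\Xc \to U$, using that $\eta\colon U \to W$ is finite so $\dim U = \dim W$. However, your verification of the ``no floating components'' hypothesis is based on a misreading of the family. You assert that ``for generic $u \in U$ no copy of $E_0$ has yet been extracted, so $X_u$ is just the normalization of $Y_u = \phi(C_u)$; by the $\circ$ decoration on $W$, the curve $Y_u$ contains no fiber.'' This is backwards. The parameter $U$ maps finitely onto an open subscheme $U_0 \subset W$, and $W$ is, by the standing assumption of Sections~\ref{sub:the_genus_bound}--\ref{sub:the_dimension_bound}, a component of $H_p \cap \Vdg$ whose \emph{general} point corresponds to a curve containing $E_0$ with multiplicity $m > 0$. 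Consequently, for \emph{every} $u \in U$ the fiber $C^U_u$ is, by construction, the output of nodal reduction applied to $\eta(u) \in W$; it already contains the pieces $\tilde{E}$ (components dominating $E_0$) and $Z$ (components contracted into $E_0$), and $X_u$ is only the residual part, not the normalization of $\phi(C^U_u)$. Likewise, the $\circ$ decoration on $\widetilde{V}^\circ$ controls the generic point of that Severi variety, not the generic point of the hyperplane section divisor $W$ sitting inside its boundary --- which, as noted, does contain a fiber.

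So your argument for the absence of floating connected components of $X_u$ does not stand; some other justification is required (for instance, a dimension count showing that a connected component of $X_u$ mapping with class a multiple of $e$ would contribute a floating parameter that overshoots the budget of Theorem~\ref{theorem:dimension-of-severi}, or an argument that the residual cycle $\phi(C^U_u) - mE_0$ cannot contain a fiber). To be fair, the paper's own one-line proof does not spell this out either, but your attempted justification introduces an error where the paper is merely silent. A smaller imprecision: the map $U \to |\Lc|$ you invoke should really be the map $u \mapsto \phi(X_u)$ landing in $|\Lc - me|$; it still has finite fibers because it is obtained from $U \to W \to |\Lc|$ by the closed embedding that adds $mE_0$, but the target in Hypothesis~\ref{hypothesis:def-theory-set-up} is the linear system of the residual class, not of $\Lc$ itself.
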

\begin{proof}
This follows from \Cref{proposition:deformation_bound} applied to $\Xc \to U$,
and the fact that $U$ and $W$ have the same dimension.
\end{proof}

To bound the dimension of $P$, let us go back to our
one-dimensional degeneration $\Cc \to B$ as in \Cref{sub:nodal_reduction}.
Let $\epsilon_b$ be the number of B-sections which meet the central
fiber on $\widetilde{E}$ or on a connected component of $Z$ that does not meet $X$.
That is, out of the $b$ B-sections that the family $\Cc \to B$ has, $b- \epsilon_b$
of them land in $X$, or in a component of $Z$ that contracts to a point in $X$. Either way, these 
$b -\epsilon_b$ points are in the intersection of $\phi(X)$ with $E_0$. 

Define 
\[
\epsilon = \begin{cases}
1, \text{ if }\epsilon_b>0 \\
0, \text{ else.}
\end{cases}
\]

For example, we have
\begin{equation}
\label{equation:epsilon}
\epsilon \leq \epsilon_b
\end{equation}
with equality only if $\epsilon_b$ is equal to zero or one.

Now we are ready to state the  bound on $\dim P$.
\begin{claim}
\label{claim:dim-P}
We have
\[
	\dim P \leq T + b -\epsilon_b - 1- (1-\epsilon)
\]
If equality holds, none of the $A$ or $B$-sections land in components of $Z$ meeting $X$ and $\widetilde{E}$,
 and one of the two following scenarios
holds. 
\begin{enumerate}
 	\item\label{scenario:1} 
 	If $\epsilon=1$, then $P$ is dense in the preimage under 
 	$E_0^k \to \Div^d E_0$ of the sum of:
 	\begin{itemize}
 		\item a full linear series of degree $T+b-\epsilon_b$,
 		\item and $a'=d-(T+b-\epsilon_b)$ base points.
 	\end{itemize}
 	\item \label{scenario:2} Else, both $\epsilon_b$ and $\epsilon$ are zero, and
 	 $P$ is  dense in the preimage under 
 	$E_0^k \to \Div^d E_0$ of the sum of:
 	\begin{itemize}
 		\item a full linear series of degree $b$ on $E$ (corresponding to the limit of the B-sections),
 		\item a full linear series of degree $T$ on $E$,
 		\item and $a'=d-b-T$ fixed base points.
 	\end{itemize}
 \end{enumerate} 
\end{claim}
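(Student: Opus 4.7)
The plan is to exploit the two linear equivalence constraints at our disposal—that $\phi(X_u)\cdot E_0$ has constant class $\Lc|_{E_0}$, and that the $b$ B-section images on $E_0$ sum to $L$—to cut down the ambient $E_0^k$. I would begin by partitioning the $k$ coordinates according to the landing type of each section in the central fiber: the $a'$ on-$X$ A-coordinates are pinned at the prescribed points $p_i$ and contribute nothing to $\dim P$; there are $b-\epsilon_b$ on-$X$ B-coordinates; and there are $T$ node-coordinates, one for each connecting $Z$-chain or direct $X$-$\widetilde{E}$ node, varying freely in $E_0$ a priori. The task then reduces to bounding the projection of $P$ onto the $b-\epsilon_b+T$ variable coordinates.

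To extract the bound, I would first impose C1, the single linear equivalence relation coming from $\phi(X_u)\cdot E_0\sim\Lc|_{E_0}$, on these variable coordinates. Next I would use the B-sum relation: if $\epsilon_b\geq 1$, the off-$X$ B-images form an $(\epsilon_b-1)$-dimensional linear series freely absorbing the residual class, so the B-sum imposes no new constraint on the variable coordinates (absorption works because every positive-degree linear series on $E_0$ is non-empty of the expected dimension); if $\epsilon_b=0$, the on-$X$ B-sum is forced to equal $L$, a second constraint C2 independent of C1. Combining yields
\[
\dim P \;\leq\; (b-\epsilon_b+T) - 1 - (1-\epsilon),
\]
which is the claimed bound.

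For the equality analysis, every inequality must be tight, so $P$ must surject onto the full subvariety cut out by the active constraints. In Scenario 1 ($\epsilon=1$), only C1 is active, and $P$ fills the preimage of a complete linear series of degree $T+b-\epsilon_b$ together with the $a'=d-(T+b-\epsilon_b)$ pinned A-points as base points. In Scenario 2 ($\epsilon=0$), C1 and C2 decouple the on-$X$ B-coordinates (a complete linear series of degree $b$ realizing the class $L$) from the node-coordinates (a complete linear series of degree $T$ realizing the complementary class), supplemented by the $a'=d-b-T$ A-base points. Equality further forces that no A- or B-section land on a $Z$-chain connecting $X$ to $\widetilde{E}$, for such a landing would pin the corresponding node-point to a fixed location (the A-image $p_i$ in the A case, the varying B-image in the B case), collapsing one of the free parameters and strictly worsening the bound.

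The main obstacle is a careful treatment of the B-sum relation, disentangling the on-$X$ and off-$X$ B-contributions and verifying that off-$X$ absorption is always possible in the $\epsilon_b\geq 1$ regime. A subsidiary subtlety is tracking the multiplicities $\tau_i$ of the node-coordinates through the map $E_0^k\to\Div^d E_0$: the ``full linear series of degree $T$'' must be interpreted as the preimage of a fixed class in $\Pic$ under the multiplicity-weighted sum map, whose fibers have the expected dimension $T-1$ on the elliptic curve $E_0$.
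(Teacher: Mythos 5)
Your proof is correct and takes essentially the same approach as the paper: partition the $k$ coordinates by landing type, impose the overall class constraint $\phi(X_u)\cdot E_0\sim\Lc|_{E_0}$ for the first $-1$, impose the B-sum class $L$ as an independent second constraint exactly when $\epsilon_b=0$, and read off the two equality scenarios plus the exclusion of A/B-sections landing on connecting $Z$-chains from tightness. Your explicit ``absorption by off-$X$ B-images'' argument for the $\epsilon_b\geq 1$ case spells out what the paper leaves implicit, but the logical content is identical.
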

\begin{proof}
The curve $\phi(X)$ meets $E_0$ in three types of points:
\begin{itemize}
 	\item The image of A-sections. These add no moduli to $P$.
 	\item The images of B-sections. There are $b-\epsilon_b$ of those.
 	\item The images of connected components of $Z$ meeting both $X$ and 
 	$\widetilde{E}$, or nodes connecting $\widetilde{E}$ and $X$ in $C_0$. There
 	are exactly $T$ of those.
 \end{itemize} 
 Hence, the dimension of $P$ is bounded by
\[
\dim P \leq T+ b-\epsilon_b
\]
However, if equality were to hold, then $P$ would be dense in $E^{T+ b-\epsilon_b} \subset E^k$ for some subset
of the indices. But $P \to E^k \to \Div^d E$ has to land inside the full linear series $\left| \Lc|_{E_0} \right| \subset \Div^d E$,
while the image of $E^{T+ b-\epsilon_b} \subset E^k \to \Div^d E$ does not have a fixed linear class. This is a contradiction.

Hence, we get the slightly stronger inequality
\[
\dim P \leq T+ b-\epsilon_b -1
\]
for which equality happens exactly if $P$ is the preimage of some full linear series of degree $T+b-\epsilon_b$, plus some fixed 
base points (which correspond to A-sections landing in $X$).
That is, only in scenario (\ref{scenario:1}).

Something interesting happens when $\epsilon_b=0$. In this case, all the
B-sections land in $X$, and their sum is a fixed divisor up to linear equivalence. Indeed,
it is exactly
the class of $\phi(C_t)=\Lc$, restricted to $E_0$, minus the fixed images of the A-sections
$p_1,\ldots, p_a$. Hence, we get one moduli less than expected. That is, 
we may subtract $(1-\epsilon)$ from the count above, to get to
\[
\dim P \leq T+ b-\epsilon_b -1-(1-\epsilon)
\]
as we wanted to prove. Now equality happens in both scenarios (\ref{scenario:1}) and
(\ref{scenario:2}).
\end{proof}

Putting all of this together, we get the following.
\begin{proposition}[The dimension bound]
\label{proposition:dimension_bound} We have
\[
	g \leq p_a(X) +T
\]
If equality holds, then there is a partition $\tau$ of length $T$, and an integer 
$a' \leq a$, such that 
$X$ is a general point of \nobreakpar
 \begin{itemize}
 	\item $\widetilde{V}^{\circ, \text{tails}}_{df+N'e,p_a(X)}(a',b,\tau)$, if $\epsilon=0$, or
 	\item $\widetilde{V}^{\circ, \text{tails}}_{df+N'e,p_a(X)}\left(a',\tau+(b-1)\right)$, 
 	if $\epsilon=1$.
 \end{itemize}
\end{proposition}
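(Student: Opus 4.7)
The plan is to chain together \Cref{claim:deformation-bound} and \Cref{claim:dim-P} with the known dimension of $W$, and observe that the inequalities leave so little slack that equality in the proposition must propagate back through the entire chain.

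First I would compute $\dim W$. By \Cref{theorem:dimension-of-severi}, the Severi variety $V_{Ne+df,g}(a,b)$ has dimension $d+g-2+b$, so a generic hyperplane section satisfies $\dim W = d+g+b-3$. Substituting successively into \Cref{claim:deformation-bound} and \Cref{claim:dim-P},
\[
d+g+b-3 = \dim W \;\leq\; d + p_a(X) - 1 + \dim P \;\leq\; d + p_a(X) + T + b - 3 + \epsilon - \epsilon_b.
\]
Rearranging gives $g \leq p_a(X) + T + (\epsilon - \epsilon_b)$, and since $\epsilon \leq \epsilon_b$ by \eqref{equation:epsilon}, the bound $g \leq p_a(X) + T$ drops out.

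Next, suppose equality holds. Then every inequality in the chain is an equality. In particular $\epsilon = \epsilon_b$, which combined with \eqref{equation:epsilon} forces $\epsilon_b \in \{0,1\}$; the two values correspond exactly to the two scenarios of \Cref{claim:dim-P}. Equality in \Cref{claim:deformation-bound} tells us that, after a base change, $W$ fibers over $P$ with each fiber dense in a component of
\[
\widetilde{V}^{\circ,\text{tails}}_{df+N'e,\, p_a(X)}(\alpha,\, 0)[q_1,\ldots,q_k],
\]
where $\alpha=(\alpha_1,\ldots,\alpha_k)$ records the multiplicities with which the sections of $\phi^{-1}(E_0)\cap \Xc$ meet $E_0$, and $q_1,\ldots,q_k$ are their images. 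Equality in \Cref{claim:dim-P} pins down $P$ as the preimage, under the weighted map $E_0^k \to \Div^d E_0$, of a subvariety of the prescribed shape.

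Finally I would assemble these two pieces into the stated description of $X$. The $k$ intersection points split into three kinds: $a'$ are fixed at the images of A-sections landing in $X$ (this is the integer in the statement); $b-\epsilon_b$ come from B-sections landing in $X$ (each with multiplicity one); and the remaining $T$ are the points where the chains of rational curves in $Z$ meet $X$ (produced by the equality case of the Genus Bound, \Cref{proposition:genus_bound}), with multiplicities assembling into a tangency profile $\tau$ of length $T$. In scenario~\eqref{scenario:1} ($\epsilon=1$), one B-section escapes $X$ and the remaining $b-1$ B-section points together with the $\tau$-points move as one full linear series, matching the profile $\tau+(b-1)$ in $\widetilde{V}^{\circ,\text{tails}}_{df+N'e,\,p_a(X)}(a',\,\tau+(b-1))$. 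In scenario~\eqref{scenario:2} ($\epsilon=0$), all $b$ B-section points move in a full degree-$b$ linear series and the $\tau$-points move independently in another, which by \Cref{definition:gen-severi-variety-3} is exactly the content of $\widetilde{V}^{\circ,\text{tails}}_{df+N'e,\,p_a(X)}(a',\,b,\,\tau)$. The main subtlety is this last bookkeeping step: \Cref{claim:dim-P} counts dimensions on $E_0$ (treating each connection point as a single point in $E_0^k$), whereas the generalized Severi variety records tangency multiplicities, so one must carefully identify which entries of $\alpha$ come from A-sections, B-sections, and connection points before reading off $\tau$ and $a'$.
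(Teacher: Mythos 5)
Your argument is correct and follows essentially the same route the paper takes: chain the equality $\dim W = d+g+b-3$ through \Cref{claim:deformation-bound}, \Cref{claim:dim-P}, and \eqref{equation:epsilon}, then read off the description of $X$ from the stacked equality conditions, with the split into the two cases governed by $\epsilon_b\in\{0,1\}$. The paper's own proof is just a terse summary of this same chain; you have simply filled in the arithmetic and the bookkeeping that identifies which of the $k$ intersection points give rise to $a'$, to the $b$ or $b-1$ entries, and to $\tau$.
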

\begin{proof}
By \Cref{theorem:dimension-of-severi}, we have
\[
 \dim W = \dim \Vdg -1
	=d+g+b-3
\]
Now combine \Cref{equation:epsilon} 
and Claims \ref{claim:deformation-bound} and \ref{claim:dim-P}, and we get the desired inequality.

If equality holds, we let $\tau=(\tau_1,\tau_2,\ldots)$, where $\tau_i$ is
 the multiplicity in which each of the corresponding $T$ points appear in the
 divisor $\phi^*(E_0)|_X$,
 and $a'$ be the number of A-sections landing in $X$.
 Considering all the equality conditions together, we get the description above.
\end{proof}

\subsection{Proof of 
Theorems 
\ref{theorem:simple_hyperplane_section_of_severi} and \ref{theorem:simple_hyperplane_to_hurwitz}} 
\label{ssub:proof_of_theorem_ref_theorem_hyperplane_section_of_severi}

Let us start with \Cref{theorem:simple_hyperplane_section_of_severi}.
Let $W$ be an irreducible component of $\Vdg \cap H_p$. If the 
generic point of $W$ is a curve that does not contain $E_0$, then
$W$ is contained in 
\[
	\widetilde{V}^\circ_{Ne+df,g}(a+1,b-1) [p_1,\ldots, p_a, p]
\]
However, by \Cref{theorem:dimension-of-severi}, this has the same dimension as $W$. Hence,
$W$ must be one of its irreducible components, as we wanted to show.

Let us assume now that the general point of $W$ is a curve which contains $E_0$ with
multiplicity $m>0$. Set $N'=N-m$, so that the homology class of the residual curve, $\phi(X)$, is $N'e+df$.

Then we  apply the results of 
\Cref{sub:the_dimension_bound,sub:the_genus_bound,sub:nodal_reduction}. 
On one hand, \Cref{proposition:genus_bound} (the genus bound) says
\[
	p_a(X)+T\leq g
\]
while \Cref{proposition:dimension_bound} (the dimension bound) says
\[
	g\leq p_a(X)+T
\]
Hence, both equalities have to hold! We can now use the equality conditions to describe
the central fiber $C_0$ and the way the A and B-sections meet it.

\begin{figure}
\centering
\includegraphics{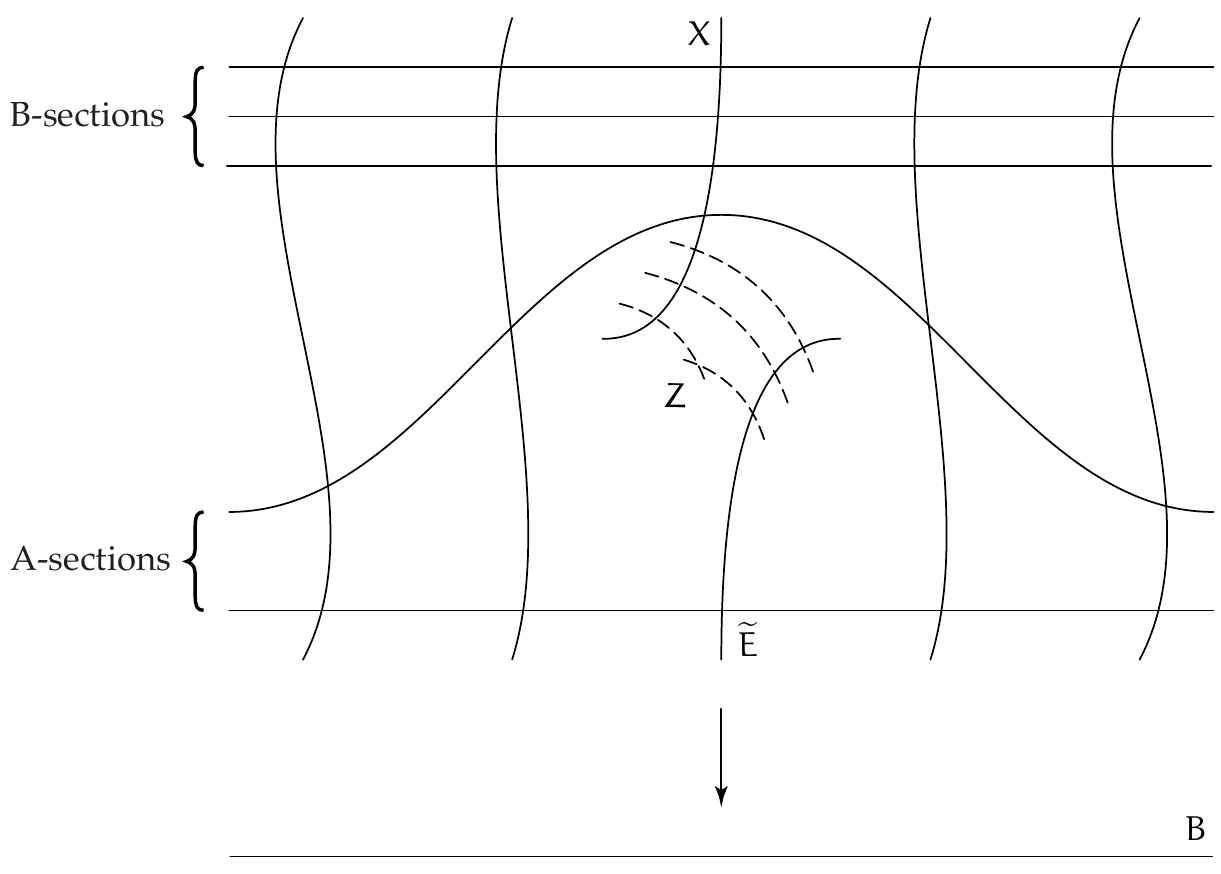}
\caption{An example of a central fiber, and of the first of the two possible scenarios of how the sections meet the central
fiber---never on the $Z$ components, and none of the B-sections meets $\widetilde{E}$.}
\label{figure:complete degeneration picture - fig15a}
\end{figure}

\begin{figure}
\centering
\includegraphics{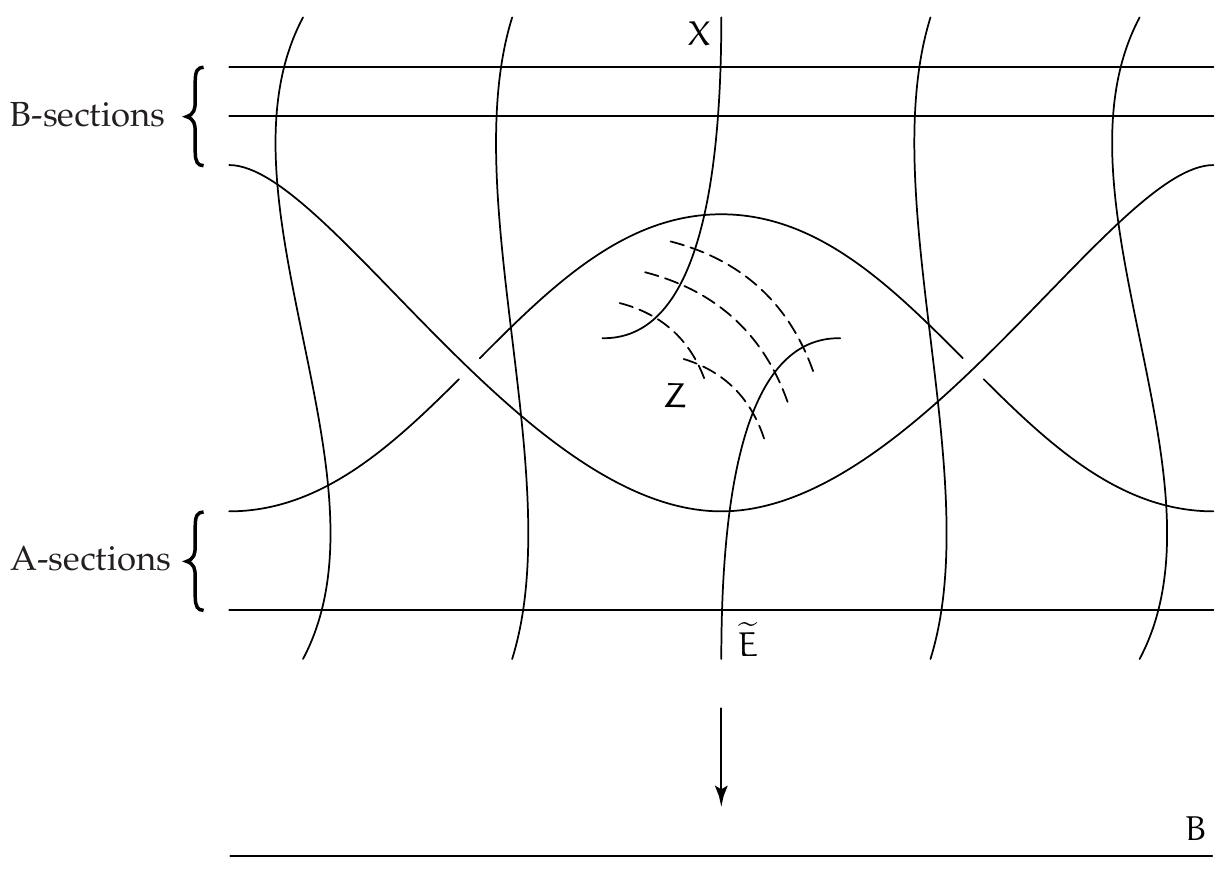}
\caption{The same example of a central fiber as in 
\cref{figure:complete degeneration picture - fig15a}, but with the 
second possibility for how the sections meet the central
fiber---again never on the $Z$ components, and with exactly one of the B-sections meeting $\widetilde{E}$.}
\label{figure:complete degeneration picture - fig15b}
\end{figure}

By \Cref{proposition:genus_bound}, the map $\widetilde{E} \to E$ is unramified, and
$Z$ is just a disjoint union of $T'$ rational chains connecting $X$ and $\widetilde{E}$, 
plus $(T-T')$ nodes between $X$ and $\widetilde{E}$. Let $\tau$ be a partition keeping track 
of the multiplicities with which each of the $T$ rational chains/nodes connecting $X$ and $\widetilde{E}$ 
appears in the pullback $\phi^*(E_0)|_X$.

By \Cref{proposition:dimension_bound}, none of the A and B-sections hit the central fiber
at $Z$. Let $a'$ be the number of A-sections that land in $X$. In particular, $a'\leq a$.

There are two possible scenarios for $X$, according to if there are any B-sections approaching 
$\widetilde{E}$ in the limit.

If there are B-sections approaching $\widetilde{E}$, then there is exactly
one such. In this case, the residual curve $X$ is a general point of a component of
the Severi variety
$\widetilde{V}^{\circ, \text{tails}}_{N'e+df,p_a(X)}\left(a',\tau+(b-1)\right)$.

On the other hand, if there are no B-sections approaching $X$, then $X$ is a general point of a component of
$\widetilde{V}^{\circ, \text{tails}}_{N'e+df,p_a(X)}(a',b,\tau)$.
See \cref{figure:complete degeneration picture - fig15a,figure:complete degeneration picture - fig15b} for 
diagrams for the family $\Cc \to B$ and its sections.

There is one last issue to be taken care of: we have to eliminate the
elliptic tails. This follows directly from Proposition \ref{proposition:tail}.
Note that this also rules out $\tau=(1)$---in this case, $\tilde{E}$ itself
would be an elliptic tail, which can't be smoothed.
This concludes the proof of \Cref{theorem:simple_hyperplane_section_of_severi}.

\begin{remark}
As opposed to the analogue arguments applied on rational surfaces \cite{caporaso_counting_1998,vakil_counting_2000}, 
our directrix $E_0$ can be split off generically with multiplicity greater than one. Compare
with \cite{shoval_gromov-witten_2013} where the ``directrix'' is still split off with multiplicity one,
but the residual curve can be non-reduced.
\end{remark}

To prove \Cref{theorem:simple_hyperplane_to_hurwitz}, we note that the
nodal reduction construction in \Cref{sub:nodal_reduction} already resolved the map
$\Vdg \rationalmap \overline{\Mc}_g(E,N)$ at the general point of $W$. 
By composing the map $\phi\from  C_0 \to E \times \Pbb^1$ with the projection 
$E \times \Pbb^1 \to E$, we get a map that is almost an element
of $\overline{\Mc}_g(E,N)$---we just have to contract the rational chains $Z$.
The description in \Cref{theorem:simple_hyperplane_to_hurwitz} follows.


\subsection{The general case} 
\label{ssub:the_general_case}
Let us tackle the problem of computing the degree of the Severi variety 
$V^\circ_{Ne+df,g}$. The strategy is to
intersect it with multiple hyperplanes of the form $H_p$, for general $p \in E_0$, until we get down
to some number of points, which we could count. Let $V$ be a component that arises in the process of
intersecting $\Vdg$ with hyperplanes. To succeed, we have to
answer the following two questions: \nobreakpar
\begin{itemize}
	\item what are the components of the intersection $V \cap H_p$? \nobreakpar
	\item with what multiplicity does each component appear?
\end{itemize}

We will deal with only the first question. \Cref{theorem:simple_hyperplane_section_of_severi}
describes the components of the intersections until the first time
the directrix $E_0$ is split off generically. To continue, we would need an analogue
of \Cref{theorem:simple_hyperplane_section_of_severi} for varieties like 
$\widetilde{V}^\circ_{N'e+df,p_a(X)}(a',b,\tau)$. 

Fortunately, the argument above readily adapts to this more general situation, and all 
the components that ever arise in this process can be described in similar tangency terms.
We will follow an analogue of Caporaso--Harris notation for these new components.
The key difference in our case is that, since $E$ has non trivial Picard group,
we are also forced to remember the classes of some of the moving points, as
we already had in the case of $\widetilde{V}^\circ_{N'e+df,p_a(X)}(a',b,\tau)$.

\begin{definition}
Fix line bundles $\Lc, L_j \in \Pic E_0$, points $p_1,\ldots, p_k \in E_0$,
and tangency profiles
$\alpha$ and $\beta^1,\ldots, \beta^\ell$, such that $m(\beta^j)  = \deg L_j$ and
\[
	\Lc = \sum_{i=1}^k \alpha_i p_i + \sum_{j=1}^\ell L_j
\]
where $\alpha = (\alpha_1\ldots \alpha_k)$.
We define the generalized Severi variety 
\[ 
\widetilde{V}^\circ_{Ne+df,g}(\alpha,\beta^1,\beta^2,\ldots, \beta^\ell)
[p_1,\ldots ,p_k, L_1,\ldots, L_\ell]	
\]
as the normalization of the closure of the locus of reduced (possibly reducible) curves in $E\times \Pbb^1$, of
class $\pi_1^*\Lc\otimes \pi_2^*(\Oc_{\Pbb^1}(d))$, geometric genus $g$, containing no fibers of $E \times \Pbb^1 \to \Pbb^1$, and whose intersection with $E_0$ 
consists of $\card{\alpha}+\sum_j \card{\beta^j}$ points as follows:
\begin{itemize}
	\item The fixed points $p_1, \ldots, p_k$, each appearing with the corresponding multiplicity $\alpha_1, \ldots, \alpha_k$
	 (note that $\card{\alpha}=k$),
	\item and for each $j$, $\card{\beta^j}$ points with multiplicities described
	by $\beta^j$, and whose sum is linearly equivalent to $L_j$.
\end{itemize}

We will omit the brackets when possible, and just write
$\widetilde{V}^\circ_{Ne+df,g}(\alpha, \beta^1,\ldots, \beta^\ell)$.
\end{definition}

Note that by an argument similar to \Cref{theorem:dimension-of-severi}, we have
\begin{equation}
\label{equation:dimension-bound-general}
	\dim \widetilde{V}^\circ_{Ne+df,g}(\alpha, \beta^1,\ldots, \beta^\ell)= d+g+\sum_j \card{\beta^j}-1-\ell
\end{equation}
and its generic point corresponds to a nodal curve.

\begin{remark}
If for some $j$ we have $\beta^j=(b)$, then the generalized Severi variety is a disjoint union
of $b^2$ different generalized Severi varieties, where we remove $\beta^j$ and add an extra $\alpha_{k+1}=b$, 
with the corresponding point $p_{k+1}$ being a $b$-th root of $L_j$. Hence, we may assume that $\card{\beta^j} \geq 2$
for all $j$.
\end{remark}

We can now describe the components that will ever arise in the process of intersecting with the hyperplanes $H_p$'s.
\begin{theorem}
\label{theorem:hyperplane_section_of_severi}
Let $p$ be a generic point on $E_0$, and $W$ a component of the intersection 
$\widetilde{V}^\circ_{Ne+df,g}(\alpha,\beta^1\ldots, \beta^\ell) \cap H_p$. Assume that $|\beta_j| \geq 2$
for all $j$. Then one of the following holds.
\begin{itemize}
	\item Either $W$ is a component of
	\[
		\widetilde{V}^\circ_{Ne+df,g}(\alpha+(n),\beta^1,\ldots,\beta^j-(n),\ldots, \beta^\ell)
		[p_1,\ldots,p_k,p,L_1,\ldots,L_j-p,\ldots L_{\ell}]
	\]
	for some $j$. That is, one of the moving points became a fixed at $p$. 
	\item Or the elliptic fiber $E_0$ is generically split off in the limit with
	multiplicity $m$. As a parameter space for the residual curve, $W$ is a component of 
	\[
		\widetilde{V}^\circ_{(N-m)e+df,g'}(\alpha',\beta^{j_1},\ldots,\beta^{j_t}, \beta')
		[p_{i_1},\ldots,p_{i_{\card{\alpha'}}}, L_{j_1},\ldots, L_{j_t},L']
	\]
	where 
	\begin{enumerate}
		\item $J=\set{j_1,\ldots, j_t}\subset[\ell]$ is a subset
		\item for $j\not \in J$, let $\widehat{\beta^j} \subset \beta^j$ be a 
		tangency profile with one less term.
		\item $\tau$ is a tangency profile such that $\card{\tau} \geq 2$ and
		\[
			g=g'+\card{\tau}
		\]
		\item $\beta$ is a tangency profile such that
		\[
			\beta'=\tau+ \sum_{j \not \in J} \widehat{\beta^j}
		\]
		\item and $\alpha' \subset \alpha$ is a subprofile such that
		\[
		 	d=m(\alpha') +m(\beta')+\sum_{j \in J}m(\beta^j)
		 \] 
	\end{enumerate}
	
	That is, for each group $\beta^j$ of B-sections, at most one of them may specialize to
	the curve dominating $E_0$. All the others stay in the residual curve. 
	If the group of $\beta^j$ remains intact, then its class is preserved.
	Otherwise, all the $\beta^j$ that lost an element bundle up together, plus
	a partition $\tau$ that remembers how the residual curve meets the cover of
	$E_0$ in the stable limit.
\end{itemize}
\end{theorem}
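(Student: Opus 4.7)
The plan is to replay the three-step architecture of the proof of \Cref{theorem:simple_hyperplane_section_of_severi}---nodal reduction, genus bound, dimension bound---and to thread through the additional bookkeeping of the groups $\beta^1,\ldots,\beta^\ell$ of tangency conditions with prescribed classes $L_j$. The nodal reduction of \Cref{sub:nodal_reduction} goes through formally once we note that the B-sections are now labeled by group: for each $j$ there are $\card{\beta^j}$ sections with multiplicities prescribed by $\beta^j$ whose total sum has class $L_j$ on the general fiber. Decompose the central fiber $C_0=\widetilde{E}\cup Z\cup X$ as before.

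The genus bound \Cref{proposition:genus_bound} holds verbatim. Its proof is a purely combinatorial estimate about $C_0$ together with the Riemann--Hurwitz constraint on $\widetilde{E}\to E_0$, and nowhere uses the identity of the sections. So we retain $p_a(X)+T\leq g$ with the same description of the equality case.

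The main work is in the dimension bound. Following \Cref{sub:the_dimension_bound}, for each group $j$ let $\epsilon_b^j$ be the number of B-sections in the $j$-th group whose limit lies on $\widetilde{E}$ or on a component of $Z$ disjoint from $X$, and set $\epsilon^j=\min(\epsilon_b^j,1)$. The image $P\subset E_0^k$ of the map remembering intersections with $E_0$ is constrained by two effects: the remaining $\card{\beta^j}-\epsilon_b^j$ sections of the $j$-th group contribute moduli, but the fixed class $L_j$ drops one dimension precisely when the group survives intact. The analogue of \Cref{claim:dim-P} becomes
\[
\dim P \;\leq\; T \;+\; \sum_{j=1}^{\ell}\bigl(\card{\beta^j}-\epsilon_b^j\bigr) \;-\; \sum_{j=1}^{\ell}(1-\epsilon^j) \;-\; 1,
\]
with equality only if for each $j$ either $\epsilon_b^j=0$ (the whole group stays in $X$ preserving the class $L_j$) or $\epsilon_b^j=\epsilon^j=1$ (exactly one section of $\beta^j$ specializes to $\widetilde{E}$, freeing the class of the residue $\widehat{\beta^j}$). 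Combined with \Cref{proposition:deformation_bound} and \eqref{equation:dimension-bound-general}, this forces $p_a(X)+T\geq g$.

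Combining the two bounds forces every intermediate inequality to be an equality, and the equality characterizations of both propositions together describe the limit completely: $\widetilde{E}\to E_0$ is unramified, $Z$ is a disjoint union of rational chains between $\widetilde{E}$ and $X$ whose intersection multiplicities with $X$ form $\tau$ with $\card\tau=T$; the A-sections landing in $X$ are indexed by a subprofile $\alpha'\subset\alpha$; the set $J$ of indices for which the group $\beta^j$ survives intact is read off from the $j$ with $\epsilon_b^j=0$; and all the residues $\widehat{\beta^j}$ for $j\notin J$ merge with $\tau$ into the single profile $\beta'$ whose class $L'$ is determined by the complementary arithmetic on $\Lc|_{E_0}$. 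Finally, \Cref{proposition:tail} eliminates elliptic tails and forces $\card\tau\geq 2$, while the standing hypothesis $\card{\beta^j}\geq 2$ ensures no group degenerates to a base-point after losing one section. The main obstacle is verifying that the displayed dimension bound really is sharp with multiple independent linear-class constraints; once that accounting is checked, the rest is a direct translation of the proof of \Cref{theorem:simple_hyperplane_section_of_severi}.
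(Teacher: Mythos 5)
Your proposal is correct and follows essentially the same route as the paper: it reuses the nodal reduction and genus bound verbatim, introduces the group-indexed counters $\epsilon_b^j$ (equivalent to the paper's $\epsilon_j$) together with their indicators $\epsilon^j$, arrives at the identical $\dim P$ bound after simplifying $\sum_j(1-\epsilon^j)=\ell-\epsilon$, and closes by forcing all inequalities to equalities and eliminating elliptic tails via \Cref{proposition:tail}. The paper likewise treats this as a bookkeeping variant of \Cref{theorem:simple_hyperplane_section_of_severi} and explicitly leaves the detailed verification of the $\dim P$ estimate to the reader, so your account is as complete as the paper's own.
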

\begin{proof}
The argument will be very similar to the one for the simpler case of 
\Cref{theorem:simple_hyperplane_section_of_severi}. 
As a matter of fact, the proof  goes through mostly word by word---the only difference is that we have to do more bookkeeping 
to keep track of all the tangency profiles $\alpha, \beta^j$. Let us just
highlight the differences, and leave the details to the reader.

\Cref{sub:nodal_reduction,sub:the_genus_bound} go through as they are. In
particular, we will use the same definition of $\widetilde{E}, Z, X$ and $T$. 
We obtain \Cref{proposition:genus_bound} (the genus bound):
\[
	p_a(X)+T \leq g	
\]

In \Cref{sub:the_dimension_bound}, the key adjustment is our definition of
$\epsilon_b$ and $\epsilon$. 
We let $\epsilon_j$ be the number of sections in the $\beta^j$ group which meet
the central fiber on $\widetilde{E}$ or on a connected component of $Z$ that does
not meet $X$. We define 
\[
\epsilon = \card{ \set{j \text{ such that } \epsilon_j \neq 0} }\\
\]

From the definition of $\epsilon$, we get the following.
\begin{claim}
\label{claim:epsilon}
We have
\[
\epsilon \leq \sum_{j=0}^\ell \epsilon_j
\]
with equality only if, for each $j$, $\epsilon_j$ is equal to zero or one.
\end{claim}

And the corresponding \Cref{claim:dim-P} is the following.
\begin{claim}
\label{claim:general-P}
We have
\[
	\dim P \leq T+\sum_j(\card{\beta^j} 
	-\epsilon_j) - (\ell -\epsilon +1) 
\]
\end{claim}

We get the dimension bound
\[
	g \leq p_a(X) +T
\]
by coupling \Cref{claim:general-P,claim:epsilon,claim:deformation-bound},
and the following consequence of \cref{equation:dimension-bound-general}:
\[
	\dim W = \dim V_{df+Ne,g}(\alpha,\beta)-1=d+g+(\sum|\beta^j|)-1-\ell-1 
\]

Again, the dimension bound balances exactly the genus bound, and equalities
hold throughout. Analyzing the equality conditions,
and eliminating elliptic tails by \Cref{proposition:tail},
we arrive at the list of \Cref{theorem:hyperplane_section_of_severi}.
\end{proof}


\section{Monodromy Groups of Covers} 
\label{sec:monodromy_groups_of_covers}
In this section we will study the monodromy groups of simply branched covers.

Let $f\from C \to D$ be a degree $d$ cover of  smooth connected curves. Let 
$\punctured{D} \subset D$ be the
complement of the set of branch points of $f$, and $\punctured{C}$ its preimage in $C$.
We will deal with fundamental groups, so let us fix base points 
$\basepoint \in \punctured{D}$ and $\basepoint \in \punctured{C}$ that
map to each other under $f$.

The map $f\from \punctured{C} \to \punctured{D}$ is a covering of topological spaces. The 
fundamental group $\pi_1(\punctured{D},\basepoint)$ acts on the fiber 
$f^{-1}(\basepoint)$. This action defines a \emph{monodromy map}
$\phi\from \pi_1(\punctured{D},\basepoint) \to S_d$.
We call the image of $\phi$ the monodromy group of the cover $f$.
Note that it is only defined up to conjugation in $S_d$. If $\phi$ is surjective,
we say that $f$ has \emph{full} monodromy.

A driving question in the field is 
\emph{what are the possible monodromy groups $G$}. 
There are many interesting (and some unsolved) problems along this line, see for example, \cite{guralnick_monodromy_2003,artebani_algebraic_2005,frohardt_composition_2001,guralnick_rational_2003}.
However, when the covering map $f$ is simply-branched, one can say much more.

\begin{proposition}[Berstein--Edmonds]
\label{proposition:full_monodromy}
Let $f\from C \to D$ be a simply branched cover. Then $f$ has full monodromy if,
and only if, $f$ is primitive (that is,
the pushforward map in fundamental groups $\pi_1(C,\star) \to \pi_1(D,\star)$
is surjective).
\end{proposition}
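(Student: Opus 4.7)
The plan is to work through the correspondence between intermediate covers of $f$ and the permutation-theoretic structure of the monodromy group $G \subset S_d$ acting on $f^{-1}(\basepoint)$. The easy direction (full monodromy $\Rightarrow$ primitive) runs as follows: if $f$ factors as $C \to \tilde C \to D$ with $\tilde C \to D$ unramified of degree $e > 1$, then the fibers of $C \to \tilde C$ over the preimages of $\basepoint$ partition $f^{-1}(\basepoint)$ into $e$ blocks of size $d/e$, and this partition is preserved by $G$. Hence $G$ is imprimitive as a permutation group, and in particular $G \neq S_d$.

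For the converse, simple branching plays the essential role. Let $\tau_p \in G$ denote the local monodromy at a branch point $p$; by hypothesis each $\tau_p$ is a transposition. Let $N \trianglelefteq G$ be the normal closure of $\{\tau_p\}$. Since $C$ is connected, $G$ acts transitively on $f^{-1}(\basepoint)$, and so the orbits of $N$ form a block system for $G$, corresponding to an intermediate cover $C \to \tilde C^\times \to D^\times$. The local inertia of $\tilde C^\times \to D^\times$ at a puncture $p$ is governed by the action of $\tau_p$ on the set of blocks, which is trivial because $\tau_p \in N$ preserves every $N$-orbit. Therefore $\tilde C^\times \to D^\times$ extends to an unramified cover $\tilde C \to D$, and primitivity of $f$ forces $\tilde C = D$, i.e.\ $N$ acts transitively on $f^{-1}(\basepoint)$.

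To finish, I would invoke the classical fact that a transitive subgroup of $S_d$ generated by transpositions equals $S_d$: associate to any generating set of transpositions the graph on $\{1,\ldots,d\}$ with those transpositions as edges; transitivity is equivalent to connectedness of this graph, and any spanning tree yields $d-1$ transpositions already sufficient to generate $S_d$. Since $N$ is generated by conjugates of the $\tau_p$, each of which is again a transposition in $S_d$, we conclude $N = S_d$, so $G = S_d$. The step that deserves the most care is the passage from the $N$-block system to an intermediate cover of the \emph{complete} curve $D$: here one uses that an étale cover of $D^\times$ whose inertia around each puncture is trivial extends uniquely to an étale cover of $D$, a standard purity statement but worth spelling out.
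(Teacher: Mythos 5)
Your proposal is correct, and both directions are handled soundly, but the key step in the converse direction is argued via a genuinely different mechanism than the paper's. Both you and the paper ultimately aim to show that the normal closure $N \trianglelefteq G$ of the local transpositions is all of $S_d$, and both rely on the classical fact that a transitive subgroup of $S_d$ generated by transpositions is $S_d$. The difference is in how transitivity of $N$ is established. The paper works with $K = \ker\bigl(\pi_1(D^\times,\star) \to \pi_1(D,\star)\bigr)$, whose image in $G$ is exactly your $N$, and proves transitivity of the $K$-action by a direct diagram chase: given any $g \in \pi_1(D^\times,\star)$, primitivity lets one lift $i_*(g)$ to $\pi_1(C^\times,\star)$ and write $g = \kappa \cdot f_*(\widehat{g})$ with $\kappa \in K$, then observes that $f_*(\widehat{g})$ fixes the base point so that $g$ and $\kappa$ act the same on it. You instead argue geometrically: the $N$-orbit block system corresponds to an intermediate cover $C^\times \to \tilde{C}^\times \to D^\times$; triviality of the $\tau_p$-action on blocks means the inertia of $\tilde{C}^\times \to D^\times$ vanishes, so it extends to an unramified cover of $D$ through which $f$ factors, which primitivity forces to be trivial. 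Your route makes the covering-space content explicit and directly connects $N$-orbits to unramified factorizations, at the modest cost of having to justify the purity/extension step for covers of curves and the fact that the factorization on the punctured locus extends to the compact curves; the paper's route avoids any extension argument by staying entirely within group theory and the standard presentation of $\pi_1(D^\times)$. Both are complete proofs.
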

This is Proposition 2.5 of \cite{berstein_classification_1984}. One can find a proof of this in 
\cite{kanev_irreducibility_2005} as well. For completeness, here is a modified proof.
\begin{proof}
From a factorization of $f\from C \to D$ through a non-trivial unramified map $\tilde{D} \to D$, we
get a partition of $f^{-1}(\star)$ according to the different images in $\tilde{D}$. The monodromy of
$f$ has to preserve this partition, and hence cannot be the full symmetric group.

Suppose now that $f$ is primitive. We want to show it has full monodromy.
Let $K$ be the kernel of the map 
$i_*\from \pi_1(\punctured{D},\basepoint) \to \pi_1(D,\basepoint)$. We
start by showing the following.
\begin{claim}
The action of $K$ on $f^{-1}(\basepoint)$ is transitive.
\end{claim}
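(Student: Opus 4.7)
The plan is to reduce the claim about the action of $K$ on the fiber to the primitivity hypothesis by passing between the punctured and compactified fundamental groups.

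First, I would translate the transitivity statement into group-theoretic language. The action of $\pi_1(\punctured{D}, \basepoint)$ on $f^{-1}(\basepoint)$ is transitive (since $\punctured{C}$ is connected), with stabilizer at $\basepoint \in f^{-1}(\basepoint)$ equal to $H := f_* \pi_1(\punctured{C}, \basepoint)$. So the action of $K$ on $f^{-1}(\basepoint)$ is transitive if and only if $K \cdot H = \pi_1(\punctured{D}, \basepoint)$, which is in turn equivalent to saying that the image of $H$ in the quotient $\pi_1(\punctured{D},\basepoint)/K = \pi_1(D, \basepoint)$ is all of $\pi_1(D, \basepoint)$.

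Next, I would identify that image explicitly. Consider the commutative square
\[
\xymatrix{
\pi_1(\punctured{C},\basepoint) \ar@{^{(}->}[r]^-{f_*} \ar@{->>}[d] & \pi_1(\punctured{D},\basepoint) \ar@{->>}[d] \\
\pi_1(C,\basepoint) \ar[r]^-{f_*} & \pi_1(D,\basepoint)
}
\]
where the vertical maps are the surjections induced by filling in the punctures (so their kernels are normally generated by small loops around removed points). Since the left vertical arrow is surjective, chasing the diagram shows that the image of $H$ in $\pi_1(D,\basepoint)$ is precisely $f_*\pi_1(C,\basepoint)$.

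Finally, I would invoke the primitivity hypothesis: by assumption $f_*\pi_1(C,\basepoint) = \pi_1(D,\basepoint)$, so the image of $H$ in $\pi_1(D,\basepoint)$ is all of $\pi_1(D,\basepoint)$, and hence $KH = \pi_1(\punctured{D},\basepoint)$, establishing the claim. There is no substantial obstacle here: the only subtlety is keeping the base points and the two kinds of ``loops around punctures'' straight when invoking the diagram, which is routine once the reformulation is in place.
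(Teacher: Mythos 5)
Your proof is correct and is essentially the same as the paper's, just phrased more abstractly: where you observe that transitivity of $K$ is equivalent to $K\cdot f_*\pi_1(\punctured{C},\basepoint) = \pi_1(\punctured{D},\basepoint)$ and then compute the image in $\pi_1(D,\basepoint)$ via the commutative square, the paper chases the same square element-by-element, writing a generic $g$ as $\kappa \cdot f_*(\lift{g})$ with $\kappa\in K$ and checking directly that $f_*(\lift{g})$ stabilizes $\basepoint$. Both hinge on the same two surjections (the puncture-filling map $\pi_1(\punctured{C},\basepoint)\twoheadrightarrow\pi_1(C,\basepoint)$ and the primitivity hypothesis $f_*\pi_1(C,\basepoint)=\pi_1(D,\basepoint)$).
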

\begin{proof}
Consider the following commutative diagram
of groups:
\[
\xymatrix{
	& \pi_1(\punctured{C},\basepoint)\ar@{->>}^{i_*}[r] \ar^{f_*}[d]& 
	\pi_1(C, \basepoint)\ar@{->>}^{f_*}[d] \\
	K\ar[r] & \pi_1(\punctured{D},\basepoint)\ar@{->>}^{i_*}[r] & \pi_1(D, \basepoint)
}
\]

Given any element $g \in \pi_1(\punctured{D},\basepoint)$, we can use the surjectivity of the top and right arrows
 to find
an element $\lift{g} \in \pi_1(\punctured{C}, \basepoint)$ such that
\[
	i_*(g)=f_*i_*(\lift{g}) = i_*(f_*(\lift{g}))
\]
Hence,
\[
	g \cdot \left(f_*(\lift{g})\right)^{-1} = \kappa \in K
\]

Now let us consider
the monodromy action of $g$ on the fiber $f^{-1}(\basepoint)$. More specifically, let us
see how $g$ acts on the base point $\basepoint \in \punctured{C}$ (which in turn lies over the
base point $\basepoint \in \punctured{D}$). 

If we lift a representative loop of $f_*(\lift{g})$ with start point $\basepoint$, we
will get a representative loop of $\lift{g}$. Hence, the monodromy action of $f_*(\lift{g})$ 
fixes $\basepoint$. As $g= \kappa \cdot f_*(\lift{g})$, we get
$g(\basepoint) = \kappa(\basepoint)$.

We showed that for any $g \in \pi_1(\punctured{D},\basepoint)$, there is a $\kappa \in K$
that acts on $\basepoint$ in the same way. Hence, the orbit of $\basepoint$ under the $K$-action is equal to the orbit under the $\pi_1(\punctured{D},\basepoint)$-action. But
the latter action is transitive, since $\punctured{C}$ is connected. Hence, the
action of $K$ is transitive as well.
\end{proof}


\begin{claim}
The image of $K \subset \pi_1(\punctured{D},\basepoint) \to S_d$ is generated
by transpositions.
\end{claim}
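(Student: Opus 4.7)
The plan is to identify $K$ explicitly as a normal subgroup of $\pi_1(\punctured{D},\basepoint)$, use the simple-branching hypothesis to see that its ``standard'' generators map to transpositions, and then observe that conjugates of transpositions in $S_d$ are themselves transpositions.

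More concretely, first I would recall that $K = \ker\bigl(i_*\from \pi_1(\punctured{D},\basepoint) \to \pi_1(D,\basepoint)\bigr)$ is (by van Kampen applied to $D$ expressed as the union of $\punctured{D}$ with small disks around each branch point) precisely the normal closure in $\pi_1(\punctured{D},\basepoint)$ of the set of small loops $\gamma_1,\ldots,\gamma_b$, where $\gamma_i$ is a loop based at $\basepoint$ that goes out along a chosen path to a small circle around the $i$-th branch point $b_i$ and comes back. Thus every element of $K$ can be written as a product of conjugates $g\gamma_i^{\pm 1} g^{-1}$ with $g \in \pi_1(\punctured{D},\basepoint)$.

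Next, I would use the hypothesis that $f$ is simply branched: the local monodromy at each branch point is a single transposition, so $\phi(\gamma_i) \in S_d$ is a transposition for every $i$. Taking images under $\phi$, the image $\phi(K)$ is generated, as a subgroup of $S_d$, by the set
\[
\bigl\{ \phi(g)\,\phi(\gamma_i)\,\phi(g)^{-1} \,:\, g \in \pi_1(\punctured{D},\basepoint),\ 1\leq i\leq b \bigr\}.
\]
But every conjugate of a transposition in $S_d$ is again a transposition, so each of these generators is a transposition. Hence $\phi(K)$ is generated by transpositions, as claimed.

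I do not expect any real obstacle here, since both ingredients (the presentation of $K$ as a normal closure of small loops around branch points, and the fact that simple branching produces transposition monodromy) are standard consequences of the Riemann existence theorem setup already implicit in the paper. The only minor care needed is to note that $K$ is \emph{normally} generated by the $\gamma_i$, not just generated by them; this is exactly what makes the step ``conjugates of transpositions are transpositions'' do the work.
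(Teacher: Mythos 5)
Your argument is correct and follows essentially the same route as the paper's: both identify $K$ as the normal closure of small loops around the branch points (the paper does this via an explicit presentation of $\pi_1(\punctured{D})$ and $\pi_1(D)$, you via van Kampen, which amounts to the same thing), then use simple branching to see that each such loop maps to a transposition, and finally observe that conjugates of transpositions are transpositions. There is no substantive difference.
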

\begin{proof}
We use the standard presentations of $\pi_1(D,\basepoint)$ and
$\pi_1(\punctured{D},\basepoint)$.
Let $g(D)$ be the genus
of $D$. Then $\pi_1(D,\basepoint)$ is the free group generated by 
$\alpha_i, \beta_i$  for $i$ ranging from $1$ to $g(D)$, subject to the relation
\[
	[\alpha_1,\beta_1]\ldots[\alpha_{g(D)}, \beta_{g(D)}] = \Id
\]

Let $b$ be the number of branch points of $f\from C\to D$, and 
choose simple contractible loops $\tau_j$ around each branch point. Then
$\pi_1(\punctured{D},\basepoint)$ is the free group generated by $\alpha_i,\beta_i$ for $i=1,2,\ldots, g(D)$
and $\tau_j$ for $j=1,2,\ldots,b$, subject to the relation
\[
[\alpha_1,\beta_1]\ldots[\alpha_{g(D)}, \beta_{g(D)}] = \tau_1 \tau_2 \ldots \tau_b 	
\]

The map $\pi_1(\punctured{D},\basepoint) \to \pi_1(D , \basepoint)$ sends the
$\alpha_i,\beta_i$ to the corresponding $\alpha_i,\beta_i$, and
each $\tau_j$ to $\Id$. Hence, the kernel $K$ is the normal closure of
the subgroup generated by the $\tau_j$.

Since $f\from C \to D$ is simply branched, each $\tau_j$ acts as a transposition. Hence
so will any conjugate of $\tau_j$. Therefore, the image of $K$ in $S_d$ is generated
by transpositions, as we wanted to show.
\end{proof}

Now we can finish the proof of \Cref{proposition:full_monodromy}. 
The image of $K$ in $S_d$ is transitive and generated by transpositions. But the only
subgroup of $S_d$ satisfying these two properties is $S_d$ itself. Hence, $K \to S_d$ is surjective, and so is the monodromy map $\phi\from \pi_1(\punctured{D},\basepoint) \to S_d$.
\end{proof}

\begin{corollary}
\label{corollary:primitive-automorphism}
A primitive degree $d$ simply branched cover $f:C \to D$ does not admit any isomorphisms over $D$,
unless $d=2$.
\end{corollary}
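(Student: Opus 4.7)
The plan is to reduce the statement to a small piece of group theory via the monodromy representation. An automorphism $\sigma \in \mathrm{Aut}(C/D)$ satisfies $f \circ \sigma = f$, so $\sigma$ restricts to a permutation of the fiber $F := f^{-1}(\basepoint)$. I would first argue that the restriction map $\rho \from \mathrm{Aut}(C/D) \to \mathrm{Sym}(F)$ is injective: if $\sigma$ fixes $F$ pointwise, then considering $\sigma$ restricted to the unramified cover $\punctured{C} \to \punctured{D}$, the standard theory of covering spaces says an automorphism of a connected covering space is determined by its value at a single point, so $\sigma|_{\punctured{C}} = \mathrm{id}$. Since $\punctured{C}$ is dense in the smooth curve $C$, we conclude $\sigma = \mathrm{id}$.

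Next I would observe that the image of $\rho$ lands in the centralizer of the monodromy group. Indeed, by naturality of the action of $\pi_1(\punctured{D},\basepoint)$ on fibers, if $\gamma \in \pi_1(\punctured{D}, \basepoint)$ and $x \in F$, then $\sigma(\gamma \cdot x) = \gamma \cdot \sigma(x)$, because $\sigma$ lifts paths in $\punctured{D}$ to paths in $\punctured{C}$ in an equivariant way with respect to $f$. Thus $\rho(\sigma)$ commutes with every element $\phi(\gamma)$ of the monodromy group, so
\[
\rho(\mathrm{Aut}(C/D)) \subseteq Z_{\mathrm{Sym}(F)}\bigl(\mathrm{image\ of\ } \phi\bigr).
\]

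Now I would apply \Cref{proposition:full_monodromy}: since $f$ is primitive and simply branched, the monodromy group is all of $\mathrm{Sym}(F) \cong S_d$. The centralizer of $S_d$ in itself is the center $Z(S_d)$, which is trivial for $d \geq 3$ (and equals $S_2$ only when $d=2$). Combining with the injectivity of $\rho$, we conclude $\mathrm{Aut}(C/D) = \{e\}$ whenever $d \geq 3$, exactly as claimed.

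The only step requiring any care is the injectivity of $\rho$, which rests on the lifting property for unramified covers and the density of $\punctured{C}$ in $C$; everything else is a direct invocation of \Cref{proposition:full_monodromy} together with the triviality of $Z(S_d)$ for $d\geq 3$. There is no real obstacle here—this is genuinely a short corollary of the full-monodromy statement.
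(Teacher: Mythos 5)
Your proof is correct and takes essentially the same route as the paper: identify $\mathrm{Aut}(C/D)$ with a subgroup of the centralizer of the monodromy group in $S_d$, invoke primitivity to conclude the monodromy is all of $S_d$ via \Cref{proposition:full_monodromy}, and use triviality of $Z(S_d)$ for $d\geq 3$. The paper states this more tersely (the isomorphism ``corresponds to'' a commuting permutation), whereas you spell out the injectivity of the restriction map and the equivariance; these are exactly the details being compressed.
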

\begin{proof}
This is similar to Proposition 1.9 in \cite{fulton_hurwitz_1969}.
An isomorphism of $C$ over $D$ corresponds to an element $\sigma$ of $S_d$ that commutes with every 
element of the monodromy group $\im \phi \subset S_d$.

By \Cref{proposition:monodromy-group}, 
if a cover is primitive, then the monodromy map is surjective, and therefore $\sigma$ would be in the
center of $S_d$, which is the trivial group as soon as $d \geq 3$.
\end{proof}

An  arbitrary cover may be related to primitive ones by the following construction.
\begin{corollary}
\label{corollary:factorization}
Let $f\from C \to D$ be a simply branched covering map. Then
it can be factored in a unique way as 
$C \to \Dbar \to D$
such that $C \to \Dbar$ has full monodromy, and
$\Dbar \to D$ is unramified.
\end{corollary}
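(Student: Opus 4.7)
The plan is to realize $\Dbar$ as the topological covering space of $D$ corresponding to the subgroup $H := f_* \pi_1(C, \basepoint) \subset \pi_1(D, \basepoint)$. Over $\Cbb$ every connected topological cover of a smooth projective curve carries a unique compatible algebraic structure, so this construction yields a finite étale map $\Dbar \to D$ of smooth projective curves, with $\pi_1(\Dbar, \tilde{\basepoint}) \cong H$ via the pushforward of $\Dbar \to D$.

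Having built $\Dbar$, I would invoke the lifting criterion for covering spaces to produce a map $C \to \Dbar$ making the composition $C \to \Dbar \to D$ agree with $f$: such a lift exists precisely because $f_*\pi_1(C) = H = \pi_1(\Dbar)$. By construction, the induced pushforward $\pi_1(C) \to \pi_1(\Dbar)$ is surjective, so $C \to \Dbar$ is primitive in the paper's sense. Since $\Dbar \to D$ is étale, ramification points of $C \to \Dbar$ are exactly the preimages of ramification points of $f$, with the local ramification orders preserved; in particular $C \to \Dbar$ is still simply branched. \Cref{proposition:full_monodromy} then promotes primitivity to full monodromy, giving the existence claim.

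For uniqueness, suppose $C \xrightarrow{g} \Dbar' \xrightarrow{h} D$ is another such factorization. As above, $g$ is simply branched because $h$ is étale, so the full monodromy assumption on $g$ combined with \Cref{proposition:full_monodromy} says $g_*\pi_1(C) = \pi_1(\Dbar')$. Setting $H' = h_*\pi_1(\Dbar', \basepoint) \subset \pi_1(D, \basepoint)$ and pushing the previous equality forward through $h_*$ gives
\[
 H = f_*\pi_1(C) = h_* g_*\pi_1(C) = h_*\pi_1(\Dbar') = H'.
\]
Since $\Dbar$ and $\Dbar'$ are the connected unramified covers of $D$ classified by the same conjugacy class of subgroups of $\pi_1(D, \basepoint)$, they are isomorphic as covers of $D$, which is exactly the desired uniqueness.

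The only substantive technical point is the algebraization of the topological cover $\Dbar \to D$, which is standard over $\Cbb$; in positive characteristic one would replace $\pi_1$ throughout by the étale fundamental group, in the spirit of the remarks in \Cref{sub:guide_to_readers}. Everything else is a formal consequence of elementary covering space theory together with \Cref{proposition:full_monodromy}, so there is no serious combinatorial or geometric obstacle to overcome.
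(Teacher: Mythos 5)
Your proof is correct and takes essentially the same route as the paper: construct $\Dbar$ as the cover of $D$ classified by $f_*\pi_1(C,\basepoint)$, lift $f$ through it, observe that $C\to\Dbar$ is $\pi_1$-surjective, and invoke \Cref{proposition:full_monodromy}. The paper's proof is terse and in fact leaves the uniqueness claim implicit; you supply that argument explicitly, and you also correctly note the small but necessary point that $C\to\Dbar$ remains simply branched because $\Dbar\to D$ is étale, which is needed for \Cref{proposition:full_monodromy} to apply. One cosmetic remark: your uniqueness argument shows $H=H'$ on the nose (given consistent base points), so the appeal to ``the same conjugacy class of subgroups'' is unnecessarily weak, though harmless since covering-space theory only classifies connected covers up to conjugacy anyway.
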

\begin{proof}
The unramified cover $\Dbar \to D$ is the covering space corresponding
to the subgroup $\im \pi_1(C,\basepoint) \subset \pi_1(D,\basepoint)$.
By the lifting property, the map $C \to D$ factors through $\Dbar$. The
map $C \to \Dbar$ is surjective on $\pi_1$. Hence, by \Cref{proposition:full_monodromy},
it has has full monodromy.
\end{proof}

\Cref{corollary:factorization} gives us a tool to study the monodromy group
of an arbitrary simply branched cover $f\from C \to D$. 
We would hope to relate the monodromy
group of $f$ with the monodromy groups of each factor, which are easier to understand.
The following proposition is a first step in this direction.

\begin{proposition}
\label{proposition:monodromy-group}
Let $C \to \Dbar \to D$ be as in \Cref{corollary:factorization}.
Let $G$ (resp. $\Gbar$)  be the monodromy group of $C \to D$ 
(resp. $\Dbar \to D$). The following holds.
\begin{enumerate}
	\item \label{item:part1} There is a surjective map $G \onto \Gbar$,
	\item There kernel of $G \onto \Gbar$ is as large as it can be. That is, the 
	monodromy group $G$ fits in a exact sequence
\[
	1 \to S_{\tilde{d}} \times \ldots \times S_{\tilde{d}} \to G \to \Gbar \to 1
\]
where the product in the left term has $\bar{d}=\deg (\Dbar \to D)$ factors, each
of them isomorphic to the symmetric group on
$\tilde{d}=\deg (C \to \Dbar)$ elements.
\end{enumerate}
\end{proposition}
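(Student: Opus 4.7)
My plan is the following. For part (1), the natural map $G \twoheadrightarrow \Gbar$ comes from the block structure of the fiber of $f\from C \to D$. Writing $\bar\star_1, \ldots, \bar\star_{\bar d}$ for the preimages of $\basepoint$ in $\Dbar$, the fiber $f^{-1}(\basepoint)$ partitions into $\bar d$ blocks $B_i := h^{-1}(\bar\star_i)$, where $h\from C \to \Dbar$ is the intermediate cover from \Cref{corollary:factorization}. The monodromy $\phi\from \pi_1(D^\times, \basepoint) \to G$ permutes the $B_i$'s among themselves, and the induced permutation action on $\{B_1, \ldots, B_{\bar d}\}$ is precisely the monodromy of $\Dbar \to D$. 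This yields the desired surjection $G \twoheadrightarrow \Gbar$, whose kernel $K$ is tautologically contained in $\prod_{i=1}^{\bar d} S_{B_i} \cong S_{\tilde d}^{\bar d}$.

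For part (2) I need the reverse inclusion $K \supseteq \prod_i S_{B_i}$. It suffices to produce, for each $i$ and each transposition of $S_{B_i}$, an element of $K$ which acts as that transposition on $B_i$ and as the identity on all other blocks. Fix a branch point $b$ of $f$. Since $\Dbar \to D$ is unramified, exactly one preimage $\bar b \in \Dbar$ of $b$ is a branch point of $h$; the remaining $\bar d - 1$ preimages of $b$ are unramified for $h$. For a path $\gamma$ in $D^\times$ from $\basepoint$ to a punctured neighborhood of $b$, set $\tau_\gamma := \gamma \cdot \ell \cdot \gamma^{-1} \in \pi_1(D^\times, \basepoint)$, where $\ell$ is a small loop around $b$. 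Because $\Dbar \to D$ is étale, $\ell$ lifts to a small loop around each preimage of $b$, so $\tau_\gamma$ lies in $H_0 := \ker(\pi_1(D^\times, \basepoint) \twoheadrightarrow \Gbar)$ and hence $\phi(\tau_\gamma) \in K$.

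The key computation is that $\phi(\tau_\gamma)|_{B_i}$ equals the $h$-monodromy around the endpoint $\bar b^{(i)}$ of the lift of $\gamma$ starting at $\bar\star_i$, which is a transposition of two elements of $B_i$ if $\bar b^{(i)} = \bar b$ and the identity otherwise. Because $\bar b$ is the \emph{unique} ramified preimage, one can choose $\gamma$ so that $\bar b^{(i)} = \bar b$ for the chosen index $i$ while $\bar b^{(j)} \neq \bar b$ for every $j \neq i$; this is possible because $\Dbar^\circ := (\Dbar \to D)^{-1}(D^\times)$ is connected, so inside $\Dbar^\circ$ we can join $\bar\star_i$ to a punctured disk about $\bar b$ by a path and project down to $D^\times$ to obtain $\gamma$. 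Consequently $\phi(\tau_\gamma) \in K$ acts as a transposition on $B_i$ and trivially on every other block.

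To exhaust all transpositions of $S_{B_i}$, I precompose $\gamma$ with loops $\ell_0 \in \pi_1(\Dbar^\circ, \bar\star_i)$; this conjugates the block-$i$ transposition by the $h$-monodromy image of $\ell_0$ while leaving the other blocks undisturbed. Since $h$ is primitive, \Cref{proposition:full_monodromy} gives a surjection $\pi_1(\Dbar^\times, \bar\star_i) \twoheadrightarrow S_{\tilde d}$, and $\pi_1(\Dbar^\circ, \bar\star_i) \twoheadrightarrow \pi_1(\Dbar^\times, \bar\star_i)$ because $\Dbar^\circ \subset \Dbar^\times$ is the complement of a finite set in a smooth curve. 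Conjugating a fixed transposition by all of $S_{\tilde d}$ sweeps out every transposition in $S_{B_i}$, and these generate $S_{B_i}$; thus $K \supseteq S_{B_i}$ for every $i$, so $K = \prod_i S_{B_i} = S_{\tilde d}^{\bar d}$. I expect the main obstacle to be this block-localization step—ensuring that the freedom in choosing $\gamma$ realizes every transposition on the chosen block while every other block stays untouched; everything else is careful bookkeeping with base points and path lifts.
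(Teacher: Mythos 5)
Your argument is correct, and it runs on the same two engines as the paper's: a transposition $\phi(\gamma\ell\gamma^{-1})\in K$ coming from a loop contractible in $D$, and conjugation by the monodromy of the intermediate cover $C\to\Dbar$ to exhaust the full symmetric group on a block. The difference is one of bookkeeping: the paper builds $\Sym(S_1)\subset K$ once and then transports it to every other $\Sym(S_i)$ by conjugating with a lift to $G$ of an element of $\Gbar$, whereas you re-choose the path $\gamma$ so that its lift from $\bar\star_i$ lands near the ramified preimage $\bar b$, thereby targeting each block directly. Both work; the paper's route avoids the path-lifting discussion by using only the group-theoretic fact that a transposition in $\prod_i\Sym(S_i)$ is supported on a single factor, so your worry about the ``block-localization step'' is resolved for free there.

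One small gap in your justification: you write that exactly one preimage of the branch point $b$ is a branch point of $h=\fbar$ ``since $\Dbar\to D$ is unramified.'' Unramifiedness alone does not give this. What you need is that $f$ is \emph{simply} branched: then $f^{-1}(b)$ contains a single simple ramification point, which lies over exactly one preimage $\bar b$ of $b$, so only that $\bar b$ is a branch point of $\fbar$. This is exactly the fact the paper invokes (``It is a transposition, since the branching is simple''), and it is part of the hypotheses via \Cref{corollary:factorization}, so the fix is one clause. Aside from that, the argument is sound; in particular the observation that the $\bar d$ lifts of $\gamma$ end near the $\bar d$ distinct preimages of $b$ (so the assignment $i\mapsto\bar b^{(i)}$ is a bijection) is correct and is what makes the direct per-block construction go through.
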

\begin{proof}
\begin{figure}
\centering
\includegraphics{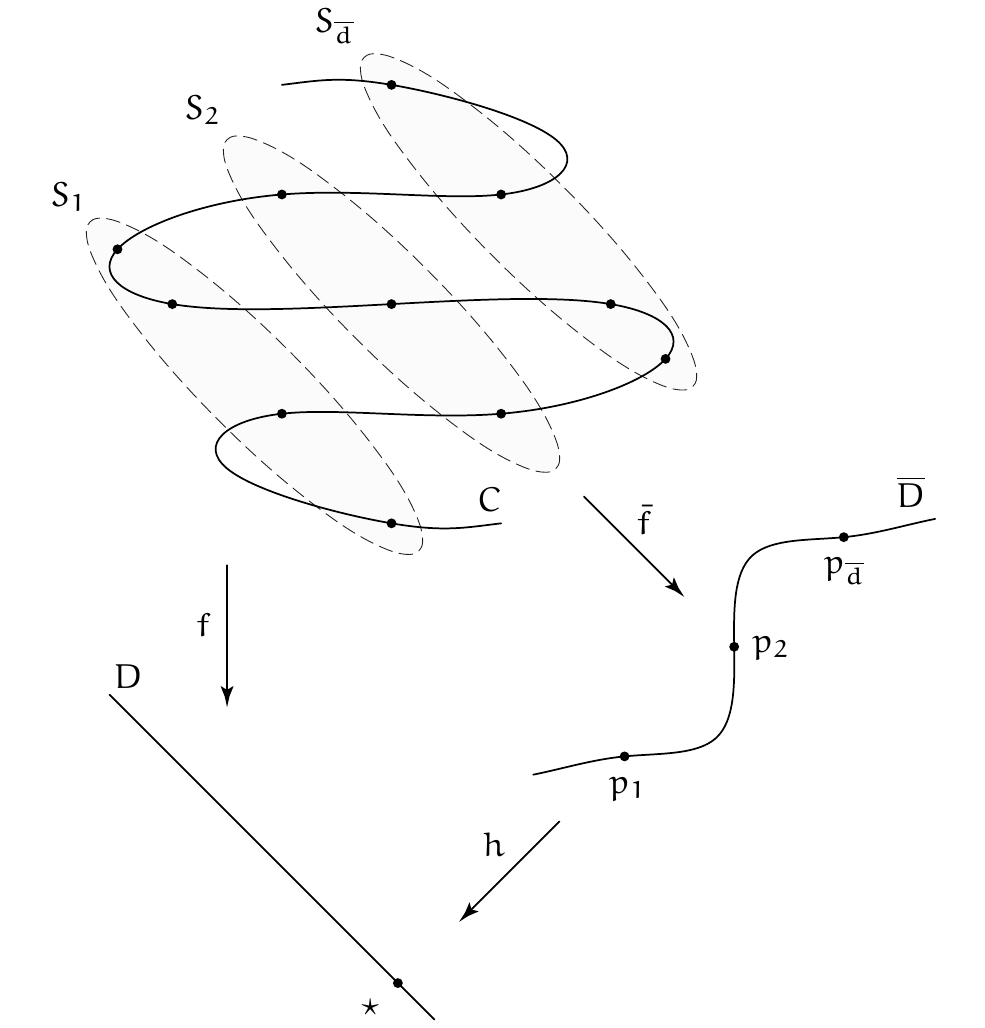}
\caption{The preimages of $\basepoint$ in $\Dbar$ and $C$.}
\label{figure:notation for monodromy argument - fig09}
\end{figure}

We will define a map $G \to \Gbar$ such that the monodromy maps are compatible. That is,
we want the following diagram to commute
\[
	\xymatrix{\pi_1(\punctured{D}, \basepoint) \ar[r] \ar[d]^{i_*} & G\ar@{-->}[d] \\
			  \pi_1(D, \basepoint) \ar^{\overline{\phi}}[r] & \Gbar }
\]

The map is defined as follows. Given a monodromy element $g \in G$, 
choose a representative loop $\gamma\from I \to \punctured{D}$ inducing it.
We send $g$ to $\gbar=\overline{\phi}(i_*(\gamma))$. Geometrically, $\gbar$ is the
 monodromy element induced by the intermediate lifts of $\gamma$ to $\Dbar$:
\[
\xymatrix{
	& C \ar[d] \\
	& \Dbar \ar[d] \\
	I \ar[r]^\gamma
	\ar@{-->}[ru]
	\ar@{-->}[ruu] & D
}
\]

The map $G \to \Gbar$ is well defined because if a loop $[\gamma] \in \pi_1(\punctured{D}, \basepoint)$
induces the trivial monodromy on the fibers of $C \to D$, it also induces the trivial monodromy on the fibers of the
intermediate map $\Dbar \to D$.






Since $i_*\from  \pi_1(\punctured{D}, \basepoint) \to \pi_1(D,\basepoint)$ and
$\overline{\phi}\from \pi_1(D,\basepoint) \to \Gbar$
 are surjective, so is the map $G \to \Gbar$. This establishes \cref{item:part1}.

Let us introduce some notation. 
Let $\fbar\from C \to \Dbar$ and $h\from \Dbar \to D$ be the
factors of the factorization of $f\from C \to D$. 
Call $p_1,\ldots,p_{\bar{d}} \in \Dbar$ 
the points in the preimage of $\basepoint \in D$, and let $S_i = \fbar^{-1}(p_i) \subset C$.
See \cref{figure:notation for monodromy argument - fig09}.

Let $K$ be the kernel of $G \onto \Gbar$. The elements of $K$ preserve the sets 
$S_i$ for all $i$. That is, 
	$K \subset \Sym(S_1) \times \Sym(S_2) \times \ldots \times \Sym(S_{\bar{d}})
	\subset \Sym(\cup S_i)$.
We want to show that $K = \Sym(S_1) \times \Sym(S_2) \times
 \ldots \times \Sym(S_{\bar{d}})$.

Pick a simple loop $\gamma \in \pi_1(\punctured{D},\basepoint)$, contractible in $D$,
around one of the branch points. Let $\tau= \phi(\gamma)\in G$ be the corresponding monodromy element. 
It is a transposition, since
the branching is simple. Moreover, since $\gamma$ is contractible in $D$, the transposition 
$\tau$ maps to the identity in $\Gbar$. Hence $\tau \in K$. The transposition $\tau$ has 
to act trivially in all but one of the $S_i$. Say it acts non-trivially on $S_1$.
By a slight abuse of notation, we will say that $\tau \in \Sym(S_1)$.

We will exploit the action of $G$ on $K$ by conjugation. First, consider 
the composition of the pushforward
$h_*:\pi_1(\punctured{\Dbar}, p_1) \to \pi_1(\punctured{D},\basepoint)$
with the monodromy map $\phi_f:\pi_1(\punctured{D},\basepoint) \to G$. 
The composite map $\phi_f \circ h_*$ fixes $S_1$, and hence it factors through
$\Sym(S_1) \times \Sym(S_2 \cup \ldots \cup S_{\bar{d}})$. Projecting on the
$\Sym(S_1)$ factor, we recover the monodromy map of $C \to \Dbar$,
as in the following diagram.
\[
	\xymatrix{
	& \Sym(S_1) \\
	\pi_1(\punctured{\Dbar}, p_1) \ar@{->>}^{\phi_{\fbar}}[ru] \ar@{-->}[r] \ar[d]^{h_*} & 
	\Sym(S_1)\times \Sym(S_2 \cup \ldots \cup S_{\bar{d}}) \ar[u] \ar[d] \\
	\pi_1(\punctured{D}, \basepoint) \ar[r]^(.35){\phi_f} & G \subset\Sym(S_1 \cup S_2 \cup \ldots \cup S_{\bar{d}})
	}
\]

Pick $\gamma \in \pi_1(\punctured{\Dbar}, p_1)$.
Conjugating an element of $\Sym(S_1) \cap K$ by $\phi_f(h_*(\gamma))$ is the same as conjugating by $\phi_{\fbar}(\gamma) \in \Sym(S_1)$. As $C \to \Dbar$ has full monodromy,
we can get an arbitrary conjugate in $\Sym(S_1)$.

We already have a transposition $\tau \in \Sym(S_1) \cap K$. Hence, we can get \emph{all}
transpositions in $\Sym(S_1)$. But these generate the full symmetric group. Hence,
$\Sym(S_1)\times \Id \times \Id \times \ldots \times \Id  \subset K$.

For each $i$, pick $\gbar \in \Gbar$ that sends $p_1$ to $p_i$. Lift $\gbar$ to $g \in G$.
Then
\[
	g \Sym(S_1) g^{-1} = \Sym(S_i) \subset K
\]
As this holds for any $i$, we get
\[
	\Sym(S_1) \times \Sym(S_2) \times \ldots \times \Sym(S_{\bar{d}}) \subset K
\]
as we wanted to show.
\end{proof}

Note that the exact sequence in \Cref{proposition:monodromy-group} does not completely characterize 
the monodromy group $G$---we do not know what extension it is. Determining what
extensions do arise as monodromy groups of simply branched maps is an interesting question, 
which, as far as I know, is unsolved.

We now apply \Cref{proposition:monodromy-group} to establish the result we will
need in \Cref{sec:irreduciblity_of_hurwitz_spaces}.
\begin{lemma}
\label{lemma:fiber product}
Let $C$ be a smooth curve, $C \to \Dbar$ be a simply branched map 
with full monodromy, and $\Dbar \to D$ an unramified map.
Consider the map
\[
C\times_D C -\Delta \to \Dbar \times_D \Dbar
\]
The target $\Dbar \times_D \Dbar$ is smooth, but possibly disconnected.
 Nevertheless, the preimage of each component is irreducible.
\end{lemma}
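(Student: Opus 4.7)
The plan is to reduce the irreducibility statement to a group-theoretic statement about monodromy orbits, which the exact sequence of Proposition \ref{proposition:monodromy-group} resolves directly. First I would observe that $C \times_D C - \Delta$ is in fact \emph{smooth} as a scheme, so that irreducibility of the preimage of a component is equivalent to connectedness. Away from the branch locus of $f\from C \to D$, the map $C \times_D C \to C$ is étale. Near a simple branch point of $f$, locally $f$ looks like $z \mapsto z^2$, so $C \times_D C$ is locally $\{(z,w) : z^2 = w^2\}$, a nodal union of two smooth curves meeting at the origin; the diagonal branch $z=w$ is one of these branches \emph{together with} the node, so removing $\Delta$ set-theoretically also removes the node, leaving the smooth locus $\{z=-w,\ z\neq 0\}$.

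Next, both $\bar D \times_D \bar D$ and $C \times_D C - \Delta$ are étale over $D^\times$, so their connected components are governed by monodromy. Using the notation of \Cref{sec:monodromy_groups_of_covers}, the fiber of $\bar D \times_D \bar D$ over $\basepoint \in D^\times$ is the set of ordered pairs $(p_i, p_j)$ with $\bar G$ acting diagonally, and the fiber of $C \times_D C - \Delta$ consists of ordered pairs $(s,s')$ of distinct points in $f^{-1}(\basepoint)$, with $G$ acting diagonally. Components of each space correspond to orbits of the respective group, and the map on components is induced by $(s,s') \mapsto (p_i,p_j)$ when $s \in S_i,\ s' \in S_j$. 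It therefore suffices to show: each $\bar G$-orbit on $\{(p_i,p_j)\}$ pulls back to a single $G$-orbit on the set of distinct pairs in $S_i \times S_j$ (summed over $(i,j)$ in the orbit).

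This is exactly what Proposition \ref{proposition:monodromy-group} delivers via the exact sequence
\[
    1 \to \Sym(S_1)\times \cdots \times \Sym(S_{\bar d}) \to G \to \bar G \to 1.
\]
Given two source pairs $(s_1,s_1'), (s_2,s_2')$ whose images lie in the same $\bar G$-orbit, lift an element of $\bar G$ carrying the image of the first to the image of the second, and apply it; we are reduced to the case where both pairs lie over the same $(p_i,p_j)$. Now the kernel acts: if $i \neq j$, independent permutations on the $i$-th and $j$-th factors match the pairs; if $i = j$, both pairs consist of distinct elements of $S_i$ (because $\Delta$ has been excised), and whenever $\tilde d \geq 2$ the group $\Sym(S_i)$ is $2$-transitive on ordered pairs of distinct elements, while if $\tilde d = 1$ the case $i = j$ is vacuous since $s_1 = s_1'$ would then lie in $\Delta$. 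The main obstacle is just the bookkeeping: verifying smoothness of $C \times_D C - \Delta$ at the simply branched points of $f$ and setting up the orbit/component correspondence cleanly. Once those are in place, the transitivity argument is essentially immediate from Proposition \ref{proposition:monodromy-group}.
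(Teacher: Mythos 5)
Your proposal is correct and takes essentially the same route as the paper: both reduce connectedness of the preimage to transitivity of the monodromy of $C \to D$ on the relevant fiber, and both draw the needed transitivity from the kernel $\Sym(S_1) \times \cdots \times \Sym(S_{\bar d}) \subset G$ of Proposition~\ref{proposition:monodromy-group}. The only cosmetic differences are that you make the smoothness of $C\times_D C - \Delta$ and the step of lifting a $\Gbar$-element explicit, and you treat the diagonal and off-diagonal components uniformly via the exact sequence, whereas the paper disposes of the diagonal component separately by $2$-transitivity of the monodromy of $C\to\Dbar$.
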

\begin{proof}
Let $\Gamma \subset \Dbar \times_D \Dbar$ be a component. We want to show 
that its preimage is connected.

If $\Gamma=\Delta$ is the diagonal, having connected preimage is equivalent to the
monodromy group of $C \to \Dbar$ being 2-transitive. This is immediate, since
the monodromy group is actually the full symmetric group.

Suppose now $\Gamma$ is not the diagonal. For convenience, let us use the notation
as in the proof of \Cref{proposition:full_monodromy}. Pick a point in the fiber over 
$\basepoint$ of the map $\Gamma \to D$, say $(p_i,p_j)$. For any $q_i, q'_i \in S_i$
and $q_j, q_j' \in S_j$, we want to exhibit a path in $C \times_D C$ connecting 
$(q_i,q_j)$ to $(q'_i, q'_j)$. It is enough to find an element in the monodromy group
of $C \to D$ that sends $q_i$ to $q'_i$, and $q_j$ to $q'_j$. This follows
from \Cref{proposition:full_monodromy}, since the monodromy group
contains $\Sym(S_i) \times \Sym(S_j)$.
\end{proof}



\section{Irreducibility of the Hurwitz Space 
\texorpdfstring{$\Hc^0_{N,g}(E)$}{of Primitive Covers}} 
\label{sec:irreduciblity_of_hurwitz_spaces}

Let us resume our study of Hurwitz spaces. Recall their definition.
\begin{definition}
The Hurwitz space $\Hc_{N,g}(E)$ parametrizes simply branched covers (meaning, dominant finite flat morphisms)
 $C \to E$ from a smooth genus $g$ curve $C$  to the fixed curve $E$.
 The space $\Hc^0_{N,g}(E)$ is the open and closed subscheme of $\Hc_{N,g}(E)$
 of primitive covers $C \to E$ (that is, covers which do not factor through any non-trivial unramified maps 
 $\tilde{E} \to E$).
 \end{definition}

 There are several alternate characterizations of $\Hc^0_{N,g}(E)$. 
 For example, as a consequence of 
 \Cref{proposition:full_monodromy} and \Cref{corollary:factorization}, we have the following useful one.
 \begin{proposition}
 \label{proposition:hurwitz-full-monodromy}
 The space $\Hc^0_{N,g}(E) \subset \Hc_{N,g}(E)$ consists of the covers 
 $C \to E$ with full monodromy.
 \end{proposition}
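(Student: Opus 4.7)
The plan is to observe that this proposition is essentially a direct unpacking of definitions combined with \Cref{proposition:full_monodromy}. By definition, a cover $f\from C \to E$ defines a point of $\Hc^0_{N,g}(E)$ precisely when $f$ is primitive, i.e., when it does not factor as $C \to \widetilde{E} \to E$ through a non-trivial unramified cover $\widetilde{E} \to E$. Since $f$ is simply branched, \Cref{proposition:full_monodromy} asserts that $f$ is primitive if and only if its monodromy map $\phi\from \pi_1(\punctured{E},\basepoint) \to S_N$ is surjective, which is exactly the condition that $f$ has full monodromy. This gives the set-theoretic equality $\Hc^0_{N,g}(E) = \{ f \in \Hc_{N,g}(E) : f \text{ has full monodromy}\}$, which is what the proposition asserts.

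The only step that requires mild care is translating between the two equivalent reformulations of primitivity used in the excerpt: the geometric one (no non-trivial unramified factorization) and the group-theoretic one (surjectivity of $f_*\from \pi_1(C,\basepoint) \to \pi_1(E,\basepoint)$). This translation is standard covering space theory---given a factorization $C \to \widetilde{E} \to E$, the image $f_*\pi_1(C)$ is contained in the proper subgroup $\pi_1(\widetilde{E}) \subsetneq \pi_1(E)$; conversely, if $f_*\pi_1(C)$ is a proper subgroup $H \subsetneq \pi_1(E)$, then \Cref{corollary:factorization} (or more directly, the classification of covering spaces) produces the unramified factor $\widetilde{E} \to E$ corresponding to any proper subgroup containing $H$. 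Once this equivalence is in hand, \Cref{proposition:full_monodromy} directly yields the proposition.

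There is no real obstacle here---the result is a bookkeeping consequence of the material already established, recorded for convenient reference in the subsequent sections on the irreducibility of Hurwitz spaces.
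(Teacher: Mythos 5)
Your proof is correct and takes essentially the same approach as the paper: the proposition is stated there as a direct consequence of \Cref{proposition:full_monodromy} (and \Cref{corollary:factorization}), and your write-up is precisely that unpacking, including the standard covering-space translation between ``no non-trivial unramified factorization'' and surjectivity of $f_*$ on $\pi_1$, which the paper itself records immediately after the definition of a primitive cover.
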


Our goal is to prove \Cref{theorem:hurwitz_irreducible}, which says that 
for a smooth genus one curve $E$ and $g>1$, the Hurwitz space $\Hc^0_{N,g}(E)$
is irreducible. Note that from the irreducibility of $\Hc^0_{N,g}(E)$, we can characterize all the 
irreducible components of $\Hc_{N,g}(E)$.

\begin{corollary}
\label{corollary:components-of-HcNgE}
	The components of $\Hc_{N,g}(E)$ are in bijection with isogenies $\tilde{E} \to E$ of degree diving $N$. That is, the decomposition in irreducible components is
	\[
		\Hc_{N,g}(E) = \bigsqcup_{\tilde{E}\to E} \Hc_{N',g}^0(\tilde{E})
	\]
	where $N'=N/\deg(\tilde{E} \to E)$, for $N' \geq 2$.
\end{corollary}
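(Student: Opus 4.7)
The plan is to produce a locally constant invariant on $\Hc_{N,g}(E)$ whose level sets are exactly the advertised pieces, and then invoke \Cref{theorem:hurwitz_irreducible} to see that each level set is irreducible.

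By \Cref{corollary:factorization}, every simply branched cover $f\from C \to E$ admits a unique factorization $C \xrightarrow{\bar{f}} \tilde{E} \to E$ with $\bar{f}$ primitive and $\tilde{E} \to E$ étale. Sending $[f]$ to the isomorphism class $[\tilde{E} \to E]$ of its maximal unramified intermediate cover defines a set-theoretic map
\[
    \psi\from \Hc_{N,g}(E) \to \bigl\{\text{iso. classes of connected étale covers } \tilde{E} \to E \text{ of degree dividing } N\bigr\}.
\]
The target is finite, and since $E$ has genus one, it is enumerated by the finite-index subgroups of $\pi_1(E,\basepoint) \iso \Zbb^2$ of index dividing $N$.

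The first step is to check that $\psi$ is locally constant. The branch morphism $\mathrm{br}\from \Hc_{N,g}(E) \to (E^b - \Delta)/S_b$ is étale, so an analytic neighborhood of $[f]$ is identified with a neighborhood of its branch divisor. Under such a small deformation, the restricted topological cover $\punctured{C} \to \punctured{E}$ varies continuously, and the subgroup $\bar{f}_*\pi_1(\punctured{C}, \basepoint) \cap \pi_1(E,\basepoint) \subset \pi_1(E,\basepoint)$—which cuts out $\tilde{E} \to E$—is invariant under continuous deformation. Hence $\psi$ is constant on connected components.

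Next, for each isomorphism class $[\tilde{E} \to E]$ with $N':= N/\deg(\tilde{E}\to E) \geq 2$, post-composition with $\tilde{E} \to E$ defines a morphism
\[
    \Phi_{\tilde{E}}\from \Hc^0_{N',g}(\tilde{E}) \to \Hc_{N,g}(E),
\]
whose image lies in $\psi^{-1}([\tilde{E}\to E])$ by construction, and which surjects onto this fiber by the existence statement of \Cref{corollary:factorization}. Since $g > 1$, \Cref{theorem:hurwitz_irreducible} tells us that $\Hc^0_{N',g}(\tilde{E})$ is irreducible, so the image $\psi^{-1}([\tilde{E} \to E])$ is irreducible as well. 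Since $\Hc_{N,g}(E)$ is smooth (the branch morphism is étale), every connected component is already irreducible, so the fibers of $\psi$ are exactly its irreducible components, indexed by the iso.\ classes $[\tilde{E}\to E]$.

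The main obstacle is really the invocation of \Cref{theorem:hurwitz_irreducible} itself; everything else is bookkeeping around \Cref{corollary:factorization}. A minor subtlety is that $\Phi_{\tilde{E}}$ is generally not injective, since the finite group $\Aut(\tilde{E}/E)$ of deck transformations acts on $\Hc^0_{N',g}(\tilde{E})$ and points in the same orbit compose to the same cover of $E$; this does not affect irreducibility of the image or the count of components.
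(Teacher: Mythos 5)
Your proof is correct and takes essentially the same approach as the paper's, which is an even terser combination of \Cref{corollary:factorization} (to get the set-theoretic partition by the maximal unramified intermediate cover) with \Cref{theorem:hurwitz_irreducible} (for irreducibility of each piece). You spell out two details the paper leaves implicit—that the invariant is locally constant, and that smoothness of $\Hc_{N,g}(E)$ promotes connected to irreducible components—but these are exactly the points the paper is silently relying on, so the logical content is the same.

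Your closing remark about $\Phi_{\tilde{E}}$ failing to be injective is actually a genuine improvement on the paper's phrasing: as written, the displayed equality $\Hc_{N,g}(E) = \bigsqcup \Hc^0_{N',g}(\tilde{E})$ is only correct after quotienting the right-hand side by the action of $\Aut(\tilde E/E)$, since two primitive covers of $\tilde E$ that differ by a deck transformation post-compose to the same cover of $E$. This does not affect the bijection of irreducible components, which is the substantive claim, and you rightly say so. One small notational slip: the expression $\bar{f}_*\pi_1(\punctured{C},\basepoint)\cap\pi_1(E,\basepoint)$ is not well-formed, since those two groups do not sit inside a common ambient group; what you mean is the image of $\pi_1(\punctured{C})$ under the composite $\pi_1(\punctured C)\to\pi_1(\punctured E)\to\pi_1(E)$, i.e.\ $f_*\pi_1(C)\subset\pi_1(E)$. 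The underlying local-constancy argument is fine once the notation is fixed.
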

\begin{proof}
The right hand side is included in the left by the map
\[
	(C \to \tilde{E}) \mapsto (C \to \tilde{E} \to E)
\]

\Cref{corollary:factorization}
says that the opposite inclusion holds, 
and \Cref{theorem:hurwitz_irreducible} says every term in the 
right is irreducible.
\end{proof}

\subsection{Our strategy}
\label{sub:our-strategy}

As a motivating example,
let us go over a simple argument by Fulton for 
the irreducibility of $\Mc_g$ (in the appendix of \cite{harris_kodaira_1982}).
 We follow Harris and Morisson account in \cite{harris_moduli_1998}.

The idea, which really goes back to Deligne and Mumford, 
is to look instead at the compactified $\overline{\Mc_g}$
and leverage the structure of the boundary to prove irreducibility.
More precisely, assume by induction that $\Mc_{g'}$ is irreducible
for $g'<g$. Then, by induction, one can establish that each boundary divisor
$\Delta_i$ in $\overline{\Mc_g}$ is irreducible. Moreover, one can prove that 
any pair of boundary divisors meet, simply by exhibiting an element in the
intersection $\Delta_i \cap \Delta_j$. Hence, there is a unique
connected component of $\overline{\Mc_g}$ meeting the boundary $\Delta$.

However,  $\overline{\Mc_g}$ is locally irreducible, and therefore connectedness implies irreducibility. 
Hence, we established the following reduction.

\begin{reduction*}
It is enough to show that every component $\Gamma$ of 
$\overline{\Mc_g}$ meets the boundary $\Delta$.
\end{reduction*}

To exhibit such a curve, we will pass to a more structured space:
the Hurwitz space $\Hc_{d,g}(\Pbb^1)$ of degree $d$ covers  $C \to \Pbb^1$.
It follows from Riemann--Roch that for large $d$ the Hurwitz space dominates
$\Mc_g$. 

If we were willing to use that the Hurwitz space is irreducible, we
would conclude irreducibility of $\Mc_g$ as well. However, using the reduction above, 
we can still prove irreducibility of 
$\overline{\Mc_g}$ without invoking the irreducibility of $\Hc_{d,g}(\Pbb^1)$. 
Instead, we will use the properness of the admissible covers compactification 
$\overline{\Hc_{d,g}}(\Pbb^1)$.

The branch morphism extends to the boundary of the admissible covers compactification, 
and every component of $\overline{\Hc_{d,g}}(\Pbb^1)$ dominates the Fulton--MacPherson configuration space 
$\Pbb^1[b]$. By choosing an extremal 
configuration of branch points, we can force the cover to be a reducible curve whose all irreducible components are rational. In particular, it is an
element of $\Delta$. For details, see \cite[Chapter 6A]{harris_moduli_1998}.

We will apply a similar argument to show the irreducibility of $\Hc_{N,g}^0(E)$.
Again, the thrust of the argument comes from exploiting boundary divisors
in a compactification of $\Hc_{N,g}^0(E)$. So let us start by choosing one.

A natural candidate is to use the Kontsevich space $\overline{\Mc}_{g}(E,N)$.
 However, this can have bad singularities at the locus of stable maps with contracted components, 
 and it is not clear that it is locally irreducible, 
 which is key to an argument like the one above.

We will instead use a partial compactification which is smooth, but still 
allows enough singular covers $X \to E$ for our reduction 
argument to go through.

\begin{definition}
Let $\widetilde{\Hc}_{N,g}(E) \subset \overline{\Mc}_{g}(E,N)$ to be the open substack 
parametrizing finite maps (that is, the complement of the locus where there are contracted components).
Points in $\widetilde{\Hc}_{N,g}(E)$ correspond to degree $N$, dominant, 
finite flat maps from a connected, arithmetic genus $g$ nodal curve $X$ to $E$.
There is an open and closed substack $\widetilde{\Hc}^0_{N,g}(E) \subset \widetilde{\Hc}_{N,g}(E)$ 
parametrizing primitive covers.
\end{definition}

\begin{remark}
Not being primitive is naturally a closed property, since if a family of covers 
$X_t \to E$ factors through $\tilde{E} \to E$ for generic $t$, then it factors
through
for all $t$. This implies that $\widetilde{\Hc}^0_{N,g}(E) \subset \widetilde{\Hc}_{N,g}(E)$ is an open substack.

However, it is also closed. 
Indeed, say $\tilde{E} \to E$ is a non-trivial isogeny, and
the composition $X_0 \to \tilde{E} \to E$ is a stable map which is a limit of primitive maps 
 $X_t \to E$. Then $(X_0 \to E)$ is a singular point of 
$\overline{\Mc}_g(E,N)$. Indeed, it lies in at least two components: the closure of 
$\Hc^0_{N,g}(E)$, and the closure of $\Hc^0_{N',g}(\tilde{E})$, where $N'$ is the 
degree of $X_0 \to \tilde{E}$. Both of these components are already $2g-2$ 
dimensional, so  $(X_0 \to E)$ is a singular point. But, as we will see below in 
\Cref{lemma:smooth}, ${\widetilde{\Hc}_{N,g}(E)}$
lives in the smooth locus of $\overline{\Mc}_{g}(E,N)$,
so we would not see such a $X_0 \to E$.
\end{remark}

\begin{lemma}
\label{lemma:smooth}
The space $\widetilde{\Hc}_{N,g}(E)$ is non-singular.
\end{lemma}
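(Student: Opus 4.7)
The goal is to verify that the obstruction to deformations vanishes at every point of $\widetilde{\Hc}_{N,g}(E)$. On the open locus $\Hc_{N,g}(E)$ where $X$ is smooth, smoothness is classical: the branch morphism to $E^{(2g-2)}$ is étale by the Riemann existence theorem. The real work is therefore at boundary points $[f\from X \to E]$ where $X$ is nodal and $f$ is still finite flat of degree $N$.

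At such a point, the tangent and obstruction spaces of $\overline{\Mc}_g(E,N)$ are given by $\Ext^1$ and $\Ext^2$ of the two-term complex $K_f := [f^*\Omega_E \to \Omega_X]$ (placed in degrees $-1, 0$) against $\Oc_X$. The plan is to apply the local-to-global spectral sequence
\[
H^p\!\left(X,\, \mathcal{E}xt^q(K_f, \Oc_X)\right) \Rightarrow \Ext^{p+q}(K_f, \Oc_X),
\]
and observe that since $X$ is a curve, $H^p$ vanishes for $p \geq 2$, so it suffices to show $H^1(\mathcal{E}xt^1) = H^0(\mathcal{E}xt^2) = 0$. Because $E$ is elliptic, $\Omega_E \cong \Oc_E$, so $f^*\Omega_E \cong \Oc_X$ is a line bundle on $X$, simplifying the complex further.

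The next step is to compute the local $\mathcal{E}xt$-sheaves and show they are torsion. Away from ramification points and nodes, $f^*\Omega_E \to \Omega_X$ is an isomorphism and $K_f$ is acyclic. At a smooth ramified point, $K_f$ is quasi-isomorphic to a torsion cokernel, giving only a local $\mathcal{E}xt^1$, itself supported at a point. At a node $p$ with local model $\Oc_{X,p} = k[[x,y]]/(xy)$ and $t \mapsto x^a u(x) + y^b v(y)$, one uses the presentation of $\Omega_X$ from the proof of \Cref{claim:tor computation} to write down $\mathcal{E}xt^q(K_f, \Oc_X)$ explicitly; these turn out to be torsion sheaves supported at $p$. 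Since torsion sheaves on a curve have no $H^1$, this forces $\Ext^2(K_f, \Oc_X) = 0$.

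The main obstacle is the local calculation at a node: one must verify that the first-order deformation smoothing the node ($xy = 0$ to $xy = s$) is unobstructed once one requires it to extend to a deformation of the finite flat map. This is best confirmed concretely by exhibiting an explicit smoothing: over the family $k[[s, x, y]]/(xy - s)$, the element $t := x^a u(x) + y^b v(y)$ defines a finite flat morphism of rank $a+b$ to $k[[s, t]]$, as one checks by showing $t$ is regular modulo $s$ and the fiberwise degree is constant. The existence of this explicit smoothing realizes every candidate first-order deformation as an honest one, confirming $\Ext^2 = 0$ and completing the argument.
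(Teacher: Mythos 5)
The paper's proof is quite different from yours: it computes the \emph{tangent} space at $[f\colon C\to E]$ directly, identifies it with $H^0(C,\Nc_f)$ for a torsion ``normal sheaf'' $\Nc_f$ of length $2g-2$, and concludes smoothness because this matches $\dim\widetilde{\Hc}_{N,g}(E)=2g-2$; no obstruction vanishing is argued. Your route — showing $\Ext^2(K_f,\Oc_X)=0$ via the local-to-global spectral sequence — is a legitimate alternative, and you correctly reduce to proving $H^1(\mathcal{E}xt^1)=H^0(\mathcal{E}xt^2)=0$.

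But there is a genuine gap in the way you close that reduction. You write that the local $\mathcal{E}xt^q(K_f,\Oc_X)$ ``turn out to be torsion sheaves,'' and then conclude that ``since torsion sheaves on a curve have no $H^1$, this forces $\Ext^2=0$.'' That justification only disposes of $H^1(\mathcal{E}xt^1)$. The other graded piece of $\Ext^2$ is $H^0(\mathcal{E}xt^2(K_f,\Oc_X))$, and $H^0$ of a nonzero torsion sheaf is nonzero. What you actually need is $\mathcal{E}xt^2(K_f,\Oc_X)=0$, not merely that it is torsion. This does hold, but it requires an argument: since $df$ is injective (as $f$ is finite, hence nonconstant on every component), $K_f$ is quasi-isomorphic to $\Omega_{X/E}$ placed in degree $0$, and $\Omega_{X/E}$ sits in a short exact sequence $0\to f^*\Omega_E\to\Omega_X\to\Omega_{X/E}\to 0$ with the left term locally free and the middle term of local projective dimension $\le 1$, which forces $\operatorname{pd}\Omega_{X/E}\le 1$ and hence $\mathcal{E}xt^{\ge 2}=0$. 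Your proposal asserts the conclusion of this chain without supplying it. The final paragraph does not rescue the situation either: exhibiting one explicit smoothing family over $k[[s,x,y]]/(xy-s)$ shows that \emph{that particular} first-order direction is unobstructed, but it does not ``realize every candidate first-order deformation as an honest one,'' nor does it imply $\Ext^2=0$; a single unobstructed tangent vector says nothing about the rest of the obstruction map.
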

\begin{proof}
The tangent space at a point $f:C \to E$ is equal to $H^0(C, \Nc_f)$, where the \emph{normal sheaf} $\Nc_f$ is defined as
\[
	0 \to T_C \to f^* T_E \to \Nc_f \to 0
\]
When there are no contracted components, $\Nc_f$ is torsion. Hence, $H^1(C,\Nc_f)=0$, and Riemann--Roch says
$h^0(C, \Nc_f)=2g-2$, the length of the branch subscheme. As $\dim \widetilde{\Hc}_{N,g}(E)$ is $2g-2$ as well, 
we conclude it is non-singular.
\end{proof}

We will show that $\widetilde{\Hc}_{N,g}^0(E)$ is connected.
Of course, \Cref{theorem:hurwitz_irreducible} follows from this.
Our proof will be inductive.
\begin{inductionhypothesis}
\label{hypothesis:inductionHurwitz}
Suppose, by induction, that the Hurwitz space of primitive covers 
$\widetilde{\Hc}_{N,g'}^0(E)$ is connected for all genus one curves $E$, and 
integers $N\geq 2$ and $2\leq g'<g$.
\end{inductionhypothesis}

The base case $g=2$ is due to Kani in \cite{kani_hurwitz_2003}.
\begin{theorem*}[Kani]
 The Hurwitz space $\Hc^0_{N,2}(E)$ is irreducible.
\end{theorem*} 

We will assume $g\geq 3$ from now on.

As in Fulton's argument, let us take advantage of the boundary 
in our partial compactification.
Call $\Delta\subset \widetilde{\Hc}_{N,g}^0(E)$ the locus
of covers with singular source.

As opposed to the case of $\overline{\Mc_g}$, where there are relatively few
boundary divisors and they are easily understood by induction, our boundary $\Delta$ will
have quite a few boundary components, and
they are not so easily dealt with inductively. Nevertheless, we will prove the following.

\begin{finalreduction}
\label{reduction:final}
It is enough to show that every connected component of 
$\widetilde{\Hc}_{N,g}^0(E)$ contains a point in the boundary $\Delta$.
That is, it is enough to find, in each component, a cover with singular source. 
\end{finalreduction}

In \Cref{sub:reduction} we will prove \Cref{reduction:final} under the \Cref{hypothesis:inductionHurwitz}.
Then in \Cref{sub:the_degeneration_argument} we will establish that every component of Hurwitz space
does contain a point of $\Delta$. As in Fulton's argument, we have to pass to a more structured space
to find such a point. In our case, we will use the Severi variety of curves in $E \times \Pbb^1$. Specifically,
we will invoke \Cref{theorem:simple_hyperplane_to_hurwitz}.

\subsection{The reduction argument}
\label{sub:reduction}

In this section we will prove \Cref{reduction:final} under the \Cref{hypothesis:inductionHurwitz}. 
To do so, we will first focus on a few special boundary divisors, 
and then bootstrap to the whole boundary $\Delta$.

We define $D_{\text{sing}}\subset \widetilde{\Hc}_{N,g}^0(E)$ to be
 the closure of the locus of covers $X \to E$ with $X$ irreducible and with a single node, 
 for which the composition of the normalization map $\tilde{X} \to X$ and $X \to E$ is primitive.
 That is, $\tilde{X} \to E$ does not factor through non-trivial isogenies,
 as in the diagram in
  \cref{figure:general point of Dsing - fig02}.

\begin{figure}
\centering
\includegraphics{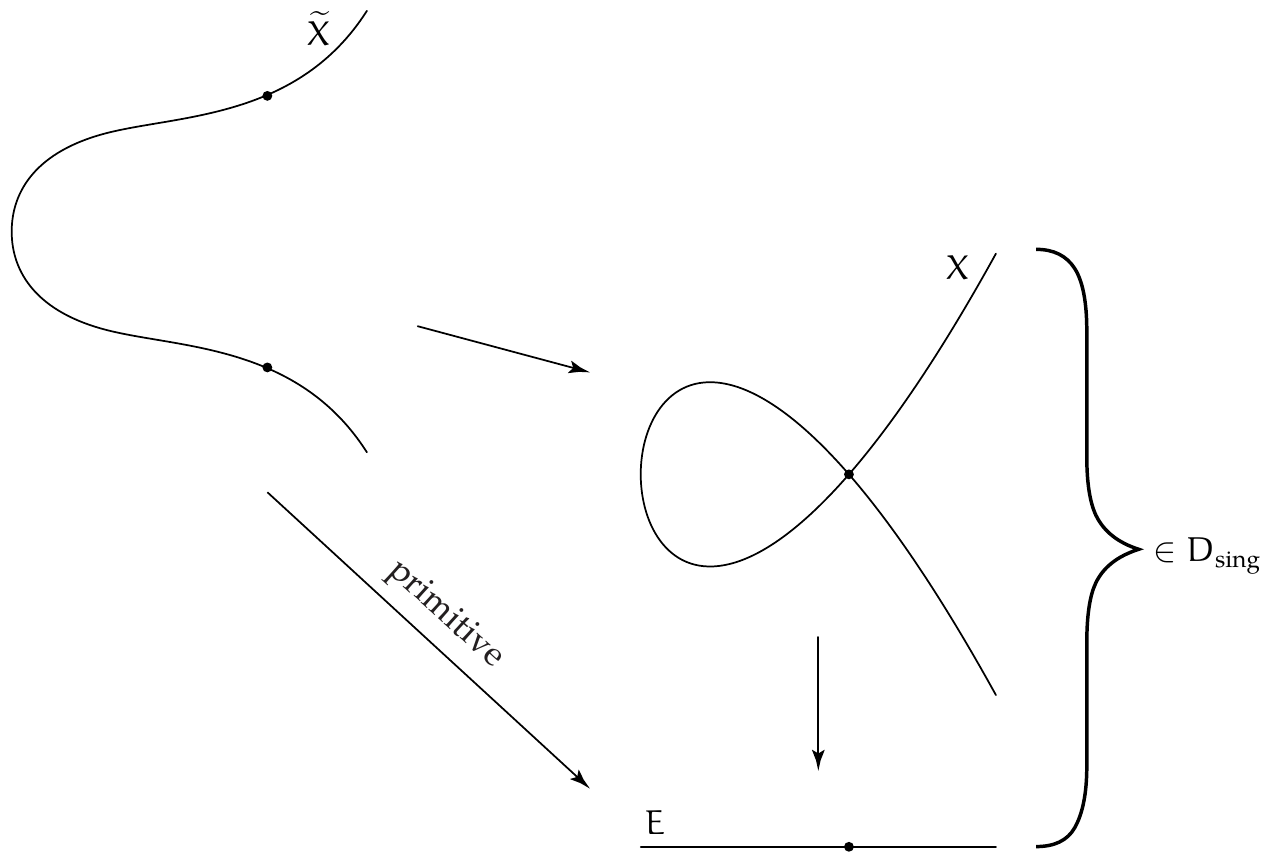}
\caption{General point of $D_{\text{sing}}$}
\label{figure:general point of Dsing - fig02}
\end{figure}

\begin{lemma}
\label{lemma:D_sing_irreducible}
Under \Cref{hypothesis:inductionHurwitz}, the set $D_{\text{sing}}$ is an irreducible divisor.
\end{lemma}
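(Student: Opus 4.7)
The plan is to parametrize the general point of $D_{\text{sing}}$ by triples $(f, p, q)$, where $f\from \tilde X \to E$ is a smooth primitive cover of genus $g-1$ and $p\neq q \in \tilde X$ are distinct points with $f(p) = f(q)$, and then to use the induction hypothesis together with \Cref{lemma:fiber product} to prove the parameter space is irreducible.

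More precisely, let $\Hc' = \Hc^0_{N,g-1}(E) \subset \widetilde{\Hc}^0_{N,g-1}(E)$ denote the open substack of covers with smooth source. By \Cref{hypothesis:inductionHurwitz} and \Cref{lemma:smooth}, the ambient Hurwitz space is connected and smooth, hence irreducible, so the open substack $\Hc'$ is irreducible as well. Let $\pi\from \widetilde{\mathcal X} \to \Hc'$ be its universal curve, equipped with the universal cover map $\mu\from \widetilde{\mathcal X} \to E$, and set
\[
\mathcal Y \;=\; \bigl(\widetilde{\mathcal X} \times_{\Hc' \times E} \widetilde{\mathcal X}\bigr) \setminus \Delta.
\]
Since $\mu$ is a family of degree-$N$ finite flat covers, the first projection $\mathcal Y \to \widetilde{\mathcal X}$ is finite flat, and combining with the smooth map $\pi$ shows that $\mathcal Y \to \Hc'$ is flat with pure one-dimensional fibers.

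The key input for irreducibility is that the fiber of $\mathcal Y \to \Hc'$ over a cover $f\from \tilde X \to E$ is exactly $(\tilde X \times_E \tilde X) \setminus \Delta$. By \Cref{proposition:full_monodromy}, the primitive simply branched cover $f$ has full monodromy, so \Cref{lemma:fiber product} applied with $\bar D = D = E$ (so that the target $\bar D \times_D \bar D = E$ is connected) shows this fiber is irreducible. A flat morphism with irreducible equidimensional fibers over an irreducible base has irreducible total space, so $\mathcal Y$ is irreducible of dimension $(2g-4) + 1 = 2g-3$.

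To finish, I would define a map $\mathcal Y \to D_{\text{sing}}$ by sending $(f, p, q)$ to the nodal cover $X \to E$ obtained by identifying $p$ and $q$ on $\tilde X$ into a single node and pushing $f$ down; the resulting cover is finite flat from an irreducible one-nodal curve of arithmetic genus $g$ and belongs to $D_{\text{sing}}$ because the composition $\tilde X \to X \to E$ is the primitive map $f$. Any general point of $D_{\text{sing}}$ arises this way, so the map is dominant and $D_{\text{sing}}$ is irreducible as the closure of the image of an irreducible variety. Comparing with $\dim \widetilde{\Hc}^0_{N,g}(E) = 2g-2$ confirms that $D_{\text{sing}}$ has codimension one and is a divisor. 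The only real subtlety I anticipate is promoting fiberwise irreducibility to irreducibility of the total space $\mathcal Y$, but this is the standard consequence of flatness with irreducible equidimensional fibers over an irreducible base.
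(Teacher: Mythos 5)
Your proof is correct and follows essentially the same strategy as the paper's: parametrize $D_{\text{sing}}$ by the complement of the diagonal in the universal fiber product of the universal curve over $\Hc^0_{N,g-1}(E)$, use the induction hypothesis for irreducibility of the base, use $2$-transitivity of the full monodromy group for irreducibility of the fibers, and conclude by a dimension count. The only packaging difference is that you route the fiberwise irreducibility through \Cref{lemma:fiber product} (applied with $\Dbar=D=E$) and spell out the flat-with-irreducible-fibers argument, whereas the paper invokes $2$-transitivity directly; the content is the same.
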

\begin{proof}
We may parametrize the points in $D_{\text{sing}}$ by the data of a primitive cover 
$\pi\from C \to E$
in $\Hc^0_{d,g-1}(E)$, and a pair of distinct points $p,q \in C$ in a fiber of $\pi$.
This data is recorded in the fiber product
$\Sigma=\Cc \times_E \Cc - \Delta$,
where $\Cc$ is the universal curve over $\Hc_{N,g-1}^0(E)$, and $\Delta$ is the diagonal.
 There is a birational isomorphism from 
$\Sigma$ to $D_{\text{sing}}$ that identifies the points $p$ and $q$. Note that
\[
	\dim D_{\text{sing}} = \dim \Sigma = 2(g-1)-2 +1 = 2g-3 = \dim \Hc^0_{d,g}(E) -1
\]
Hence $D_{\text{sing}}$ is a divisor.
Moreover, from the induction hypothesis, we know that $\Hc_{N,g-1}^0(E)$ is irreducible. 
From  \Cref{proposition:hurwitz-full-monodromy},
 the monodromy of $C\to E$ is
full, and therefore 2-transitive. Hence, $\Sigma$ is irreducible, and
so is $D_{\text{sing}}$.
\end{proof}

From the  \Cref{lemma:D_sing_irreducible}, we get our first reduction.

\begin{reduction}
\label{reduction:D-sing}
It is enough to show that every connected component of $\widetilde{\Hc}^0_{N,g}(E)$
contains a point in $D_{\text{sing}}$.
\end{reduction}
\begin{proof}
By  \Cref{lemma:smooth}, $\widetilde{\Hc}_{N,g}^0(E)$ is smooth,
and hence it is enough to show it is connected. However, by 
 \Cref{lemma:D_sing_irreducible}, 
only one connected component meets $D_{\text{sing}}$.
\end{proof}

Let us define a few more special boundary divisors.
For each isogeny ${\pi\from \tilde{E} \to E}$, let
$D_\pi \subset \widetilde{\Hc}_{N,g}^0(E)$
be the closure of the subscheme parametrizing covers with reducible source $X=X'\cup_p \tilde{E}$, with
 a single node $p$,
 such that $X' \to E$ is a primitive cover of degree $N-\deg \pi$, and genus $g-1$.
See  \cref{figure:general point of Dpi - fig03} for a picture of the general point of
$D_\pi$.

\begin{figure}
\centering
\includegraphics{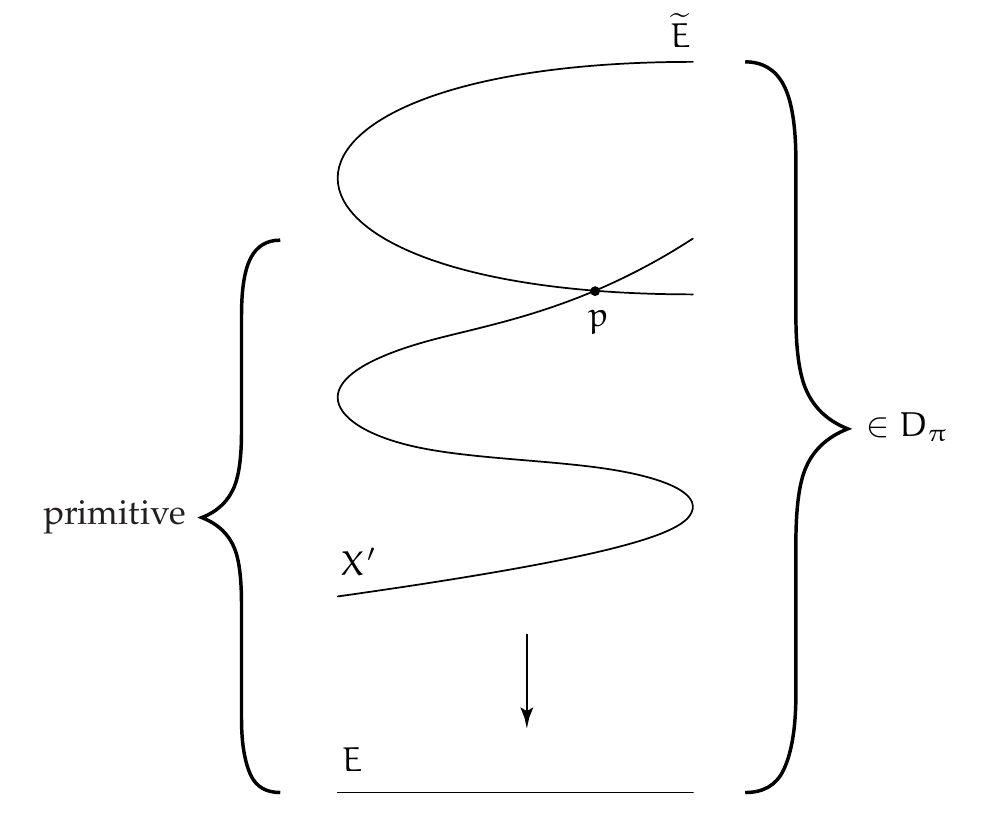}
\caption{General point of $D_{\pi}$}
\label{figure:general point of Dpi - fig03}
\end{figure}

\begin{remark}
Given two pointed covers $f_i\from (C_i,p_i) \to E$, for $i=1,2$, we can form a
cover $f\from  C \to E$ by \emph{attaching $C_2$ to $C_1$} in the following way. 
Take 
\[
C=C_1 \sqcup C_2/(p_1 \sim p_2)	
\]
and glue $f_1$ to $\tau \circ f_2$ where $\tau\from E \to E$ is the
translation which takes $f_2(p_2)$ to $f_1(p_1)$.

We may omit the point $p_i \in C$ in case $C_i \to E$ is an isogeny, since
then the choice of point is irrelevant.
\end{remark}

For example, a cover in $D_\pi$ is obtained by attaching the isogeny $\pi$
to a cover $X' \to E$ in $\Hc^0_{N-\deg \pi,g-1}(E)$.

\begin{lemma}
Under  \Cref{hypothesis:inductionHurwitz}, the set $D_{\pi}$ is an irreducible divisor.
\end{lemma}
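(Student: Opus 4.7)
The proof will closely mirror that of \Cref{lemma:D_sing_irreducible}, using a natural parameter space built from a universal curve over a lower-genus Hurwitz space and invoking \Cref{hypothesis:inductionHurwitz}.

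Set $\bar{N} = N - \deg \pi$ and $\bar{g} = g - 1$. Since we are assuming $g \geq 3$, we have $\bar{g} \geq 2$, so the induction hypothesis applies to $\widetilde{\Hc}^0_{\bar{N},\bar{g}}(E)$. The plan is to parametrize the general points of $D_\pi$ by the universal curve
\[
\pi_{\text{univ}} \colon \Cc \to \widetilde{\Hc}^0_{\bar{N},\bar{g}}(E).
\]
Concretely, a point of $\Cc$ is a pair $(X' \to E,\, p)$ where $X' \to E$ is a primitive cover of degree $\bar{N}$ and arithmetic genus $\bar{g}$, and $p \in X'$. There is a rational map $\Cc \dashrightarrow \widetilde{\Hc}^0_{N,g}(E)$, defined on the open locus where $X'$ is smooth, sending such a pair to the cover $X = X' \cup_p \tilde{E} \to E$ obtained by attaching $\tilde{E}$ at $p$ via the translation convention. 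Its image is dense in $D_\pi$, and the map is generically finite: the remaining ambiguity is the choice of attaching point $q \in \tilde{E}$ lying over $f'(p) \in E$, which is a finite set (and is killed by the deck transformations of $\pi$ if one wishes a birational model). One should check that the construction indeed lands in $\widetilde{\Hc}^0_{N,g}(E)$: the arithmetic genus computes to $p_a(X') + p_a(\tilde{E}) = (g-1) + 1 = g$, and if $X \to E$ factored through a nontrivial unramified cover $\overline{E} \to E$ then so would the restriction $X' \to E$, contradicting primitivity of $X' \to E$.

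For the dimension count, by Riemann--Hurwitz the branch morphism gives $\dim \widetilde{\Hc}^0_{\bar{N},\bar{g}}(E) = 2\bar{g}-2 = 2g-4$, so $\dim \Cc = 2g-3 = \dim \widetilde{\Hc}^0_{N,g}(E) - 1$, confirming that $D_\pi$ is a divisor. For irreducibility, by \Cref{hypothesis:inductionHurwitz} the base $\widetilde{\Hc}^0_{\bar{N},\bar{g}}(E)$ is connected, and by \Cref{lemma:smooth} it is smooth, hence irreducible. The universal curve over an irreducible base with irreducible generic fiber (a smooth genus $\bar{g}$ curve) is irreducible, so $\Cc$ is irreducible. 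Therefore $D_\pi$, being the closure of the image of $\Cc$ under the rational map above, is irreducible as well.

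The main obstacle is not any single computation but bookkeeping: one must verify that the moduli-theoretic attaching construction really produces a morphism from (an open subset of) $\Cc$ to $\widetilde{\Hc}^0_{N,g}(E) \subset \overline{\Mc}_g(E,N)$, and that the resulting map hits $D_\pi$ generically finitely. The translation convention makes the construction canonical once $p$ is chosen (the choice of $q$ being absorbed by $\ker \pi \subset \operatorname{Aut}(\tilde{E}/E)$), which keeps the fibers finite and so preserves irreducibility and dimension under the image.
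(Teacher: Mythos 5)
Your proof is correct and follows essentially the same route as the paper: parametrize $D_\pi$ by the universal curve $\Cc$ over $\widetilde{\Hc}^0_{N-\deg\pi,g-1}(E)$ via the attaching construction, do the dimension count to conclude it is a divisor, and invoke the induction hypothesis to get irreducibility of $\Cc$ (hence of $D_\pi$). The extra details you supply (the genus computation, the primitivity check, and the observation that the attaching-point ambiguity is harmless) are left implicit in the paper but are correct; note only that the choice of $q \in \tilde{E}$ is irrelevant by the translation convention itself, so the map $\Cc \to D_\pi$ is already generically one-to-one without appealing to deck transformations.
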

\begin{proof}
There is a map from the universal curve 
$\Cc \to \widetilde{\Hc}^0_{N-\deg \pi,g-1}(E)$
to $D_\pi$, which takes a pointed cover $(X',p)\to E$ and attaches the isogeny
$\pi\from \tilde{E} \to E$ at the marked point $p$. We conclude that $D_\pi$
is a divisor, since 
\begin{align*}
	\dim D_{\pi} = \dim \Cc &= \dim \Hc^0_{N-\deg \pi,g-1} +1 \\ &= 2(g-1)-2+1 = 2g-3 = \dim \Hc^0_{N,g}(E) -1
\end{align*}
 Moreover, by induction, the universal curve $\Cc$ is irreducible, and so is $D_{\pi}$.
\end{proof}

We are on our way to prove 
\begin{reduction}
\label{reduction:D-sing-and-D-pi}
 It is enough to show that every connected component of
$\widetilde{\Hc}_{N,g}^0(E)$ contains a point in $D_{\text{sing}}$ or in $D_{\pi}$ for some isogeny $\pi$.
\end{reduction}

Indeed, this follows will follow from the  \Cref{reduction:D-sing}
 and the following proposition.
\begin{proposition}
\label{proposition:connected-boundary}
The union
\[
	D_{\text{sing}} \cup \bigcup_{\pi} D_{\pi} \subset \widetilde{\Hc}_{N,g}^0(E)
\]
is connected, where $\pi$ ranges through all isogenies $\pi:\tilde{E} \to E$ of degree at most $N-1$. 
\end{proposition}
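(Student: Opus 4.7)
The plan is to show that every divisor $D_\pi$ meets the closure of $D_{\text{sing}}$ inside $\widetilde{\Hc}^0_{N,g}(E)$; combined with the irreducibility of each of these divisors, this immediately gives connectedness of their union.

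To exhibit a common point, I would look at the codimension-two stratum consisting of covers $f\from X \to E$ of the form $X = X' \cup_p \tilde{E}$, where $\tilde{E} \to E$ is our isogeny $\pi$ of degree $m$, and $X'$ is an \emph{irreducible nodal} primitive cover of $E$ of degree $N-m$ and arithmetic genus $g-1$, with a single internal node $q$. Such an $X'$ always exists: for $g \geq 4$, invoke \Cref{hypothesis:inductionHurwitz} to apply the analogue of \Cref{lemma:D_sing_irreducible} to $\widetilde{\Hc}^0_{N-m,g-1}(E)$, whose own $D_{\text{sing}}$ is nonempty and furnishes such an $X'$ (with the normalization $\tilde{X}' \to E$ even primitive); for $g=3$, pick instead an isogeny $\tilde{X}' = E_1 \to E$ of degree $N-m$ with cyclic kernel and identify two points $r_1, r_2 \in E_1$ in a common fiber with $r_1 - r_2$ generating $\ker(E_1 \to E)$---the generator condition rules out factoring of $X' = E_1/(r_1\sim r_2) \to E$ through any intermediate isogeny. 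Attaching $\tilde{E}$ at a generic smooth point $p \in X'$ produces a primitive cover $X \to E$ lying in $\overline{D_\pi}$, since smoothing the internal node $q$ of $X'$ (which does not touch $\tilde{E}$) returns a generic point of $D_\pi$.

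The core of the proof is then to construct a one-parameter smoothing of \emph{only} the attaching node $p$, while preserving $q$. Near $p$, the cover has the étale-local model $\Spec k[[x,y]]/(xy) \to \Spec k[[t]]$ with $t\mapsto x+y$, and the deformation $\Spec k[[x,y,\epsilon]]/(xy-\epsilon)$ with the same formula for $t$ gives an unobstructed finite flat family. Globally this produces a morphism $\Spec k[[\epsilon]] \to \widetilde{\Hc}^0_{N,g}(E)$ sending the special point to $X$ and the generic point to an irreducible cover $\mathcal{X}_\eta \to E$ whose only node is $q$. The family remains in $\widetilde{\Hc}^0$ because no contracted component appears; \Cref{proposition:tail} does not obstruct the smoothing, since it applies only to \emph{globally unramified} maps to the surface $E\times \Pbb^1$, whereas our $f\from X\to E$ is simply branched on $X'$.

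It remains to verify that $\mathcal{X}_\eta$ lies in $D_{\text{sing}}$, that is, that both $\mathcal{X}_\eta \to E$ and its normalization $\widetilde{\mathcal{X}_\eta}\to E$ are primitive. Primitivity of $\mathcal{X}_\eta \to E$ is inherited from $X \to E$ by openness of the primitive locus. The normalization $\widetilde{\mathcal{X}_\eta}$ is a smooth genus-$(g-1)$ cover of $E$ of degree $N$, obtained as the smoothing of $\tilde{X}' \cup_p \tilde{E}$ at $p$; it factors through a non-trivial intermediate isogeny $E''\to E$ if and only if both $\tilde{X}' \to E$ and $\tilde{E}\to E$ factor through $E''$ and their lifts take the two preimages of $p$ to a common point in $E''$. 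For $g\geq 4$, primitivity of $\tilde{X}'$ already rules this out; for $g=3$, each of the finitely many candidate intermediate $E''$ imposes a proper algebraic condition on $p$, which is avoided for generic $p$. This final primitivity check for $g=3$ is the main technical obstacle, but is a finite case analysis in the isogeny lattice of $E$; once completed, $X \in D_\pi \cap \overline{D_{\text{sing}}}$ for every $\pi$, finishing the proof.
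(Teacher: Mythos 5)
Your overall route is the paper's route: reduce to showing that each $D_\pi$ meets $D_{\text{sing}}$ by exhibiting a curve with two nodes (an internal node inherited from a genus-$(g-1)$ primitive nodal cover, plus the attaching node to $\tilde{E}$), and your explicit local smoothing of the attaching node is essentially a spelled-out proof of what the paper packages as \Cref{lemma:intersection}. For $g\geq 4$ the two arguments coincide. The interesting divergence is in the $g=3$ case. The paper proves Lemma \ref{lemma:technical-isogeny} in the lattice language, where the solvability criterion (Proposition \ref{proposition:technical-lattice}) requires $\gcd(N-m, m')=1$ with $m'$ the multiplicativity of $\pi$; this forces the paper to restrict to \emph{reduced} isogenies $\pi$ first and then patch in the remaining $\pi$ via a separate lemma that $D_{\pi_1}$ meets $D_{\pi_2}$ whenever $d_1+d_2=N-1$. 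Your construction---pick $E_1 \to E$ of degree $N-m$ with \emph{cyclic} kernel, take $r_1-r_2$ a generator, and choose the attaching datum generically---handles all $\pi$ uniformly and sidesteps both Lemma \ref{lemma:technical-isogeny} and the supplementary lemma. That is a genuine simplification.

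However, one step is stated too loosely. You claim that each candidate intermediate isogeny $E''$ imposes ``a proper algebraic condition on $p$, which is avoided for generic $p$.'' The attaching datum $(p,\tilde p)$ lives on the one-dimensional curve $E_1\times_E\tilde E$, which is in general \emph{disconnected}, and the locus where the two lifts to $E''$ agree is exactly $E_1\times_{E''}\tilde E$, a \emph{union of entire connected components} of $E_1\times_E\tilde E$ (a finite-index closed subgroup of a one-dimensional group can only be a union of components, by divisibility of the identity component). So each ``bad locus'' is not a lower-dimensional subset to be avoided by generic choice within a fixed component; it is all-or-nothing on each component. It is therefore not automatic that the union over all $E''$ fails to exhaust $E_1\times_E\tilde E$. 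What rescues the argument---and what should be said explicitly---is precisely your choice of cyclic kernel: the component group of $E_1\times_E\tilde E$ is $\Lambda/(\widehat\Lambda+\tilde\Lambda)$, which is a quotient of the cyclic group $\Lambda/\widehat\Lambda$ and is therefore itself cyclic; the bad loci $E_1\times_{E''}\tilde E$ correspond to its proper subgroups, and a cyclic group is not the union of its proper subgroups, so a component mapping to a generator works. Once this is filled in, the argument is complete and does avoid the paper's reduced/non-reduced case split.
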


In view of irreducibility of $D_{\pi}$ and $D_{\text{sing}}$, it is enough 
to prove that these divisors meet for every $\pi$. That is, we 
only have to exhibit a curve that is in both.

Here is a candidate. Let $d$ be the degree of the isogeny
$\pi\from \tilde{E} \to E$. Take a cover $\widetilde{X}_0 \to E$ in
$\Hc_{N-d,g-2}(E)$. Now pick two points $p_1,p_2 \in \widetilde{X}_0$
in a fiber of the map to $E$, and glue them to get a nodal curve $X_0 \to E$.
Then, pick a generic third point $q \in X_0$, and attach a copy
of $\tilde{E}$ at $q$. Let this be $X \to E$, as in \cref{figure:cover in both Dpi and Dsing - fig04}.

\begin{figure}
\centering
\includegraphics{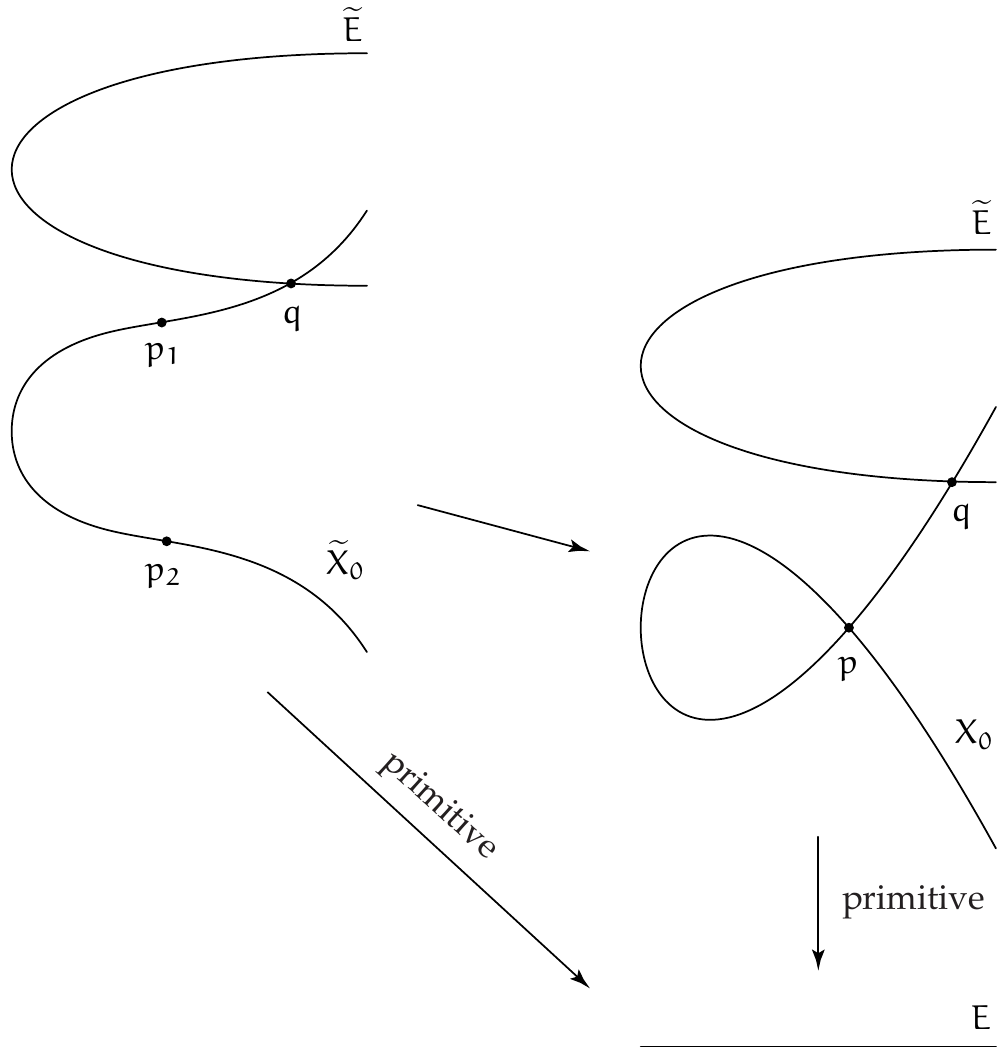}
\caption{A cover in both $D_{\pi}$ and $D_{\text{sing}}$}
\label{figure:cover in both Dpi and Dsing - fig04}
\end{figure}

From the definitions of $D_{\pi}$ and $D_{\text{sing}}$, we make the following
observation.

\begin{observation}
\label{lemma:intersection}
If $\widetilde{X}_0 \cup \tilde{E} \to E$ and $X\to E$ are both primitive covers,
 then $X\to E$ is a cover in the intersection $ D_{\text{sing}} \cap D_{\pi}$.
\end{observation}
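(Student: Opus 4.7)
The plan is to verify the Observation by exhibiting two one-parameter smoothings of $X$ inside $\widetilde{\Hc}^0_{N,g}(E)$: one whose general fiber lies in $D_\pi$ and one whose general fiber lies in $D_{\text{sing}}$. Both smoothings are available because $\widetilde{\Hc}_{N,g}(E)$ is smooth (\Cref{lemma:smooth}), so the two nodes of the nodal cover $X = X_0 \cup_q \tilde{E}$ can be smoothed independently; to stay inside the primitive locus, I would invoke the fact that primitivity is an open condition on $\widetilde{\Hc}_{N,g}(E)$.

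To show $X \in \overline{D_\pi}$, I would smooth the internal node $p_1 \sim p_2$ of $X_0$ while keeping the attachment at $q$ intact. The general fiber is a cover with source $Y_t \cup_q \tilde{E}$, where $Y_t$ is a smooth curve of genus $g-1$ (smoothing a single node preserves arithmetic genus) and $Y_t \to E$ has degree $N - d = N - \deg \pi$ — exactly the shape of a generic point of $D_\pi$. It then remains to check that $Y_t \to E$ itself is primitive. Since smoothing a node does not alter the monodromy representation, the monodromy of $Y_t \to E$ agrees with that of $\widetilde{X}_0 \to E$, so this reduces to checking primitivity of $\widetilde{X}_0 \to E$, which I would extract by combining the two stated primitivity hypotheses with the explicit structure of the isogeny $\pi$.

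To show $X \in \overline{D_{\text{sing}}}$, I would instead smooth the node $q$. The general fiber $X_t$ is then irreducible of arithmetic genus $g$ with a single node inherited from $p_1 \sim p_2$, exactly the shape required by $D_{\text{sing}}$. To verify that the composition $\widetilde{X}_t \to X_t \to E$ is primitive, I would carry out the simultaneous partial normalization of the family along the $p_1 \sim p_2$ section. The resulting subsidiary family lives in $\widetilde{\Hc}_{N,g-1}(E)$ with general fiber the smooth genus-$(g-1)$ curve $\widetilde{X}_t$ and special fiber at $t = 0$ equal to the partial normalization of $X$ at only the $p_1 \sim p_2$ node, namely $\widetilde{X}_0 \cup_q \tilde{E}$. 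By the hypothesis that this special fiber is primitive, and by openness of primitivity, $\widetilde{X}_t \to E$ is primitive for small $t \neq 0$, placing $X_t$ in $D_{\text{sing}}$ and hence $X$ in its closure.

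The main obstacle will be the primitivity bookkeeping for the $D_\pi$ deformation: the hypothesis that $\widetilde{X}_0 \cup_q \tilde{E} \to E$ is primitive is not formally equivalent to $\widetilde{X}_0 \to E$ being primitive, since a common intermediate isogeny $\tilde{E}'' \to E$ through which both factors descend could a priori be obstructed by the gluing at $q$. Disentangling these requires combining both stated primitivity hypotheses, exploiting that $q$ was chosen generically on $X_0$, so that no compatible lift of the attachment data to such a $\tilde{E}''$ can exist unless $\widetilde{X}_0 \to E$ is already primitive. Granting this, the two smoothings respectively certify $X \in D_\pi$ and $X \in D_{\text{sing}}$, giving the claimed inclusion $X \in D_\pi \cap D_{\text{sing}}$.
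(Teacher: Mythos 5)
Your two-smoothing strategy is the natural one (and the paper, which labels this an Observation and offers no written proof, presumably intends exactly this). The $D_{\text{sing}}$ smoothing is handled correctly: smoothing $q$ and simultaneously normalizing the family along the surviving node $p_1\sim p_2$ produces a family in $\widetilde{\Hc}_{N,g-1}(E)$ specializing to the primitive cover $\widetilde{X}_0\cup_q\tilde{E}\to E$, and openness of the primitive locus puts the nearby normalizations in the defining locus of $D_{\text{sing}}$.

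The $D_\pi$ branch has a genuine gap, and your diagnosis of the obstacle points at the wrong target. First, ``smoothing a node does not alter the monodromy representation'' is incorrect: smoothing $p_1\sim p_2$ introduces two new simple branch points whose transposition $\sigma$ exchanges the sheets through $p_1$ and $p_2$, so $\operatorname{Mon}(Y_t\to E)=\langle \operatorname{Mon}(\widetilde{X}_0\to E),\ \sigma\rangle$, typically strictly larger. Second, and more seriously, the reduction to primitivity of $\widetilde{X}_0\to E$ cannot follow from the two stated hypotheses plus genericity of $q$: in the paper's own $g=3$ application, $\widetilde{X}_0=\widehat{E}$ is an isogeny and hence never primitive, yet the Observation is invoked. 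The condition that actually characterizes $X\in D_\pi$ is primitivity of the nodal curve $X_0=\widetilde{X}_0/(p_1\sim p_2)\to E$; since the primitive locus is both open and closed in $\widetilde{\Hc}_{N-d,g-1}(E)$ (see the remark preceding \Cref{lemma:smooth}), any smoothing $Y_t$ of $X_0$ is primitive if and only if $X_0$ is. Primitivity of $X_0$ does hold in both of the paper's applications---trivially when $\widetilde{X}_0\to E$ is itself primitive (the $g\geq 4$ case), and by property~(2) of \Cref{lemma:technical-isogeny} (that $p_1-p_2$ generates $\ker\widehat{\pi}$) when $g=3$---but it is not a formal consequence of the Observation's two hypotheses together with genericity of $q$: choosing $\widetilde{X}_0\to E'\to E$ with $p_1,p_2$ in a common $E'$-fiber, while $\deg(\tilde{E}/E)$ is coprime to $\deg(E'/E)$, satisfies both stated hypotheses although $X_0$, and hence every $Y_t$, still factors through $E'$.
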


\begin{corollary}
If $g\geq 4$, then $D_{\text{sing}}$ meets $D_{\pi}$ for every isogeny $\pi$.
\end{corollary}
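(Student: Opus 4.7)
The plan is to exhibit, for every relevant isogeny $\pi\from \tilde{E}\to E$, a single cover $X\to E$ lying in $D_{\text{sing}}\cap D_\pi$, using the explicit candidate already described above and depicted in \Cref{figure:cover in both Dpi and Dsing - fig04}. By \Cref{lemma:intersection}, it suffices to produce $\widetilde{X}_0$ and then to verify that both $\widetilde{X}_0\cup\tilde{E}\to E$ and $X\to E$ are primitive.

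First I would dispose of the edge case $d=\deg\pi=N-1$: here $D_\pi$ would demand a primitive cover of degree $1$ and genus $g-1\geq 3$, which does not exist, so $D_\pi=\emptyset$ and there is nothing to prove. In the remaining range $d\leq N-2$ we have $N-d\geq 2$, and the assumption $g\geq 4$ gives $g-2\geq 2$, so the pair $(N-d,g-2)$ falls within \Cref{hypothesis:inductionHurwitz}. Pick any primitive cover $\widetilde{X}_0\to E$ in $\widetilde{\Hc}^0_{N-d,g-2}(E)$, choose two distinct points $p_1,p_2$ in a common fiber (possible since the degree is at least $2$), glue them to form $X_0$, and then attach $\tilde{E}$ at a generic point $q\in X_0$ with the unique gluing compatible with the maps to $E$. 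A routine arithmetic-genus count ($p_a(X_0)=g-1$ from the self-node on $\widetilde{X}_0$, then $p_a$ adds $p_a(\tilde{E})=1$ upon attaching $\tilde{E}$ at a node) gives $p_a(X)=g$, and $X\to E$ is visibly finite of degree $N$ with connected source, so $[X\to E]\in\widetilde{\Hc}_{N,g}(E)$.

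The real content is the verification of primitivity. Both $\widetilde{X}_0\cup\tilde{E}\to E$ and $X\to E$ contain $\widetilde{X}_0$ as a subcurve; the pushforward to $\pi_1(E,\star)$ of the loops supported on this subcurve already hits all of $\pi_1(E,\star)$, because $\widetilde{X}_0\to E$ is primitive by construction. A Seifert--van Kampen argument then promotes this to surjectivity of the pushforward from the whole source in each case. With both primitivity hypotheses of \Cref{lemma:intersection} in hand, that observation directly produces the desired element of $D_{\text{sing}}\cap D_\pi$.

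I do not expect a serious obstacle here: all of the geometric weight sits in \Cref{lemma:intersection} and in the induction hypothesis, and what remains is bookkeeping. The only mild traps are (a) remembering to exclude $d=N-1$, where $D_\pi$ is vacuous, and (b) tracking that $g-2\geq 2$ is precisely the inequality that translates into the stated bound $g\geq 4$.
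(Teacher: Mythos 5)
Your argument is essentially the paper's: the paper's own proof consists of noting that $g-2\geq 2$ lets one pick a primitive $\widetilde{X}_0\to E$ in the inductive range, and that this automatically makes both hypotheses of \Cref{lemma:intersection} hold, which is exactly your plan. The extra details you supply (the vacuous edge case $\deg\pi = N-1$, the arithmetic-genus bookkeeping, and the observation that containing the primitive subcurve $\widetilde{X}_0$ forces primitivity of the full nodal cover) are correct and merely make explicit what the paper leaves implicit.
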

\begin{proof}
When $g-2 \geq 2$, we may choose a primitive cover $\widetilde{X}_0 \to E$.
This guarantees that both conditions in \Cref{lemma:intersection} are satisfied.
\end{proof}

This settles  \Cref{proposition:connected-boundary} for $g\geq 4$,
and in consequence the \Cref{reduction:D-sing-and-D-pi}.

We can apply the same construction for $g=3$, but then $\widetilde{X}_0 \to E$
would be an isogeny. We have to choose it carefully so that the 
hypotheses of \Cref{lemma:intersection} are satisfied.
It turns out that we cannot choose such a $\widetilde{X}_0 \to E$ for every
isogeny $\pi$, but we can for a large subclass: the reduced ones.

\begin{definition}
An isogeny $\pi\from  \tilde{E} \to E$ is \emph{reduced}
 if it does not factor through
the multiplication by $n$ map $M_n\from E \to E$ for any integer $n>1$.
\end{definition}

\begin{lemma}
\label{lemma:technical-isogeny}
For any reduced isogeny $\pi\from \tilde{E} \to E$ and any $D \geq 2$, 
there is a reduced degree $D$ isogeny $\widehat{\pi}\from \widehat{E} \to E$ 
and points $p_1,p_2 \in \widehat{E}$ such that
\begin{enumerate}
	\item There is no common non-trivial subcover to $\pi$ and
	$\widehat{\pi}$.
	\item For any subcover $\pi'\from \widehat{E} \to E'$, the
	points $\pi'(p_1),\pi'(p_2) \in E'$ are the same if
	and only if $\pi'=\widehat{\pi}$ (that is, the subcover is trivial).
\end{enumerate}
\end{lemma}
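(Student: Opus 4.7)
The plan is to work over $\Cbb$ and translate the statement into a purely lattice-theoretic construction, as suggested in the guide to readers. Write $E = \Cbb/\Lambda$, so that isogenies $\widetilde{E} \to E$ of degree $d$ correspond to sublattices $\Lambda' \subset \Lambda$ of index $d$ (with $\widetilde{E} = \Cbb/\Lambda'$). Such an isogeny is reduced precisely when $\Lambda/\Lambda'$ is cyclic, since factoring through $M_n$ is equivalent to $\Lambda' \subset n\Lambda$, i.e., to $\Lambda/\Lambda'$ having $(\Zbb/n)^2$ as a quotient. Under this dictionary, a common non-trivial subcover of $\pi$ and $\widehat{\pi}$ corresponds to an intermediate lattice $\Lambda' + \widehat{\Lambda} \subset \Lambda'' \subsetneq \Lambda$, so condition~(1) translates to $\Lambda' + \widehat{\Lambda} = \Lambda$. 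Similarly, a subcover $\pi'\from \widehat{E} \to E'$ corresponds to $\widehat{\Lambda} \subset \Lambda'' \subset \Lambda$, and the equality $\pi'(p_1) = \pi'(p_2)$ becomes $p_1 - p_2 \in \Lambda''/\widehat{\Lambda}$; thus condition~(2) becomes the requirement that $p_2 - p_1$ lifts to an element of $\Lambda$ whose image in $\Lambda/\widehat{\Lambda}$ \emph{generates} this cyclic group of order~$D$.

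With the dictionary in place the construction is concrete. Because $\pi$ is reduced, the Smith normal form of $\Lambda' \subset \Lambda$ yields a basis $e_1, e_2$ of $\Lambda$ with $\Lambda' = \Zbb \cdot (d e_1) + \Zbb \cdot e_2$, where $d = \deg \pi$. I define
\[
\widehat{\Lambda} := \Zbb \cdot e_1 + \Zbb \cdot (D e_2).
\]
Then $\Lambda/\widehat{\Lambda}$ is cyclic of order $D$ generated by the image of $e_2$, so $\widehat{\pi}\from \Cbb/\widehat{\Lambda} \to E$ is a reduced isogeny of degree $D$. Since $e_1 \in \widehat{\Lambda}$ and $e_2 \in \Lambda'$, we have $\Lambda' + \widehat{\Lambda} = \Lambda$, verifying~(1). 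Finally, take $p_1 = 0$ and $p_2$ to be the class of $e_2$ in $\widehat{E}$; then $p_2 - p_1$ lifts to a generator of $\Lambda/\widehat{\Lambda}$, so it lies in no proper sublattice $\Lambda''/\widehat{\Lambda}$, which by the dictionary is exactly condition~(2).

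There is no real obstacle once the lattice translation is in place; the argument is elementary, and the choice of $\widehat{\Lambda}$ is dictated by the Smith normal form. The only subtlety worth flagging is that the reduced hypothesis on $\pi$ is genuinely used — it is precisely what produces the adapted basis $e_1, e_2$ — and that the resulting $\widehat{\pi}$ is then reduced essentially by construction.
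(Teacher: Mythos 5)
Your proof is correct and follows the same lattice-theoretic translation the paper uses (sublattices $\widehat{\Lambda} \subset \Lambda$ for isogenies, $\tilde{\Lambda}+\widehat{\Lambda}=\Lambda$ for condition (1), and a vector $v$ generating $\Lambda/\widehat{\Lambda}$ for condition (2)). The only difference is in packaging: you invoke Smith normal form to produce the adapted basis $e_1,e_2$ directly, whereas the paper first constructs a primitive vector in $\tilde{\Lambda}$ by a Chinese remainder argument and completes it to a basis, in order to prove the slightly more general statement that the construction succeeds precisely when $\gcd(D,m)=1$, where $m$ measures the failure of $\pi$ to be reduced.
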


We will prove \Cref{lemma:technical-isogeny} in
 \Cref{ssub:proof_of_lemma_technical_isogeny}.

\begin{corollary}
For $g=3$ and a reduced isogeny $\pi$, the divisor $D_{\pi}$ meets
$D_{\text{sing}}$.
\end{corollary}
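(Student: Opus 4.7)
The plan is to construct an explicit cover $X\to E$ that visibly lies in both $D_\pi$ and $D_{\text{sing}}$, using \Cref{lemma:technical-isogeny} to arrange the needed primitivity. First, note that the divisor $D_\pi$ only makes sense when the residual cover $X'\to E$ has positive genus $g-1=2$, so we may assume $\deg\pi\leq N-2$; hence $D:=N-\deg\pi\geq 2$ and \Cref{lemma:technical-isogeny} applies. Let $\widehat\pi\from\widehat E\to E$ be the reduced isogeny of degree $D$, and $p_1,p_2\in\widehat E$ the pair of points provided by the lemma.

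Form the nodal curve $\widetilde X_0:=\widehat E/(p_1\sim p_2)$ of arithmetic genus $2$; it inherits a finite flat degree $D$ map to $E$ from $\widehat\pi$. Pick a generic point $q\in\widetilde X_0$ and attach $\tilde E$ there via $\pi$ to form
\[
X \;=\; \widetilde X_0\cup_q \tilde E \;\longrightarrow\; E.
\]
By construction $X$ is a connected, arithmetic-genus-$3$, nodal cover of $E$ of degree $N$, and it is a limit both of covers in $D_{\text{sing}}$ (smooth out the attaching node at $q$, so the elliptic tail $\tilde E$ disappears into a nodal irreducible genus $2$ component) and of covers in $D_\pi$ (smooth out the self-node $p_1\sim p_2$, so one recovers a smooth genus $2$ component glued to $\tilde E$). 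Thus by \Cref{lemma:intersection} it suffices to verify the two primitivity conditions: that $\widetilde X_0\cup\tilde E\to E$ is primitive, and that $X\to E$ is primitive.

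For the first, if $\widehat E\sqcup\tilde E\to E$ factored through a nontrivial unramified cover $E'\to E$, then both $\widehat\pi$ and $\pi$ would factor through $E'\to E$, giving a common nontrivial subcover --- this is ruled out by property~(1) of \Cref{lemma:technical-isogeny}. For the second, suppose $X\to E$ factored as $X\to E'\to E$ with $E'\to E$ a nontrivial isogeny. The restriction to the $\tilde E$ component gives, via property~(1), no obstruction by itself, but the restriction to $\widehat E$ is a map $\widehat E\to E'$ that must identify $p_1$ and $p_2$ in order to descend across the self-node of $\widetilde X_0$. Property~(2) of \Cref{lemma:technical-isogeny} forces any such subcover to be trivial, i.e.\ $E'=E$, a contradiction.

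The main potential obstacle is really packaged into \Cref{lemma:technical-isogeny}: one must produce a reduced isogeny $\widehat\pi$ avoiding any common subcover with $\pi$ and whose points $p_1,p_2$ cannot be identified by any proper subcover. Granted that lemma, the verification above is immediate, and no further degeneration or monodromy argument is needed for this corollary.
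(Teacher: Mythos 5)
Your proof is correct and follows exactly the paper's approach: take $\widetilde{X}_0 = \widehat{E}$ as furnished by \Cref{lemma:technical-isogeny} and check the two hypotheses of \Cref{lemma:intersection}. The paper's own proof is just those two sentences, whereas you spell out the verification explicitly; your reasoning for both primitivity checks is sound, with only a cosmetic slip in writing $\widehat{E} \sqcup \tilde{E}$ for what should be the glued (connected) curve $\widehat{E} \cup_q \tilde{E}$.
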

\begin{proof}
Take $\widetilde{X}_0$ as the $\widehat{E}$ produced by   
\Cref{lemma:technical-isogeny}. This satisfies the conditions of
 \Cref{lemma:intersection}.
\end{proof}

To finish the proof of  \Cref{proposition:connected-boundary},
for each isogeny $\pi$, we need to find a reduced isogeny $\pi'$ such
that  $D_{\pi}$ meets $D_{\pi'}$.

\begin{lemma}
For $g=3$, the divisors $D_{\pi_1}$ and $D_{\pi_2}$ meet if $d_1+d_2=N-1$,
where $d_i$ is the degree of the isogeny $\pi_i\from \tilde{E}_i \to E$.
\end{lemma}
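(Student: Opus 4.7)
The plan is to exhibit a single cover $X_0 \to E$ lying in $D_{\pi_1} \cap D_{\pi_2}$. The candidate is the three-component chain
\[
X_0 = \tilde{E}_1 \cup_{q_1} E_\ast \cup_{q_2} \tilde{E}_2,
\]
where $E_\ast$ is a copy of $E$ mapping to the base $E$ by an isomorphism, the $\tilde{E}_i$ are attached at the nodes $q_i$ via the isogenies $\pi_i$ (with matching images in $E$), and all components carry the obvious map to $E$. A direct computation gives arithmetic genus $p_a(X_0) = 1+1+1+2-3+1 = 3$ and total degree $d_1 + 1 + d_2 = N$; moreover, $X_0 \to E$ is primitive, since any factorization through a nontrivial isogeny $\bar{E} \to E$ would in particular have to factor the identity $E_\ast \to E$, which is impossible.

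To place $X_0$ in $D_{\pi_1}$, regroup the chain as $X_0 = X' \cup_{q_1} \tilde{E}_1$ with $X' = E_\ast \cup_{q_2} \tilde{E}_2$, a nodal curve of arithmetic genus $2$ equipped with a degree $N - d_1 = d_2 + 1 \geq 2$ cover of $E$ that is again primitive by the same argument applied to the $E_\ast$ component. Hence $X' \in \widetilde{\Hc}^0_{N-d_1, 2}(E)$. The base case of our induction is Kani's theorem, which says $\Hc^0_{N-d_1,2}(E)$ is irreducible; combined with the smoothness of $\widetilde{\Hc}^0_{N-d_1, 2}(E)$ (\Cref{lemma:smooth}) and Kani's explicit identification $\Hc^0_{N-d_1,2}(E) \cong E \times X(N-d_1)$, one concludes that $X'$ lies in the closure of the smooth primitive locus, so its sole node $q_2$ can be smoothed within $\widetilde{\Hc}^0_{N-d_1, 2}(E)$. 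Any such smoothing $X'_t \to E$ induces a one-parameter family $X'_t \cup_{q_1} \tilde{E}_1 \to E$ lying in $D_{\pi_1}$ and specializing to $X_0$, hence $X_0 \in D_{\pi_1}$. The symmetric argument, reassociating the chain as $(\tilde{E}_1 \cup_{q_1} E_\ast) \cup_{q_2} \tilde{E}_2$ and exchanging the roles of $\pi_1$ and $\pi_2$, shows that $X_0 \in D_{\pi_2}$ as well.

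The main obstacle is the smoothability claim for $X'$: one must rule out that $X'$ sits in some exotic boundary component of $\widetilde{\Hc}^0_{N-d_1, 2}(E)$ disjoint from the smooth primitive locus. This is precisely where the strong form of Kani's result is used, as it gives control over the entire primitive Hurwitz space in genus two, not only over its smooth locus, and thus certifies that the specific nodal admissible primitive cover $E_\ast \cup_{q_2} \tilde{E}_2 \to E$ does indeed lie in the closure of $\Hc^0_{N-d_1,2}(E)$.
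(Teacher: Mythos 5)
Your proposal is correct and takes essentially the same approach as the paper: exhibit the three-component chain $\tilde{E}_1 \cup_p E \cup_q \tilde{E}_2 \to E$ and observe that it lies in both $D_{\pi_1}$ and $D_{\pi_2}$ by reassociating. The paper's proof stops there (it exhibits the chain and asserts the conclusion), while you spell out the reason the chain is in the \emph{closure} defining $D_{\pi_i}$; the quickest way to see this in the paper's framework is that $D_{\pi_i}$ is the image of the irreducible universal curve $\Cc$ over $\widetilde{\Hc}^0_{N-d_i,2}(E)$, and the candidate is precisely the image of the nodal point $(E_\ast \cup \tilde{E}_{3-i},\, q_i) \in \Cc$, so no explicit smoothing argument is strictly needed.
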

\begin{proof}
Form the nodal curve $X$ with components $\tilde{E}_1, \tilde{E}_2$ and $E$,
by gluing $\tilde{E}_1$ with $E$ at a point $p$, and $\tilde{E}_2$ with $E$
at a point $q \neq p$, as in \cref{figure:intersection of Dpi1 and Dpi2 - fig05}.
The cover $X \to E$ lies in both $D_{\pi_1}$ and $D_{\pi_2}$. 
\begin{figure}
\centering
\includegraphics{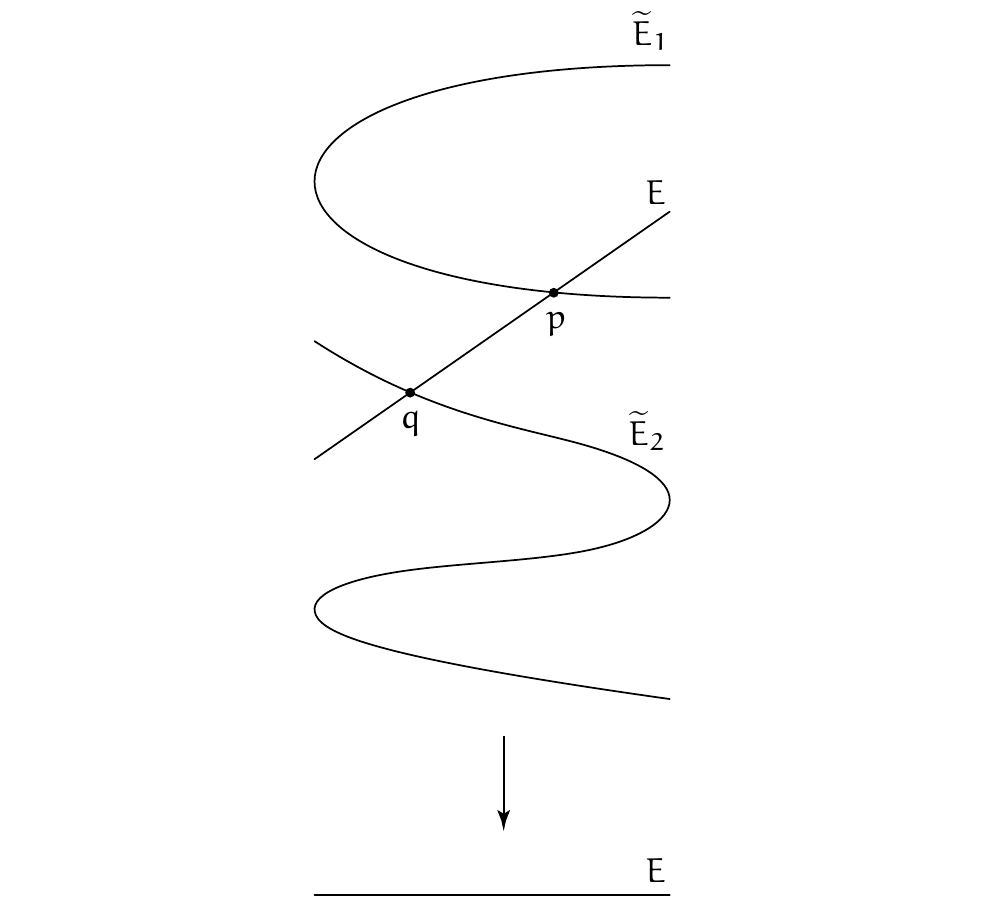}
\caption{A cover in both $D_{\pi_1}$ and $D_{\pi_2}$.}
\label{figure:intersection of Dpi1 and Dpi2 - fig05}
\end{figure}
\end{proof}

This completes the proof of \Cref{proposition:connected-boundary},
since there are reduced isogenies of any degree. This is easy to see directly, 
but is already a consequence of \Cref{lemma:technical-isogeny},
so we are not going to restate it as a separate claim.

We have so far settled  \Cref{reduction:D-sing-and-D-pi}. This gives a 
good mileage towards our goal, the \Cref{reduction:final}.
 To help organize what is left to do, let us introduce a piece of language.

\begin{definition}
Let $C \to E$ be a cover in $\widetilde{\Hc}^0_{N,g}(E)$, and  $\eta\in C$ 
a node in the source. Let $\widetilde{C}$ be the partial normalization
of $C$ at $\eta$. We call $\eta$
\begin{itemize}
	\item a \emph{disconnecting node} if $\widetilde{C}$ is disconnected.
	\item a \emph{$E$-superfluous node} if $\widetilde{C}$ is connected and
	$\widetilde{C} \to E$ is primitive.
	\item a \emph{$E$-relevant node} if $\widetilde{C} \to E$ is not primitive,
	i.e., it factors through
	a non-trivial isogeny.
\end{itemize}
\end{definition}  

In this language, a cover is in $D_{\text{sing}}$ if and only if it has an 
$E$-superfluous node. We can reexpress  \Cref{reduction:D-sing-and-D-pi} as saying that it is enough to find, 
in each connected component, a cover
with an $E$-superfluous node, or a disconnecting node
 such that if the two components of the partial normalization are $C_1 \sqcup C_2$, then
\begin{enumerate}
 	\item \label{item:condition1A} $C_1 \to E$ is an isogeny, and
 	\item $C_2 \to E$ is primitive.
 \end{enumerate} 

We can relax this and drop condition (\ref{item:condition1A}).

\begin{reduction}
\label{reduction:super-quasi-disconnecting}
It is enough to find, in each connected component of Hurwitz space $\Hc^0_{N,g}(E)$,
a cover $C \to E$ with an $E$-superfluous node or a disconnecting node such that the restriction 
of $C_1 \sqcup C_2 \to C \to E$ to $C_2 \to E$ is primitive.
\end{reduction}
\begin{proof}
Indeed, let us say we have a cover $C\to E$ and a disconnecting node 
$\eta \in C$ such that such that
$C_2 \to E$ is primitive. If 
$C_1 \to E$ is an isogeny, this is a point in $D_{\pi}$, and we are done. Assume otherwise
that $C_1 \to E$ is not an isogeny. We want to find another point in the same connected
component which is in one of the $D_\pi$ divisors.

Let $\pi\from  \tilde{E} \to E$ the maximal isogeny for which $C_1 \to E$ factors through, 
and let $\eta_1 \in C_1$ be the point over the node $\eta$.
Then $(C_1,\eta_1) \to \tilde{E}$ is an element of 
$\Cc \to \widetilde{\Hc}^0_{N_1,g_1}(\tilde{E})$, the universal curve
over the component of Hurwitz space $C_1 \to E$ lies on. 
By the \Cref{hypothesis:inductionHurwitz}, $\Cc$
is irreducible. Hence, we can connect $(C_1,\eta_1)$ to any
other element of $\Cc$. For example, we can take
$(C' \cup_p \tilde{E}, \eta') \to \tilde{E}$, where $\eta' \in C'$, for
some cover $C' \to \tilde{E}$.

Attaching $(C_2,\eta)\to E$ back to 
$(C' \cup \tilde{E} ,\eta') \to \tilde{E} \to E$,
 we get
\[
	X=\tilde{E} \cup_p C' \cup_{\eta} C_2 \to E
\]
as in \cref{figure:reduction:super-quasi-disconnecting - fig06}.
As $C_2 \to E$ is primitive, so is 
$C' \cup_\eta C_2 \to E$. Hence, 
the cover $X \to E$ is an element of $D_{\pi}$.
\begin{figure}
\centering
\includegraphics{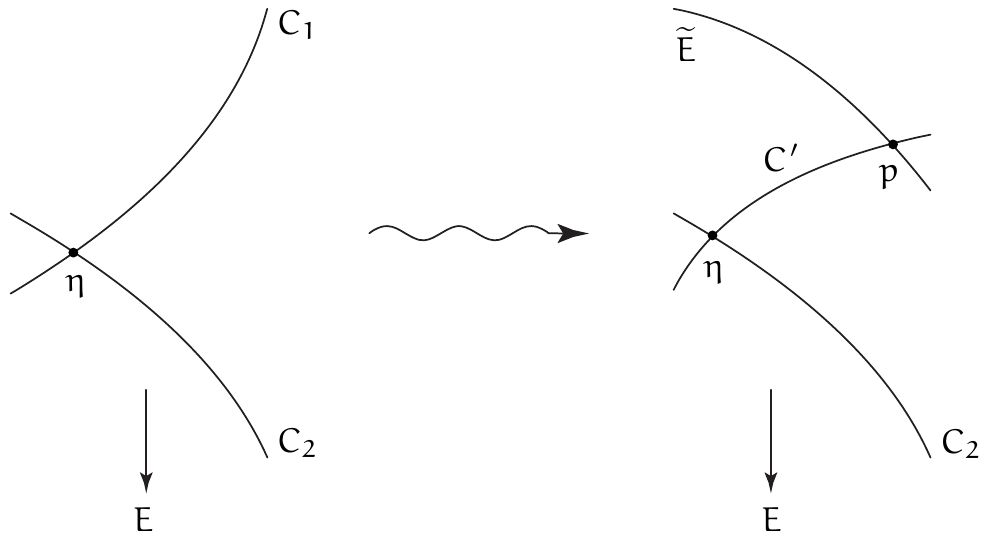}
\caption{From a disconnecting node with a
primitive component to a cover in $D_{\pi}$, as in \Cref{reduction:super-quasi-disconnecting}.}
\label{figure:reduction:super-quasi-disconnecting - fig06}
\end{figure}
\end{proof}

\begin{remark}
Note that for this argument it was
 important that our induction hypothesis accommodates for
any genus one target, as long as the genus of the source is smaller than $g$. 
\end{remark}

We can do even better.
\begin{reduction}
\label{reduction:super-disconnecting}
It is enough to find, in each connected component of Hurwitz space, a cover
with an $E$-superfluous node or a disconnecting node.
\end{reduction}

\begin{proof}
We use the same trick. Let $C_1 \sqcup C_2 \to C \to E$ be the partial
normalization of a curve with a disconnecting node $\eta$. Let $\eta_i \in C_i$
be the point over $\eta$. 
As $g\geq 3$,
at least one of the $C_i$ has genus at least $2$. Say that is $C_1$. Let
$\tilde{E} \to E$ be the maximal isogeny $C_1 \to E$ factors through.
Degenerate $(C_1,\eta_1) \to E$ to $(C' \cup \tilde{E}, \tilde{\eta}) \to E$
with $\tilde{\eta} \in \tilde{E}$. 
Attaching $(C_2,\eta_2)$ back, we get
$C' \cup_p \tilde{E} \cup_{\eta} C_2 \to E$,
as in \cref{figure:reduction:super-disconnecting fig07}.

\begin{figure}
\centering
\includegraphics{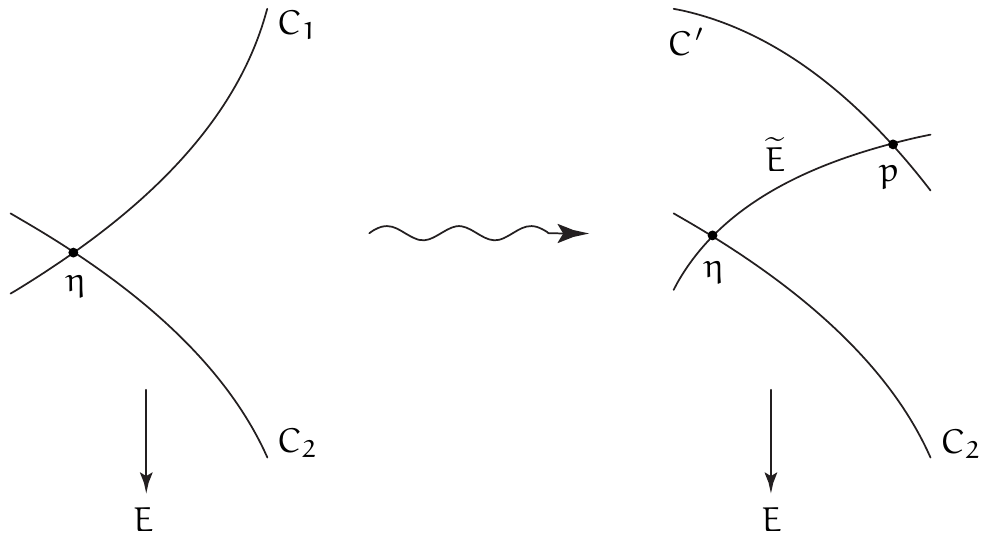}
\caption{From a cover with a disconnecting node
to a cover with a disconnecting node and a primitive component, as in \Cref{reduction:super-disconnecting}.}
\label{figure:reduction:super-disconnecting fig07}
\end{figure}

We claim that cover $\tilde{E} \cup_{\eta} C_2 \to E$ is primitive. Suppose not. Then, by definition, the map
${\tilde{E} \cup_{\eta} C_2 \to E}$ factors through a non-trivial isogeny, and
so would the original cover ${C_1\cup_{\eta}C_2 \to E}$
(because $C_1 \to E$ itself factors through $\tilde{E} \to E$). But the original cover is primitive
by assumption, which is a contradiction.

Hence, the node $p$ joining $C'$ and $\tilde{E}$ is a disconnecting node 
for which one of the components is primitive.
 So we are done by \Cref{reduction:super-quasi-disconnecting}.
\end{proof}

Let us finally prove the \Cref{reduction:final}.
\begin{proof}
Let $C \to E$ be a cover in $\Delta \subset \widetilde{\Hc}^0_{N,g}(E)$. Hence,
$C$ has a node, call it $\eta$. 
By  \Cref{reduction:super-disconnecting}, it is enough to show that in the connected component of 
$C \to E$ there is a cover with an $E$-superfluous or a disconnecting node.

If $\eta$ itself is $E$-superfluous
or disconnecting, we are done. We may assume that
$\eta$ is $E$-relevant, then. This means that the partial normalization
$\tilde{C} \to E$ is connected, and that it factors through a non-trivial
isogeny $\tilde{E} \to E$. Let $\eta_1,\eta_2 \in \tilde{C}$ be the preimages of the node $\eta$, and $N'= \deg(\tilde{C} \to \tilde{E})$, as in \cref{figure:E relevant node - fig08}.

\begin{figure}
\centering
\includegraphics{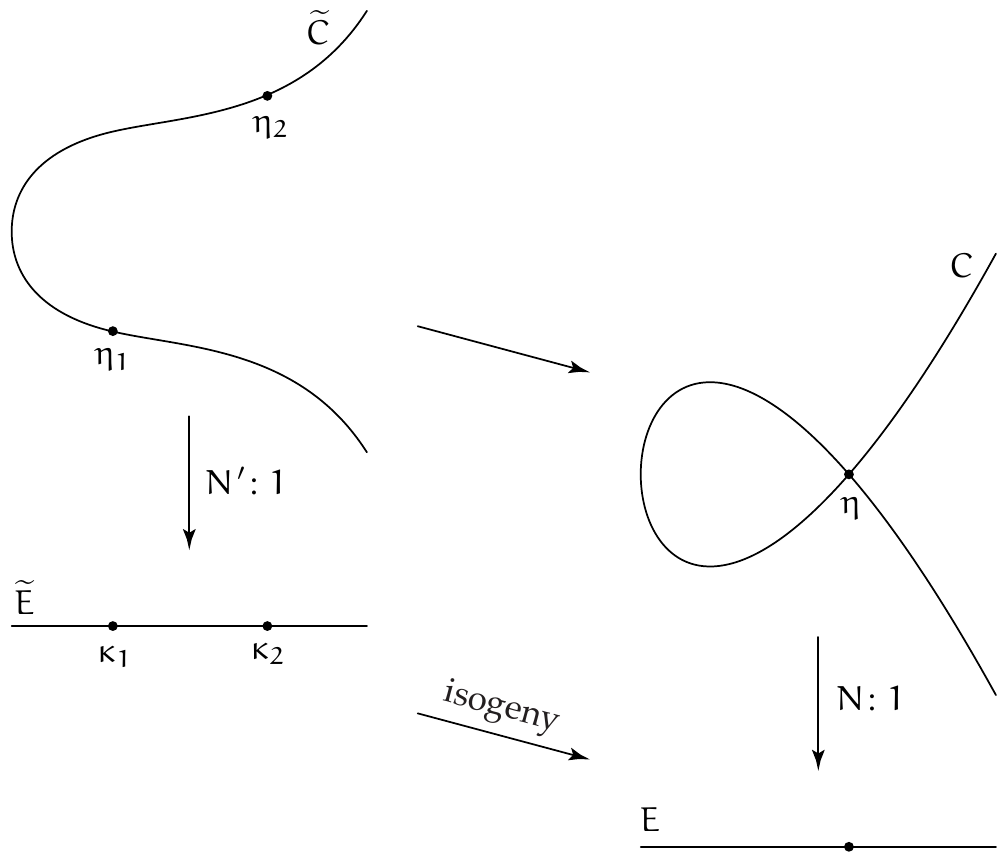}
\caption{An $E$-relevant node $\eta$.}
\label{figure:E relevant node - fig08}
\end{figure}

The data of $C \to E$ is encoded in the cover $\tilde{C} \to \tilde{E}$, with the two
marked points $\eta_1,\eta_2$. Let us construct a parameter space $\Pc$ for the triples
$(\tilde{C} \to \tilde{E}, \eta_1, \eta_2)$.

Let $\Cc \to \widetilde{\Hc}^0_{N',g-1}(\tilde{E})$
be the universal curve over the component of Hurwitz parametrizing 
primitive covers. The individual covers $C_b \to \tilde{E}$ assemble together in a map $\Cc \to \tilde{E}$. 
Then the triple $(\tilde{C} \to \tilde{E}, \eta_1, \eta_2)$ is a point in 
\[
	\Pc=\Cc \times_{E \times \widetilde{\Hc}} \Cc - \Delta
\]
where the map from $\Cc$ to $E$ is the composition of $\Cc \to \tilde{E}$ with
the isogeny $\tilde{E} \to E$.

Conversely, given any point 
$(\tilde{C}' \to \tilde{E}, \tilde{\eta}_1,\tilde{\eta}_2) \in \Pc$, we may identify
the points $\tilde{\eta}_1$ and  $\tilde{\eta}_2$ to get a cover $C' \to E$ in $\widetilde{\Hc}^0_{N,g}(E)$.

This prompt us to characterize the connected component of $\Pc$
containing the point ${(\tilde{C} \to \tilde{E},\eta_1,\eta_2)}$. If this connected component 
contains a point which gives rise to a cover of $E$ with
a disconnecting or $E$-superfluous node, we will be done.

Let $\kappa_i$ be the image of $\eta_i$ in $\tilde{E}$.
If $\kappa_1=\kappa_2$, then $C \to E$ would also factor
 through $\tilde{E} \to E$, which is contrary to our assumption that 
 $C\to E$ is primitive. Hence ${\kappa_1 \neq \kappa_2}$. 
We can even say more: after identifying $\kappa_1$ with $\kappa_2$, the
map from the nodal curve $\tilde{E}/(\kappa_1 \sim \kappa_2)$ 
to $E$ is a primitive cover.

Note that the evaluation map $\Cc \to \tilde{E}$ produces a map
\[
	f\from \Pc=\Cc \times_{E \times \widetilde{\Hc}} \Cc - \Delta \to \tilde{E} \times_E \tilde{E}
\]
Our point $(\tilde{C} \to \tilde{E}, \eta_1,\eta_2)$ in $\Pc$ maps to $(\kappa_1,\kappa_2)$.

We describe the components of $\Pc$ in terms of the map $f$.
\begin{proposition}
\label{proposition:connected-fibers}
 The map 
$f\from  \Pc \to \tilde{E} \times_E \tilde{E}$
 induces a bijection in connected components.
 \end{proposition}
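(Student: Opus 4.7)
The plan is to apply \Cref{lemma:fiber product} fiberwise over the Hurwitz space $\widetilde{\Hc}^0_{N',g-1}(\tilde{E})$ and then patch the fibers together using the induction hypothesis. Let $\pi\from\Pc\to\widetilde{\Hc}^0_{N',g-1}(\tilde{E})$ be the forgetful projection $(\tilde{C}'\to\tilde{E},\eta_1,\eta_2)\mapsto[\tilde{C}'\to\tilde{E}]$. The fiber of $\pi$ over $[\tilde{C}'\to\tilde{E}]$ is precisely $\tilde{C}'\times_E\tilde{C}'-\Delta$, and the restriction of $f$ to this fiber is the map studied in \Cref{lemma:fiber product} applied to the tower $\tilde{C}'\to\tilde{E}\to E$. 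The hypothesis of that lemma---that $\tilde{C}'\to\tilde{E}$ has full monodromy---is precisely the primitivity built into the definition of $\widetilde{\Hc}^0$, via \Cref{proposition:full_monodromy}.

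Surjectivity of $\pi_0(\Pc)\to\pi_0(\tilde{E}\times_E\tilde{E})$ is easy: for any $(\kappa_1,\kappa_2)\in\tilde{E}\times_E\tilde{E}$, choose a smooth primitive cover $(\tilde{C}'\to\tilde{E})\in\Hc^0_{N',g-1}(\tilde{E})$ whose branch locus avoids $\{\kappa_1,\kappa_2\}$ (such covers exist by the induction hypothesis together with the dominance of the branch morphism), and pick any two distinct preimages of $\kappa_1,\kappa_2$. For injectivity, fix a connected component $\Gamma\subset\tilde{E}\times_E\tilde{E}$, let $\Hc^0\subset\widetilde{\Hc}^0_{N',g-1}(\tilde{E})$ be the open substack parametrizing covers with smooth source, and set $\Pc^{\mathrm{sm}}=\pi^{-1}(\Hc^0)$. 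By \Cref{hypothesis:inductionHurwitz} and \Cref{lemma:smooth}, $\widetilde{\Hc}^0_{N',g-1}(\tilde{E})$ is smooth and connected, hence irreducible, and so $\Hc^0$ is an irreducible dense open substack. For every $[\tilde{C}']\in\Hc^0$, \Cref{lemma:fiber product} gives that $f^{-1}(\Gamma)\cap\pi^{-1}([\tilde{C}'])$ is irreducible of dimension one. Thus $\pi|_{f^{-1}(\Gamma)\cap\Pc^{\mathrm{sm}}}\to\Hc^0$ has irreducible, equidimensional fibers over an irreducible base, so its total space is itself irreducible; since $\Pc^{\mathrm{sm}}$ is open and dense in $\Pc$, this yields the connectedness of $f^{-1}(\Gamma)$.

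The main obstacle is the otherwise-routine step of promoting fiberwise irreducibility to global irreducibility of $f^{-1}(\Gamma)\cap\Pc^{\mathrm{sm}}$. One handles this either by verifying that $\pi|_{\Pc^{\mathrm{sm}}}$ is flat---which follows because $\Cc\times_E\Cc\to\widetilde{\Hc}^0_{N',g-1}(\tilde{E})$ is a composition of flat finite morphisms---or by directly noting that every irreducible component of $f^{-1}(\Gamma)\cap\Pc^{\mathrm{sm}}$ must dominate $\Hc^0$ by comparison of dimensions, and therefore contains the unique component whose generic point lives in the generic fiber. The substantive content of \Cref{proposition:connected-fibers} is really just \Cref{lemma:fiber product}; this proposition is its family-theoretic upgrade.
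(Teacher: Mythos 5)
Your proof is essentially the paper's proof, with one cosmetic difference in how the key technical point is handled. Both arguments fiber $\Pc_\Gamma = f^{-1}(\Gamma)$ over the Hurwitz space, invoke \Cref{lemma:fiber product} (justified via \Cref{proposition:full_monodromy}) to see that the general fiber is irreducible, and then need to know that every irreducible component of $\Pc_\Gamma$ dominates the base so that the unique generic fiber picks out a unique component. The paper secures this by exhibiting $\Pc$ as a finite (flat) cover of the irreducible $E \times \widetilde{\Hc}^0_{N',g-1}(\tilde{E})$, giving equidimensionality of $\Pc$ (and hence of the open-and-closed $\Pc_\Gamma$) directly; you instead argue flatness of $\pi\colon \Pc \to \widetilde{\Hc}^0_{N',g-1}(\tilde{E})$. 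Both routes are valid. Two small imprecisions in your write-up: \emph{(i)} the phrase ``composition of flat finite morphisms'' is not accurate, since the universal curve $\Cc \to \widetilde{\Hc}^0_{N',g-1}(\tilde{E})$ is flat but not finite; the correct statement is that $\Cc \times_{E\times\widetilde{\Hc}}\Cc \to \Cc$ is finite flat (a base change of $\Cc \to E \times \widetilde{\Hc}$) and $\Cc \to \widetilde{\Hc}$ is flat, so the composite is flat. \emph{(ii)} Your alternative ``comparison of dimensions'' fallback is circular as stated, since knowing that every component must dominate ``by dimensions'' presupposes equidimensionality, which is precisely what flatness (or the paper's finite-cover observation) is supplying. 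Since you do give the flatness route, the proof stands.
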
 

Before we prove this proposition, let us see how to establish the \Cref{reduction:final}
assuming it.

If $g\geq 4$, we will use the construction in \Cref{figure:teh case g geq 4 fig21}.
Take a primitive cover $C_0 \to \tilde{E}$ in $\Hc^0_{N',g-2}(\tilde{E})$. 
Pick a pair of distinct points $p_1,p_2 \in C_0$ in a generic fiber of the map to
$\tilde{E}$. 
Identify $p_1,p_2$ to get a cover $C_1 \to \tilde{E}$ with a node $p$.
Note that the node $p$ is $\tilde{E}$-superfluous, by construction.

Pick any preimages $\eta'_i \in C_1$ of $\kappa_i \in \tilde{E}$. 
By \Cref{proposition:connected-fibers},
 ${(C_1 \to \tilde{E}, \eta_1',\eta_2')}$ is in the same component
of $\Pc$ as the original $(\tilde{C} \to E, \eta_1,\eta_2)$. Hence, if we
identify $\eta'_1$ with $\eta'_2$, we get a primitive cover $C_2 \to E$ in the
same component as the original cover $C \to E$. However, $C_2 \to E$  has two nodes: $\eta'$ and $p$. But
as $p$ was $\tilde{E}$-superfluous, it is $E$-superfluous as well. By
\Cref{reduction:super-disconnecting}, we are done.

\begin{figure}
\centering
\includegraphics{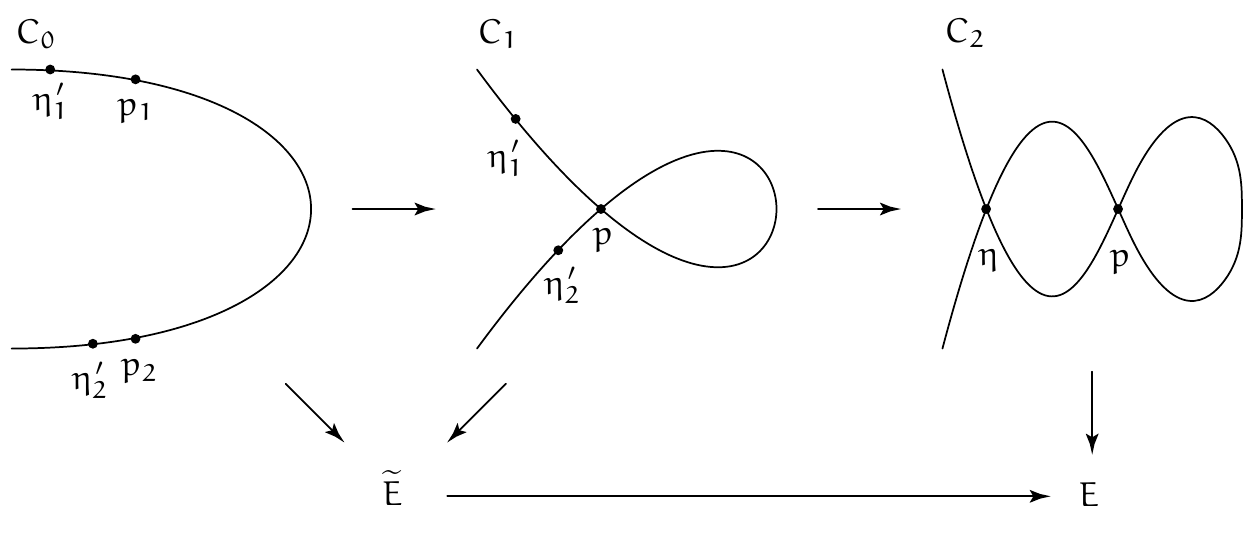}
\caption{The construction for the case $g\geq 4$.}
\label{figure:teh case g geq 4 fig21}
\end{figure}

If $g=3$, we have to use a different argument, because there are no primitive genus 2 covers of $\tilde{E}$
with an $\tilde{E}$-superfluous node. Nevertheless, we can take  
 an isogeny $\tilde{E}_0 \to \tilde{E}$ of degree $N'-1$, and attach
 an $\tilde{E}$ tail. Let the resulting cover be
 $C_0 = \tilde{E}_0 \cup_p \tilde{E} \to \tilde{E}$,
 as in \Cref{figure:the-case-g-3-fig22}.

  Now take $\kappa_1,\kappa_2 \in \tilde{E} \subset C_0$. By  
  \Cref{proposition:connected-fibers}, $(C_0, \kappa_1,\kappa_2)$
  is in the same connected component as $(\tilde{C},\eta_1,\eta_2)$. Let
  $C_1$ be the nodal curve obtained by identifying $\kappa_1$ and $\kappa_2$
  in $C_0$. We can degenerate our original cover $C \to E$ into $C_1 \to E$.
  But $p \in C_1$ is a disconnecting node! Hence, by \Cref{reduction:super-disconnecting}, we are done.
\begin{figure}
\centering
\includegraphics{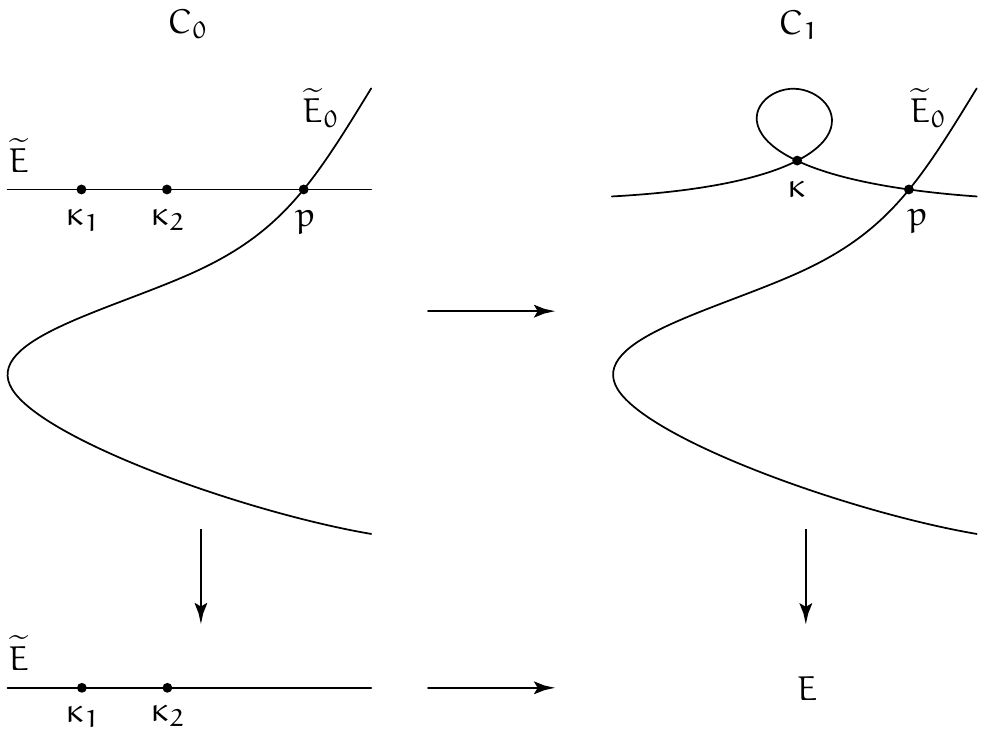}
\caption{The construction for the case $g=3$.}
\label{figure:the-case-g-3-fig22}
\end{figure}
\end{proof}

\begin{proof}[Proof of Proposition \ref{proposition:connected-fibers}]
Recall that $\tilde{E} \to E$ is an isogeny, and
$\Cc \to \widetilde{\Hc}^0_{N',g-1}(\tilde{E})$ is the universal
curve. The space $\Pc = \Cc \times_E \Cc - \Delta$ parametrizes covers 
$C \to \tilde{E}$ and pairs of distinct points $p_1,p_2 \in C$ that map to the
same point $q \in E$ under the composite map $C \to \tilde{E} \to E$.
Hence, there is a map 
	$\Pc \to E \times \widetilde{\Hc}^0_{N',g-1}(\tilde{E})$
which sends $(C \to \tilde{E}, p_1,p_2)$ to $(C \to \tilde{E}, q)$. This
turns $\Pc$ into a finite cover of an irreducible target. Hence $\Pc$
is equidimensional, i.e., all
 components of $\Pc$ have the same dimension.

Now fix a component $\Gamma \subset \tilde{E} \times_E \tilde{E}$, and
let 
\[
	\Pc_\Gamma =f^{-1}(\Gamma) \subset \Pc 
\]
where $f$ is the map
$f\from  \Pc  \to \tilde{E} \times_E \tilde{E}$.
We want to show that $\Pc_{\Gamma}$ is irreducible. There is a forgetful
map
\[
 \pi\from  \Pc_{\Gamma} \to \widetilde{\Hc}^0_{N',g-1}(\tilde{E})	
\]
sending $(C \to \tilde{E},p_1,p_2)$ to $(C \to \tilde{E})$.
The fiber of $\pi$ over $C \to \tilde{E}$ is
\[
	\pi^{-1}((\phi\from C \to \tilde{E})) = \set{p_1,p_2 \in C, (\phi(p_1),\phi(p_2))
	 \in \Gamma \text{ and } p_1\neq p_2}
\]
which is the same as saying that the fiber of $\pi$ is the preimage of $\Gamma$ in the map
${C \times_E C - \Delta \to \tilde{E} \times \tilde{E}}$. Hence, the following
diagram is Cartesian.
\[
	\xymatrix{
		\pi^{-1}((\phi\from C \to \tilde{E})) \ar[r] \ar[d] & \Gamma \ar[d] \\
		{\displaystyle C \times_E C}- \Delta \ar[r] & \tilde{E}\times_E \tilde{E}
	}
\]

Now observe that 
\begin{itemize}
	\item  the target of $\pi$ is irreducible, by \Cref{hypothesis:inductionHurwitz},
	\item all fibers of $\pi$ have dimension one,
	\item the general fiber of $\pi$ is irreducible: more precisely, if $C \to \tilde{E}$ is simply branched, then 
	\Cref{lemma:fiber product} says that the fiber of $\pi$ is irreducible.
\end{itemize}

Moreover, we already know that $\Pc_{\Gamma}$ is equidimensional.
Hence, it follows that $\Pc_{\Gamma}$ is irreducible.
\end{proof}

\subsection{The degeneration argument} 
\label{sub:the_degeneration_argument}
Given \Cref{reduction:final}, to complete the proof of \Cref{theorem:hurwitz_irreducible}
it is enough to prove the following.
\begin{proposition}
\label{propositon:meet-delta}
Every component of $\widetilde{\Hc}_{N,g}^0(E)$ meets the boundary $\Delta$.
\end{proposition}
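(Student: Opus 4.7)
The plan is to find a singular-source cover in each component $\Gamma$ of $\widetilde{\Hc}_{N,g}^0(E)$ by degenerating inside a Severi variety of curves on $E \times \Pbb^1$. The strategy: embed a generic $[f\from C \to E] \in \Gamma$ as a smooth curve in $E \times \Pbb^1$, cut with enough hyperplanes $H_p$ to force the fiber $E_0$ to split off, and then apply \Cref{theorem:simple_hyperplane_to_hurwitz} to read off a nodal cover as the image under the rational map to $\overline{\Mc}_g(E,N)$.

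First, pick a generic $[f\from C \to E] \in \Gamma$, fix an integer $d \geq 2$, and choose a generic degree-$d$ map $h\from C \to \Pbb^1$. A short Bertini argument shows that $(f,h)\from C \to E \times \Pbb^1$ is birational onto its image $\bar{C}$: for $N \geq 3$ this follows from \Cref{corollary:primitive-automorphism}, while for $N=2$ one uses that a generic $h$ is not invariant under the hyperelliptic involution over $E$. The image $\bar{C}$ is then a smooth genus-$g$ curve of class $df+Ne$ meeting a generic fiber $E_0$ transversely, so $[\bar{C}]$ lies in some irreducible component $V_0$ of $\widetilde{V}_{df+Ne,g}(0,d)$.

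Next, for generic points $p_1, p_2, \ldots \in E_0$, inductively take $V_k$ to be an irreducible component of $V_{k-1} \cap H_{p_k}$. By \Cref{theorem:simple_hyperplane_section_of_severi}, each step either converts a free intersection point into a fixed one (which requires $b \geq 2$) or splits off $E_0$ generically. Since $b$ starts at $d$ and decreases by one at each non-splitting step, after at most $d-1$ such steps the next intersection must split off $E_0$; let $k_0$ be the first index at which this happens. Applying \Cref{theorem:simple_hyperplane_to_hurwitz} at the generic point of $V_{k_0}$, the rational map $\widetilde{V}_{df+Ne,g}(\ldots) \rationalmap \overline{\Mc}_g(E,N)$ sends it to a stable map $[X \to E]$ whose source $X$ is a connected nodal curve—obtained by gluing the normalization of the residual curve to an unramified cover $\tilde{E} \to E_0$ along $|\tau| \geq 1$ nodes. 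Since $X \to E$ is finite and $X$ is singular, $[X \to E]$ is a point of $\Delta \subset \widetilde{\Hc}_{N,g}(E)$.

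Finally, to see that $[X \to E] \in \Gamma$, note that the rational map $V_0 \rationalmap \widetilde{\Hc}_{N,g}(E)$ is regular on the dense open locus of smooth embedded curves and sends $[\bar{C}]$ to $[f\from C \to E] \in \Gamma$. Propagating through the chain of codimension-one degenerations $V_0 \supset V_1 \supset \cdots \supset V_{k_0}$, the image stays in the closure of $\Gamma$ in $\overline{\Mc}_g(E,N)$, and since $\widetilde{\Hc}^0_{N,g}(E)$ is both open and closed in $\widetilde{\Hc}_{N,g}(E)$, the limit $[X \to E]$ is automatically primitive and lies in $\Gamma$. The main delicate point is tracking the rational map coherently through this chain—especially because $V_{k_0}$ is naturally a Severi variety of residual curves of class $df+(N-m)e$, so the identification of $V_{k_0}$ with a piece of the closure of $V_0$ happens at the level of the codimension-one extension of the rational map rather than literally. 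This is a bookkeeping argument resting entirely on \Cref{theorem:simple_hyperplane_section_of_severi,theorem:simple_hyperplane_to_hurwitz}.
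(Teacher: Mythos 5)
Your proposal is correct and follows essentially the same strategy as the paper's proof: lift a generic primitive cover into a Severi variety $\widetilde{V}_{df+Ne,g}(E)$, cut repeatedly with hyperplanes $H_p$ until the fiber $E_0$ is forced to split off (using \Cref{theorem:simple_hyperplane_section_of_severi}), and then read off a nodal cover via \Cref{theorem:simple_hyperplane_to_hurwitz}. You supply more explicit bookkeeping (the Bertini argument for birationality via \Cref{corollary:primitive-automorphism}, the explicit count showing $b$ drops at each non-splitting step, and the open-and-closed argument placing the limit back in the same component $\Gamma$) than the paper, which simply chooses $d$ large enough for the Severi variety to dominate the Hurwitz space and then invokes the same two theorems; the only small inaccuracy is that fixing $d \geq 2$ is not enough in general, since a degree-$d$ map $h\from C \to \Pbb^1$ only exists once $d$ reaches the gonality of $C$, so one should say $d$ sufficiently large.
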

We will use our study of hyperplane sections of $\widetilde{V}_{df+Ne,g}(E)$ to produce
such a cover in $\Delta$.
\begin{proof}
Choose $d$ large enough so that the Severi variety $\widetilde{V}_{df+Ne,g}(E)$ dominates
the Hurwitz space $\widetilde{\Hc}_{N,g}(E)$. 
For example, by Riemann--Roch $d=g+3$ is enough. For any component of $\widetilde{\Hc}_{N,g}^0(E)$, 
there is a component of $\widetilde{V}_{df+Ne,g}(E)$, say $V_0$,  dominating it. Now intersect $V_0$
with multiple hyperplanes of the form $H_p$, as in \Cref{sec:hyperplane_sections}. Eventually the
curve $E_0 \subset E \times \Pbb^1$ is split off generically. We want to see what the image of this generic point 
is in the Hurwitz space $\widetilde{\Hc}_{N,g}^0(E) \subset \overline{\Mc}_g(E,N)$. The map is initially not defined there, but
\Cref{theorem:simple_hyperplane_to_hurwitz} tells us how to extend it. The resulting cover
has singular source and there are no contracted components. Hence it is a point of 
$\Delta \subset \widetilde{\Hc}_{N,g}^0(E)$.
\end{proof}
 This finishes the
proof of \Cref{theorem:hurwitz_irreducible}.

Let us mention two alternatives ways to establish \Cref{propositon:meet-delta} in a more topological vein.
To produce a node in the source, we have to bring a pair branch points together, in a way that, just before they meet, their
monodromy transpositions exchange the same pair of sheets. To ensure this, we have to choose the path of approach carefully.

If one keeps track of how the permutation data changes as the branch points move around, 
one can re-encode the question in a combinatorial setting---we want to show that from any initial data, there
is a sequence of moves that will result in a pair of adjacent branch points with the same monodromy transposition.
However, it is still a tough problem.
For example, in
\cite{graber_note_2002,kanev_irreducibility_2005}, they can exhibit such a sequence of moves only when there are at least $2d-2$ branch points.

In \cite{berstein_classification_1984}, Berstein and Edmonds ask the same combinatorial problem, 
but allowing not only the moves on the branch points, but also Dehn twists on the target.
In Proposition~5.2 of \cite{berstein_classification_1984}, they show that under this setup, whenever the target has genus one, we can apply a 
sequence of moves to produce a pair of adjacent branch points with the same monodromy. 
 Tracing back through their argument, we see that no Dehn twists 
are used, so their result holds in our setup as well, effectively proving \Cref{propositon:meet-delta}.


As the genus of the target increases, the combinatorial problem gets even harder. Gabai and Kazez get around this challenge with the following beautiful construction.
\begin{lemma}[Gabai--Kazez]
\label{lemma:gabai-kazez-style}
For an arbitrary smooth curve $D$, every component of $\widetilde{\Hc}_{d,g}(D)$ meets the boundary $\Delta$.
\end{lemma}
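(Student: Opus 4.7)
The plan is to fix a component $W\subset\widetilde{\Hc}_{d,g}(D)$ and produce a one-parameter degeneration inside $W$ whose limit lies in $\Delta$. Set $b=2g-2-d(2g(D)-2)$ and assume $b\geq 1$, which is the interesting case. Recall that $\widetilde{\Hc}_{d,g}(D)$ is smooth of dimension $b$, and the branch morphism $br\from\widetilde{\Hc}_{d,g}(D)\to\mathrm{Sym}^b D$ restricts to an \'etale map over the open configuration stratum of distinct points. Hence $br|_W$ is \'etale and surjects onto a union of components of this stratum.

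My approach would be to collide two branch points whose local monodromy is the same transposition $(ij)\in S_d$. Suppose such a cover exists in $W$, with branch points $b_1,\ldots,b_b$ and monodromies $\sigma_1=\sigma_2=(ij)$ at $b_1,b_2$. Choose an arc in $\mathrm{Sym}^b D$ along which $b_1$ and $b_2$ merge while $b_3,\ldots,b_b$ stay fixed and distinct. By the \'etaleness of $br|_W$, the punctured arc lifts uniquely to $W$, yielding a family of smooth covers, which we extend over the central fiber via the properness of $\overline{\Mc}_g(D,d)$. Locally near the colliding pair the cover is \'etale on all sheets except $i,j$, where it is modeled by $y^2=(x-b_1)(x-b_2)$; since the total space of this local deformation (essentially $y^2-u^2+t^2=0$) is smooth, the central fiber is simply the node $y^2=(x-b)^2$, with no contracted components needed in the stable limit. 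The limit therefore lies in $\widetilde{\Hc}_{d,g}(D)\cap\Delta$, proving the lemma modulo the combinatorial existence statement in the next paragraph.

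The substantive content of the lemma is thus the combinatorial existence question: inside every component $W$, find a cover whose monodromy tuple $(\sigma_1,\ldots,\sigma_b)$ of transpositions has two adjacent entries equal. Since $W$ corresponds to a braid orbit of such tuples (with fixed product dictated by the generators of $\pi_1(D)$), the task is to show that every such orbit contains a representative with this property. I expect this combinatorial step to be by far the main obstacle, and it is the reason \Cref{lemma:gabai-kazez-style} is credited to Gabai and Kazez. For $g(D)=0$, the orbit is unique by Clebsch and the property is immediate. For $g(D)=1$, Proposition~5.2 of \cite{berstein_classification_1984} establishes it combinatorially; as the text preceding the lemma notes, their argument uses only braid moves, so no Dehn twists on the target are actually required. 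For arbitrary target genus, one must invoke the topological surgery construction of Gabai and Kazez, which enlarges the set of monodromy tuples reachable within a single component of $\widetilde{\Hc}_{d,g}(D)$ beyond those accessible by Hurwitz moves alone, and then concludes by an induction on a suitable complexity measure (roughly, degree plus genus) that one can always reach a configuration with two adjacent matching transpositions. Combining their combinatorial result with the degeneration argument of the preceding paragraph completes the proof.
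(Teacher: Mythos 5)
Your degeneration step is sound and matches what the paper uses implicitly: once you have a path between two branch points along which the transported transpositions agree, colliding the branch points along that path produces a cover with a nodal source, still finite, hence lying in $\widetilde{\Hc}_{d,g}(D)\cap\Delta$. You also correctly identify where the real content lies — producing such a configuration in every component.

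But you then treat that existence step as a hard combinatorial problem about braid orbits, invoking Gabai--Kazez's ``surgery construction'' and an unspecified induction to enlarge the reachable set of monodromy tuples. That is not what the paper's proof does, and it misdescribes the argument of \cite[Lemma~3.5]{gabai_classification_1987} on which the paper models its proof. The key observation is that no orbit-chasing is needed at all: given \emph{any} cover $C\to D$ in the component, pick a generic smooth function $h\from C\to[0,1]$ and perturb so that the induced map $C\to D\times[0,1]$ is a generic immersion. Its self-intersection locus then consists of double curves (with finitely many triple points), and every ramification point of $C\to D$ is the endpoint of \emph{exactly one} double curve, since near a simple ramification point exactly two sheets come together. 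Taking any double curve joining two ramification points and projecting it to $D$ gives precisely the path $\gamma$ you want: the two sheets meeting along the double curve are, by construction, the two sheets exchanged over $\gamma(0)$ and over $\gamma(1)$, so the transported transpositions agree. This works for every cover, directly, with no reference to the braid action, no induction, and no need to change representative within the component. Your proposal names the missing lemma and sets up the reduction correctly, but the actual idea that makes the existence step work is absent.
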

This is similar to Lemma~3.5 in \cite{gabai_classification_1987}.
\begin{proof}
Given a cover $C \to D$, we want to choose a path $\gamma: [0,1] \to D$ connecting a pair of branch points, in a way
that the pair of sheets exchanged by the ramification point over $\gamma(0)$ get transported to the pair of sheets exchanged 
over $\gamma(1)$.

Choose a generic map $h:C \to [0,1]$. If we perturb $h$, then $C \to D \times [0,1]$ is a submersion, and the
image has at most double curves and some triple points. The double curves can be closed curves, or have endpoints 
at the ramification points of $C \to D$. Conversely, at every ramification point there will be exactly one double curve ending there.

Take a double curve connecting a pair of ramification points, and let $\gamma:[0,1] \to D$ be its image in $D$.
The sheets exchanged over the branch points $\gamma(0)$ and $\gamma(1)$  are exactly the two sheets that meet to form the double curve. So the transpositions agree, and we are done!
\end{proof}

\subsection{Proof of Lemma \ref{lemma:technical-isogeny}} 
\label{ssub:proof_of_lemma_technical_isogeny}
Let us recall the statement we want to prove. 
We are given a reduced isogeny $\pi\from \tilde{E} \to E$ and an integer $D \geq 2$.
We want to construct a reduced degree $D$ isogeny $\widehat{\pi}\from \widehat{E} \to E$ 
and a pair of points $p_1,p_2 \in \widehat{E}$ such that
\begin{enumerate}
	\item There is no common non-trivial subcover to $\pi$ and
	$\widehat{\pi}$.
	\item \label{item:points} For any subcover $\pi'\from \widehat{E} \to E'$, the
	points $\pi'(p_1),\pi'(p_2) \in E'$ are the same if
	and only if $\pi'=\widehat{\pi}$ (that is, the subcover is trivial).
\end{enumerate}

We are going to translate this into the language of lattices. Let 
$\Lambda \subset \Cbb$ be the rank 2 lattice such that
$E = \Cbb/\Lambda$.
Our isogeny $\tilde{E} \to E$ corresponds to a sublattice
$\tilde{\Lambda} \subset \Lambda$. To construct the second 
cover $\widehat{E} \to E$, we need to find another sublattice
 $\widehat{\Lambda} \subset \Lambda$ such that
\begin{enumerate}
	\item[(0')] \label{item:condition2} $\# \left(\Lambda / \widehat{\Lambda} \right) =D$ (this
	corresponds to the degree of $\widehat{E} \to E$ being $D$).
	\item[(1')] \label{item:condition1}
	 $\tilde{\Lambda}+\widehat{\Lambda} = \Lambda$ (and this to having no non-trivial common subcover)
\end{enumerate}

Given two points $p_1,p_2$ in a fiber of $\widehat{E} \to E$, we can pick a path between them, which produces an element $v \in \Lambda$. Conversely, given
$v \in \Lambda$ and any point $p_1\in \widehat{E}$, we can lift $v$ 
and get a second point $p_2 \in \widehat{E}$.  Condition (2) above translates to 
\begin{enumerate}
\item[(2')]
\label{equation:condition3}
	$\widehat{\Lambda}+\Zbb v = \Lambda$
\end{enumerate}

Let $m$ be the largest integer such that $\tilde{\Lambda} \subset m \Lambda$.
Note that saying that $\tilde{E} \to E$ is reduced means that $m=1$.
\begin{proposition}
\label{proposition:technical-lattice}
There exist a choice of $\widehat{\Lambda}$  and $v$ satisfying conditions \textnormal{(0')}, \textnormal{(1')} 
and \textnormal{(2')}
 if and only if $\gcd(D,m)=1$.
\end{proposition}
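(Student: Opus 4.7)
\medskip

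The plan is to prove both implications by translating the conditions into a question about the structure of $\Lambda/\widehat{\Lambda}$ as a module over $\Zbb$, and then use the Smith normal form for the explicit construction.

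For the forward implication, suppose $\widehat{\Lambda}$ and $v$ exist. Let $\overline{G} = \Lambda/\widehat{\Lambda}$, a finite abelian group of order $D$. Condition (1') says that the natural map $\tilde{\Lambda} \to \overline{G}$ is surjective. Since $\tilde{\Lambda} \subset m \Lambda$ by the definition of $m$, this map factors through $m\Lambda \to \overline{G}$, which is therefore also surjective. But $m\Lambda \to \overline{G}$ is the composition of the projection $\Lambda \onto \overline{G}$ with multiplication by $m$ on $\overline{G}$, so multiplication by $m$ is surjective---and hence bijective---on the finite group $\overline{G}$ of order $D$. This forces $\gcd(D,m)=1$ (if a prime $p$ divided both, Cauchy's theorem would give an element of order $p$ on which multiplication by $m$ acts as zero, contradicting injectivity).

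For the backward implication, suppose $\gcd(D,m)=1$. By the Smith normal form for sublattices of $\Lambda$, we can choose a $\Zbb$-basis $e_1, e_2$ of $\Lambda$ such that $\tilde{\Lambda}$ has basis $m e_1,\, mn\, e_2$ for some positive integer $n$; the fact that $m$ appears as the first elementary divisor is exactly the statement that $m$ is the largest integer with $\tilde{\Lambda} \subset m\Lambda$. Then I set
\[
    \widehat{\Lambda} = D\,\Zbb\, e_1 \oplus \Zbb\, e_2, \qquad v = e_1.
\]

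It remains to verify the three conditions and reducedness. Condition (0') is immediate since $\Lambda/\widehat{\Lambda} \iso \Zbb/D$. For (1'), we compute
\[
    \tilde{\Lambda} + \widehat{\Lambda} = (m \Zbb + D \Zbb) e_1 + \Zbb\, e_2 = \gcd(m,D)\, \Zbb\, e_1 + \Zbb\, e_2 = \Lambda,
\]
using $\gcd(m,D)=1$. For (2'), $\widehat{\Lambda} + \Zbb v$ contains both $e_1$ and $e_2$, so equals $\Lambda$. Finally, $\widehat{\Lambda}$ contains $e_2$, so it cannot be contained in $k\Lambda$ for any $k>1$, and $\widehat\pi$ is reduced. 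The main ingredient---and the only nontrivial step---is isolating the obstruction $\gcd(D,m)$, which I expect to be the only subtle point; the backward construction is then a direct verification after choosing the right basis.
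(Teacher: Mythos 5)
Your proof is correct, and it takes a genuinely different route from the paper. For the backward implication, the paper first proves an auxiliary lemma (via the Chinese Remainder Theorem) producing a vector $(am,bm)\in\tilde\Lambda$ with $\gcd(a,b)=1$, then completes it to a unimodular pair $(a,b),(a',b')$ and defines $\widehat\Lambda=\Zbb(a',b')\oplus\Zbb(Da,Db)$, $v=(a,b)$; the verifications of (1') and (2') take a little work. You instead pass to a Smith normal form basis $e_1,e_2$ with $\tilde\Lambda=m\Zbb e_1\oplus mn\Zbb e_2$, so that $\widehat\Lambda=D\Zbb e_1\oplus\Zbb e_2$ and $v=e_1$ make every condition a one-line computation. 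The trade-off is that you have to justify that the first elementary divisor is exactly $m$, which you do correctly. For the forward implication, the paper argues concretely: reduce $\widehat\Lambda+\tilde\Lambda$ modulo a prime $p\mid\gcd(D,m)$ and observe the rank drops below $2$. Your argument is more structural: condition (1') is surjectivity of $\tilde\Lambda\to\Lambda/\widehat\Lambda$, which factors through $m\Lambda$, forcing multiplication by $m$ to be surjective (hence bijective) on a finite group of order $D$, which happens iff $\gcd(D,m)=1$. The intermediate sentence identifying the map $m\Lambda\to\overline G$ with the composite $\Lambda\twoheadrightarrow\overline G\xrightarrow{\times m}\overline G$ is loosely worded (the domains differ), but the images agree, which is all you use, so the argument stands. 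One small bonus: your $\widehat\Lambda$ visibly contains the primitive vector $e_2$, so it is automatically a reduced isogeny---a point \Cref{lemma:technical-isogeny} needs but that the proposition's statement, strictly read, does not deliver on its own; the paper's construction also happens to be reduced (since $ab'-a'b=1$), but neither the proposition nor the paper's proof calls attention to this.
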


The case $m=1$ implies \Cref{lemma:technical-isogeny}.
\begin{proof}
We identify $\Lambda$ with $\Zbb\oplus \Zbb$ by choosing a basis.

\begin{lemma}
There is $(am,bm) \in \tilde{\Lambda}$ such that $\gcd(a,b)=1$.
\end{lemma}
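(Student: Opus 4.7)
The plan is to reinterpret the statement as a search for a primitive vector of $\Lambda$ lying in the rescaled sublattice. Define $\Lambda' = m^{-1}\tilde{\Lambda}$, which is a genuine sublattice of $\Lambda$ because $\tilde{\Lambda} \subset m\Lambda$ by hypothesis. An element $(am,bm) \in \tilde{\Lambda}$ with $\gcd(a,b)=1$ is the same as a vector in $\Lambda'$ whose coordinates in the fixed basis are coprime, i.e., a primitive vector of $\Lambda$ that happens to lie in $\Lambda'$.

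Next I would verify that $\Lambda'$ is not contained in $n\Lambda$ for any integer $n>1$. Indeed, such a containment would give $\tilde{\Lambda} = m\Lambda' \subset mn\Lambda$, contradicting the maximality of $m$.

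The core step is then to show that $\Lambda / \Lambda'$ is cyclic. Applying the structure theorem for finitely generated abelian groups, write $\Lambda / \Lambda' \cong \Zbb/d_1 \oplus \Zbb/d_2$ with $d_1 \mid d_2$. If $d_1 > 1$, pick a prime $p$ dividing $d_1$. Then $\Lambda/\Lambda'$ admits a surjection onto $(\Zbb/p)^2$, giving a surjection $\phi \from \Lambda \twoheadrightarrow (\Zbb/p)^2$ whose kernel $K$ contains $\Lambda'$. But any index-$p^2$ sublattice $K \subset \Lambda$ with quotient $(\Zbb/p)^2$ must equal $p\Lambda$ (since $pv \in K$ for every $v \in \Lambda$, so $p\Lambda \subset K$ and the indices match). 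Thus $\Lambda' \subset p\Lambda$, contradicting the previous step. Hence $d_1 = 1$ and $\Lambda/\Lambda'$ is cyclic.

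Finally, with $\Lambda/\Lambda'$ cyclic of order $d := d_2$, a standard Smith normal form argument produces a basis $(f_1, f_2)$ of $\Lambda$ in which $\Lambda' = \Zbb f_1 \oplus d\Zbb f_2$. The vector $f_1 \in \Lambda'$ has coordinates $(1,0)$ in this basis, so it is primitive in $\Lambda$. Translating back to the original basis changes coordinates by an element of $\mathrm{GL}_2(\Zbb)$, which preserves the gcd of entries; hence $f_1$ has coprime coordinates in the original basis too, and $mf_1 \in \tilde{\Lambda}$ is the desired element. The main (and essentially only) subtlety is the cyclic-quotient step; once that is in hand, the construction of a primitive vector is automatic from the Smith normal form.
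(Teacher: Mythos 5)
Your proof is correct, and it takes a genuinely different route from the paper's. The paper also first reduces to the case $m=1$ by replacing $\tilde\Lambda$ with $\tilde\Lambda/m$, but then it argues by hand: pick the smallest $d>0$ with $(0,d)\in\tilde\Lambda$, extend to a basis $(a,b),(0,d)$ of $\tilde\Lambda$ so that $\gcd(a,b,d)=1$, and then use the Chinese remainder theorem to choose $n$ so that $\gcd(a,b+nd)=1$, producing the primitive vector $(a,b+nd)$ directly. You instead isolate the structural statement --- that $\Lambda/\Lambda'$ is cyclic --- and prove it via the observation that $p\Lambda$ is the unique index-$p^2$ sublattice with quotient $(\Zbb/p)^2$, after which Smith normal form hands you a primitive vector for free. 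The paper's argument is more elementary (no invocation of the structure theorem or Smith normal form beyond choosing a convenient basis) but somewhat ad hoc; yours makes the mechanism transparent and generalizes immediately to rank $n$ (a sublattice $\Lambda'\subset\Zbb^n$ not contained in any $p\Zbb^n$ has a primitive vector exactly when $\Zbb^n/\Lambda'$ has a cyclic factor of the right kind, and your $(\Zbb/p)^2$ argument adapts). Both proofs ultimately rest on the same fact --- that $m=1$ rules out $\Lambda'\subset p\Lambda$ for every prime $p$ --- but you invoke it at the level of quotient groups while the paper invokes it at the level of gcds of coordinates.
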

\begin{proof}
Applying the lemma to $\tilde{\Lambda}/m$, we reduce to the case $m=1$. 
Pick the smallest positive $d$ such that $(0,d) \in \tilde{\Lambda}$. 
Choose another vector $(a,b) \in \Lambda$ such that $(a,b)$ and $(0,d)$ 
generate $\Lambda$. In particular $\gcd(a,b,d)=1$.

It may be that $\gcd(a,b) \neq 1$. But we may temper with it by adding arbitrary multiples
of $(0,d)$. So it is enough to find an $n$ for which ${\gcd(a,b+nd)=1}$.
For each prime $p$ dividing $a$, we have that $p$ does not divide $d$ or $b$, and therefore 
there is a residue class $n_p$ modulo $p$ such that
\[
	n \equiv n_p \mod p \implies p \notdivides b+nd
\]
Using the Chinese remainder theorem, we can find an integer $n$ such that no prime 
diving $a$ divides $b+nd$. Hence, $\gcd(a,b+nd)=1$, as we wanted to show. 
\end{proof}

\begin{lemma}
\label{lemma:construction-isogeny}
Take $(a',b')$ such that $ab'-a'b=1$. Then the lattice
\[
	\widehat{\Lambda}  = \Zbb (a',b') \oplus \Zbb (Da,Db)
\]
satisfies $\# \left(\Zbb \oplus \Zbb / \Lambda' \right) =D$,
and
\[
	\widehat{\Lambda}+\tilde{\Lambda} =\Zbb \oplus \Zbb (=\Lambda)
\]
as long as $\gcd(m,D)=1$. 

Moreover, if we take $v=(a,b)$, then 
\[
	\widehat{\Lambda} + \Zbb v = \Lambda
\]
\end{lemma}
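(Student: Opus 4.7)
The plan is to verify the three required conditions by direct $2\times 2$ determinant computations, since $\widehat{\Lambda}$ comes with an explicit pair of generators and $\Lambda = \mathbb{Z}\oplus\mathbb{Z}$ has standard coordinates. The key numerical facts at our disposal are $ab'-a'b=1$ (the Bezout relation that defines $(a',b')$), the membership $(am,bm)\in\tilde{\Lambda}$, and the hypothesis $\gcd(m,D)=1$.

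First I would compute the index $[\Lambda:\widehat{\Lambda}]$. Since $\widehat{\Lambda}$ is generated by $(a',b')$ and $(Da,Db)$, this index equals the absolute value of the determinant
\[
\det\begin{pmatrix} a' & b' \\ Da & Db \end{pmatrix} = D(a'b-ab') = -D,
\]
so $[\Lambda:\widehat{\Lambda}]=D$, which is condition (0'). Next I would verify (2'): the sublattice $\widehat{\Lambda}+\mathbb{Z}v$ contains both $(a',b')$ and $(a,b)$, and
\[
\det\begin{pmatrix} a' & b' \\ a & b \end{pmatrix} = a'b-ab' = -1,
\]
so these two vectors already span $\Lambda$, giving $\widehat{\Lambda}+\mathbb{Z}v=\Lambda$.

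For condition (1'), I would use Bezout on $(m,D)$. Choose integers $s,t$ with $sm+tD=1$; then
\[
s\cdot(am,bm)+t\cdot(Da,Db) = (a,b),
\]
with the first summand in $\tilde{\Lambda}$ and the second in $\widehat{\Lambda}$. Thus $(a,b)\in\widehat{\Lambda}+\tilde{\Lambda}$, and combining with $(a',b')\in\widehat{\Lambda}$ we again get two vectors of determinant $-1$, proving $\widehat{\Lambda}+\tilde{\Lambda}=\Lambda$.

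There is no real obstacle here; the whole statement is a repackaging of Bezout's identity and Smith normal form for rank-$2$ lattices. The only point deserving care is to make sure the hypothesis $\gcd(m,D)=1$ enters exactly where needed (namely in condition (1'), and nowhere else), which also explains why the converse direction of \Cref{proposition:technical-lattice} must fail when $\gcd(m,D)>1$: any common prime would force $\widehat{\Lambda}+\tilde{\Lambda}$ into the proper sublattice $\tilde{\Lambda}+D\Lambda \subset \Lambda$.
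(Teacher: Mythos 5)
Your proof is correct and follows essentially the same route as the paper: the index and spanning claims are settled by $2\times 2$ determinants, and the containment $(a,b)\in\widehat{\Lambda}+\tilde{\Lambda}$ comes from Bezout on $\gcd(m,D)=1$ applied to the vectors $(am,bm)$ and $(Da,Db)$. You merely make explicit the Bezout coefficients and the verification of condition (2$'$), both of which the paper leaves implicit.
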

\begin{proof}
That $\# \left(\Zbb \oplus \Zbb / \widehat{\Lambda} \right) =D$ follows
directly from $ab'-a'b=1$.

To compute the index of $\Lambda+\Lambda' \subset \Zbb \oplus \Zbb$,
first notice that
$(a,b) \in \Lambda+\Lambda'$, since both
$(Da,Db)$ and $(ma,mb)$ are in the lattice. But $(a',b')$ is also
in $\Lambda+\Lambda'$, and
$(a,b)$ and $(a',b')$ span $\Zbb \oplus \Zbb$.
\end{proof}

\Cref{lemma:construction-isogeny} shows that if $\gcd(m,D)=1$, then we can choose
$\widehat{\Lambda}, v$ satisfying the conditions of \Cref{proposition:technical-lattice}.

Conversely, suppose that $\gcd(D,m) \neq 1$. Let $p$ be a prime
dividing $\gcd(D,m)$. Look at the reduction of 
$\left(\widehat{\Lambda} + \tilde{\Lambda} \right)$
modulo $p$. As $\tilde{\Lambda} \subset m(\Zbb \oplus \Zbb)$, we have 
$\tilde{\Lambda} \equiv 0 \mod p$. Moreover, if
$(a,b)$ and $(a',b')$ generate $\widehat{\Lambda}$, then 
\[
	\det \left( \begin{array}{cc}
		a & b \\
		a'& b'
		\end{array} \right) = ab'-ba' = D\equiv 0 \mod{p}
\]
Hence $(a,b)$ and $(a',b')$ are dependent modulo $p$. Therefore,
$\widehat{\Lambda}+\tilde{\Lambda}$ has rank at most one in the reduction modulo
$p$. Therefore, it can't span the full lattice! This finishes the proof of
\Cref{proposition:technical-lattice}, which establishes \Cref{lemma:technical-isogeny}.
\end{proof}

\subsection{Varying the moduli of $E$} 
\label{sub:varying_the_moduli_of_E}

We will now study the version of Hurwitz space where the target curve is allowed to vary in moduli as well
(see \Cref{definition:global-hurwitz}).
Recall that $\Hc_{d,g,1}$ is fibered over $\Mc_{1,1}$, and the fiber over $(E,p)$ is exactly $\Hc_{d,g}(E)$.
We will use our knowledge of the components of the fiber (\Cref{corollary:components-of-HcNgE}) to describe the
components of the total space $\Hc_{d,g,1}$. More precisely, the construction in \Cref{corollary:factorization} works in pointed families, and
we get a map
\[
	\Hc_{d,g,1} \to \bigsqcup_{d \neq \tilde{d} \divides d} \Hc_{\tilde{d},1,1}
\]
sending a cover $(C,p) \to (E,q)$ to the maximal isogeny $(\tilde{E},\tilde{q}) \to (E,q)$ it factors through.
\Cref{corollary:components-of-HcNgE} says that the fibers of this map are irreducible. Hence it is enough
to describe the components on $\Hc_{\tilde{d},1,1}$. That is, we want to know what are the orbits of isogenies 
under the monodromy action induced by varying the j-invariant of the target.

Most of the isogenies will not be fixed by this monodromy action, but there is a special family which will be: the multiplication by $m$ map, for integers $m$. Hence, besides the degree $\tilde{d}$, there is another invariant: the maximal $m$ such that $\tilde{E} \to E$ factors through the multiplication by $m$ map $\times m \from E \to E$.
 Note that $m^2 \divides \tilde{d} \divides d$.

To prove \Cref{corollary:components-of-Hdg1}, we want to show that these are the only invariants. Here is a concrete way to prove that. The isogeny $\tilde{E} \to E$ corresponds to an inclusion of lattices, which can be represented by
an inclusion
\[
	M : \Zbb \oplus \Zbb \to \Zbb \oplus \Zbb
\]
We are allowed to act by $\GL_2(\Zbb)$ on the source (corresponding to change basis of the lattice for $\tilde{E}$).
We are also allowed to act by $\SL_2(\Zbb)$ on the target---this is the monodromy action. Under these two actions, 
the $2\times 2$ matrix $M$ can be put in a unique standard diagonal form
\[
	M = \left[
	\begin{array}{cc}
	d_1 & 0 \\
	0 & d_2
	\end{array}
	\right]
\]
such that $d_1$ divides $d_2$. Hence $M$ is classified by this pair of integers $(d_1,d_2)$.
Back to our original notation, $m=d_1$ and $\tilde{d}=\det M= d_1  d_2$. Hence these are the only two
discrete invariants, as we wanted to show. This completes the proof of \Cref{corollary:components-of-Hdg1}.

\begin{remark}
The problem of classifying $n \times m$ 
integer matrices modulo actions of $GL(\Zbb)$ on both sides 
is equivalent to the classification of abelian groups. It is, as a matter of fact,
one of the standard ways to derive it. See for example Theorem~6.4 in \cite{artin_algebra_1991}.
\end{remark}

\begin{remark}
We could ask in general what are the components of $\Hc_{d,g,h}$. Using 
\Cref{conjecture:hurwitz-connected}, we can reduce again to the case of unramified covers, that is, $g=d(h-1)+1$.
Unfortunately, for $h \geq 2$, this does not translate anymore into a simple question about
lattices, as above.
\end{remark}
\subsection{Further remarks on the higher genus case} 
\label{sub:further_remarks}

We may ask what this approach can say about \Cref{conjecture:hurwitz-connected}, the
connectedness of the Hurwitz scheme of primitive covers, in
the case the target has genus greater than one. Quite a few challenges arise. To start off, we do not have the induction base case. Kani's theorem 
\cite{kani_hurwitz_2003} assured us that 
the Hurwitz space of primitive genus two covers of elliptic curves is connected, which got our induction kicked off. 
There is no known direct algebraic proof of the corresponding result when the genus of the target is at least two. Aside from this, most of the arguments in this section would go through, but not all---notably, \Cref{lemma:technical-isogeny} really
used the fact that the target had genus one.

But perhaps the biggest challenge is in the analogue of 
\Cref{theorem:simple_hyperplane_to_hurwitz}. We do not know much about the Severi variety of curves in $C \times \Pbb^1$, when $C$ has genus at least two. The biggest obstruction is to get good dimension estimates on the Severi variety, and their generalizations imposing tangency conditions. \Cref{theorem:deftheory} is not enough anymore. In general, the less ample the anticanonical bundle of the surface is, the harder it is to work out the deformation theory of Severi varieties.



\section{Components of the Severi Variety \texorpdfstring{$V_{Ne+df,g}$}{}} 
\label{sec:components_of_the_severi_variety}

Let us come back to the study of Severi varieties on $E \times  \Pbb^1$.
We want to determine what its irreducible components are. A candidate is
the following.
\begin{definition}
Let $V^\text{prim}_{Ne+df,g} \subset |\Lc|$ be the subvariety parametrizing integral curves $C$
of geometric genus $g$, such that the composition of the normalization map
$\tilde{C} \to C$ with the projection map $C \to E$ is a primitive cover (that is, 
$\tilde{C}\to C \to E$ does not factor through non-trivial isogenies).
\end{definition}

We conjecture the following.
\begin{conjecture}
The variety $V^\text{prim}_{Ne+df,g}$ is irreducible, for $d\geq 3$.
\end{conjecture}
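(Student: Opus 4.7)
The plan is to leverage \Cref{theorem:hurwitz_irreducible}, the irreducibility of the primitive Hurwitz space $\Hc^0_{N,g}(E)$, via the natural forgetful morphism
\[
\Phi : V^{\mathrm{prim}}_{Ne+df,g} \to \Hc^0_{N,g}(E)
\]
sending an integral curve $C \subset E \times \Pbb^1$ to the composition $\tilde{C} \to C \hookrightarrow E \times \Pbb^1 \to E$ of its normalization with the projection to $E$. By the primitivity hypothesis on $C$, this map lands in $\Hc^0_{N,g}(E)$. Since the target is irreducible, it suffices to show that the image of $\Phi$ is irreducible (as a locally closed subscheme) and that the general fiber is irreducible and equidimensional.

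For a primitive cover $f : \tilde{C} \to E$ in the image, the fiber $\Phi^{-1}(f)$ parametrizes degree $d$ morphisms $\phi : \tilde{C} \to \Pbb^1$ such that $(f,\phi)(\tilde{C}) \subset E \times \Pbb^1$ has the fixed class $\Lc$, modulo the group $\mathrm{Aut}(\tilde{C}/E)$; the latter is trivial for $N \geq 3$ by \Cref{corollary:primitive-automorphism}, and for $N \leq 2$ is easily handled by hand. Encoding $\phi$ as a pair $(L,V)$ with $L = \phi^*\Oc_{\Pbb^1}(1) \in \mathrm{Pic}^d(\tilde{C})$ and $V \subset H^0(\tilde{C}, L)$ a base-point-free two-dimensional subspace, the constraint on the class of the image becomes the norm condition $\mathrm{Nm}_f(L) = L_1$, where $L_1 \in \mathrm{Pic}^N(E)$ is the ``$E$-part'' of $\Lc$.

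Next I would show that $\mathrm{Nm}_f^{-1}(L_1) \subset \mathrm{Pic}^d(\tilde{C})$ is irreducible of dimension $g-1$, namely a torsor under the generalized Prym $(\ker \mathrm{Nm}_f)^{\circ}$. The key input is that for a primitive cover $f$, the pullback $f^* : \mathrm{Pic}^0(E) \to \mathrm{Pic}^0(\tilde{C})$ is injective; combined with the surjectivity of $\mathrm{Nm}_f$, this forces $\ker \mathrm{Nm}_f$ to be connected. The fiber $\Phi^{-1}(f)$ then sits in a Grassmannian bundle over an open of this Prym torsor, and is irreducible whenever the Brill--Noether stratum where $h^0(\tilde{C}, L) \geq 2$ is irreducible of the expected dimension.

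The main obstacle is verifying this last Brill--Noether irreducibility in the regime where $d$ is small relative to $g$. For $d \geq 2g-1$ one has $h^0(L) = d-g+1 \geq 2$ for every $L$, so the fiber is a genuine Grassmannian bundle over the Prym torsor; this recovers the result of \Cref{sec:components_of_the_severi_variety}. For $d \geq 3$ below this range, the natural strategy is to run an induction on $g$: degenerate using the hyperplane section description of \Cref{theorem:simple_hyperplane_section_of_severi}, identifying each boundary stratum with a generalized Severi variety of lower invariants together with a Hurwitz factor, and invoke \Cref{theorem:hurwitz_irreducible} together with the $d \geq 2g-1$ base case to complete the argument.
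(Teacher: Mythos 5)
You are attempting to prove what the paper states only as a \emph{conjecture}, offering no proof: the paper establishes irreducibility of $V^{\mathrm{prim}}_{Ne+df,g}$ only in the restricted range $d \geq 2g-1$, in \Cref{theorem:severi_irreducible}. Your proposal reflects exactly this state of affairs. The first part---fibering $V^{\mathrm{prim}}_{Ne+df,g}$ over $\Hc^0_{N,g}(E)$ via the normalization-and-project map, invoking \Cref{theorem:hurwitz_irreducible} for the base, and exhibiting irreducible equidimensional fibers when $d \geq 2g-1$---is essentially the same argument as the paper's \Cref{theorem:severi_irreducible}. Your version is actually more careful: you explicitly impose the norm condition $\mathrm{Nm}_f(L)=L_1$ and identify its fiber as a connected torsor under $(\ker \mathrm{Nm}_f)^\circ$ (correctly noting that primitivity of $f$ forces $f^*$ to be injective on $\Pic^0$, so the kernel of the norm is connected); the paper instead lets $L$ vary over all of $\Pic^d(\tilde C)$, which it justifies earlier by the remark that $\mathrm{Aut}(E)$ acts transitively on divisor classes of fixed degree, and the two accountings give the same fiber dimension.

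The extension to $3 \leq d < 2g-1$ is not a proof. You correctly identify the obstruction: the fiber of $\Phi$ over a general primitive cover now sits inside the Brill--Noether locus $W^1_d(\tilde C)$ intersected with a Prym torsor, and irreducibility of that intersection is not known for a curve which is constrained only to be a primitive simply branched cover of $E$---there is no ready analogue of the Fulton--Lazarsfeld connectedness theorem in this norm-constrained setting. Your fallback, degenerating via \Cref{theorem:simple_hyperplane_section_of_severi} and running an induction, is only gestured at: to make it work you would need to show that the degenerations you reach connect all components, which requires precisely the non-emptiness and multiplicity analysis that the paper explicitly postpones. So there is a genuine gap between what you have written and a complete proof; this is consistent with the paper labelling the statement a conjecture and proving only the $d \geq 2g-1$ case.
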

\begin{remark}
For $d=2$ it is not clear what to expect. In view of the construction in
\Cref{ssub:the_hyperelliptic_locus}, we see that all the nodes have to lie on four
special fibers (the preimages of the four ramification points of the double cover $E \to \Pbb^1$).
Hence, to each component of $V^\text{prim}_{Ne+2f,g}$, we can associate a partition of $\delta$ in four parts (where
$\delta=2N-1-g$ is the number of nodes). If more than one partition occurs, we would get more than one component.

On the other hand, for $g=2$ only one partition occurs for primitive covers (see \cite{kuhn_curves_1988}). It is not
clear what the behavior will be for $g>2$.
\end{remark}

Using our knowledge of Hurwitz spaces, we can establish irreducibility 
in at least a restricted range.

\begin{theorem}
\label{theorem:severi_irreducible}
When $d\geq 2g-1$, the Severi variety $V^\text{prim}_{Ne+df,g}$ is irreducible.
\end{theorem}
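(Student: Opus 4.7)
The plan is to deduce the irreducibility of $V^{\text{prim}}_{Ne+df,g}$ from that of the Hurwitz space of primitive covers $\Hc^0_{N,g}(E)$, established in \Cref{theorem:hurwitz_irreducible}. The key tool is the natural morphism
\[
	\Psi \from V^{\text{prim}}_{Ne+df,g} \to \Hc^0_{N,g}(E)
\]
sending an integral curve $C \subset E \times \Pbb^1$ to the composite $\widetilde{C} \to C \hookrightarrow E \times \Pbb^1 \to E$ of its normalization with the projection; by the definition of $V^{\text{prim}}$, this lands in the primitive locus. Since $V^{\text{prim}}_{Ne+df,g}$ is equidimensional of dimension $2d+g-1$ and $\Hc^0_{N,g}(E)$ is irreducible of dimension $2g-2$, it is enough to prove that $\Psi$ is surjective and that every fiber is irreducible of dimension $(2d+g-1)-(2g-2) = 2d-g+1$: an equidimensional variety mapping surjectively to an irreducible base with all fibers irreducible of the correct dimension must itself be irreducible.

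Next I would identify the fiber of $\Psi$ over $[\widetilde{C}\to E]$ with the variety $\mathrm{Mor}_d(\widetilde{C},\Pbb^1)$ of degree $d$ morphisms $\widetilde{C}\to\Pbb^1$ (modulo the finite group $\Aut(\widetilde{C}/E)$, which is trivial when $N\geq 3$ by \Cref{corollary:primitive-automorphism} and in any case does not affect irreducibility). A morphism $\varphi\from\widetilde{C}\to\Pbb^1$ is sent to the image of the combined map $\widetilde{C}\to E \times \Pbb^1$. One must check that this image is automatically an integral curve of class $Ne+df$, i.e., that the combined map is birational onto its image. Any nontrivial factorization $\widetilde{C}\to\widetilde{C}''\hookrightarrow E\times\Pbb^1$ would yield an intermediate cover $\widetilde{C}\to\widetilde{C}''\to E$; but \Cref{proposition:full_monodromy} says a primitive simply branched cover has full monodromy $S_N$, which preserves no non-trivial partition of sheets, so no such intermediate cover exists. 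The combined map is therefore birational onto an integral curve of class $Ne+df$.

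The hypothesis $d\geq 2g-1$ then makes each fiber irreducible of the right dimension and also non-empty, giving the surjectivity of $\Psi$. A degree $d$ morphism $\widetilde{C}\to\Pbb^1$ is determined by a pair $(L,[s_0\!:\!s_1])$ with $L\in \Pic^d(\widetilde{C})$ and $(s_0,s_1)$ a basepoint-free pair of linearly independent sections of $L$, modulo common rescaling. For $d\geq 2g-1$, every $L$ is non-special with $h^0(L)=d-g+1$ constant, generic $L$ is basepoint free, and basepoint-free pencils exist on every $\widetilde{C}$; thus $\mathrm{Mor}_d(\widetilde{C},\Pbb^1)$ is a non-empty open subset of a $\Pbb^{2(d-g+1)-1}$-bundle over the irreducible variety $\Pic^d(\widetilde{C})$, hence irreducible of dimension $g+2(d-g+1)-1=2d-g+1$, exactly as required. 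The main technical point to verify carefully is the scheme-theoretic relativization of this construction over $\Hc^0_{N,g}(E)$ via the universal curve and relative Picard scheme---which requires some care about the existence of a relative Poincaré bundle, but can be handled étale-locally on the base without affecting the irreducibility conclusion.
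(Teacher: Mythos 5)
Your proposal is correct and follows essentially the same route as the paper: map $V^\text{prim}_{Ne+df,g}$ to $\Hc^0_{N,g}(E)$, invoke \Cref{theorem:hurwitz_irreducible} for the base, and show the fibers are irreducible of constant dimension by identifying them with the space of $(L,\text{pencil})$ data over $\Pic^d(\widetilde C)$, using $d\geq 2g-1$ to guarantee non-speciality. Your extra observations---birationality onto the image via the primitivity of the $S_N$-monodromy from \Cref{proposition:full_monodromy}, and triviality of the deck group via \Cref{corollary:primitive-automorphism}---are valid supporting details that the paper leaves implicit, but they do not change the underlying argument.
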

\begin{proof}
Consider the map
$V^\text{prim}_{Ne+df,g} \to \Hc^0_{N,g}(E)$.
The target is irreducible by \Cref{theorem:hurwitz_irreducible}.
To establish the irreducibility of the source, it is enough to show that the map 
is dominant with irreducible fibers of the same dimension.

Let $C \to E$ be a point in $\Hc^0_{N,g}(E)$. The fiber over it corresponds 
to choosing a line bundle $L\in \Pic^d(C)$, and a framing
for the map $C \to \Pbb^1$. The line bundle $L$ is in the non-special range,
 as $d \geq 2g-1$, and hence it has $d-g+1$ global sections. In total, we get
 irreducible fibers of dimension
 \[
 	1+ 2\times (d-g+1) -1
 \]
which is independent of the map $C \to E$.
\end{proof}


\appendix
\section{Deformation Theory}
\label{sec:appendix-def-theory}



The goal of this section is to prove \Cref{theorem:deftheory}. We
will only adapt the arguments already existing in the literature for our setup. 
In specific, we will follow \cite{caporaso_counting_1998} closely. 
Let us remind ourselves of the setup.
We have
\begin{itemize}
 	\item a smooth projective surface $\Sigma$,
 	\item a fixed curve $D \subset \Sigma$,
 	\item a fixed finite set $\Omega \subset D$,
 	\item a fixed homology class $\tau \in H_2(\Sigma, \Cbb)$,
 	\item and integers $g$ and $b$.
\end{itemize}

We are given a substack $W \subset \Mc_g(\Sigma, \tau)$, such that the
general point corresponds to maps
$f\from  C \to \Sigma$ such that
\begin{itemize}
\item the source $C$ is a smooth connected curve of genus $g$,
\item the map $f$ is birational onto its image,
\item the preimage $Q=f^{-1}(D - \Omega)$ is supported on
at most $b$ points.
\end{itemize}

We want to bound the dimension of $W$ in terms of 
\[
	\gamma= -(K_{\Sigma} + D) \cdot \tau +b
\]
More precisely, we want to show that if $\gamma \geq 1$ then
\[
	\dim W \leq g-1 + \gamma
\]
and to control the singularities of $f(C) \subset \Sigma$, according 
to how large $\gamma$ is.

To do so, we need some notation to keep track
of the points in $f^{-1}(D)$. Let $q_1,\ldots, q_{\bar{b}}$ be the support
of $Q = f^{-1}(D -\Omega)$. By our assumption, we have $\bar{b}\leq b$. 
Let $p_1,\ldots, p_a \in C$ be the remaining points
in the support of $f^{-1}(D)$. Note that $f(p_i)$ is in the finite set 
$\Omega$, and hence is
not allowed to vary in $W$. Let $\alpha_i, \beta_j$ be such that
\[
	f^{-1}(D) = \sum \alpha_i p_i + \sum \beta_j q_j 
\]

After a finite base change, we 
get $\widetilde{W} \subset \Mc_{g,a+\bar{b}}(\Sigma, \tau)$ in which the points
$p_i, q_j$ become sections instead of just multisections.

We want to determine the tangent space to $\widetilde{W}$ at $f$. 
We start by determining the tangent space to the 
whole Kontsevich space. Deformation theory tell us that a first order
deformation of the map $f\from C \to \Sigma$,
 without taking into account the tangency conditions, produces
 a section
$\sigma \in H^0(C,\Nc)$
where $\Nc$ is the normal sheaf to the map $f$, defined by the sequence:
\[
0 \to T_{C} \to f^*T_\Sigma \to \Nc \to 0
\]

Although we expect $\Nc$ to be torsion free, we cannot
assume that from the outset. Anyways, it will be useful
to consider the image  $\bar{\sigma} \in H^0(\Nc/\Nc_{\text{tor}})$ of
$\sigma$ under $H^0(\Nc) \to H^0(\Nc/\Nc_{\text{tor}})$.

To take the tangency conditions into account, we invoke
Lemma 2.6 from \cite{caporaso_counting_1998}.

\begin{lemma}[Caporaso--Harris]
\label{lemma:tangency-conditions}
The section $\bar{\sigma}$ vanishes to order $\alpha_i$ at $p_i$, and at least 
$\beta_j-\ell_j$ at $q_j$, where $\ell_j-1$ is the order of vanishing 
of $df$ at
$q_j$.
Moreover, 
the exact order of vanishing of $\bar{\sigma}$ is either $\beta_j-\ell_j$ or at least $\beta_j$.
\end{lemma}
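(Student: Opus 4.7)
The claim is local at each marked point, so I pass to analytic local coordinates: a parameter $t$ on $C$ centered at the point and coordinates $(x,y)$ on $\Sigma$ centered at its image, chosen so that $D=\{y=0\}$. Writing $f(t)=(x(t),y(t))$, the hypothesis translates to $\mathrm{ord}_t\,y=\alpha_i$ at $p_i$, and at $q_j$ to $\mathrm{ord}_t\,y=\beta_j$ together with $\min(\mathrm{ord}\,x',\mathrm{ord}\,y')=\ell_j-1$. Relabeling coordinates on $\Sigma$ if necessary, I assume $x'(t)=t^{\ell_j-1}u(t)$ with $u(0)\neq 0$, so $\beta_j\geq\ell_j$.

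A first-order deformation of $f$ is encoded by a section $v=a(t)\partial_x+b(t)\partial_y$ of $f^*T_\Sigma$, and $\sigma\in\Nc$ is its class modulo the image of $T_C$ (generated locally by $x'\partial_x+y'\partial_y$). Any infinitesimal motion $\epsilon$ of the marked point along $C$ is gauge-equivalent to adding $\epsilon(x'\partial_x+y'\partial_y)$ to $v$, so after gauging I may assume the marked points remain at $t=0$. The tangency condition at $p_i$ (resp.\ $q_j$) then becomes $(f+sv)^*y=y(t)+sb(t)$ vanishing to order $\alpha_i$ (resp.\ $\beta_j$) at $t=0$; this forces $b(t)$ to vanish to the corresponding order, while $a(t)$ remains unconstrained.

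Next, I compute the torsion-free quotient locally. Setting $R=\Oc_{C,q_j}$ and writing $(x',y')=t^{\ell_j-1}w$ with $w=(u,\,y'/t^{\ell_j-1})$ primitive (the second coordinate lies in $R$ because $\beta_j\geq\ell_j$), the module $\Nc_{q_j}=R^2/R(x',y')$ decomposes as $R/(t^{\ell_j-1})\oplus R$; the torsion-free summand is generated by $\partial_y$ under the relation $\partial_x\equiv -(y'/x')\partial_y$, where $y'/x'\in R$ vanishes to order exactly $\beta_j-\ell_j$. Consequently,
\[
\bar\sigma \;=\; \bigl(b(t)-a(t)\,y'(t)/x'(t)\bigr)\,\partial_y.
\]
At $p_i$ both summands have order $\geq\alpha_i$, establishing the first claim. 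At $q_j$, the $b$-summand has order $\geq\beta_j$ and the $a$-summand has order $\mathrm{ord}(a)+\beta_j-\ell_j\geq\beta_j-\ell_j$, with equality in the latter precisely when $a(0)\neq 0$; this is the stated lower bound.

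The final dichotomy---that the exact order at $q_j$ is either $\beta_j-\ell_j$ or at least $\beta_j$---is the step I expect to be the main obstacle. The plan is to exploit the extra gauge freedom that appears when $a(t)$ has order between $1$ and $\ell_j-1$: representatives of $\sigma$ with such $a$ differ from higher-order ones only by elements whose image in $\Nc_{q_j}$ sits in the torsion summand $R/(t^{\ell_j-1})$ and hence dies in $\Nc/\Nc_{\mathrm{tor}}$. A careful module-theoretic bookkeeping, pairing each intermediate order of $a$ with a compensating gauge element of $T_C$ of matching vanishing, should rule out the orders in $\{\beta_j-\ell_j+1,\ldots,\beta_j-1\}$ and force the dichotomy.
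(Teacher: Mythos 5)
The paper offers no proof of this lemma, citing it verbatim as Lemma~2.6 of Caporaso--Harris, so what follows is a check of your argument against the standard one rather than against a proof in the text. Your coordinate setup, the local structure $\Nc_{q_j}\cong R/(t^{\ell_j-1})\oplus R$, and the formula $\bar\sigma=(b-a\,y'/x')\,\partial_y$ are all correct, and the derivation of the lower bound $\beta_j-\ell_j$ at $q_j$ is fine. But two steps are genuinely flawed.

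First, the assertion at $p_i$ that ``both summands have order $\geq\alpha_i$'' is unjustified. You have $\ord(b)\geq\alpha_i$ and (from the fixed image) $\ord(a)\geq 1$, but $\ord(a\,y'/x')=\ord(a)+(\alpha_i-\ell_i)$ where $\ell_i-1$ is the ramification order of $f$ at $p_i$, so the best you get from what you've derived is $\ord(\bar\sigma)\geq\alpha_i+1-\ell_i$, which is strictly less than $\alpha_i$ whenever $f$ is ramified at $p_i$. You've implicitly assumed $\ell_i=1$, but no such hypothesis is in the statement, and the theorem that invokes the lemma is in part \emph{proving} that ramification does not occur, so it cannot be assumed.

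Second, the plan for the dichotomy cannot work when $\ell_j\geq 3$. The gauge freedom changes $a\mapsto a-\epsilon x'$ with $\ord(x')=\ell_j-1$, so gauging alters $a$ only modulo $t^{\ell_j-1}$. If $1\leq\ord(a)\leq\ell_j-2$, this order is gauge-invariant and yields the forbidden intermediate orders $\ord(\bar\sigma)\in\{\beta_j-\ell_j+1,\ldots,\beta_j-2\}$; no ``module-theoretic bookkeeping'' of gauge elements can remove them. (Your plan does happen to work in the case $\ell_j=2$, since there are no intermediate orders to rule out there.) Both gaps have the same root cause: you are only using the tangency conditions to constrain the tangent vector, whereas the lemma also needs the fact that $f$ is a \emph{general} point of $W$, so that the ramification profile of $f$ is locally constant along $W$. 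That constraint forces $\ord(a)\geq\ell_i$ at $p_i$ and $\ord(a)\in\{0\}\cup[\ell_j,\infty)$ at $q_j$, which is exactly what your computation needs; this is where the Arbarello--Cornalba-type generality input enters the Caporaso--Harris argument, and it is absent from your proposal.
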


We expect all $\ell_j$ to be $1$, and the tangent space to $\widetilde{W}$ to be
\[
	H^0\left(C, \Nc\left(-\sum \alpha_i p_i - \sum (\beta_j-1) q_j \right) \right)
\]
This sheaf has degree $2g-2+ \gamma$, and
Riemann--Roch gives $g-1+ \gamma$ for the expected dimension of $W$.

This estimate can be wrong for two reasons: 
\begin{enumerate}
\item The higher cohomology of 
$\Nc(-\sum \alpha_i p_i - \sum (\beta_j-1) q_j)$ might not vanish. 
\item The sheaf $\Nc(-\sum \alpha_i p_i - \sum (\beta_j-1) q_j)$
 could have torsion.
\end{enumerate}

The first issue will not arise if $\Nc$ is torsion free, and its degree 
is in the non-special range, 
which corresponds to the condition $\gamma \geq 1$. 

The second issue arises when the map $f$ has ramification
 (for example, if the image curve $C$ had a cusp). 
Nevertheless, this problem  was dealt with by Arbarello--Cornalba, 
and then Caporaso--Harris. They showed that for $f\from C \to \Sigma$ general,
the dimension of $H^0(\Nc/\Nc_{\text{tor}})$ is an upper bound on 
the dimension of the Kontsevich space. This assumes that
the evaluation map is birational (which we are assuming already, anyways), and that 
the characteristic is zero.

Their setup is the following. Let $\mathcal{C} \to B$ be a flat, smooth and proper 
family of curves over a smooth connected base $B$, and 
$f\from \mathcal{C} \to \Sigma \times B$ be a map over $B$.
To each fiber $f_b\from C_b \to \Sigma$, we associate
  the normal sheaf $\Nc_b$ for $f_b$. Then the 
\emph{Horiwaka map} sends a tangent vector at $b \in B$
 to the corresponding first order deformation 
of the map $f_b$:
\[
  k_b\from  T_bB \to H^0(C_b, \mathcal{N}_b)
\]
If the family $\mathcal{C} \to \Sigma\times B$ is nowhere isotrivial, this map is
injective, which gives an upper bound for the dimension of $B$.

\begin{lemma}[Arbarello--Cornalba, Caporaso--Harris]
\label{lemma:torsion}
If $\mathcal{C} \to S \times B$ is birational onto its image, then for $b \in B$
general
\[
  \im k_b \cap H^0(C_b, (\mathcal{N}_b)_{\text{tor}} )= 0
\]
\end{lemma}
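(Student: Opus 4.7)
The plan is to follow the approach of Arbarello--Cornalba and Caporaso--Harris. The torsion subsheaf $(\mathcal{N}_b)_{\mathrm{tor}}$ is supported on the finitely many points where $df_b$ vanishes, so its global sections correspond morally to first-order deformations of $f_b$ concentrated at ramification points---deformations that do not move the image curve $\bar{C}_b := f_b(C_b) \subset \Sigma$ to first order. Since $f$ is birational onto its image, any genuine deformation coming from the family $\mathcal{C}\to B$ must move $\bar{C}_b$ nontrivially, and the lemma is the rigorous version of this heuristic.

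Concretely, let $\mathrm{pr}\colon \mathcal{N}_b \to \bar{\mathcal{N}}_b := \mathcal{N}_b/(\mathcal{N}_b)_{\mathrm{tor}}$ denote the quotient by torsion. The claim $\im k_b \cap H^0(C_b,(\mathcal{N}_b)_{\mathrm{tor}}) = 0$ is equivalent to $\ker k_b = \ker(\mathrm{pr}\circ k_b)$. After shrinking $B$ around a general $b$, I would form the image family $\bar{\mathcal{C}} := f(\mathcal{C}) \subset \Sigma\times B$, which by birationality is a flat family of reduced curves with $f_b\colon C_b \to \bar{C}_b$ the normalization. Global sections of $\bar{\mathcal{N}}_b$ are in bijection with first-order embedded deformations of $\bar{C}_b$ inside $\Sigma$, and under this identification $\mathrm{pr}\circ k_b$ records precisely how the image curve varies in the family.

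Suppose now that $v \in T_bB$ lies in $\ker(\mathrm{pr}\circ k_b)$, so the corresponding deformation $f_v\colon C_b\times \Spec k[\epsilon] \to \Sigma$ factors through the trivial deformation $\bar{C}_b\times \Spec k[\epsilon] \hookrightarrow \Sigma\times \Spec k[\epsilon]$. Then $f_v$ amounts to a first-order deformation of the normalization map $C_b \to \bar{C}_b$ lying over the trivial deformation of $\bar{C}_b$. In characteristic zero, a birational morphism to a reduced scheme has no nontrivial infinitesimal automorphisms over its image---this is the uniqueness of normalization, applied in families---so $f_v$ is trivial and $v \in \ker k_b$. This yields $\ker k_b = \ker(\mathrm{pr}\circ k_b)$, as desired.

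The main obstacle will be that the image family $\bar{\mathcal{C}}$ need not be flat over the whole base $B$, and normalization need not commute with base change on a non-normal base. Both issues are resolved by restricting to a dense open neighborhood of a general $b$, where flatness, reducedness of fibers, and the compatibility of normalization with base change follow from generic flatness together with the standard good behavior of normalization in characteristic zero; the genericity hypothesis on $b$ in the lemma statement accommodates exactly this restriction.
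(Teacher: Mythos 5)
First, note that the paper itself does not prove this lemma: it is quoted with attribution to Arbarello--Cornalba and Caporaso--Harris and used as an external input, so there is no in-paper proof to compare against. Evaluating your argument on its own merits, the reduction to $\ker k_b = \ker(\mathrm{pr}\circ k_b)$ is fine, and the observation that a torsion normal field forces the image curve to be constant to first order is correct (it is true simply because the section vanishes on a dense open of $C_b$, so the image is fixed on a dense subset; the appeal to a ``bijection between $H^0(\bar{\mathcal{N}}_b)$ and embedded deformations of $\bar C_b$'' is not needed and in fact is false, since $\bar{\mathcal{N}}_b$ lives on $C_b$ and is not the normal bundle $\mathcal{N}_{\bar C_b/\Sigma}$ of the singular embedded curve).

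The genuine gap is in the last step. You pass from ``$f_b\colon C_b\to \bar C_b$ has no nontrivial infinitesimal automorphisms over $\bar C_b$'' to ``$f_v$ is trivial,'' but absence of automorphisms does not give absence of deformations. Concretely, uniqueness of normalization fails over a non-reduced base such as $\Spec k[\epsilon]$, and there do exist nontrivial first-order deformations of a normalization map with fixed image. For $f_0(t)=(t^2,t^3)$, the deformation
\[
f_\epsilon(t)=\bigl(t^2+\epsilon,\ t^3+\tfrac{3}{2}\epsilon t\bigr)
\]
satisfies $y^2 - x^3\equiv 0\pmod{\epsilon^2}$ on the image, yet the induced class in $H^0(\mathcal{N}_{f_0})$ is a nonzero generator of the length-one torsion at the cusp. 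So the implication you assert is false even pointwise, and the lemma cannot be proved without using the genericity of $b$ in an essential way. In the cusp example, the very direction $v$ with $k_b(v)$ torsion smooths the cusp: integrating it, $f_s(t)=(t^2+s,\,t^3+\tfrac{3}{2}st)$ is unramified for $s\neq 0$, so the torsion length of $\mathcal{N}_{f_s}$ drops. The actual Arbarello--Cornalba/Caporaso--Harris argument exploits exactly this: the length of $(\mathcal{N}_b)_{\text{tor}}$ is upper semicontinuous in $b$, hence minimal at a general point, while a nonzero $k_b(v)\in H^0((\mathcal{N}_b)_{\text{tor}})$ would produce a nearby fiber with strictly smaller torsion length. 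That local computation at the ramification points is the crux, and your proposal skips it.
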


We use this lemma to deal with the issues raised above, and prove 
\Cref{theorem:deftheory}.

\begin{proof}[Proof of \Cref{theorem:deftheory}]
Consider the following divisors on $C$:
\begin{align*}
D &= \sum \alpha_i p_i + \sum \beta_j q_j \\
D_0 &= \sum \ell_j q_j \\
D_1 &=D -D_0
\end{align*}
Note that, by an abuse of notation, $D = f^*(\Oc_\Sigma(D))$.

By \Cref{lemma:tangency-conditions}, the image of $T_{f}W \subset H^0(C,\Nc)$ on $H^0(C, \Nc/\Nc_{\text{tor}})$
is contained in $H^0(C, \Nc/\Nc_{\text{tor}}(-D_1))$. And by \Cref{lemma:torsion}, the dimension of the
latter is an upper bound on the dimension of $W$. That is,
\[
\dim W \leq h^0(C, \Nc/\Nc_{\text{tor}}(-D_1))
\]

We can describe very explicitly what the line bundle $\Nc/\Nc_{\text{tor}}(-D_1)$ is. 
The sheaf $\Nc_{\text{tor}}$ is supported on the ramification points of $f\from C \to \Sigma$. Let $\Gamma$ 
be the skyscraper sheaf of ramification away
from the $q_j$. Then
\[
\Nc_{\text{tor}} = \Gamma \oplus k^{\ell_i-1}_{q_j}
\]
And, as we know that
\[
c_1(\Nc)=K_C-f^*K_\Sigma
\]
we get
\begin{align*}
\Nc/\Nc_{\text{tor}} &= K_C\otimes f^*K_\Sigma^{-1} (- c_1(\Gamma) - \sum (\ell_i -1) q_j) \\
&= K_C(-f^*K_\Sigma - D_0 + \sum q_j - c_1(\Gamma))
\end{align*}
Hence,
\[
\Nc/\Nc_{\text{tor}}(-D_1) = K_C(-f^* K_\Sigma -D + \sum q_j - c_1(\Gamma))
\]
which is a sub sheaf of 
\[
\Lc =K_C(-f^* K_\Sigma -D + \sum q_j)
\]
whose degree is 
\[
\deg \Lc = 2g-2 -(K_\Sigma+D) \tau + \bar{b}
\]
Adding an effective divisor $B$ of degree $b-\bar{b}$, we get that 
$\deg \Lc(B) = 2g-2+\gamma$, and $\Nc/\Nc_{\text{tor}}(-D_1)$
is a subsheaf of $\Lc(B)$.

Hence, if $\gamma \geq 1$, we are in the non-special range, and
Riemann--Roch tells us
\begin{align*}
\dim W &\leq h^0(\Nc/\Nc_{\text{tor}}(-D_1) \\
&\leq h^0(\Lc(B)) = g-1 +\gamma
\end{align*}

Let us assume from now on that equality holds.

If $\gamma \geq 2$, then $\Gamma =0$ and $b=\bar{b}$. 
Indeed, if $r \in \Supp \Gamma$, or in $\Supp B$, then 
\[
\dim W \leq h^0(\Lc(B-r))
\]
which is still in the non-special range, and hence gives too small a dimension.

To be unramified along the $q_j$ we need more freedom---let us assume that 
$\gamma \geq 3$.
Then we can find a section of $\Lc$ vanishing to order exactly $1$ on $q_j$. Hence, the order
of vanishing of $\bar{\sigma} \in H^0(\Nc/\Nc_{\text{tor}})$ at $q_j$ is
$\beta_j-\ell_j+1$. By the \Cref{lemma:tangency-conditions}, this must be at least $\beta_j$, and hence 
$\ell_j=1$.
Hence $f$ is unramified and  $\Nc$ is torsion-free. Therefore,
$\Nc = K_C(-f^*K_\Sigma)$.

To check smoothness of  $f(C)$ along $D$, we only need show that pairs of points 
$q_j$ don't always
map to the same point. We ensure that by finding a section vanishing at $q_{j_1}$ but not at $q_{j_2}$. Again, $\gamma \geq 3$ is enough.

We can rule out tacnodes too. If $r_1,r_2$ map down to a tacnode,
we need to find a section vanishing at one but not the other, which we are guaranteed to find, again by 
Riemann--Roch.

Finally, let us rule out triple points. Suppose there is one,
and that $r_1,r_2,r_3 \in C$ are its preimages.
We would like to find a section vanishing 
at $r_1$ and $r_2$, but not $r_3$. To guarantee that we need 
$\Nc(-D+\sum q_j -r_1-r_2-r_3)$ 
to be non-special, which holds when $\gamma \geq 4$
\end{proof}

\bibliographystyle{alpha}
\bibliography{covers_of_genus_one_curves}

\end{document}